\documentclass[12pt]{article}
\usepackage[latin1]{inputenc}
\usepackage[british]{babel}
\usepackage{lmodern}
\usepackage[T1]{fontenc}
\usepackage[paper=a4paper, left=28mm, right=25mm, top=29mm, bottom=25mm]{geometry}
\usepackage{latexsym,amsfonts,amsmath,graphics}
\usepackage{epsfig}

\usepackage[active]{srcltx}
\usepackage{enumitem}
\usepackage{amssymb,mathrsfs}
\usepackage{verbatim}

\newtheorem{theorem}{Theorem}
\newtheorem{lemma}{Lemma}
\newtheorem{corollary}{Corollary}

\newtheorem{definition}{Definition}

\newtheorem{remark}{Remark}
\newenvironment{proof}{\begin{trivlist} \item[\hskip\labelsep{\it Proof.}]}{$\hfill\Box$\end{trivlist}}

\newcommand{\rd}{\,\mathrm{d}}

\newcommand{\bsx}{\boldsymbol{x}}
\newcommand{\bsy}{\boldsymbol{y}}
\newcommand{\bsz}{\boldsymbol{z}}

\newcommand{\bsb}{\boldsymbol{b}}

\newcommand{\bssigma}{\boldsymbol{\sigma}}

\newcommand{\bsone}{\boldsymbol{1}}

\newcommand{\RR}{\mathbb{R}}
\newcommand{\QQ}{\mathbb{Q}}

\newcommand{\FF}{\mathbb{F}}
\newcommand{\NN}{\mathbb{N}}

\newcommand{\ZZ}{\mathbb{Z}}

\newcommand{\sym}{{\rm sym}}
\newcommand{\cP}{\mathcal{P}}

\newcommand{\cH}{\mathcal{H}}

\newcommand{\id}{{\rm id}}

\newcommand{\Sy}{\mathfrak{S}}
\newcommand{\icomp}{\mathtt{i}}

\newcommand{\To}{\rightarrow}

\newcommand{\qed} {\hfill \Box \vspace{0.5cm}}
\allowdisplaybreaks

\title{From van der Corput to modern constructions of sequences for quasi-Monte Carlo rules}
\author{Henri Faure, Peter Kritzer, and Friedrich Pillichshammer\thanks{The last two authors are supported by the
Austrian Science Fund (FWF): Projects F5506-N26 (Kritzer)  and
F5509-N26 (Pillichshammer), respectively, which are part of the
Special Research Program ``Quasi-Monte Carlo Methods: Theory and
Applications''.}}
\date{}

\begin{document}

\maketitle

\centerline{\it Dedicated to the memory of Johannes G. van der
Corput}

\begin{abstract}
In 1935 J.G. van der Corput introduced a sequence which has
excellent uniform distribution properties modulo 1. This sequence
is based on a very simple digital construction scheme with respect
to the binary digit expansion. Nowadays the van der Corput
sequence, as it was named later, is the prototype of many
uniformly distributed sequences, also in the multi-dimensional
case. Such sequences are required as sample nodes in quasi-Monte
Carlo algorithms, which are deterministic variants of Monte Carlo
rules for numerical integration. Since its introduction many
people have studied the van der Corput sequence and
generalizations thereof. This led to a huge number of results.

On the occasion of the 125th birthday of J.G. van der Corput we
survey many interesting results on van der Corput sequences and
their generalizations. In this way we move from van der
Corput's ideas to the most modern constructions of sequences for
quasi-Monte Carlo rules, such as, e.g., generalized Halton
sequences or Niederreiter's $(t,s)$-sequences.
\end{abstract}

\centerline{\begin{minipage}[hc]{130mm}{
{\em Keywords:} van der Corput sequence, uniform distribution, discrepancy, 
bounded remainder sets, von Neumann-Kakutani transform, Halton sequence, Hammersley point set, $(t,s)$-sequences\\
{\em MSC 2000:} 11K38, 11K31}
\end{minipage}}

\allowdisplaybreaks

\tableofcontents
\section{Introduction}

If one is given the task to sequentially distribute points in the unit
interval $[0,1)$ such that for every non-negative integer $N$ the
initial $N$ elements of the resulting sequence fill the interval
very ``uniformly'', then one might after some thinking come to an
answer as illustrated in Figure~\ref{f1}.
\begin{figure}[htp]
\begin{center}
\begin{picture}(0,0)%
\includegraphics{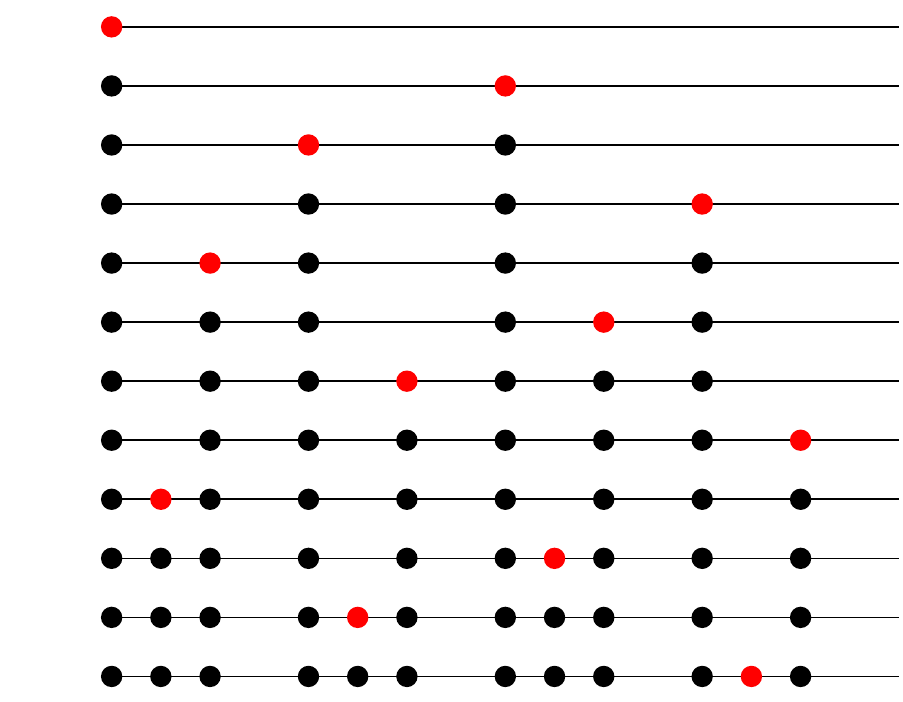}%
\end{picture}%
\setlength{\unitlength}{4144sp}%
\begingroup\makeatletter\ifx\SetFigFont\undefined%
\gdef\SetFigFont#1#2#3#4#5{%
  \reset@font\fontsize{#1}{#2pt}%
  \fontfamily{#3}\fontseries{#4}\fontshape{#5}%
  \selectfont}%
\fi\endgroup%
\begin{picture}(4122,3198)(1516,-3595)
\put(1531,-1366){\makebox(0,0)[lb]{\smash{{\SetFigFont{12}{14.4}{\rmdefault}{\mddefault}{\updefault}{$3:$}%
}}}}
\put(1531,-1096){\makebox(0,0)[lb]{\smash{{\SetFigFont{12}{14.4}{\rmdefault}{\mddefault}{\updefault}{$2:$}%
}}}}
\put(1531,-826){\makebox(0,0)[lb]{\smash{{\SetFigFont{12}{14.4}{\rmdefault}{\mddefault}{\updefault}{$1:$}%
}}}}
\put(1531,-556){\makebox(0,0)[lb]{\smash{{\SetFigFont{12}{14.4}{\rmdefault}{\mddefault}{\updefault}{$0:$}%
}}}}
\put(1531,-1906){\makebox(0,0)[lb]{\smash{{\SetFigFont{12}{14.4}{\rmdefault}{\mddefault}{\updefault}{$5:$}%
}}}}
\put(1531,-2176){\makebox(0,0)[lb]{\smash{{\SetFigFont{12}{14.4}{\rmdefault}{\mddefault}{\updefault}{$6:$}%
}}}}
\put(1531,-2446){\makebox(0,0)[lb]{\smash{{\SetFigFont{12}{14.4}{\rmdefault}{\mddefault}{\updefault}{$7:$}%
}}}}
\put(1531,-2716){\makebox(0,0)[lb]{\smash{{\SetFigFont{12}{14.4}{\rmdefault}{\mddefault}{\updefault}{$8:$}%
}}}}
\put(1531,-2986){\makebox(0,0)[lb]{\smash{{\SetFigFont{12}{14.4}{\rmdefault}{\mddefault}{\updefault}{$9:$}%
}}}}
\put(1531,-3256){\makebox(0,0)[lb]{\smash{{\SetFigFont{12}{14.4}{\rmdefault}{\mddefault}{\updefault}{$10:$}%
}}}}
\put(1531,-3526){\makebox(0,0)[lb]{\smash{{\SetFigFont{12}{14.4}{\rmdefault}{\mddefault}{\updefault}{$11:$}%
}}}}
\put(1531,-1636){\makebox(0,0)[lb]{\smash{{\SetFigFont{12}{14.4}{\rmdefault}{\mddefault}{\updefault}{$4:$}%
}}}}
\end{picture}%
\caption{Intuitive construction of a sequence whose initial elements fill the unit interval very uniformly.}
     \label{f1}
\end{center}
\end{figure}

That is, one might discover the sequence
\begin{equation}\label{intuvdc}
0,\frac{1}{2},\frac{1}{4},\frac{3}{4},\frac{1}{8},\frac{5}{8},\frac{3}{8},
\frac{7}{8},\frac{1}{16},\frac{9}{16},\frac{5}{16},\frac{13}{16},\ldots.
\end{equation}

A similar task is to distribute points from a sequence very
uniformly on a circle. In this case, a possible idea may be to choose two
antipodal points on the circle. Then rotate the whole picture by
$\pi/2$ to obtain two further points. Next rotate the picture by
$\pi/4$ to obtain 4 new points. Next rotate by $\pi/8$, and so on
(see Figure~\ref{f1a}). In this way one constructs a sequence on the
torus which directly corresponds to the findings in
Figure~\ref{f1} when the unit interval is coiled up into a circle.

\begin{figure}[htp]
\begin{center}
\begin{picture}(0,0)%
\includegraphics{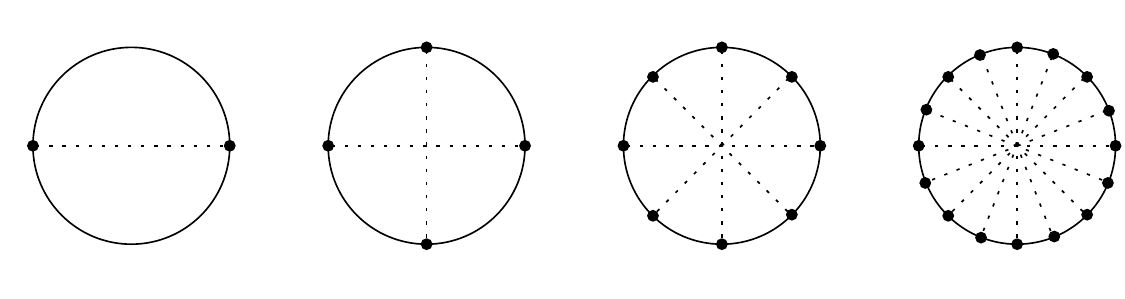}%
\end{picture}%
\setlength{\unitlength}{4144sp}%
\begingroup\makeatletter\ifx\SetFigFont\undefined%
\gdef\SetFigFont#1#2#3#4#5{%
  \reset@font\fontsize{#1}{#2pt}%
  \fontfamily{#3}\fontseries{#4}\fontshape{#5}%
  \selectfont}%
\fi\endgroup%
\begin{picture}(5131,1353)(1651,-1660)
\put(3556,-1591){\makebox(0,0)[lb]{\smash{{\SetFigFont{12}{14.4}{\rmdefault}{\mddefault}{\updefault}{$2$}%
}}}}
\put(3556,-466){\makebox(0,0)[lb]{\smash{{\SetFigFont{12}{14.4}{\rmdefault}{\mddefault}{\updefault}{$3$}%
}}}}
\put(4906,-1591){\makebox(0,0)[lb]{\smash{{\SetFigFont{12}{14.4}{\rmdefault}{\mddefault}{\updefault}{$2$}%
}}}}
\put(4906,-466){\makebox(0,0)[lb]{\smash{{\SetFigFont{12}{14.4}{\rmdefault}{\mddefault}{\updefault}{$3$}%
}}}}
\put(4546,-1456){\makebox(0,0)[lb]{\smash{{\SetFigFont{12}{14.4}{\rmdefault}{\mddefault}{\updefault}{$4$}%
}}}}
\put(5266,-601){\makebox(0,0)[lb]{\smash{{\SetFigFont{12}{14.4}{\rmdefault}{\mddefault}{\updefault}{$5$}%
}}}}
\put(5266,-1456){\makebox(0,0)[lb]{\smash{{\SetFigFont{12}{14.4}{\rmdefault}{\mddefault}{\updefault}{$6$}%
}}}}
\put(4546,-601){\makebox(0,0)[lb]{\smash{{\SetFigFont{12}{14.4}{\rmdefault}{\mddefault}{\updefault}{$7$}%
}}}}
\put(1666,-1006){\makebox(0,0)[lb]{\smash{{\SetFigFont{12}{14.4}{\rmdefault}{\mddefault}{\updefault}{$0$}%
}}}}
\put(2746,-1006){\makebox(0,0)[lb]{\smash{{\SetFigFont{12}{14.4}{\rmdefault}{\mddefault}{\updefault}{$1$}%
}}}}
\put(3016,-1006){\makebox(0,0)[lb]{\smash{{\SetFigFont{12}{14.4}{\rmdefault}{\mddefault}{\updefault}{$0$}%
}}}}
\put(4096,-1006){\makebox(0,0)[lb]{\smash{{\SetFigFont{12}{14.4}{\rmdefault}{\mddefault}{\updefault}{$1$}%
}}}}
\put(4366,-1006){\makebox(0,0)[lb]{\smash{{\SetFigFont{12}{14.4}{\rmdefault}{\mddefault}{\updefault}{$0$}%
}}}}
\put(5446,-1006){\makebox(0,0)[lb]{\smash{{\SetFigFont{12}{14.4}{\rmdefault}{\mddefault}{\updefault}{$1$}%
}}}}
\end{picture}%
\caption{Intuitive construction of a sequence whose initial elements are uniformly distributed on a circle.}
     \label{f1a}
\end{center}
\end{figure}

In the year 1935 Johannes G. van der Corput~\cite{vdc35}
introduced a general construction principle of a sequence in
$[0,1)$ which resembles exactly the sequence \eqref{intuvdc}. This construction
is based on the reflection of the binary digits of non-negative
integers $n$ in the comma. If $n$ has binary expansion $n=n_0+n_1
2+n_2 2^2+\cdots$ (which is of course finite) 
with $n_j \in \{0,1\}$ for $j=0,1,2,\ldots$
, then the $n$th element $y_n$ of the
sequence is given by
$$y_n=\frac{n_0}{2}+\frac{n_1}{2^2}+\frac{n_2}{2^3}+\cdots.$$ The
resulting infinite sequence $(y_n)_{n\ge 0}$ is nowadays called
the
{\it van der Corput sequence in base 2} or the {\it binary (or dyadic) van der Corput sequence}.\\

Since its introduction in 1935 a multitude of mathematicians
studied the van der Corput sequence with respect to its
distributional properties, but also many variations of the van der
Corput sequence have been introduced and studied. A search on
American Mathematical Society's
MathSciNet\footnote{\texttt{www.ams.org/mathscinet/}, last checked
on \today} using the keyword ``van der Corput sequence'' led to 93
hits, and presumably there are many more papers dealing with this
subject but do not explicitly mention van der Corput in
the title. The van der Corput sequence in base 2 can be seen as
the prototype of the most important sequences, even in the
multi-dimensional case, which are nowadays used in modern
quasi-Monte Carlo simulation algorithms.

On the occasion of the 125th anniversary of the birth of Johannes
G. van der Corput (Sept. 4, 1890 - Sept. 25, 1975) we give a
survey of many interesting results
on the van der Corput sequence and we present prominent generalizations thereof. \\

The paper is structured as follows: in Section~\ref{sec_udt} we
present results on uniform distribution modulo 1 of the van der
Corput sequence. Generalizations and variants of the van der
Corput sequence are presented in Section~\ref{sec_vari}. In
Section~\ref{sec_mult} we present some multi-dimensional
generalizations of the van der Corput sequence.\\

Before we proceed, we introduce some notation. By $\NN$ we denote
the set of positive integers $\{1,2,3,\ldots\}$ and we write $\NN_0=\NN
\cup \{0\}$. The natural logarithm is denoted by $\log$. By
$\{x\}$ we denote the fractional part of a real $x$ and $\|x\|$
denotes the distance of $x$ to the nearest integer, i.e.,
$\|x\|=\min(\{x\},1-\{x\})$. For $b\in \NN$, $b \ge 2$, let
$\ZZ_b=\{0,1,\ldots,b-1\}$. Furthermore, let $\Sy_b$ be the set of all
permutations of $\ZZ_b$.

For functions $f,g:\NN \rightarrow \RR^+$, we sometimes write
$g(N) \ll f(N)$ or $g(N)=O(f(N))$, if there exists a constant
$C>0$ such that $g(N) \le C f(N)$ for all $N \in \NN$, $N \ge 2$.
If we would like to stress that $C$ depends on some parameter, say
$p$, this will be indicated by writing $\ll_p$ or $O_p$.

\section{Uniform distribution modulo 1 and the van der Corput sequence}\label{sec_udt}

According to Weyl~\cite{weyl} a real sequence $(x_n)_{n\ge 0}$
is {\it uniformly distributed modulo 1} if for all $0 \le a < b
\le 1$ we have
\begin{equation}\label{def_udt}
\lim_{N \rightarrow \infty} \frac{\#\{n \in \{0,1,\ldots,N-1\}\ :
\ \{x_n\} \in [a,b)\}}{N}=b-a,
\end{equation}
i.e., the relative number of points in $[a,b)$ converges to the
length of the interval. See \cite{DT97,kuinie} for introductions
to the theory of uniform distribution modulo 1.

Two of the first examples of uniformly distributed sequences are
the so-called $(n \alpha)$-sequences for irrational $\alpha$, where
$x_n=n \alpha$ (see \cite{weyl}), and the van der Corput sequence.
Although van der Corput introduced his sequence only with respect
to binary digits, we introduce a first generalization of his idea
to $b$-adic digit expansions.

\begin{definition}\label{vdc1}\rm
Let $b \in \NN$, $b \ge 2$.
\begin{itemize}
\item The {\it $b$-adic radical inverse function}\index{radical
inverse function} is defined as $Y_b: \NN_0 \rightarrow [0,1)$,
$$Y_b(n)=\frac{n_0}{b}+\frac{n_1}{b^2}+\frac{n_2}{b^3}+\cdots,$$
for $n \in \NN_0$ with $b$-adic digit expansion $n=n_0+n_1 b+n_2
b^2+\cdots$, where $n_i \in \{0,1,\ldots,b-1\}$. In other words,
$Y_b(n)$ is the reflection of the $b$-adic digit expansion of $n$
in the comma. \item The {\it van der Corput sequence in base $b$}
is defined as $Y_b:=(y_n)_{n \ge 0}$ with $y_n=Y_b(n)$.
\end{itemize}
\end{definition}

For example, for $b=2$ we have
$$\begin{array}{l | ccccccccc}
n & 0 & 1 & 2 & 3 & 4 & 5 & 6 & 7 & 8\\
\hline
\mbox{binary $n$} & 0. & 1. & 10. & 11. & 100. & 101. & 110. & 111. & 1000.\\
\mbox{binary $Y_2(n)$} & 0.0  & 0.1 & 0.01 & 0.11 & 0.001 & 0.101 & 0.011 & 0.111 & 0.0001\\
Y_2(n) & 0 & \frac{1}{2} & \frac{1}{4} & \frac{3}{4} & \frac{1}{8}
& \frac{5}{8} & \frac{3}{8} & \frac{7}{8} & \frac{1}{16}
\end{array}$$

It can be shown by elementary counting methods that the van der
Corput sequence in base
$b$ is uniformly distributed modulo 1. We give a sketch of the proof here.\\

\noindent{\it Proof sketch.} For fixed $m \in \NN$ and $k \in
\{0,1,\ldots,b^m -1\}$ an element $y_n$ of the van der Corput
sequence in base $b$ belongs to
$J_{k,m}=[\tfrac{k}{b^m},\tfrac{k+1}{b^m})$ if and only if $n
\equiv A(k) \pmod{b^m}$ where $A(k)$ is a uniquely determined
integer from $\{0,1,\ldots,b^m-1\}$. Hence exactly one of $b^m$
consecutive elements of the van der Corput sequence belongs to
$J_{k,m}$. This implies that
\begin{equation}\label{anzbadicint}
\#\{n \in \{0,1,\ldots,N-1\}\ : \ y_n \in J_{k,m}\} = \left
\lfloor \frac{N}{b^m}\right\rfloor + \theta\ \ \ \mbox{ with }\
\theta \in \{0,1\},
\end{equation}
and hence
\begin{equation}\label{fairelint}
\lim_{N \rightarrow \infty} \frac{\#\{n \in \{0,1,\ldots,N-1\}\ :
\ y_n \in J_{k,m}\}}{N}=\frac{1}{b^m}={\rm length}(J_{k,m}).
\end{equation}
Arbitrary intervals $[a,b) \subseteq [0,1)$ are approximated from
the interior and the exterior by finite unions of intervals of the
form $J_{k,m}$. In this way the result \eqref{fairelint} carries
over to the general case. This means that the van der Corput
sequence in base $b$ is uniformly distributed modulo 1. $\qed$

For a detailed version of this proof we refer to \cite[Proof of
Proposition~2.10]{LP14}.

\subsection{Discrepancy}

Quantitative versions of \eqref{def_udt} are usually stated in
terms of discrepancy. For a real sequence $X=(x_n)_{n \ge 0}$ the
{\it local discrepancy} is defined, for $x\in [0,1]$, as
$$\Delta_{X,N}(x)=\frac{\#\{n \in \{0,1,\ldots,N-1\}\ : \ \{x_n\}
\in [0,x)\}}{N}-x.$$ Sometimes we will abbreviate the counting
part in the local discrepancy by $A(x;N;X):=\#\{n \in
\{0,1,\ldots,N-1\}\ : \ \{x_n\} \in [0,x)\}$.

Then the {\it extreme discrepancy} is defined as $$D_N(X)=\sup_{0
\le x < x'\le 1}|\Delta_{X,N}(x')-\Delta_{X,N}(x)|.$$ The {\it
star (extreme) discrepancy} is defined as $$D_N^{\ast}(X)=\sup_{0
\le x \le 1}|\Delta_{X,N}(x)|.$$ The star discrepancy of a
sequence $X$ can be interpreted as the $L_{\infty}$-norm of the
local discrepancy. In this vein one often also studies the
$L_p$-norm of the local discrepancy and then speaks of the {\it
$L_p$-discrepancy} of a sequence $X$, i.e.,
$$L_{p,N}(X)=\left(\int_0^1 | \Delta_{X,N}(x)|^p \rd
x\right)^{1/p}\ \ \ \mbox{ for }\ p \ge 1.$$ The {\it diaphony}
(introduced by Zinterhof) is defined as
$$
F_N(X) = \frac{1}{N}\left (2 \sum_{m=1}^{\infty} \frac{1}{m^2}
\bigg | \sum_{n=0}^{N-1} \exp(2\pi\icomp m x_n) \bigg |^2
\right)^{1/2}.
$$
The diaphony is linked to the $L_2$-discrepancy by the formula of Koksma:
\begin{equation}\label{FoKoks}
L_{2,N}^2(X)=\left (\sum_{n=0}^{N-1} \left (\frac{1}{2}-x_n
\right) \right )^2 + \frac{1}{4\pi^2}F_N^2(X).
\end{equation}

The notions of uniform distribution modulo 1 and discrepancy can
be extended to the multi-variate case in the obvious way. See, for
example, \cite{BC,DP10,DT97,kuinie,LP14,Mat99,niesiam}.

\begin{remark}[Some general facts on discrepancy]\label{genfactsdisc}\rm
A real sequence $X$ is uniformly distributed modulo 1 if and only
if $\lim_{N \rightarrow \infty} D_N(X)=0$. For any real sequence
$X$ and any $N$ we have $D_N^{\ast}(X) \le D_N(X) \le 2
D_N^{\ast}(X)$. Furthermore, for any $1 \le p \le q \le \infty$ we
have $L_{p,N}(X) \le L_{q,N}(X) \le D_N^{\ast}(X)$. It was first
shown by W.M. Schmidt~\cite{Schm72distrib} in 1972 (see also
\cite{be1982,lar2014}) that there exists a constant $c>0$ such
that for every infinite real sequence $X$ we have
\begin{equation}\label{lowschmid}
ND_N^{\ast}(X) \ge c \log N \ \ \mbox{ for infinitely many $N \in
\NN$}.
\end{equation}
In a recent paper Larcher~\cite{lar2014} showed that one can
choose $c=0.0646363\ldots$  which is the largest value for $c$
known so far (instead of $0.06011 \ldots$ in \cite{be1982},
resulting from the lower bound $0.12022 \ldots$ for $D$). The
star discrepancy also appears in the Koksma-Hlawka inequality for
estimating the absolute error of a quasi-Monte Carlo algorithm:
for all real functions defined on the unit interval with bounded
total variation $V(f)$, and all sequences $X$ in $[0,1)$ with star
discrepancy $D_N^*(X)$  we have $$\left|\int_0^1 f(x) \rd
x-\frac{1}{N} \sum_{n=0}^{N-1} f(x_n)\right| \le V(f) D_N^*(X).$$
In the multi-dimensional case $V(f)$ is the variation of $f$ in
the sense of Hardy and Krause (see \cite{kuinie,niesiam}).

It is also well known that for every $p\in[1,\infty)$ there exists
a positive number $c_p$ with the property that for every real
sequence $X$ we have
\begin{equation}\label{lowproinov}
NL_{p,N}(X) \ge c_p \sqrt{\log N} \ \ \ \mbox{ for infinitely many
$N \in \NN$}.
\end{equation}
For $p \in (1,\infty)$ this lower bound was shown by 
Proinov~\cite{pro86} based on results of Roth~\cite{Roth} and
W.M. Schmidt~\cite{schX} for finite point sets in dimension two.
Using the method of Proinov in conjunction with a result of
Hal\'{a}sz~\cite{hala} for finite point sets in dimension two the
lower bound follows also for the $L_1$-discrepancy. It follows
from a paper of Hinrichs and Larcher~\cite{HL15} that one
can choose $c_2=0.0515599\ldots$ and from a paper of
Vagharshakyan~\cite{V15} that one can choose $c_1=0.01138\ldots$
(see also \cite{lar15a} for a further discussion).

 Both lower bounds \eqref{lowschmid} and
\eqref{lowproinov} are optimal with respect to the order of magnitude in $N$.
\end{remark}

\subsection{The discrepancy in the binary case}

In this section we collect results on the discrepancy of the
classical van der Corput sequence in base 2. Many of these results
are valid in analogous form for arbitrary bases $b \ge 2$ and even
for generalized van der Corput sequences. These generalized van
der Corput sequences will be introduced in
Section~\ref{secdiscgenvdC} and then we will also collect results
on discrepancy in the general case.

\paragraph{The star discrepancy.}
A first estimate of the star discrepancy of the van der Corput
sequence in base 2 was already provided by van der Corput in his
1935 paper \cite{vdc35}. He showed that
$$
ND_N^*(Y_2) \le \frac{\log N}{\log 2} +1\ \ \ \mbox{ for every $N
\ge 2$}.
$$
Comparing this estimate with \eqref{lowschmid} we find that the
star discrepancy of the van der Corput sequence in base 2 is
optimal with respect to the order of magnitude in $N$. Later many authors
worked to improve the discrepancy bound and to find the smallest
possible constant in the $\log N$-term. The first who found this
optimal constant were Haber~\cite{hab1966} and Tijdeman
(unpublished according to \cite{kuinie}). Here we state the
slightly more explicit result of Tijdeman.
\begin{theorem}[Tijdeman]
For every $N \in \NN$ we have
\begin{equation}\label{discbd_tijde}
ND_N^*(Y_2) \le\frac{\log N}{3 \log 2} +1.
\end{equation}
\end{theorem}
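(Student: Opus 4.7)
The plan is to exploit the self-similar binary structure of $Y_2$ by decomposing the first $N$ points according to the binary expansion of $N$. Write $N = \sum_{i=1}^{r} 2^{a_i}$ with $a_1 > a_2 > \cdots > a_r \ge 0$, so that $r \le a_1 + 1 \le \log_2 N + 1$. The index set $\{0,1,\ldots,N-1\}$ then partitions naturally into consecutive blocks $B_1,\ldots,B_r$ of sizes $2^{a_1},\ldots,2^{a_r}$, and a short calculation with the radical inverse function shows that each image $P_i := \{y_n : n \in B_i\}$ is a shifted equi-spaced grid
$$P_i = \{k/2^{a_i} + \alpha_i : k = 0,1,\ldots,2^{a_i}-1\},$$
with shift $\alpha_i \in [0,2^{-a_i})$ determined by the binary digits of the common prefix of the indices in $B_i$.

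For each block the signed local discrepancy $\psi_i(x) := \#(P_i \cap [0,x)) - 2^{a_i} x$ is a piecewise linear sawtooth bounded in absolute value by $1$, and the full local discrepancy factors as $N\Delta_{Y_2,N}(x) = \sum_{i=1}^{r}\psi_i(x)$. Applying $|\psi_i| \le 1$ termwise recovers van der Corput's original bound $\log_2 N + 1$, so the improvement to the constant $1/(3\log 2)$ must come from cancellation between the sawtooths. The key observation is that the shifts $\alpha_i$ are not independent quantities: they are determined by the same binary expansion of $N$, so for any fixed $x$ the values $\psi_i(x)$ are strongly correlated across blocks of different resolutions.

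The heart of the argument is a combinatorial lemma stating that consecutive sawtooth contributions cancel down to a bounded quantity. Concretely, one groups the indices $1,\ldots,r$ so that the partial sum of $\psi_i(x)$ within each group is uniformly bounded in $x$; the natural grouping here is in triples, because three consecutive binary resolutions (spacings differing by a factor of $8$) capture the typical pattern of how $x$ can fall relative to the shifts $\alpha_i$, and within each triple the contributions of opposite signs dominate. Summing over all groups yields $\bigl|\sum_{i=1}^{r}\psi_i(x)\bigr| \le r/3 + O(1) \le (\log_2 N)/3 + 1$, which is the asserted inequality. The main obstacle is the three-block cancellation lemma itself: one must perform a careful case analysis on the binary digits of $x$ relative to the shifts $\alpha_{i-1},\alpha_i,\alpha_{i+1}$, tracking the ceiling functions $\lceil 2^{a_j}(x-\alpha_j)\rceil$ at each of the three resolutions, and then show that at most one of the three $\psi_j(x)$ can have large magnitude at a time. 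Edge effects (the first block $B_1$, for which $\alpha_1 = 0$, and the final incomplete triple when $3\nmid r$) are straightforward and can be absorbed into the additive constant~$1$ in the stated inequality.
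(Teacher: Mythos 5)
Your block decomposition is correct: writing $M_i = \sum_{j<i} 2^{a_j}$, each $P_i$ is indeed a shifted grid of spacing $2^{-a_i}$ with shift $\alpha_i = Y_2(M_i)$, each sawtooth $\psi_i$ satisfies $|\psi_i|\le 1$, and $N\Delta_{Y_2,N}(x) = \sum_{i=1}^r\psi_i(x)$, which recovers van der Corput's bound since $r\le\log_2 N + 1$. The gap is in the claimed three-block cancellation lemma: the bound $\bigl|\sum_{i=1}^r \psi_i(x)\bigr| \le r/3 + O(1)$, with $r$ the number of ones in the binary expansion of $N$, is false, and the flaw is structural rather than a matter of tightening constants.

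Consider $N_k = (4^k-1)/3$, whose binary expansion is $1010\cdots101$ with $m = 2k-1$ digits and $r = k$ ones. Here $\|N_k/2^j\|\to 1/3$, so by the B\'ejian--Faure formula \eqref{disc_henri} one has $N_k D_{N_k}^*(Y_2) = m/3 + O(1) \approx 2r/3$, roughly twice $r/3$; the discrepancy exceeds $r/3$ by an amount growing like $k/3$, which cannot be absorbed into an additive constant. (Concretely, for $N_4 = 85$ one has $N D_N^*(Y_2) = 199/64 \approx 3.11$, while $r/3 + 1 = 7/3 \approx 2.33$.) The reason the triple picture fails is that the constant $1/(3\log 2)$ pertains to the total number of binary \emph{resolutions}, not to the number of \emph{ones}: when $N$ has a zero digit at level $j$, your decomposition has no sawtooth at that level, yet that level still contributes its share to the $(\log_2 N)/3$ budget; likewise the assumed factor-of-$8$ ratio between consecutive spacings only holds when there are no gaps $a_{i-1}-a_i>1$. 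The paper's route bypasses this entirely by starting from the exact formula $ND_N^*(Y_2)=\sum_{j\ge 1}\|N/2^j\|$ --- one summand per binary resolution, whether $N$'s digit there is $0$ or $1$ --- and bounding that arithmetic sum directly, where the constant $1/3$ emerges as $\alpha_{2,\id}$ in \eqref{defalphabid}. Any repair of your argument would first have to re-expand $\sum_i\psi_i$ over all $m$ resolutions (zeros of $N$ included) before a genuine $1/3$ can appear, which is in effect re-deriving the B\'ejian--Faure formula.
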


Further very exact results were shown by B\'{e}jian and 
Faure \cite{befa77}. Their main finding is the formula
\begin{equation}\label{disc_henri}
ND_N^*(Y_2) = ND_N(Y_2)=\sum_{r=1}^{\infty}
\left\|\frac{N}{2^r}\right\|  = \sum_{r=1}^m
\left\|\frac{N}{2^r}\right\|+\frac{N}{2^m}\
 \ \ \mbox{ whenever $1 \le N \le 2^m$}.
\end{equation}
This formula gives rise to a very simple generating recursion for
$D_N^{\ast}(Y_2)$. The simplicity of this recursion becomes
especially apparent when we use the notation $D(N):=N
D_N^{\ast}(Y_2)$. Then we have $D(1)=1$  and for any $N\in \NN$
\begin{equation}\label{genrecY2}
D(2N)= D(N), \ \ \ \mbox{ and } \ \ \
D(2N+1)=\tfrac{1}{2}(D(N)+D(N+1)+1).
\end{equation}\
The formula \eqref{disc_henri}, or, even better, the formula
\eqref{genrecY2}, makes it possible to compute $D_N^*(Y_2)$ for every $N
\in \NN$ (see Figure~\ref{f2'}).

\begin{figure}[ht]
\begin{center}
\includegraphics[width=115mm]{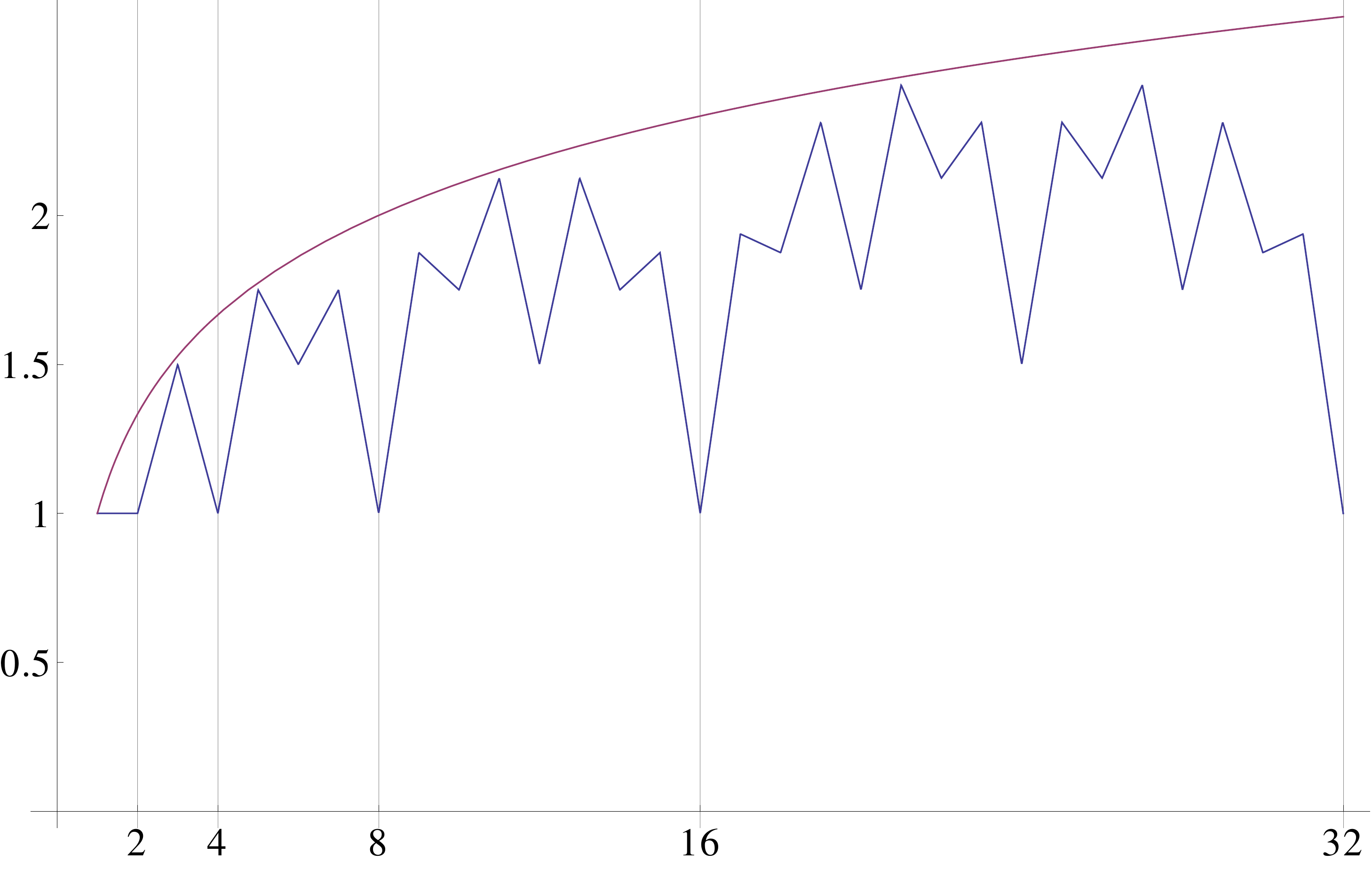}
\caption{$D(N)=N D_N^*(Y_2)$ of the van der Corput sequence in
base 2 for $N=1,2,\ldots,32$, compared with $N\mapsto
\tfrac{\log N}{3 \log 2} +1$. Observe how the
recursion~\eqref{genrecY2} is reflected in the graph, for example,
$D(1)=D(2)=D(4)=D(8)=D(16)=D(32)$.} \label{f2'}
\end{center}
\end{figure}

Based on formula \eqref{disc_henri} B\'{e}jian and Faure
reproved the discrepancy estimate \eqref{discbd_tijde}.
Furthermore they showed:
\begin{theorem}[B\'{e}jian and Faure]\label{thmbfstar}
We have
$$
\limsup_{N \rightarrow \infty} \left(N D_N^{\ast}(Y_2)-\frac{\log
N}{3 \log 2}\right)=\frac{4}{9}+\frac{\log 3}{3 \log 2}
$$
and
$$
\frac{1}{M} \sum_{N=1}^M ND_N^{\ast}(Y_2)=\frac{\log M}{4 \log
2}+O(1).
$$
\end{theorem}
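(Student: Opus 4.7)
The plan is to exploit the exact formula \eqref{disc_henri}, writing $\phi(N):=ND_N^*(Y_2)=\sum_{r\ge 1}\|N/2^r\|$, or equivalently $\phi(N)=1+\sum_{r=1}^{\lfloor \log N/\log 2\rfloor} s_r(N)/2^r$ with $s_r(N):=\min(N\bmod 2^r,\,2^r-N\bmod 2^r)$. The lower bound in the $\limsup$ identity then follows by direct evaluation at the family $N_k:=(4^k-1)/3$, whose binary expansion is $\underbrace{1010\cdots 01}_{2k-1\text{ digits}}$. A parity-case analysis gives $s_r(N_k)=(2^r-1)/3$ for even $r$, $s_r(N_k)=(2^r+1)/3$ for odd $r\ge 3$, and $s_1(N_k)=1$; splitting the sum by parity and collapsing the two resulting geometric series yields
\[
\phi(N_k) = \frac{2k}{3}+\frac{4}{9}+O(4^{-k}).
\]
Combined with $\log N_k = 2k\log 2 -\log 3+O(4^{-k})$, this gives $\phi(N_k)-\log N_k/(3\log 2)\to 4/9+\log 3/(3\log 2)$, so the $\limsup$ is at least the claimed value.

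For the matching upper bound I would use the recursion \eqref{genrecY2}. Introduce the companion family $M_k:=2N_k+1=(2\cdot 4^k+1)/3$ and set $u_k:=\phi(N_k)$, $v_k:=\phi(M_k)$; using $\phi(2N)=\phi(N)$ and the identity $N_k+1=2M_{k-1}$, the recursion produces the coupled system
\[
u_{k+1}=\tfrac{1}{2}(u_k+v_k+1),\qquad v_k=\tfrac{1}{2}(u_k+v_{k-1}+1),
\]
whose solution is $u_k=2k/3+4/9+O(4^{-k})$ and $v_k=2k/3+7/9+O(4^{-k})$. A strong induction on the bit length $L$ then shows that $M(L):=\max\{\phi(N):2^{L-1}\le N<2^L\}$ equals $u_k$ for $L=2k-1$ and $v_k$ for $L=2k$; the induction is closed by the parallel claim $\max\{\phi(N)+\phi(N+1):2^{L-1}\le N<2^L\}=u_{\lceil L/2\rceil}+v_{\lfloor L/2\rfloor}$, which controls the odd step $\phi(2N+1)=(\phi(N)+\phi(N+1)+1)/2$. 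Since the extremal $N$ satisfies $N\sim (4/3)\cdot 2^{L-1}$, one has $\log N\sim (L+1)\log 2-\log 3$, so substituting into $M(L)-\log N/(3\log 2)$ recovers exactly $4/9+\log 3/(3\log 2)$; for any other $N\in[2^{L-1},2^L)$ the deficit $M(L)-\phi(N)$ dominates the possible saving from a smaller $\log N$, and the $\limsup$ is attained only along $\{N_k\}$ and $\{M_k\}$.

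For the Ces\`aro-mean identity I would swap the order of summation,
\[
\sum_{N=1}^{M}\phi(N) = \sum_{r\ge 1}T_r(M),\qquad T_r(M):=\sum_{N=1}^{M}\|N/2^r\|.
\]
Since $n\mapsto \|n/2^r\|$ is $2^r$-periodic with average $1/4$ per period, $T_r(M)=M/4+O(2^r)$ whenever $2^r\le M$. For $2^r>2M$ one has $\|N/2^r\|=N/2^r$ for every $N\le M$, giving $T_r(M)=M(M+1)/2^{r+1}$ and a tail of total size $O(M)$. Summing over $r\le R:=\lfloor \log M/\log 2\rfloor$ therefore produces $\sum_{N=1}^{M}\phi(N)=R\cdot M/4+O(M)=\frac{M\log M}{4\log 2}+O(M)$, and dividing by $M$ yields the second identity.

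The main obstacle is the upper bound in the $\limsup$: one must simultaneously identify the two extremal families $\{N_k\}$, $\{M_k\}$ and verify uniformly that no other $N$ exceeds the constant $4/9+\log 3/(3\log 2)$. The coupled recursion for $(u_k,v_k)$ together with the auxiliary induction on $\phi(N)+\phi(N+1)$ is the heart of the argument, and the appearance of the specific constant $\log 3/(3\log 2)$ ultimately reflects the position of the extremizers at roughly $(4/3)\cdot 2^{L-1}$ rather than at $2^{L-1}$.
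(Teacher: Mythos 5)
Your evaluation of $\phi(N_k)$ along the family $N_k=(4^k-1)/3$ is correct (one can even close the geometric series exactly to get $\phi(N_k)=\tfrac{2k}{3}+\tfrac49-\tfrac49 4^{-k}$), so the \emph{lower} bound for the $\limsup$ is established. The Ces\`aro computation is also sound: the interchange of summation is legitimate, the per-period average of $\|n/2^r\|$ is indeed $1/4$, the tail $2^r>2M$ contributes $O(M)$, and the transition term $M<2^r\le 2M$ is trivially $O(M)$, giving $\sum_{N\le M}\phi(N)=\tfrac{M\lfloor\log_2 M\rfloor}{4}+O(M)$.

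The gap is in the \emph{upper} bound for the $\limsup$. Even granting the block-maximum induction $M(L):=\max\{\phi(N):2^{L-1}\le N<2^L\}=u_k$ for $L=2k-1$ (and the small cases do check out: $M(3)=7/4$, $M(5)=39/16$, etc.), this statement only controls $\phi$ by a single number over each dyadic block and therefore yields, for the worst $N$ in the block,
\[
\phi(N)-\frac{\log N}{3\log 2}\le u_k-\frac{(L-1)\log 2}{3\log 2}
= \frac{2k}{3}+\frac49-\frac{2k-2}{3}+o(1)=\frac{10}{9}+o(1),
\]
which is strictly larger than the target $\frac49+\frac{\log 3}{3\log 2}$. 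The difference is exactly $\frac{\log(4/3)}{3\log 2}$, reflecting that the block maximizer $N_k$ sits near $(4/3)\cdot 2^{L-1}$, not at $2^{L-1}$. Your remark that ``for any other $N$ the deficit $M(L)-\phi(N)$ dominates the possible saving from a smaller $\log N$'' is precisely the nontrivial content of the theorem and is not proved by the proposed induction: what is needed is a \emph{position-dependent} upper bound of the form $\phi(N)\le u_k-\frac{\log(N_k/N)}{3\log 2}+o(1)$ for $N<N_k$ in the block, and neither $M(L)$ nor the auxiliary $\max\{\phi(N)+\phi(N+1)\}$ claim supplies any information about how $\phi$ decays as $N$ moves away from the extremizer. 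Until that sharpened, $N$-dependent inequality is actually established (for instance via a finer subdivision of the dyadic block together with the recursion \eqref{genrecY2}, or by estimating $\sum_{r}\|N/2^r\|$ against $\log_2 N$ directly), the upper half of the $\limsup$ identity remains open in your argument.
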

Using (\ref{disc_henri}), B\'{e}jian and Faure also provided the
intervals for which the discrepancy is attained:
\begin{corollary}\label{corbefa}
The reals $\alpha \in [0,1)$ for which
$\Delta_{Y_2,N}(\alpha)=D_N(Y_2)$ are those for which there exists
a sequence $(\varepsilon_r)_{r \ge 1}$ of signs $\varepsilon_r \in
\{+, -\}$ such that
$$
\alpha= - \sum_{r=1}^{\infty} \frac{1}{2^r}
\left\|\frac{N}{2^r}\right\|'_{\varepsilon_r} \pmod{1}
$$
where $\left\| \cdot \right\|'_{+}$ and $\left\| \cdot
\right\|'_{-}$ are the right and the left derivatives
of $\left\| \cdot \right\|$, respectively.
\end{corollary}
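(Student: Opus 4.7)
The plan is to extract the Corollary from formula \eqref{disc_henri} by tracking the equality case. Since \eqref{disc_henri} already gives $ND_N(Y_2)=\sum_{r\ge 1}\|N/2^r\|$, I only need to identify those $\alpha$ at which $N\Delta_{Y_2,N}(\alpha)$ equals this sum. I would first establish a scale-by-scale identity of the form
\[
N\Delta_{Y_2,N}(\alpha)=\sum_{r\ge 1}\eta_r(\alpha,N)\,\|N/2^r\|,
\]
in which $\eta_r(\alpha,N)\in\{-1,+1\}$ is determined by the $r$-th binary digit $\alpha_r$ of $\alpha$ (together with the preceding digits, via the bit-reversal structure encoded by $Y_2$).

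The maximum is attained if and only if $\eta_r(\alpha,N)=+1$ for every $r$. This imposes a recursive condition on the digits $\alpha_r$ whose binding direction is dictated by whether $\{N/2^r\}<\tfrac{1}{2}$ or $>\tfrac{1}{2}$, i.e.\ by the sign of the one-sided derivative $\|N/2^r\|'_{\pm}$. Encoding this sign by $\varepsilon_r\in\{+,-\}$ and unwinding the recursion in closed form yields
\[
\alpha=-\sum_{r\ge 1}\frac{1}{2^r}\,\|N/2^r\|'_{\varepsilon_r}\pmod 1.
\]
Whenever $N/2^r\notin\tfrac{1}{2}\mathbb{Z}$ the two one-sided derivatives coincide and $\varepsilon_r$ is a dummy variable; at integer or half-integer values of $N/2^r$, both choices give distinct legitimate extremizers, which accounts for the multiplicity of $\alpha$ at which $\Delta_{Y_2,N}$ attains its maximum.

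The main technical obstacle is producing the scale-by-scale decomposition with the correct sign structure $\eta_r$. The naive expansion of $[0,\alpha)$ as a disjoint union of the dyadic intervals $[S_{r-1},S_{r-1}+2^{-r})$ arising from the binary digits of $\alpha$ yields contributions that are \emph{not} individually bounded by $\|N/2^r\|$, and these must be regrouped so that \eqref{disc_henri} is exhibited as a telescoping sum of $\pm\|N/2^r\|$ at each scale. Additional care is needed to handle the convention about $\Delta_{Y_2,N}$ at its jump points (the maximum is a right-limit, so the $\alpha$ delivered by the formula is the location of the corresponding jump), the redundancy modulo $1$ in the final expression (different sign sequences may collapse to the same $\alpha$), and the matching of the multiplicity of extremizers with the two-valuedness of $\|\cdot\|'_{\pm}$ at integer and half-integer arguments.
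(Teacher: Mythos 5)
Your overall plan matches the paper's descent method from Section~\ref{sec_Faure_meth}, and your list of delicate points (right-limit convention at jumps, collapse modulo~$1$, multiplicity at the dyadic corners of $\|\cdot\|$) is exactly right. However, the central scale-by-scale identity you propose to prove is mis-stated, and the error is substantive, not just a matter of normalization. Specialising the fundamental descent lemma (Lemma~\ref{DesLem}) to $b=2$ and $\sigma=\id$ gives, for dyadic $\lambda/2^m$,
\[
N\Delta_{Y_2,N}\!\left(\frac{\lambda}{2^m}\right)=\sum_{j=1}^m \varphi_{2,\varepsilon_j}^{\id}\!\left(\frac{N}{2^j}\right),
\]
where the $\varepsilon_j\in\{0,1\}$ are determined recursively from the binary digits of $\lambda$, and $\varphi_{2,0}^{\id}=0$, $\varphi_{2,1}^{\id}=\|\cdot\|$. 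Thus each term is either $0$ or $\|N/2^j\|$: the scale-by-scale coefficient lies in $\{0,1\}$, not in $\{-1,+1\}$ as you assert. This is not merely cosmetic: a genuine $\pm1$ decomposition would allow $\Delta_{Y_2,N}<0$, whereas in fact $\Delta_{Y_2,N}\ge 0$ everywhere (this is the statement $\psi_2^{\id,-}=0$, hence $D_N^{-}(Y_2)=0$, which is exactly why $D_N(Y_2)=D_N^{*}(Y_2)$ in~\eqref{disc_henri}). If you try to build the recursion you describe on top of the $\pm1$ ansatz, the sign bookkeeping will not close.

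Once the decomposition is written with $\{0,1\}$ coefficients, the rest of your plan does go through: the maximum $\sum_j\|N/2^j\|$ is attained exactly when every $\varepsilon_j=1$, and running the $\eta_j$-recursion of Lemma~\ref{DesLem} backwards from $\eta_j=1$ forces $\lambda_j=\bigl(1-\|\cdot\|'_{+}(N/2^{j+1})\bigr)/2$ for $j<m$ and $\lambda_m=1$. Reindexing with $r=j+1$ and observing that $\frac{1}{2}\equiv -\frac{1}{2}\,\|\cdot\|'_{\varepsilon_1}(N/2)\pmod 1$ for either choice of $\varepsilon_1$ (since $N/2$ is always a corner of $\|\cdot\|$) then yields the formula of the Corollary, with the $\varepsilon_r$ becoming genuine free choices precisely when $N/2^r$ hits a corner — which accounts for the $2^k$ maximizers when $2^k\|N$. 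So your skeleton is sound, but the key lemma you intend to establish must be corrected from $\pm1$ to $\{0,1\}$ coefficients before the argument can be made rigorous.
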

Note that the number of the intervals given in Corollary \ref{corbefa} 
is the greatest power of 2 dividing $N$.

\medskip

A further result whose proof is based on \eqref{disc_henri} is the
following central limit theorem for the star discrepancy of $Y_2$.
\begin{theorem}[Drmota, Larcher, and Pillichshammer]\label{thD1}
For every real $y$ and for $M \rightarrow \infty$ we have
$$\frac{1}{M} \# \left\{ N < M : N D_N^*(Y_2) \le \frac{\log N}{4 \log 2} + \frac{y}{4} \sqrt{
\frac{\log N}{3 \log 2}} \right\} = \Phi(y)+ o(1),
$$
where
$$
\Phi(y) = \frac1{\sqrt{2\pi}} \int\limits_{-\infty}^y \exp(-t^2/2)
\rd t
$$
denotes the Gaussian cumulative distribution function. That is,
the star discrepancy of the van der Corput sequence in base $2$
satisfies a central limit theorem.
\end{theorem}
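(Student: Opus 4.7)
The plan is to exploit the exact formula from \eqref{disc_henri}, which (for $1\le N\le 2^m$) reads $N D_N^*(Y_2)=\sum_{r=1}^m\|N/2^r\|+N/2^m$, and to regard $N$ as drawn uniformly from $\{0,1,\ldots,M-1\}$. The theorem then becomes a central limit theorem for $N D_N^*(Y_2)$ viewed as a function of the binary digits of $N$. After reducing to the case $M=2^m$ (a general $M$ is split at the largest power of two it contains and the residual block is treated analogously), the digits $n_0,\ldots,n_{m-1}$ become i.i.d.\ symmetric Bernoulli, while the boundary term $N/2^m\le 1$ is uniformly $O(1)$ and negligible against the target standard deviation of order $\sqrt m$.

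Set $\varepsilon_j:=2n_j-1\in\{-1,1\}$ and $X_r:=\|N/2^r\|-1/4$. A short calculation starting from $\|u\|=1/2-|u-1/2|$ yields the key identity
\[
X_r=\frac{\varepsilon_{r-1}}{2^{r+1}}\Big(1-\sum_{j=0}^{r-2}\varepsilon_j\,2^j\Big).
\]
Since $\varepsilon_{r-1}$ is independent of $\mathcal{F}_{r-1}:=\sigma(\varepsilon_0,\ldots,\varepsilon_{r-2})$ and has zero mean, $(X_r)_{r\ge 1}$ is a bounded martingale difference sequence with $|X_r|\le 1/4$. Computing the conditional second moment gives $\EE[X_r^2\mid\mathcal{F}_{r-1}]=(1-Y_{r-1})^2/4^{r+1}$ with $Y_{r-1}:=\sum_{j<r-1}\varepsilon_j\,2^j$, whence $\EE[X_r^2]=1/48+O(4^{-r})$ and the total variance is $\sigma_m^2=\sum_{r=1}^m\EE[X_r^2]=m/48+O(1)$.

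I would then invoke a martingale central limit theorem (Brown or Hall--Heyde). The Lindeberg condition is automatic because $|X_r|\le 1/4=o(\sqrt m)$. The main technical task is verifying the convergence in probability of the quadratic characteristic
\[
\frac{48}{m}\sum_{r=1}^m\EE[X_r^2\mid\mathcal{F}_{r-1}]=\frac{48}{m}\sum_{r=1}^m\frac{(1-Y_{r-1})^2}{4^{r+1}}\longrightarrow 1,
\]
which, upon setting $U_r:=Y_{r-1}/2^{r-1}$, amounts to showing $\frac{1}{m}\sum_{r=1}^m U_r^2\to 1/3$ in probability. Here $(U_r)$ satisfies the contractive Markov recursion $U_{r+1}=(U_r+\varepsilon_{r-1})/2$ whose unique stationary distribution is uniform on $[-1,1]$ with second moment $1/3$; its exponential mixing delivers an $L^2$-ergodic theorem for $U_r^2$. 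This ergodic averaging step for the auxiliary chain is the hardest part of the proof.

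Assembling the pieces, $\sum_{r=1}^m X_r/\sqrt{m/48}$ converges in distribution to $N(0,1)$, so $N D_N^*(Y_2)=m/4+\sqrt{m/48}\,Z_m+O(1)$ with $Z_m$ asymptotically standard normal. Since $m=\log N/\log 2+O(1)$ for $N$ in the bulk range (which has full asymptotic density), the centering $m/4$ matches $\log N/(4\log 2)$ up to $O(1)$ and the scaling $\sqrt{m/48}$ matches $\tfrac14\sqrt{\log N/(3\log 2)}$ up to a factor $1+o(1)$; the asserted empirical limit $\Phi(y)+o(1)$ then follows by translating $\PP(Z_m\le y)\to\Phi(y)$, with the reduction of a general $M$ to a power of $2$ absorbed into the $o(1)$ term.
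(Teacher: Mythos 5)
Your argument is correct, and the details I spot-checked --- the identity $\|N/2^r\|-\tfrac14=\varepsilon_{r-1}(1-Y_{r-1})/2^{r+1}$, the variance $\EE[X_r^2]=\tfrac1{48}+O(4^{-r})$, the recursion $U_{r+1}=(U_r+\varepsilon_{r-1})/2$, and the covariance decay $\mathrm{Cov}(U_r^2,U_s^2)=O(4^{-|r-s|})$ (which follows since $U_s=2^{-(s-r)}U_r+(\text{noise independent of }U_r)$) that gives the $L^2$-ergodic average --- all hold. But the probabilistic engine differs genuinely from the paper's. Both proofs share the reduction via the exact formula \eqref{disc_henri} to the case $M=2^m$ and to the sum $\sum_{r\le m}\|\omega 2^r\|$ up to $O(1)$; from there the paper cites classical CLTs for lacunary dyadic sums (Gaposhkin, Philipp--Stout, Sunklodas) together with moment convergence, and handles general $M$ via those moments, whereas you build an explicit bounded martingale-difference array in the digit filtration, verify the Brown/Hall--Heyde hypotheses directly, and handle general $M$ by dyadic block splitting. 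Your route is self-contained and elementary; the paper's is shorter but delegates the core CLT to outside references --- which, at bottom, are themselves proved by just such a martingale or block decomposition, so the two mechanisms are the same idea at different levels of explicitness. One loose end worth tightening: when you say the residual block is ``treated analogously,'' note that a block of size $2^{m_j}$ naturally centers at $m_j/4$ rather than at $m_1/4\approx\log M/(4\log 2)$; the mismatch is harmless only because blocks with $m_1-m_j$ not $o(\sqrt{m_1})$ carry total density $o(1)$. The paper glosses over the same point (``by considering moments''), so this is not a gap, just a sentence worth adding.
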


\noindent{\it Proof sketch.} It suffices to prove the equivalent
limit relation
\begin{equation}\label{eqlimitrelation}
\frac 1M \# \left\{ N < M : N D_N^*(Y_2) \le \frac{\log M}{4 \log
2} + \frac{y}{4} \sqrt{\frac{\log M}{3\log 2}} \right\} = \Phi(y)+
o(1).
\end{equation}

Assume first that $M=2^m$. Using formula \eqref{disc_henri} one
can show that $$N D_N^*(Y_2)= \sum_{r=1}^{m} \| \omega 2^r \| +
O(1),$$ where $\omega$ is uniformly distributed on $[0,1)$. Now,
by using well known limit theorems for lacunary sequences (compare
with \cite{gap66} and \cite{phst75}, alternatively we can use
\cite{sunklodas84}) it follows that
$$
S_m = \sum_{r=1}^m \| \omega 2^r \|
$$
satisfies a central limit theorem. Since ${\bf E} [S_m] = m/4$ and
${\bf Var} [S_m] = m/48$ we thus have
$$
\frac{S_m - m/4}{\sqrt{m/48}} \to N(0,1),
$$
and also all moments converge. Since $ND_N^*(Y_2) = S_m + O(1)$ we
get the same limit relation for $ND_N^*(Y_2)$ if $N$ is uniformly
distributed on $\{0,1,\ldots, 2^m-1\}$. This proves the theorem
for $M = 2^m$. The general case, where $M$ is not a power of $2$,
can be reconstructed from the case of $M = 2^m$ by considering
moments. A detailed version of this proof can be found in
\cite[Proof of Theorem~2]{DLP05}. $\qed$

\begin{remark}\rm
It is known from \cite[Th\'{e}or\`{e}me 1]{befa} that
$D_N^*(Y_2)=D_N(Y_2)$. Hence the presented results for the star
discrepancy of $Y_2$ remain valid if we replace the star
discrepancy by the extreme discrepancy.
\end{remark}

Central limit theorems for the star discrepancy of van der Corput
sequences in arbitrary bases $b$ can be found in the PhD thesis
of Wohlfarter~\cite{wohldiss}.

\paragraph{The $L_p$-discrepancy.} For the $L_p$-discrepancy of
the van der Corput sequence in base $2$ the situation is a bit
different than for the star discrepancy. The most studied case is
the case $p=2$. Similar to \eqref{disc_henri} Faure~\cite{fau1990}
proved the following explicit formula:
\begin{equation}\label{discfoL2}
4 (N L_{2,N}(Y_2))^2
=\left(\sum_{j=1}^{\infty}\left\|\frac{N}{2^j}\right\|\right)^2 +
\sum_{j=1}^{\infty}\left\|\frac{N}{2^j}\right\|^2.
\end{equation}
Based on this formula he was able to show the following results
for the $L_2$-discrepancy of the van der Corput sequence in base
$2$:

\begin{theorem}[Faure]\label{thmFL2}
For all $N \in \NN$ we have $$(N L_{2,N}(Y_2))^2 \le
\left(\frac{\log N}{6 \log 2}\right)^2+\left(\frac{11}{3}+\frac{2
\log 3}{\log 2}\right)\frac{\log N}{36 \log 2}+\frac{1}{3}.$$
Moreover
\begin{eqnarray*}
\limsup_{N \rightarrow \infty} \left( (N
L_{2,N}(Y_2))^2-\left(\frac{\log N}{6 \log
2}\right)^2-\left(\frac{11}{3}-\frac{2 \log 3}{\log 2}\right)
\frac{\log N}{36 \log 2}\right)\\ = \frac{7}{81}+\frac{11 \log
3}{108 \log 2}+\left(\frac{\log 3}{6 \log 2}\right)^2.
\end{eqnarray*}
\end{theorem}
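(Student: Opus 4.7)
The plan is to start from Faure's explicit formula \eqref{discfoL2} and reduce the whole statement to the analysis of two auxiliary digit sums. For brevity write $S(N):=\sum_{j\ge 1}\|N/2^j\|$ and $T(N):=\sum_{j\ge 1}\|N/2^j\|^2$, so that \eqref{discfoL2} reads $4(NL_{2,N}(Y_2))^2 = S(N)^2+T(N)$. By \eqref{disc_henri}, $S(N)=ND_N^*(Y_2)$, hence $S(N)$ has already been controlled in Section~2.2: Tijdeman's bound \eqref{discbd_tijde} gives $S(N)\le (\log N)/(3\log 2)+1$, and Theorem~\ref{thmbfstar} supplies the sharp limsup $\limsup_N(S(N)-(\log N)/(3\log 2))=4/9+(\log 3)/(3\log 2)$. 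Squaring these gives the contribution of $S(N)^2/4$ to both the upper bound and the limsup, the leading term being $((\log N)/(6\log 2))^2$.

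The new object is $T(N)$. Let $m$ be the length of the binary expansion $N=\sum_{k=0}^{m-1}\epsilon_k 2^k$, $\epsilon_{m-1}=1$, and split
$$T(N)=\sum_{j=1}^{m}\|N/2^j\|^2+\sum_{j>m}\|N/2^j\|^2.$$
The tail is elementary: for $j>m$ one has $\|N/2^j\|=N/2^j$, so $\sum_{j>m}\|N/2^j\|^2=N^2/(3\cdot 4^m)<1/3$, which is where the additive constant $1/3$ in the upper bound comes from. For $1\le j\le m$, $\|N/2^j\|$ is determined by the digit $\epsilon_{j-1}$: if $\epsilon_{j-1}=0$ then $\|N/2^j\|=\sum_{k=0}^{j-2}\epsilon_k 2^{k-j}$, whereas if $\epsilon_{j-1}=1$ then $\|N/2^j\|=1/2-\sum_{k=0}^{j-2}\epsilon_k 2^{k-j}$. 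Squaring and summing over $j$ yields a weighted double sum over pairs of binary digits of $N$ whose leading behaviour is of the form $a\log_2 N+O(1)$; this is what will combine with the cross-term $2\cdot(\log N)/(3\log 2)\cdot (4/9+\log 3/(3\log 2))$ coming from $S(N)^2$ to produce the coefficients $11/3\pm 2\log 3/\log 2$ in the two assertions.

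Step~3 is the assembly. For the upper bound, combine the worst-case digit-wise estimate of $T(N)$ just described with the squared Tijdeman bound and divide by~$4$. For the limsup, insert the extremal sequences of $N$'s producing the star-discrepancy limsup in Theorem~\ref{thmbfstar} (identified via Corollary~\ref{corbefa}) into $S(N)^2+T(N)$, subtract the polynomial leading part $((\log N)/(6\log 2))^2+(11/3-2\log 3/\log 2)(\log N)/(36\log 2)$ and pass to the limsup. The constant $7/81+11\log 3/(108\log 2)+(\log 3/(6\log 2))^2$ should then emerge as the sum of the square of $4/9+(\log 3)/(3\log 2)$ (divided by $4$) and a residual constant from $T(N)$ on the extremal digit pattern.

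The main obstacle is not the qualitative structure, which is essentially dictated by \eqref{discfoL2} and the pre-existing control of $S(N)$, but the exact matching of constants. The difficulty is that $S(N)^2$ involves all the off-diagonal cross-terms $\|N/2^i\|\,\|N/2^j\|$, whereas $T(N)$ captures only the diagonal, so the extremal digit pattern for the sum is not quite the same as for $S(N)$ alone. One must keep track, simultaneously, of which digits $\epsilon_{j-1}$ are $0$ versus $1$ (these correspond to the signs $\varepsilon_r$ of Corollary~\ref{corbefa}) and of the weighted pairing of these choices across the two sums; the asymmetry between the two regimes $\epsilon_{j-1}=0$ and $\epsilon_{j-1}=1$ is precisely what injects $\log 3/\log 2$ into the final constants (the upper bound sees the worst pairing, $+$, while the limsup sees the actual pairing at the maximiser, $-$).
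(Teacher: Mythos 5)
You have identified the right starting point --- formula \eqref{discfoL2} together with \eqref{disc_henri} and Theorem~\ref{thmbfstar} --- but what you have written is a plan, not a proof, and the assembly step you propose would not in fact produce the stated constants. The decisive issue, which you yourself flag at the end, is that $S(N)^2$ and $T(N)$ must be controlled \emph{jointly}, over the same $N$, yet your Step~3 combines a worst-case bound on $S(N)$ (squared Tijdeman) with a separate worst-case bound on $T(N)$. Because these worst cases are attained at different digit patterns, adding them gives a strictly weaker inequality: a naive bound $T(N)\le m/4 + 1/3$ together with $(S(N)/2)^2\le(\log N/(6\log 2))^2+\log N/(6\log 2)+1/4$ yields a coefficient of $\log N$ equal to $1/(6\log 2)+1/(16\log 2)=11/(48\log 2)\approx 0.331$, which exceeds the stated coefficient $(11/3+2\log 3/\log 2)/(36\log 2)\approx 0.274$. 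So the claimed upper bound cannot be recovered by the route you describe.

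For the limsup the gap is of the same kind but sharper. Writing $S(N)=\tfrac{\log N}{3\log 2}+u(N)$ with $\limsup u(N)=4/9+\log 3/(3\log 2)=:d$ does give $\limsup\bigl(S(N)^2-(\log N/(3\log 2))^2-\tfrac{2d}{3\log 2}\log N\bigr)=d^2$, but the value of $T(N)$ along the subsequence realising this limsup is not the worst case for $T$, and its coefficient $a$ of $\log N$ along this subsequence is exactly what injects the $-2\log 3/\log 2$ (as opposed to $+2\log 3/\log 2$ in the upper bound). You never compute $a$, nor the residual constant $b$ from $T(N)$; you only say they ``should emerge.'' Moreover Corollary~\ref{corbefa} locates the extremal \emph{intervals} $\alpha$ for a fixed $N$, not the extremal $N$'s, so it cannot be used the way you propose. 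The proof in the paper's framework goes through the descent machinery of Section~\ref{sec_Faure_meth}: the $L_2$-version of Lemma~\ref{DesLem} (the reference \cite[Lemme~2.2]{fau1990} mentioned there) expresses the error in terms of the functions $\varphi^{\id}_{2,h}$ evaluated along the orbit $N/2^j$, and the constants $\beta_{b,\sigma}$ in Theorem~\ref{asymptL2_F} are obtained from an $\inf_m\sup_x$ of averaged sums of $\varphi^\sigma_b$, which is precisely the joint optimisation over digit patterns that your separate worst-case bounds miss. Until that joint analysis is carried out and the extremal sequences of $N$'s for the full quantity $S(N)^2+T(N)$ are identified, the constants in both assertions remain unproved.
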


A proof of these results can be found in \cite{fau1990}, where
a similar study is also performed for the symmetrized version of
the van der Corput sequence (see Section \ref{sec_sym} for
details).

\medskip

For $p\ge 1$, the following estimates are known.
\begin{theorem}[Pillichshammer]\label{discLpb2}
Let $p\ge 1$. For all $N \in \NN$ we have
\begin{eqnarray}
(N  L_{p,N}(Y_2))^p \le \left(\frac{\log N}{6 \log 2}\right)^p
+O((\log N)^{p-1}),\nonumber
\end{eqnarray}
where the constant in the $O$-notation only depends on $p$. If $N$
is of the form
$$N=\frac{2^{m+1}}{3}\left(1-\left(-\frac{1}{2}\right)^{m+1}\right)$$
we have $$(N L_{p,N}(Y_2))^p \ge
\frac{1}{2^p}\left(\frac{m}{3}+\frac{1}{9}-(-1)^m\frac{1}{9 \cdot
2^m}\right)^p.$$ In particular,
\begin{equation}\label{limsupLp}
\limsup_{N \rightarrow \infty}\frac{N L_{p,N}(Y_2)}{\log
N}=\frac{1}{6 \log 2}=0.2404\ldots.
\end{equation}
\end{theorem}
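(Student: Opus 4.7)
\emph{Plan.} My strategy is to reduce everything to the arithmetic sum $S_m(N) := \sum_{r=1}^{m}\|N/2^r\|$ that already appears in the Faure--B\'{e}jian identity \eqref{disc_henri}. Fix $m$ with $2^m \ge N$ and partition $[0,1)$ into the $2^m$ elementary intervals $J_{k,m} = [k/2^m,(k+1)/2^m)$. Since for this choice of $m$ each $J_{k,m}$ contains at most one element of $Y_2$, the local discrepancy $\Delta_{Y_2,N}$ is affine on each $J_{k,m}$ with slope $-1$, and its endpoint values at the dyadic rationals $k/2^m$ are signed sums of the terms $\|N/2^r\|$ --- this is exactly the combinatorial computation underlying \eqref{disc_henri} and Corollary~\ref{corbefa}. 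This furnishes a closed-form expression for $\Delta_{Y_2,N}$ on each elementary interval.

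The $L_p$-integral decomposes as $(NL_{p,N}(Y_2))^p = \sum_{k=0}^{2^m-1}\int_{J_{k,m}}|N\Delta_{Y_2,N}(x)|^p\rd x$, where each summand is the integral of $|A_k - N(x-k/2^m)|^p$ over an interval of length $1/2^m$, evaluated in closed form by elementary calculus. Summing and invoking the Tijdeman bound $S_m(N) \le \log N/(3\log 2) + O(1)$, one obtains $(NL_{p,N}(Y_2))^p \le (S_m(N)/2)^p + O((\log N)^{p-1}) \le (\log N/(6\log 2))^p + O((\log N)^{p-1})$. The decisive factor $1/2^p$ arises from the cancellation between endpoint values of opposite sign on adjacent elementary intervals together with the closed-form integration of linear segments raised to the $p$-th power; the $O((\log N)^{p-1})$ remainder collects the binomial cross-terms.

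For the lower bound I specialise to $N = (2^{m+1}+(-1)^m)/3$. A direct computation based on $3N = 2^{m+1} + (-1)^m$ yields $\|N/2^r\| = 1/3$ plus an explicit correction of size $O(2^{-(m+1-r)})$, and summing gives $S_m(N) = m/3 + 1/9 - (-1)^m/(9\cdot 2^m)$ exactly (easily verified in small cases, e.g.\ $m=4$, $N=21$, where $S_4(21) = 23/16$). For this specific $N$ the piecewise-linear function $N\Delta_{Y_2,N}$ attains peak absolute value equal to $S_m(N)$ and oscillates in a symmetric pattern, so either by the monotonicity $L_{p,N} \ge L_{1,N}$ followed by direct integration, or (for $p \ge 2$) via the identity \eqref{discfoL2}, one obtains $NL_{p,N}(Y_2) \ge S_m(N)/2$, which is the claim. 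The limit formula \eqref{limsupLp} then follows by combining the upper bound with this lower bound along the subsequence $N=(2^{m+1}+(-1)^m)/3$ and using $\log N = m\log 2 + O(1)$.

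\emph{Main obstacle.} The sharp constant $1/(6\log 2)$ is the principal difficulty: bounding $|\Delta_{Y_2,N}|$ by its supremum on each elementary interval and invoking \eqref{discbd_tijde} would yield only the worse constant $1/(3\log 2)$. Recovering the factor $1/2$ forces one to exploit the exact affine form of $\Delta_{Y_2,N}$ on each $J_{k,m}$; for $p=2$ Faure's identity \eqref{discfoL2} performs this averaging automatically via the square, but for general $p$ one must carry out the pointwise analysis and control the $L^p$-norm of a linear segment by hand, which is the main technical hurdle.
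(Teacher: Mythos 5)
Your overall plan — reduce to the Faure--B\'ejian sum $S_m(N)=\sum_{r=1}^m\|N/2^r\|$, exploit the piecewise-affine structure of $\Delta_{Y_2,N}$ on the elementary intervals $J_{k,m}$, get the lower bound from monotonicity $L_{p,N}\ge L_{1,N}$ together with the exact $L_1$-formula $NL_{1,N}(Y_2)=\tfrac12\sum_{r\ge 1}\|N/2^r\|$ (Theorem~\ref{thmCF93} with $\varphi_2^{\id}=\|\cdot\|$), and pass to the limsup along $N=(2^{m+1}+(-1)^m)/3$ — is the right skeleton. The lower-bound branch of the argument is essentially complete.

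The upper bound, however, has a genuine gap, and the heuristic you give for it is wrong. You attribute the factor $1/2^p$ to ``cancellation between endpoint values of opposite sign on adjacent elementary intervals.'' There is no such cancellation: for the classical $Y_2$ the local discrepancy is \emph{non-negative}. This follows immediately from $\psi_2^{\id,-}=0$ noted after Remark~\ref{alphabid} (so $D_N^-(Y_2)=0$), or directly from the descent lemma, where $\varphi_{2,0}^{\id}=0$ and $\varphi_{2,1}^{\id}=\|\cdot\|$ are both $\ge 0$, so the endpoint values $c_k=E(k/2^m,N,Y_2)=\sum_{j:\,\varepsilon_j(k)=1}\|N/2^j\|$ are non-negative \emph{subset} sums of the $\|N/2^j\|$, not signed sums. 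Consequently the per-interval closed-form integration $\int_{J_{k,m}}(c_k-Nx)^p\,\mathrm{d}x = \tfrac{1}{N(p+1)}\bigl[c_k^{p+1}-(c_k-N/2^m)^{p+1}\bigr]$ does not by itself produce any saving over the trivial bound $c_k\le S_m(N)$: choosing $m$ large you simply get $(NL_{p,N})^p \approx 2^{-m}\sum_k c_k^p$, and if you now estimate $c_k\le S_m(N)$ you only recover the $1/(3\log 2)$-constant. The real source of the factor $1/2$ is a concentration statement: one must show that, averaged over $k\in\{0,\dots,2^m-1\}$, the quantities $c_k$ concentrate near $S_m(N)/2$ strongly enough that the $p$-th moment is $(S_m(N)/2)^p+O((\log N)^{p-1})$. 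This requires controlling the joint behaviour of the bits $\varepsilon_j(k)$ (each index $j$ is "on" for roughly half the $k$'s, with suitable near-independence), and that combinatorial averaging is precisely the missing step in your argument; it cannot be replaced by pointwise integration of linear segments, which is what you flag as the obstacle. In other words, you have identified that $1/2$ is the hard part, but not where it actually comes from.

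A small additional slip: your numerical check attributes $N=21$ to $m=4$, but $N=\tfrac{2^{m+1}}{3}(1-(-\tfrac12)^{m+1})$ gives $N=11$ for $m=4$ and $N=21$ only for $m=5$; you happen to get the same value $S_4=23/16$ because $21\equiv -11\pmod{32}$, but the labelling is wrong and, more importantly, $21>2^4$, so the identity $ND_N^*=S_m(N)+N/2^m$ does not even apply with that pair of parameters.
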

A proof of these results can be found in \cite{pil04}. Equation
\eqref{limsupLp} for $1 \le p \le 2$ can also be found in
\cite{chafa,proat} and for $p=2$ in \cite{hab1966}.

A comparison of Theorem~\ref{discLpb2} with \eqref{lowproinov}
shows that the $L_p$-discrepancy of the van der Corput sequence in
base 2 is not of optimal order of magnitude in $N$. This problem
can be overcome with a symmetrization technique which will be
explained in Section~\ref{sec_sym}.

On average the $L_p$-discrepancy of the van der Corput sequence is
large but behaves very regularly, as can be seen from the following
theorem, which is \cite[Theorem~3]{DLP05}.

\begin{theorem}[Drmota, Larcher, and Pillichshammer]
For every $p\ge 1$, for every real $y$, and for $M \rightarrow
\infty$ we have
$$
\frac 1M \# \left\{ N < M : N L_{p,N}(Y_2) \le \frac{\log N}{8
\log 2} + \frac{y}{8} \sqrt{\frac{\log N}{3 \log 2}} \right\} =
\Phi(y) + o(1),
$$
that is, the $L_p$-discrepancy satisfies a central limit theorem.
\end{theorem}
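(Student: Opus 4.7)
The plan is to follow the strategy used for the star discrepancy in Theorem~\ref{thD1}. I would first reduce to the case $M=2^m$: identify $N\in\{0,1,\ldots,2^m-1\}$ with $\omega:=N/2^m$ on the dyadic grid in $[0,1)$ endowed with the uniform probability measure, and later recover the general $M$ by the moment-matching device used in the sketch after~\eqref{eqlimitrelation}.

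The core of the proof is the one-term asymptotic
\[
NL_{p,N}(Y_2)\;=\;\tfrac{1}{2}\,S_m+O(1),\qquad S_m:=\sum_{r=1}^{m}\bigl\|\omega\,2^{r}\bigr\|,
\]
valid uniformly in $N$ outside an exceptional set of $\omega$ whose measure tends to $0$ as $m\to\infty$. For $p=2$ this is immediate from Faure's identity~\eqref{discfoL2}: setting $T_N=\sum_{j\ge 1}\|N/2^j\|$ and $U_N=\sum_{j\ge 1}\|N/2^j\|^2$ we have $4(NL_{2,N})^2=T_N^2+U_N$ with $U_N=O(m)$, and since Theorem~\ref{thD1} (together with the computation $ND_N^{\ast}(Y_2)=T_N+O(1)$) shows that $T_N$ is of order $m$ for all but vanishingly few $\omega$, a Taylor expansion of the square root gives $NL_{2,N}=T_N/2+O(1)$.

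The main obstacle lies in extending this asymptotic to arbitrary $p\ge 1$, since no closed form of the type~\eqref{discfoL2} is available. Here I would refine the upper-bound computation behind Theorem~\ref{discLpb2} into an actual expansion
\[
(NL_{p,N}(Y_2))^{p}\;=\;\left(\frac{T_N}{2}\right)^{p}+O\bigl(T_N^{\,p-1}\bigr),
\]
by expanding the local discrepancy $\Delta_{Y_2,N}(\cdot)$ in its Haar series and isolating the dominant first-level contribution, whose integrated $p$-th power evaluates to $(T_N/2)^{p}$; the remaining terms, together with the cross-terms that appear when $p$ is not an even integer, contribute at most $O(m^{p-1})$. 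Taking $p$-th roots and using once more that $T_N\asymp m$ outside a set of negligible measure yields $NL_{p,N}(Y_2)=T_N/2+O(1)$. This step is the main technical hurdle; for $p=2$ it is essentially free, but for general $p$ it requires careful bookkeeping of the Haar cross-terms.

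With the reduction in hand, the CLT follows from the classical central limit theorem for the lacunary series $\{\omega 2^r\}$ invoked in the proof of Theorem~\ref{thD1}: $S_m$ has $\mathbf{E}[S_m]=m/4$ and $\mathbf{Var}[S_m]=m/48$, and its standardization converges to $N(0,1)$ together with all moments. Hence $S_m/2$ has mean $m/8$ and variance $m/192$, so $(NL_{p,N}(Y_2)-m/8)/\sqrt{m/192}\To N(0,1)$. Substituting $m/8=\log M/(8\log 2)$ and $\sqrt{m/192}=\tfrac{1}{8}\sqrt{\log M/(3\log 2)}$, and finally replacing $\log M$ by $\log N$ via a routine tail estimate (as $|\log N-\log M|$ is negligible for all but $o(M)$ many $N<M$), yields the stated limit relation.
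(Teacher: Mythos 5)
Your strategy — reduce to $M=2^m$, show $NL_{p,N}(Y_2)=\tfrac12 S_m+O(1)$, and invoke the lacunary CLT for $S_m$, passing to general $M$ by matching moments — is essentially the same route taken in \cite[Theorem~3]{DLP05}, and the constant bookkeeping ($m/8$ and $\sqrt{m/192}$) is correct.

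There is, however, one fact you never state that is load-bearing for the step from $p=2$ to general $p$: for the van der Corput sequence $Y_2$ the discrepancy function is one-sided, i.e.\ $\Delta_{Y_2,N}(x)\ge 0$ for every $x$ and $N$ (equivalently $D_N^{-}(Y_2)=0$, which follows from $\psi_2^{\id,-}=0$). This is what turns $\int_0^1 |N\Delta_{Y_2,N}|^p\,\rd x$ into $\int_0^1 (N\Delta_{Y_2,N})^p\,\rd x$, makes the constant Haar (level-$(-1)$) coefficient equal to the $L_1$-norm $T_N/(2N)$ from Theorem~\ref{thmCF93}, and ensures that when you write $N\Delta_{Y_2,N}(x)=T_N/2+R_N(x)$ you have $|R_N|\le T_N/2$ pointwise, so that the binomial expansion $(1+r)^p=1+pr+O_p(r^2)$ is applicable uniformly on $[-1,1]$. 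Once you have that, the error is controlled by $\int R_N^2\,\rd x = U_N/4$ together with the trivial bound $U_N\le T_N/2$, which yields $(NL_{p,N})^p=(T_N/2)^p\bigl(1+O(1/T_N)\bigr)$ and hence $NL_{p,N}=T_N/2+O(1)$ \emph{for all} $N$, not only outside a small set — so the restriction to ``typical'' $\omega$ you mention is not actually needed for this step. The Haar/Littlewood--Paley apparatus you sketch is more machinery than required; the mean $T_N/2$ and variance $U_N/4$ come directly from Theorems~\ref{thmCF93} and~\ref{thmFL2}, and the expansion around the mean finishes the job. Please also be careful with the displayed error term $O(T_N^{p-1})$: what the computation yields is $O(T_N^{p-2}U_N)$, which only becomes $O(T_N^{p-1})$ after using $U_N\le T_N/2$; that is worth making explicit.
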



\subsection{The discrepancy of the generalized van der Corput sequence}\label{secdiscgenvdC}


\paragraph{Definition of the generalized van der Corput sequence.}

This type of sequence was introduced by Faure~\cite{Fau1981} with
the aim of improving the constant term in the asymptotic behavior
of low-discrepancy sequences.

\begin{definition}\label{vdc2}\rm
Let $b \in \NN$, $b \ge 2$, and let $\Sigma=(\sigma_r)_{r\geq 0}$
be a sequence of permutations of $\{0,1,\ldots,b-1\}=:\ZZ_b$.
\begin{itemize}
\item The {\it $b$-adic radical inverse function with respect to
$\Sigma$} is defined as $Y_b^{\Sigma}: \NN_0 \rightarrow [0,1)$,
$$Y_b^{\Sigma}(n)=\frac{\sigma_0(n_0)}{b}+\frac{\sigma_1(n_1)}{b^2}+\frac{\sigma_2(n_2)}{b^3}+\cdots,$$
for $n \in \NN_0$ with $b$-adic digit expansion $n=n_0+n_1 b+n_2
b^2+\cdots$, where $n_i \in \ZZ_b$. \item The {\it generalized van
der Corput sequence in base $b$ associated with $\Sigma$} is
defined as $Y_b^{\Sigma}:=(y_n)_{n \ge 0}$ with
$y_n=Y_b^{\Sigma}(n)$.
\end{itemize}
If $\Sigma=(\sigma)_{r \ge 0}$ is constant, we simply write
$Y_b^\Sigma=Y_b^\sigma$. The original van der Corput sequence
$Y_b$ in base $b$ is obtained with the identical permutation,
i.e., $Y_b=Y_b^{\id}$.
\end{definition}

It is an easy extension of the classical case of $Y_b$ that for
every $\Sigma$ the generalized van der Corput sequence in base $b$
associated with $\Sigma$ is uniformly distributed modulo 1. See
also \cite[Propri\'et\'e 3.1.1]{Fau1981} for a short proof using a
result on $b$-additive functions.

\paragraph{Functions related to a pair $(b, \sigma)$.}
In \cite{Fau1981} and \cite{chafa} the formulas \eqref{disc_henri}
and \eqref{discfoL2} were fully generalized. To recall these
formulas we first need to introduce some definitions used in the
study of generalized van der Corput sequences. For $J \subseteq
[0,1)$ and $X=(x_n)_{n \ge 0}$ let $A(J;N;X)$ be the number of
indices $n \in \{0,1,\ldots,N-1\}$ such that $x_n \in J$.

Let $\Sy_b$ be the set of all permutations of $\ZZ_b$. For any
$\sigma \in \Sy_b$ set
$$\mathcal{Z}_b^{\sigma}=\left(\tfrac{\sigma(0)}{b},\tfrac{\sigma(1)}{b},\ldots
,\tfrac{\sigma(b-1)}{b}\right).$$ For $h \in \ZZ_b$ and $x \in
[(k-1)/b,k/b)$, where $k \in \{1,\ldots ,b\}$, define
$$\varphi_{b,h}^{\sigma}(x)=\left\{
\begin{array}{ll}
A([0,h/b);k;\mathcal{Z}_b^{\sigma})-h x & \mbox{ if } 0 \le h \le
\sigma(k-1),\\(b-h)x-A([h/b,1);k;\mathcal{Z}_b^{\sigma}) & \mbox{
if }\sigma(k-1)< h < b.
\end{array}\right.
$$
Note that $\varphi_{b,0}^{\sigma}=0$ for any $\sigma \in \Sy_b$.
Further we observe that if $b=2$, then we only have two
permutations which give either $\varphi_{2,1}^{\sigma}=\| \cdot
\|$ if $\sigma$ is the identity or $\varphi_{2,1}^{\sigma}=-\|
\cdot \|$ if $\sigma$ is the transposition $(0\,1)$.

Moreover, the functions $\varphi_{b,h}^{\sigma}$ are extended to
the reals by periodicity. Based on $\varphi_{b,h}^{\sigma}$ we
define further functions which then appear in the formulas for
different notions of discrepancy. Put $$\psi_b^{\sigma,+}=\max_{0
\le h < b}
\varphi_{b,h}^{\sigma}\;,\;\;\;\psi_b^{\sigma,-}=\max_{0 \le h <
b} (-\varphi_{b,h}^{\sigma}), \;\;
\psi_b^{\sigma}=\psi_b^{\sigma,+}+\psi_b^{\sigma,-},$$
$$\varphi_b^\sigma=\sum_{h=0}^{b-1} \varphi_{b,h}^{\sigma} \ ,\;\; \phi_b^\sigma=\sum_{h=0}^{b-1} \left (\varphi_{b,h}^{\sigma} \right )^2 \ \mbox{ and } \chi_b^\sigma=b\phi_b^\sigma-(\varphi_b^\sigma)^2.$$
Note that $\psi_b^{\sigma}=\max_{0 \le h < h' < b}
|\varphi_{b,h}^{\sigma}-\varphi_{b,h'}^{\sigma}|$ \;and
\;$\chi_b^\sigma=\sum_{0 \le h < h' < b}
(\varphi_{b,h}^{\sigma}-\varphi_{b,h'}^{\sigma})^2$.

\paragraph{Exact formulas.}
For a sequence $X=(x_n)_{n \ge 0}$ in $[0,1)$ let
$$D_N^+(X)=\sup_{0 \le t \le 1} \Delta_{X,N}(t)\ \ \ \mbox{ and }\ \ \ D_N^-(X)=\sup_{0 \le t \le 1} (-\Delta_{X,N}(t)).$$

The following theorems link, with exact formulas, the functions
defined above to the discrepancy of generalized van der Corput
sequences. Note that the infinite series in these formulas can in fact 
be represented as the sum of finite terms and geometric series, and hence 
can be computed exactly (see \cite[3.3.6 Corollaire 1]{Fau1981} for $D_N,
D_N^+, D_N^-$; analogous proofs hold for $L_{2,N}$ and $F_N$). We
first give the formulas for the extreme discrepancies obtained in
\cite[Th\'eor\`eme 1]{Fau1981}:

\begin{theorem}[Faure]\label{thmF81D}
For any $N \in \NN$ we have
$$
ND^+_N(Y_b^{\Sigma})=\sum_{j=1}^\infty \psi_b^{\sigma_{j-1},+}
\left ( \frac{N}{b^j} \right ),
$$
$$
ND^-_N(Y_b^{\Sigma})=\sum_{j=1}^\infty
\psi_b^{\sigma_{j-1},-}\left ( \frac{N}{ b^j} \right ),
$$
$$
ND_N(Y_b^{\Sigma})=\sum_{j=1}^\infty \psi_b^{\sigma_{j-1}} \left (
\frac{N}{b^j} \right ) \mbox{ and }
$$
$$
D_N^{\ast}(Y_b^{\Sigma})=\max(D^+_N(Y_b^{\Sigma}),D^-_N(Y_b^{\Sigma})).
$$
\end{theorem}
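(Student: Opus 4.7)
My plan is to derive an exact closed-form expression for the local discrepancy $N\Delta_{Y_b^{\Sigma},N}(\alpha)$ at every $b$-adic rational $\alpha$, and then read off the four claimed identities by taking appropriate suprema. Since $\alpha \mapsto N\Delta_{Y_b^{\Sigma},N}(\alpha)$ is piecewise affine with slope $-N$ between the jump points $y_0,\ldots,y_{N-1}$, all of which are $b$-adic rationals, the suprema defining $D_N^+$, $D_N^-$, $D_N$ and $D_N^\ast$ are attained as one-sided limits at values of the form $\alpha = h/b^m$ with $h \in \{0,1,\ldots,b^m\}$, so it suffices to evaluate the local discrepancy at such $\alpha$.

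Writing $\alpha = \sum_{j=1}^{m} a_j/b^j$ with $a_j \in \ZZ_b$, the event $y_n < \alpha$ is a lexicographic comparison of the digit strings $\sigma_0(n_0)\sigma_1(n_1)\cdots$ and $a_1 a_2 \cdots a_m$. It decomposes, according to the first index $j$ at which equality fails, into the disjoint union of the events
$$\sigma_i(n_i) = a_{i+1}\ (0 \le i \le j-2), \qquad \sigma_{j-1}(n_{j-1}) < a_j.$$
Each such event fixes $n_0,\ldots,n_{j-2}$ and forces $n_{j-1}$ into a set of cardinality $a_j$, so it amounts to $n$ lying in a union of $a_j$ congruence classes modulo $b^j$. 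Counting these classes within $\{0,\ldots,N-1\}$ and subtracting $a_j N / b^j$ produces, after identification with the piecewise definition of $\varphi_{b,a_j}^{\sigma_{j-1}}$ extended by periodicity, the identity
$$N\Delta_{Y_b^{\Sigma},N}(\alpha) = \sum_{j=1}^{m} \varphi_{b,a_j}^{\sigma_{j-1}}\!\left(\frac{N}{b^j}\right).$$

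Because the digits $a_j$ may be chosen independently in $\ZZ_b$ as $\alpha$ ranges over $[0,1]$, and because the convention $\varphi_{b,0}^{\sigma} \equiv 0$ lets us extend any such finite sum to an infinite one at no cost, the supremum distributes term by term:
$$ND_N^+(Y_b^{\Sigma}) = \sum_{j=1}^{\infty} \max_{a_j \in \ZZ_b} \varphi_{b,a_j}^{\sigma_{j-1}}\!\left(\frac{N}{b^j}\right) = \sum_{j=1}^{\infty} \psi_b^{\sigma_{j-1},+}\!\left(\frac{N}{b^j}\right).$$
The formula for $ND_N^-$ follows by the same argument applied to $-\varphi$; for $ND_N$ one uses $D_N = \sup\Delta - \inf\Delta = D_N^+ + D_N^-$ together with $\psi_b^{\sigma} = \psi_b^{\sigma,+} + \psi_b^{\sigma,-}$; and $D_N^{\ast} = \max(D_N^+, D_N^-)$ is immediate from $D_N^{\ast}(X) = \sup_t|\Delta_{X,N}(t)|$.

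The main obstacle is the verification that the per-level correction in the closed-form expression above really equals $\varphi_{b,a_j}^{\sigma_{j-1}}(N/b^j)$. The two-case structure in the definition of $\varphi_{b,h}^{\sigma}$---governed by whether $h \le \sigma(k-1)$ or $h > \sigma(k-1)$, with $k-1$ the leading base-$b$ digit of $N \bmod b^j$---precisely mirrors a dichotomy in counting how many of the $a_j$ distinguished residues modulo $b^j$ fall below $N \bmod b^j$. Matching the two formulations in both cases, while handling the periodic extension of $\varphi$ and the one-sided limits at the $b$-adic jump points, is where the book-keeping is most delicate.
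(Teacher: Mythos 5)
Your plan to express $N\Delta_{Y_b^\Sigma,N}(\alpha)$ as a sum of per-level corrections and then pass to the supremum is in the right spirit---the paper's own proof (Section~\ref{sec_Faure_meth}) goes through a descent lemma, Lemma~\ref{DesLem}, that produces a decomposition of exactly this shape. However, the closed form you assert for that decomposition is wrong, and the error is structural, not a matter of bookkeeping.

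You claim that if $\alpha = \sum_{j\le m} a_j b^{-j}$ then
\[
N\Delta_{Y_b^\Sigma,N}(\alpha) = \sum_{j=1}^{m} \varphi_{b,a_j}^{\sigma_{j-1}}\!\left(\frac{N}{b^j}\right),
\]
with the $a_j$ being precisely the $b$-adic digits of $\alpha$. This already fails for $b=2$, $\Sigma\equiv\id$, $N=1$, $\alpha=3/4$ (so $a_1=a_2=1$): the true value is $N\Delta_{Y_2,1}(3/4)=1-3/4=1/4$, whereas your right-hand side gives $\varphi_{2,1}^{\id}(1/2)+\varphi_{2,1}^{\id}(1/4)=\|1/2\|+\|1/4\|=3/4$. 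The reason is visible in your own counting argument: the level-$j$ event confines $n\pmod{b^j}$ to $a_j$ residues, but \emph{which} residues they are is dictated by the earlier digits $a_1,\ldots,a_{j-1}$ (they fix $n_0,\ldots,n_{j-2}$). Whether each such residue is $<N\bmod b^j$ therefore depends on the lower digits of $\alpha$, so the count-minus-expectation at level $j$ is not a function of $a_j$ and $\{N/b^j\}$ alone. In the paper's Lemma~\ref{DesLem} the arguments inside the $\varphi$'s are not the digits $a_j$ but auxiliary indices $\varepsilon_j$ defined by a descent recursion in which $\varepsilon_j$ depends on $\varepsilon_{j+1}$ through the right derivative of $\varphi_{b,\varepsilon_{j+1}}^{\sigma_j}$; in the example above the descent returns $\varepsilon_1=0$, $\varepsilon_2=1$, giving $0+1/4=1/4$ as it should.

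This gap also invalidates your final step, where the supremum is distributed term by term on the grounds that the $a_j$ range independently. Once the per-level indices $\varepsilon_j$ are coupled, it is a genuine claim---not an observation---that the maximizers $h\mapsto\psi_b^{\sigma_{j-1},+}(N/b^j)$ can be realized simultaneously. The paper establishes this by running the descent recursion in reverse: given the maximizing choice at each level, one constructs a $\widetilde\lambda\in\{0,\ldots,b^m-1\}$ whose descent yields exactly those $\varepsilon_j$. That reverse construction, together with the careful control of intervals inside the descent itself, is where the real work lies, and it is precisely the content your argument omits.
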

Next, we state the formulas for $L_{2,N}, F_N$, and $L_{1,N}$ obtained
in \cite[Th\'eor\`emes 4.1, 4.2, 4.3]{chafa}, but in a fixed base
$b$ instead of in variable bases as in \cite{chafa}:

\begin{theorem}[Chaix and Faure]\label{thmCF93}
For any $N \in \NN$, we have
$$
(N L_{2,N}(Y_b^{\Sigma}))^2=\\
\frac{1}{b}\sum_{j=1}^\infty \phi_b^{\sigma_{j-1}} \left
(\frac{N}{b^j} \right )+ \frac{1}{b^2} \sum_{i \neq j}
\varphi_b^{\sigma_{i-1}}\left (\frac{N}{b^i} \right )
\varphi_b^{\sigma_{j-1}}\left (\frac{N}{b^j} \right ),
$$
$$
(NF_N(Y_b^{\Sigma}))^2=\frac{4 \pi^2}{b^2}\sum_{j=1}^\infty
\chi_b^{\sigma_{j-1}} \left ( \frac{N}{b^j} \right ) \ \ \ \mbox{
and }
$$
$$
N L_{1,N}(Y_b)=\frac{1}{b}\sum_{j=1}^\infty \varphi_b^{\id} \left
(\frac{N}{b^j} \right ).
$$
\end{theorem}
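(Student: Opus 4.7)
The plan is to derive all three formulas from the same underlying mechanism: a level decomposition of the counting function $A([0,\alpha); N; Y_b^\Sigma)$ according to the $b$-adic digits of both $\alpha$ and $N$, followed by integration exploiting the fact that the digits $a_j(\alpha)$ of a uniformly distributed $\alpha \in [0,1)$ are independent and uniform on $\ZZ_b$. Writing $[0,\alpha) = \bigsqcup_{j \ge 1} [\alpha_{j-1}, \alpha_{j-1} + a_j(\alpha)\, b^{-j})$ with $\alpha_{j-1}=\sum_{\ell<j}a_\ell(\alpha)b^{-\ell}$, the count on each slice depends in a controlled way on $\sigma_{j-1}$ applied to the $(j-1)$st digit of $n$ and on the base-$b$ remainder of $N$ modulo $b^j$; the functions $\varphi_{b,h}^\sigma$ and $\phi_b^\sigma$ are designed to package precisely these quantities.

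For the $L_{2,N}$ formula I would square the level decomposition of $N\Delta_{Y_b^\Sigma,N}(\alpha)$ and integrate over $\alpha$. The key technical computation is that the conditional integral of the $j$-th level-slice contribution, given the value $a_j(\alpha)=h$, equals $\frac{1}{b}\varphi_{b,h}^{\sigma_{j-1}}(N/b^j)$; this is the same bookkeeping that underlies Theorem~\ref{thmF81D}. From this, digit independence gives off-diagonal terms $\frac{1}{b^2}\varphi_b^{\sigma_{i-1}}(N/b^i)\varphi_b^{\sigma_{j-1}}(N/b^j)$ for $i\neq j$, and the diagonal terms assemble into $\frac{1}{b}\phi_b^{\sigma_{j-1}}(N/b^j)=\frac{1}{b}\sum_h(\varphi_{b,h}^{\sigma_{j-1}}(N/b^j))^2$, as in \cite[Th\'eor\`eme~4.1]{chafa}. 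The main obstacle here is checking that when $b^j \nmid N$, the boundary corrections arising in each slice (which depend simultaneously on the digits of $\alpha$ and of $N$) rearrange so that the diagonal and off-diagonal integrals produce $\phi_b^{\sigma_{j-1}}$ and $\varphi_b^{\sigma_{i-1}}\varphi_b^{\sigma_{j-1}}$ with no residual terms.

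For the diaphony $F_N$ I would apply Koksma's identity~\eqref{FoKoks}, so that it suffices to compute $\sum_{n=0}^{N-1}(1/2-y_n)$ for $y_n=Y_b^\Sigma(n)$. Expanding $y_n=\sum_{j\ge 1}\sigma_{j-1}(n_{j-1})/b^j$, swapping the order of summation, and applying the same level-$j$ counting argument (now to $\sum_{n<N}\sigma_{j-1}(n_{j-1})$) yields $\sum_n(1/2-y_n)=\frac{1}{b}\sum_{j\ge 1}\varphi_b^{\sigma_{j-1}}(N/b^j)$. Substituting this and the expression for $(NL_{2,N})^2$ into \eqref{FoKoks}, the identity $\chi_b^\sigma=b\phi_b^\sigma-(\varphi_b^\sigma)^2$ collapses the cross-terms and leaves precisely $\frac{4\pi^2}{b^2}\sum_j\chi_b^{\sigma_{j-1}}(N/b^j)$.

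For the $L_{1,N}$ formula, a short check using the two-case definition of $\varphi_{b,h}^\sigma$ with $\sigma=\id$ shows that $\varphi_{b,h}^{\id}(x)\ge 0$ for every $h\in\ZZ_b$ and every $x\in[0,1)$: in the first case $\varphi=h(1-x)$ and in the second $\varphi=(b-h)x$, both non-negative. Propagating this through the level decomposition gives $\Delta_{Y_b,N}(\alpha)\ge 0$ on $[0,1)$, so the absolute value in the definition of $L_{1,N}$ disappears and $NL_{1,N}(Y_b)=\int_0^1 N\Delta_{Y_b,N}(\alpha)\,d\alpha=\frac{1}{b}\sum_j\varphi_b^{\id}(N/b^j)$ by exactly the conditional-integration step already used in the $L_{2,N}$ analysis.
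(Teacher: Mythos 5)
The reduction of $F_N$ to $L_{2,N}$ via Koksma's identity is fine, and the $L_{1,N}$-argument based on $\varphi_{b,h}^{\id}\ge 0$ is essentially correct. However, your argument for the $L_{2,N}$-formula has a genuine gap. The level decomposition $N\Delta_{Y_b^\Sigma,N}(\alpha)=\sum_{j\ge 1}c_j(\alpha)$ by the digits $a_j(\alpha)$ of $\alpha$, with $c_j(\alpha)=A([\alpha_{j-1},\alpha_j);N;Y_b^\Sigma)-Na_j(\alpha)b^{-j}$, is not the decomposition used by Chaix and Faure, and the slices $c_j$ are neither independent across $j$ nor deterministic given $a_j$. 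The first-moment identity $E[c_j\mid a_j=h]=\varphi_{b,h}^{\sigma_{j-1}}(N/b^j)$ is correct and suffices for $L_{1,N}$, but it does not extend to second moments: given $a_j=h$, the value of $c_j$ still depends on $a_1,\ldots,a_{j-1}$. For $b=2$, $\Sigma$ constant equal to $\id$, $N=1$, $j=2$, one has $c_2=3/4$ when $(a_1,a_2)=(0,1)$ and $c_2=-1/4$ when $(a_1,a_2)=(1,1)$, so $E[c_2^2]=5/32$ whereas $\frac{1}{b}\phi_b^{\sigma_{j-1}}(N/b^j)=\frac{1}{2}\|1/4\|^2=1/32$. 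Likewise $E[c_1c_2]=-1/32$, while the claimed product $\frac{1}{b^2}\varphi_b^{\id}(1/2)\,\varphi_b^{\id}(1/4)$ equals $+1/32$. The totals still agree (both decompositions sum to $(NL_{2,N})^2=1/3$), which means your diagonal and off-diagonal blocks carry non-vanishing residuals that cancel against each other; the ``main obstacle'' you flag is therefore not a technicality to be checked --- the term-by-term identification simply fails.

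The decomposition that actually underlies Theorem~\ref{thmCF93} is the descent Lemma~\ref{DesLem}: $E(\lambda/b^m,N,Y_b^\Sigma)=\sum_{j=1}^m\varphi_{b,\varepsilon_j}^{\sigma_{j-1}}(N/b^j)$, where the $\varepsilon_j$ are produced by a backward recursion intertwining the digits of $\lambda$ with right derivatives of $\varphi$-functions; they are \emph{not} the digits of $\lambda$, let alone of $\alpha$. Squaring, averaging over $\lambda\in\{0,\ldots,b^m-1\}$, and recognizing the result as $\frac{1}{b}\sum_j\phi_b^{\sigma_{j-1}}+\frac{1}{b^2}\sum_{i\ne j}\varphi_b^{\sigma_{i-1}}\varphi_b^{\sigma_{j-1}}$ hinges on knowing the joint distribution of $(\varepsilon_1,\ldots,\varepsilon_m)$ as $\lambda$ varies. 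That is exactly the content of the auxiliary Lemme~5.3 of \cite{chafa}, which the paper singles out as the extra ingredient making the $L_{2,N}$- and $F_N$-formulas harder than the extreme-discrepancy formulas of Theorem~\ref{thmF81D}; this structural step is what your proposal is missing.
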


\paragraph{Asymptotic behavior for the extreme and star discrepancies.}
Concerning the asymptotic behavior of the extreme discrepancy
$D_N$, the following generalization of \eqref{discbd_tijde} to
arbitrary bases with fixed permutations was  proved in
\cite[Th\'{e}or\`{e}me 2]{Fau1981}:
\begin{theorem}[Faure]\label{asymptD}
For $\sigma \in \Sy_b$ and for all $N \in \NN$ we have
$$D_N(Y_b^{\sigma}) \le \frac{1}{N}\left(\frac{\alpha_{b,\sigma}}{\log b} \log
N +\max\left(2,1+\frac{1}{b}+\alpha_{b,\sigma}\right)\right),$$
where $$\alpha_{b,\sigma}=\inf_{m \ge 1} \sup_{x \in
\RR}\left(\frac{1}{m} \sum_{j=1}^m
\psi_b^{\sigma}\left(\frac{x}{b^j}\right)\right).$$ Moreover,
\begin{equation}\label{Fdef_d}
d(Y_b^{\sigma}):=\limsup_{N \rightarrow \infty} \frac{N
D_N(Y_b^{\sigma})}{\log N} =\frac{\alpha_{b,\sigma}}{\log b}.
\end{equation}\
\end{theorem}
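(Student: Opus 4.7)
The plan is to reduce the bound for $D_N(Y_b^{\sigma})$ to an analysis of the explicit series furnished by Theorem~\ref{thmF81D}. Since $\Sigma=(\sigma)_{j\ge 0}$ is constant, that theorem gives
$$ND_N(Y_b^{\sigma})=\sum_{j=1}^{\infty}\psi_b^{\sigma}\!\left(\frac{N}{b^j}\right).$$
Everything that follows is a careful analysis of this series. The function $\psi_b^{\sigma}$ is continuous, $1$-periodic, piecewise linear with breakpoints at the multiples of $1/b$, uniformly bounded (in fact by $1+1/b$ on $[0,1)$), and satisfies $\psi_b^{\sigma}(0)=0$, so the infinite series converges and its long tail is already under good control.

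The main step is a subadditivity/Fekete argument. Set
$$M(m):=\sup_{x\in\RR}\sum_{j=1}^{m}\psi_b^{\sigma}\!\left(\frac{x}{b^j}\right).$$
I would first show $M(m+n)\le M(m)+M(n)$ by the substitution $y=x/b^m$, which maps the tail block $\sum_{j=m+1}^{m+n}\psi_b^{\sigma}(x/b^j)=\sum_{i=1}^{n}\psi_b^{\sigma}(y/b^i)$ and is bounded by $M(n)$ after taking suprema. Fekete's lemma then gives $M(m)/m\to\inf_{m\ge 1}M(m)/m=\alpha_{b,\sigma}$. Choosing an integer $m_0$ at which $M(m_0)/m_0$ attains (or essentially attains) this infimum, and writing $m=qm_0+r$, subadditivity yields $M(m)\le qM(m_0)+M(r)\le m\alpha_{b,\sigma}+M(r)$, with $M(r)$ uniformly bounded for $0\le r<m_0$. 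This is precisely the bound $M(m)\le m\alpha_{b,\sigma}+O_b(1)$ needed for the leading term.

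Next, I would pick $m:=\lfloor\log N/\log b\rfloor+1$, so that $b^{m-1}\le N<b^m$, and split the series from Theorem~\ref{thmF81D} as $\sum_{j=1}^{m}+\sum_{j>m}$. The head is bounded by $M(m)\le m\alpha_{b,\sigma}+O_b(1)\le(\alpha_{b,\sigma}/\log b)\log N+\alpha_{b,\sigma}+O_b(1)$. For the tail, each argument $N/b^j$ lies in $[0,1/b)$ for $j>m$; using piecewise linearity of $\psi_b^{\sigma}$ near $0$ one shows that the geometrically shrinking arguments contribute at most a constant depending only on $b$ and $\sigma$, and careful bookkeeping converts this into the explicit constant $\max(2,1+1/b+\alpha_{b,\sigma})$ announced in the statement. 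This proves the displayed bound and the inequality $d(Y_b^{\sigma})\le\alpha_{b,\sigma}/\log b$.

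Finally, for the matching lower bound in \eqref{Fdef_d}, I would exploit that $x\mapsto\sum_{j=1}^{m}\psi_b^{\sigma}(x/b^j)$ is piecewise linear with integer breakpoints and $b^m$-periodic, so its supremum is attained at an integer $N_m\in\{0,1,\ldots,b^m-1\}$. By Fekete again, $M(m)/m\to\alpha_{b,\sigma}$, so along the subsequence $N_m$ one has $N_m D_{N_m}(Y_b^{\sigma})\ge M(m)-O_b(1)=m\alpha_{b,\sigma}+o(m)$, and since $\log N_m\le m\log b$, this gives $\limsup_{N\to\infty}ND_N(Y_b^{\sigma})/\log N\ge\alpha_{b,\sigma}/\log b$. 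The main obstacle I expect is obtaining a \emph{uniform} constant $O_b(1)$ rather than a term of the form $\alpha_{b,\sigma}+\varepsilon$ in front of $\log N$; this is delicate because Fekete only gives convergence of $M(m)/m$, and extracting an $O(1)$ error requires pinning down a concrete $m_0$ (or an effective rate) using the finite combinatorial structure of the piecewise-linear function $\psi_b^{\sigma}$.
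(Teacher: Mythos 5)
Your overall strategy—start from the exact series formula of Theorem~\ref{thmF81D} for the constant sequence $\Sigma=(\sigma)_{r\ge 0}$, bound the head $\sum_{j\le m}$ by $M(m)=\sup_x\sum_{j=1}^m\psi_b^\sigma(x/b^j)$, and control the tail using the linear growth of $\psi_b^\sigma$ near $0$—is the right one, and the subadditivity of $M$ via the substitution $y=x/b^m$ together with Fekete's lemma is indeed how one establishes that $M(m)/m$ tends to $\alpha_{b,\sigma}$. Your treatment of the lower bound in \eqref{Fdef_d} (extremizing over integers $N_m$ and letting $m\to\infty$) is also sound. So the $\limsup$ statement is correctly handled by your argument.

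The genuine gap is in the passage from Fekete to the explicit constant, and you have flagged it yourself, but the displayed inequality $qM(m_0)+M(r)\le m\alpha_{b,\sigma}+M(r)$ is actually wrong as written unless the infimum is exactly attained at $m_0$. Recall that for a subadditive sequence one always has $M(m)\ge m\alpha_{b,\sigma}$, so $M(m_0)/m_0\ge\alpha_{b,\sigma}$; writing $M(m_0)/m_0=\alpha_{b,\sigma}+\varepsilon_0$ with $\varepsilon_0\ge 0$ gives $qM(m_0)\le m(\alpha_{b,\sigma}+\varepsilon_0)$, an error of order $\varepsilon_0\,m$, not $O(1)$. Fekete only yields $M(m)=m\alpha_{b,\sigma}(1+o(1))$ and does not preclude $M(m)-m\alpha_{b,\sigma}$ growing sublinearly. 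In fact, in the concrete example $b=2$, $\sigma=\id$, one has $\alpha_{2,\id}=1/3$ while $M(1)/1=1/2$, $M(2)/2=M(3)/3=3/8$, $M(4)/4=23/64$, all strictly above $1/3$; the infimum is approached but there is no visible finite $m_0$ where it is attained, so your argument as stated would not deliver the constant $\max(2,\,1+1/b+\alpha_{b,\sigma})$. The explicit constant in the theorem comes from a finer analysis in \cite{Fau1981} that uses the precise piecewise-linear structure of $\psi_b^\sigma$ (and the special form of the tail $\sum_{j>m}\psi_b^\sigma(N/b^j)$ with $N<b^m$) rather than the abstract Fekete machinery; for the $\limsup$ alone, the $\varepsilon$-loss is harmless, but for the uniform bound it is not.
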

\begin{remark}\label{alphabid}\rm
In particular, see \cite[Th\'{e}or\`{e}me 6]{Fau1981}, for
$\sigma=\id$ we have (the best constant being obtained for $b=3$
with $\alpha_{3,\id}=1/2 \log 3$)
\begin{equation}\label{defalphabid}
\alpha_{b,\id}=\left\{
\begin{array}{ll}\displaystyle
\frac{b^2}{4(b+1)} & \mbox{ if $b$ is even,}\\\displaystyle
\frac{b-1}{4} & \mbox{ if $b$ is odd.}
\end{array}\right.
\end{equation}
It is worth noting that $\psi_b^{\id,-}=0$, so that
$\psi_b^{\id}=\psi_b^{\id,+}$ and hence $D_N(Y_b)=D_N^*(Y_b)$ for
the van der Corput sequence $Y_b$. Equation (\ref{defalphabid})
shows that $D_N(Y_b)$ increases and tends to infinity with $b$.
The question of the existence of a permutation $\sigma_b$ such
that $d(Y_b^{\sigma_b})$ is bounded (by an absolute constant)
can be answered positively, see Section \ref{cantor}.
\end{remark}

Theorem \ref{asymptD} is the starting point for research on
best possible upper bounds for the extreme discrepancy $D_N$, see
the end of the present Section \ref{secdiscgenvdC}.
In the next theorem, we give an analogous result for the star
discrepancy $D_N^*$ in a slightly different form than the original
version in \cite[Th\'{e}or\`{e}me 3]{Fau1981}:
\begin{theorem}[Faure]\label{asymptD*}
Let $\tau_b \in \Sy_b$ be the permutation defined by
$\tau_b(k)=b-k-1$ for all $k \in \ZZ_b$ and let $\mathcal{A}$ be
the subset of $\NN_0$ defined by $\mathcal{A}=\bigcup_{H=1}^\infty
\mathcal{A}_H$ with $\mathcal{A}_H=\{H(H-1),\ldots,H^2-1\}$.
For any permutation $\sigma \in \Sy_b$, let
$\overline{\sigma}:=\tau_b \circ \sigma$ and let
$$\Sigma^\sigma_\mathcal{A}=(\sigma_r)_{r \ge 0} := (\sigma,\overline{\sigma},\sigma,\sigma,\overline{\sigma},
\overline{\sigma},\sigma,\sigma,\sigma,\overline{\sigma},\overline{\sigma},\overline{\sigma},\ldots)$$
be the sequence of permutations defined by $\sigma_r=\sigma$ if $r
\in \mathcal{A}$ and $\sigma_r=\overline{\sigma}$ if $r \notin
\mathcal{A}$. Then
\begin{equation}\label{Fdef_d*}
d^*(Y_b^{\Sigma^\sigma_\mathcal{A}}):=\limsup_{N \rightarrow
\infty} \frac{N D_N^*(Y_b^{\Sigma^\sigma_\mathcal{A}})}{\log
N}=\frac{\alpha_{b,\sigma}^{+}+\alpha_{b,\sigma}^{-}}{2 \log b},
\end{equation}
where
$$\alpha_{b,\sigma}^{+}=\inf_{m\geq 1} \frac{1}{m}\sup_{x\in [0,1]} \sum_{j=1}^m
\psi_b^{\sigma,+}\left (\frac{x}{b^j}\right ) \quad \mbox{ and }
\quad \alpha_{b,\sigma}^{-}=\inf_{m\geq 1} \frac{1}{m} \sup_{x\in
[0,1]} \sum_{j=1}^m \psi_b^{\sigma,-}\left (\frac{x}{b^j}\right ).
$$
\end{theorem}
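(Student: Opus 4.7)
The plan is to combine the explicit formulas of Theorem \ref{thmF81D} with a symmetry relating $\sigma$ and $\overline{\sigma} = \tau_b \circ \sigma$, and then to exploit the growing, balanced block structure of $\mathcal{A}$ to force $D_N^+$ and $D_N^-$ to coincide asymptotically.

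First I would establish the key swap identities
$$
\psi_b^{\overline{\sigma},+} = \psi_b^{\sigma,-} \quad \text{and} \quad \psi_b^{\overline{\sigma},-} = \psi_b^{\sigma,+}.
$$
This follows by direct computation from the definitions: because $\tau_b(k) = b-k-1$ reflects the set $\mathcal{Z}_b^\sigma$ inside $[0,1)$, the functions $\varphi_{b,h}^{\overline{\sigma}}$ are, after the involution $h \mapsto b-h$ on the parameter, the pointwise negatives of $\varphi_{b,h}^\sigma$, which exchanges the upper and lower envelopes. Combining this with Theorem \ref{thmF81D} and the fact that $\sigma_{j-1} = \sigma$ iff $j-1 \in \mathcal{A}$ gives
$$
ND_N^+\bigl(Y_b^{\Sigma^\sigma_\mathcal{A}}\bigr) = \sum_{j-1 \in \mathcal{A}} \psi_b^{\sigma,+}\!\left(\tfrac{N}{b^j}\right) + \sum_{j-1 \notin \mathcal{A}} \psi_b^{\sigma,-}\!\left(\tfrac{N}{b^j}\right),
$$
and the analogous formula for $ND_N^-$ with $\psi_b^{\sigma,+}$ and $\psi_b^{\sigma,-}$ interchanged.

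For the upper bound on $\limsup ND_N^*/\log N$, I would use that the blocks $\mathcal{A}_H$ and the complementary blocks between consecutive $\mathcal{A}_H$'s both have size $H$. Fix $\varepsilon > 0$; the infima defining $\alpha_{b,\sigma}^\pm$ provide an integer $m_0$ with $\sup_{x}\frac{1}{m_0}\sum_{j=1}^{m_0}\psi_b^{\sigma,\pm}(x/b^j) \le \alpha_{b,\sigma}^\pm + \varepsilon$. In each block of size $H \ge m_0$ (all but finitely many), subdivide the $H$ consecutive values of $j$ into $\lfloor H/m_0\rfloor$ chunks of length $m_0$ plus a short remainder, apply the uniform bound chunk by chunk with $x$ the fractional part of $N/b^{j_0}$ at the chunk's start, and absorb the remainder into an $O(1)$ error. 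Summing over the $\Theta(\sqrt{M})$ blocks meeting $\{1,\ldots,M\}$ with $M = \lfloor \log_b N\rfloor$, and noting that $\mathcal{A}$ and its complement each account for $M/2 + O(\sqrt{M})$ indices, yields
$$
ND_N^\pm \le \tfrac{M}{2}\bigl(\alpha_{b,\sigma}^+ + \alpha_{b,\sigma}^-\bigr) + O(\varepsilon M) + O(\sqrt{M}),
$$
so that dividing by $\log N$ and letting $\varepsilon \downarrow 0$ produces the upper bound $(\alpha_{b,\sigma}^+ + \alpha_{b,\sigma}^-)/(2\log b)$ for the limsup of $ND_N^*/\log N$.

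The matching lower bound will be the main obstacle. The aim is to exhibit a subsequence $(N_k)$ along which both $ND_{N_k}^+$ and $ND_{N_k}^-$ achieve $\tfrac{M_k}{2}(\alpha_{b,\sigma}^+ + \alpha_{b,\sigma}^-) - o(M_k)$ with $M_k = \lfloor \log_b N_k\rfloor$. For each large $m$, fix near-maximisers $x_m^\pm$ of $\frac{1}{m}\sum_{j=1}^m \psi_b^{\sigma,\pm}(x/b^j)$; then choose a very large $H$ and engineer the $b$-adic digits of $N_k$ so that $\{N_k/b^{j_0}\}$ is close to $x_m^+$ at the start of a $\sigma$-block $\mathcal{A}_H$, and so that $\{N_k/b^{j_0'}\}$ is close to $x_m^-$ at the start of the following complement block. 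The delicate point is that $\{N/b^{j+1}\} = \{\{N/b^j\}/b\}$, so the two witness values are linked and cannot be prescribed independently; the way around this is to use the growing block sizes $H \to \infty$ to leave a long transition region between the two witness positions whose contribution is absorbed into the $O(\varepsilon M_k)$ error, together with periodicity of $\psi_b^{\sigma,\pm}$ that makes near-matches sufficient. Once both $ND_{N_k}^\pm$ attain the upper bound, so does $ND_{N_k}^* = \max(ND_{N_k}^+, ND_{N_k}^-)$, yielding equality in \eqref{Fdef_d*}.
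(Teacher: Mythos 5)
Your overall route---establishing the swap identities $\psi_b^{\overline\sigma,\pm}=\psi_b^{\sigma,\mp}$, rewriting $ND_N^\pm(Y_b^{\Sigma^\sigma_{\mathcal A}})$ via Theorem~\ref{thmF81D}, and then exploiting the balanced block structure of $\mathcal A$---is exactly the approach in the paper (via Lemme~4.4.1 of \cite{Fau1981}, quoted in the remark after the theorem), and your upper bound by chunking each block into sub-blocks of length $m_0$ is correct. The lower bound, however, rests on a false identity: you write $\{N/b^{j+1}\}=\{\{N/b^j\}/b\}$, but in fact $\{N/b^{j+1}\}=(n_j+\{N/b^j\})/b$, where $n_j$ is the $j$-th base-$b$ digit of $N$. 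Were your identity true, $(\{N/b^j\})_{j}$ would decay geometrically to $0$ with no freedom at all, forcing $\sum_j\psi_b^{\sigma,\pm}(N/b^j)$ to be bounded and the theorem to be false. The free digit $n_j$ at each step is precisely what lets one steer $\{N/b^j\}$ towards a given block's optimizer; the only memory of the previous block is the value $\{N/b^{j_0}\}$ at the block's start, which enters the $i$-th term of the block as a perturbation of size $O(b^{-i})$, and it is this geometric decay together with the Lipschitz continuity of $\psi_b^{\sigma,\pm}$ (not a macroscopic ``transition region'') that keeps the per-block error $O(1)$ while $H\to\infty$.

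A second problem is that you aim for a subsequence along which \emph{both} $ND^+_{N_k}$ and $ND^-_{N_k}$ attain the bound. This is unnecessary (since $D_N^*=\max(D_N^+,D_N^-)$, one suffices) and in general not achievable: the two formulas place $\psi^+$ and $\psi^-$ on complementary index sets, so the digit choices that maximize one degrade the other. Indeed $ND_N^++ND_N^-=ND_N(Y_b^\sigma)$, and by Theorem~\ref{asymptD} this is $\le(\alpha_{b,\sigma}/\log b)\log N+o(\log N)$; whenever $\alpha_{b,\sigma}<\alpha_{b,\sigma}^++\alpha_{b,\sigma}^-$, both sums cannot simultaneously reach $\tfrac{\alpha^++\alpha^-}{2\log b}\log N-o(\log N)$. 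The digit construction you actually describe is the one that makes $D_N^+$ alone large, which is all that is needed; the accompanying claim that it also makes $D_N^-$ large should be dropped.
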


\begin{remark}[On the ``swapping'' permutation $\tau_b$]\rm
Let $\mathcal{S}$ be a subset of $\NN_0$ and let $\sigma \in
\Sy_b$. Define the sequence
${\Sigma^\sigma_\mathcal{S}}=(\sigma_r)_{r\geq 0}$ by
$\sigma_r=\sigma$ if $r\in \mathcal{S}$ and
$\sigma_r=\overline{\sigma}=\tau_b\circ\sigma$ if $r \notin
\mathcal{S}$. Then, see \cite[Lemme~4.4.1]{Fau1981}, we have
\begin{eqnarray*}
N D^+_N(Y_b^{\Sigma^\sigma_\mathcal{S}}) & = & \sum_{j=1 \atop j\in \mathcal{S}}^\infty  \psi_b^{\sigma,+} 
\left (\frac{N}{b^j} \right )+\sum_{j=1 \atop j\notin \mathcal{S}}^\infty \psi_b^{\sigma,-} \left (\frac{N}{b^j} \right ) \ \mbox{ and }\\
ND^-_N(Y_b^{{\Sigma^\sigma_\mathcal{S}}}) & = & \sum_{j=1 \atop
j\in \mathcal{S}}^\infty \psi_b^{\sigma,-} \left ( \frac{N}{b^j}
\right )+\sum_{j=1 \atop j\notin \mathcal{S}}^\infty
\psi_b^{\sigma,+} \left ( \frac{N}{b^j} \right ).
\end{eqnarray*}
The permutation $\tau_b$ {\it swaps} the functions
$\psi_b^{\sigma,+}$ and $\psi_b^{\sigma,-}$ and hence, in order to
minimize $D_N^*=\max(D_N^+,D_N^-)$, one has to find a set
$\mathcal{S}$ for which the sums with $\psi_b^{\sigma,+}$ and
$\psi_b^{\sigma,-}$ asymptotically split into two equal parts.
This is achieved, for example, by the set
$\mathcal{A}=\{0,2,3,6,7,8,12,13,14,15,\ldots\}$ introduced in
Theorem \ref{asymptD*} above. Further, since
$\psi_b^{\sigma,+}+\psi_b^{\sigma,-}=\psi_b^\sigma$, for any
$\mathcal{S} \subseteq \NN_0$, we have
$$
D_N(Y_b^{\Sigma^\sigma_\mathcal{S}})=D^+_N(Y_b^{\Sigma^\sigma_\mathcal{S}})+D^-_N(Y_b^{{\Sigma^\sigma_\mathcal{S}}})=\sum_{j=1}^\infty
\left(\psi_b^{\sigma,+} \left (\frac{N}{b^j} \right )+
\psi_b^{\sigma,-} \left (\frac{N}{b^j} \right
)\right)=D_N(Y_b^\sigma).
$$
This property is the starting point for research on best
possible lower bounds for the star discrepancy for the family of
$Y_b^{\Sigma}$ sequences, and for a new generalization of this
family, see Section \ref{secdig01seq}.
\end{remark}

\begin{remark}\label{re5bejKrLaPi}\rm
For reasonably small $b$, the constants $\alpha_{b,\sigma}^{+}$
and $\alpha_{b,\sigma}^{-}$ are not difficult to compute, and for
the identical permutation (for which $\psi_b^{\id,-}=0$) it is
even possible to find them explicitly: from (\ref{defalphabid}),
for arbitrary $b$ we obtain
\begin{equation}\label{d*alter}
d^*(Y_b^{\Sigma^\id_\mathcal{A}})=\frac{\alpha_{b, \id}^{+}}{2
\log b}=\left\{
\begin{array}{ll}\displaystyle
\frac{b^2}{8 (b+1)\log b} & \mbox{ if } b \mbox{ is
even,}\\\displaystyle \frac{b-1}{8 \log b} & \mbox{ if } b \mbox{
is odd.}
\end{array}\right.
\end{equation}
For $b=2$, this result was first obtained by B\'ejian
\cite{be1978} and then rediscovered in \cite{KLP07} in the context
of shifted Hammersley and van der Corput point sets. The result in
\cite{KLP07} states that
\begin{equation}\label{stdiscestasm}
ND_N^\ast (Y_2^{\Sigma^\id_\mathcal{A}})\le \frac{\log N}{6 \log
2}+c\sqrt{\log N},
\end{equation}
for all $N\in\NN$, where $c>0$ is a constant and where the
constant $1/(6 \log 2)$ is best possible. We thus see that there
exists an explicit generalized version of $Y_2$ which reduces the
constant in \eqref{discbd_tijde} by a factor of 2, and this
reduction is best possible.
\end{remark}

It is also worth mentioning the following result from \cite{KLP07}
for general sequences $\Sigma$ and $b=2$ which reflects the
properties of $\Sigma$ in order to obtain a low star discrepancy.
For $m\in\NN$ let
$$S_m(\Sigma):=\max\left(\#\{0\le r\le m-1:\  \sigma_r=\tau_2\},\#\{0\le r\le m-1:\  \sigma_r=\mathrm{id}\}\right) $$
and
$$T_m(\Sigma):=\#\{1\le r\le m-1:\ \sigma_{r-1}=\tau_2\ \mbox{and}\ \sigma_r=\id\}.$$
The quantities $S_m (\Sigma)$ and $T_m (\Sigma)$ can be used to
give very precise discrepancy estimates for a generalized van der
Corput sequence based on $\Sigma$. The following theorem was shown
in \cite{KLP07}.
\begin{theorem}[Kritzer, Larcher, and Pillichshammer]\label{thmKLPstar}
Let $\Sigma= (\sigma_r)_{r\ge 0}$ be a sequence of permutations of
$\ZZ_2$, and let $Y_2^\Sigma$ be the corresponding generalized van
der Corput sequence. Then it is true that
$$\frac{S_m (\Sigma)}{3}+\frac{T_m (\Sigma)}{48}-4\le \max_{1\le N\le 2^m} N D_N^\ast (Y_2^\Sigma)\le
\frac{S_m (\Sigma)}{3}+\frac{2T_m (\Sigma)}{9}+\frac{56}{9}.$$
\end{theorem}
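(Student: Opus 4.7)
The plan is to start from Theorem~\ref{thmF81D} specialised to $b=2$. Since $\Sy_2=\{\id,\tau_2\}$, direct inspection of the definitions gives $\psi_2^{\id,+}(x)=\|x\|$, $\psi_2^{\id,-}(x)=0$, $\psi_2^{\tau_2,+}(x)=0$ and $\psi_2^{\tau_2,-}(x)=\|x\|$. Faure's formulas then collapse to
\[
A^+(N):=ND_N^+(Y_2^\Sigma)=\sum_{j\ge 1,\,\sigma_{j-1}=\id}\left\|\frac{N}{2^j}\right\|,\qquad A^-(N):=ND_N^-(Y_2^\Sigma)=\sum_{j\ge 1,\,\sigma_{j-1}=\tau_2}\left\|\frac{N}{2^j}\right\|,
\]
with $ND_N^*(Y_2^\Sigma)=\max(A^+(N),A^-(N))$. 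For $1\le N\le 2^m$ the tail $\sum_{j>m}\|N/2^j\|\le N/2^m\le 1$ is absorbed into the additive constants, reducing the problem to truncated sums over the first $m$ positions classified by $I^+:=\{0\le r\le m-1:\sigma_r=\id\}$ and $I^-:=\{0\le r\le m-1:\sigma_r=\tau_2\}$, so that $|I^+|+|I^-|=m$ and $S_m(\Sigma)=\max(|I^+|,|I^-|)$.

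For the upper bound, by symmetry I may assume $|I^+|=S_m(\Sigma)$, the opposite case being identical after swapping $\id\leftrightarrow\tau_2$ and noting that the number of $\id\to\tau_2$ transitions differs from $T_m(\Sigma)$ by at most one. I decompose $I^+$ into its maximal runs of consecutive integers. Each id-run except possibly the leftmost is immediately preceded by a $\tau_2\to\id$ transition counted by $T_m(\Sigma)$, so the number of id-runs is at most $T_m(\Sigma)+1$. The crucial block estimate is a Tijdeman-type bound: for any run of consecutive indices $a+1,\ldots,a+\ell$,
\[
\sum_{j=a+1}^{a+\ell}\left\|\frac{N}{2^j}\right\|\le \frac{\ell}{3}+\frac{2}{9}+O(1),
\]
proved by replaying the proof of \eqref{discbd_tijde} on the rescaled integer $N\bmod 2^{a+\ell}$, so that each run incurs an additive overhead of at most $2/9$ beyond its main term $\ell/3$. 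Summing over all runs yields $A^+(N)\le S_m(\Sigma)/3+\tfrac{2}{9}T_m(\Sigma)+C$ for an absolute $C$, and the analogous estimate for $A^-(N)$ (with one extra run possible) is weaker only by an absolute constant; combining the two and collecting all constants into $56/9$ produces the upper bound.

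For the lower bound I exhibit an $N\in[1,2^m]$ making $A^+(N)$ large. Within each maximal id-run $[a+1,a+\ell]$ the relevant binary digits of $N$ can be chosen in analogy with the Faure extremum $N^\ast=(2^{\ell+1}/3)(1-(-1/2)^{\ell+1})$ appearing in Theorem~\ref{discLpb2}, which produces a within-run sum close to $\ell/3$; the $\tau_2$-gaps insulate the runs because $\|N/2^j\|$ depends only on $N\bmod 2^j$, so the digit-choices in separate id-runs can be made essentially independently. Each $\tau_2\to\id$ transition then provides an additional degree of freedom whose optimisation contributes at least $1/48$ to $A^+(N)$; this specific constant coincides with $\int_0^1\|x\|^2\rd x-(\int_0^1\|x\|\rd x)^2=1/12-1/16=1/48$, the variance of $\|\cdot\|$ on $[0,1)$ that also drives the central limit theorem of Theorem~\ref{thD1}. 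Summing these contributions and absorbing boundary corrections of absolute size at most $4$ would deliver the lower bound. The main obstacle is the precise accounting of the two constants $2/9$ and $1/48$: both require a sharp per-run extremal analysis of the partial Tijdeman sum together with careful tracking of how digits chosen to optimise one id-run interact with the neighbouring $\tau_2$-blocks, and this combinatorial bookkeeping is the delicate heart of the proof.
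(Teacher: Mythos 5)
Your starting point is correct: specializing Theorem~\ref{thmF81D} to $b=2$ with $\psi_2^{\id,+}=\|\cdot\|$, $\psi_2^{\id,-}=0$, $\psi_2^{\tau_2,+}=0$, $\psi_2^{\tau_2,-}=\|\cdot\|$, and the tail bound $\sum_{j>m}\|N/2^j\|\le 1$ gives the formulas for $A^{\pm}(N)$, and the decomposition of $I^+$ into maximal runs with the count ``number of id-runs $\le T_m(\Sigma)+1$'' is the right organizing idea. But the two load-bearing quantitative steps are not proved. For the upper bound you assert a per-run estimate $\sum_{j=a+1}^{a+\ell}\|N/2^j\|\le \ell/3 + 2/9 + O(1)$ by ``replaying'' Tijdeman's argument, but you never carry out the replay, and the extra $O(1)$ is fatal to the bookkeeping: if every run carries its own unspecified constant, the sum of those constants scales with the number of runs, i.e.\ with $T_m(\Sigma)$, so you would obtain $S_m/3+cT_m+\mathrm{const}$ with an unidentified $c$ rather than the coefficient $2/9$. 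What is actually needed is a clean, uniform per-run inequality with the additive overhead pinned to at most $2/9$ (boundary and tail corrections tracked separately so they sum to at most $56/9$), and that is precisely the step you skip.

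The lower bound is the larger gap. The claim that each $\tau_2\to\id$ transition contributes at least $1/48$ to $A^+(N)$ has no supporting mechanism: identifying $1/48$ with $\int_0^1\|x\|^2\,\rd x-\left(\int_0^1\|x\|\,\rd x\right)^2$ is numerology, not an argument, since that variance governs the stochastic fluctuations around the mean $m/4$ in the central limit theorem of Theorem~\ref{thD1}, not a deterministic lower bound for the maximum of $A^+(N)$. The ``insulation'' claim is also not correct as stated: $\|N/2^j\|$ for $j$ in a higher run depends on $N\bmod 2^j$, which contains all the lower-order bits, including those you have already fixed to optimize the lower runs, so the per-run optimizations are coupled through carries across the $\tau_2$-gaps; a single $\tau_2$-bit between runs cannot in general absorb this coupling, and a real proof must show that the cumulative loss from it is $O(1)$ rather than proportional to the number of runs. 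Finally, the explicit constants $56/9$ and $-4$ are never derived. In short, the framework (Faure's exact formula plus a run decomposition) is the right one, but the delicate extremal and carry-tracking analysis that you yourself flag as ``the delicate heart of the proof'' is absent, so this is a plan rather than a proof.
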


Theorem \ref{thmKLPstar} implies that two properties of $\Sigma$
influence the star discrepancy of the shifted van der Corput
sequence. On the one hand, it is the number of identical
permutations $\id$ compared to the number of transpositions
$\tau_2$ in $\Sigma$, and on the other hand, it is also how these
are distributed. Obviously, it is favorable to have a situation
where, for as many $m\in \NN$ as possible, approximately half of
the permutations among the first $m$ elements of $\Sigma$ is the
identity, and simultaneously there should not be too many changes
between $\id$ and $\tau_2$. This observation is perfectly
reflected in the star discrepancy estimate for
$Y_2^{\Sigma^\id_\mathcal{A}}$ presented above in
\eqref{stdiscestasm}.

\paragraph{Asymptotic behavior of the $L_2$-discrepancy and the diaphony.}
As for Theorem \ref{asymptD}, Theorem \ref{asymptD*} is the
starting point for research on best possible upper bounds for
the star discrepancy (see the end of Section \ref{secdiscgenvdC}).
These theorems have analogous formulations, resulting from Theorem
\ref{thmCF93}, for the $L_p$-discrepancy and for the diaphony
$F_N$. We only give an excerpt of a series of results in fixed and
variable bases that can be found in \cite[Section 4.2]{chafa}:
\begin{theorem}[Chaix and Faure]\label{asymptL2_F}
For $\sigma \in \Sy_b$ and for all $N \in \NN$ we have
\begin{equation*}\label{Fdef_t}
t_1(Y_b^{\sigma}):=\limsup_{N \rightarrow \infty} \frac{N
L_{2,N}(Y_b^{\sigma})}{\log N} =\frac{\beta_{b,\sigma}}{b\log b}
\ \mbox{ with } \ \beta_{b,\sigma}=\inf_{m \ge 1} \sup_{x
\in\RR}\left(\frac{1}{m} \sum_{j=1}^m
\varphi_b^{\sigma}\left(\frac{x}{b^j}\right)\right),
\end{equation*}
and
\begin{equation*}\label{Fdef_f}
f(Y_b^{\sigma}):=\limsup_{N \rightarrow \infty} \frac{(N
F_N(Y_b^{\sigma}))^2}{\log N} =\frac{4 \pi^2
\gamma_{b,\sigma}}{b^2\log b} \ \mbox{ with } \ 
\gamma_{b,\sigma}=\inf_{m \ge 1} \sup_{x \in\RR}\left(\frac{1}{m}
\sum_{j=1}^m \chi_b^{\sigma}\left(\frac{x}{b^j}\right)\right).
\end{equation*}
\end{theorem}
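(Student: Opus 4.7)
The plan is to derive both limsup formulas from the exact representations in Theorem \ref{thmCF93}, specialised to the constant sequence $\Sigma=(\sigma)_{r\ge 0}$. Using the identity $\sum_{i\ne j} a_i a_j = \big(\sum_j a_j\big)^2 - \sum_j a_j^2$, the $L_2$-formula becomes
$$
(NL_{2,N}(Y_b^\sigma))^2 = \frac{1}{b^2}\Bigl(\sum_{j=1}^\infty \varphi_b^\sigma\bigl(\tfrac{N}{b^j}\bigr)\Bigr)^2 + \frac{1}{b}\sum_{j=1}^\infty \phi_b^\sigma\bigl(\tfrac{N}{b^j}\bigr) - \frac{1}{b^2}\sum_{j=1}^\infty \varphi_b^\sigma\bigl(\tfrac{N}{b^j}\bigr)^2,
$$
while the diaphony identity already reads $(NF_N(Y_b^\sigma))^2 = \frac{4\pi^2}{b^2}\sum_{j=1}^\infty \chi_b^\sigma(N/b^j)$. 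The first step is to observe that $\varphi_b^\sigma$, $\phi_b^\sigma$ and $\chi_b^\sigma$ are $1$-periodic, bounded, and vanish linearly at $0$ (inherited from the behaviour of each $\varphi_{b,h}^\sigma$ near the origin). Consequently, for any $N$ with $b^{m-1}\le N< b^m$, only about $m$ terms of each series contribute nontrivially and the tail is $O(1)$.

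From this, the last two terms in the $L_2$-identity are $O(\log N)$, while the first term can reach order $(\log N)^2$. Hence
$$
\limsup_{N\to\infty}\frac{NL_{2,N}(Y_b^\sigma)}{\log N} = \frac{1}{b}\,\limsup_{N\to\infty}\frac{1}{\log N}\Bigl|\sum_{j=1}^\infty \varphi_b^\sigma\bigl(\tfrac{N}{b^j}\bigr)\Bigr|,
$$
so it remains to evaluate this one-dimensional limsup. This is done exactly as in the proof of Theorem \ref{asymptD}. Setting $T_m(x):=\sum_{j=1}^m \varphi_b^\sigma(x/b^j)$, $1$-periodicity of $\varphi_b^\sigma$ gives $T_{m+n}(x) = T_n(x)+T_m(x/b^n)$, so the sequence $(m\sup_x T_m(x)/m)_m$ is subadditive; Fekete's lemma then yields $\lim_m \sup_x T_m(x)/m = \inf_m \sup_x T_m(x)/m = \beta_{b,\sigma}$. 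Matching $N$ with integer approximations of points $x$ that (nearly) realise $\sup_x T_m(x)$ — while using the linear decay of $\varphi_b^\sigma$ near $0$ to bound the tail $\sum_{j>m}$ by $O(1)$ — delivers $\limsup_N |\sum_j \varphi_b^\sigma(N/b^j)|/\log N = \beta_{b,\sigma}/\log b$, and hence $t_1(Y_b^\sigma)=\beta_{b,\sigma}/(b\log b)$. The diaphony formula is obtained by the very same Fekete/subadditivity argument applied directly to $\chi_b^\sigma$.

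The main obstacle is the limsup computation itself: unlike $\psi_b^\sigma\ge 0$ in Faure's Theorem \ref{asymptD}, the function $\varphi_b^\sigma$ can change sign for non-identity $\sigma$, so one has to handle absolute values and argue that both extremizers (positive and negative) are attained through suitable integer sequences $N$. One must also verify that the real extremizers $x$ for $\sup_x T_m(x)/m$ can be replaced by $N/b^m$ with $N\in\NN$ without losing the asymptotic. Once these technical points are settled — the routine details are given in \cite[Section~4.2]{chafa} — the two displayed equalities for $t_1(Y_b^\sigma)$ and $f(Y_b^\sigma)$ follow.
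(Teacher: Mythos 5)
Your reduction of the exact formula from Theorem~\ref{thmCF93} is correct: with $\Sigma$ constant, $\sum_{i\ne j}a_ia_j=(\sum a_j)^2-\sum a_j^2$, and the identity $b\phi_b^\sigma-(\varphi_b^\sigma)^2=\chi_b^\sigma$ shows that in fact
$$
(NL_{2,N}(Y_b^\sigma))^2=\frac{1}{b^2}\Bigl(\sum_{j\ge 1}\varphi_b^\sigma\bigl(\tfrac{N}{b^j}\bigr)\Bigr)^2+\frac{1}{b^2}\sum_{j\ge 1}\chi_b^\sigma\bigl(\tfrac{N}{b^j}\bigr),
$$
which is Koksma's formula~\eqref{FoKoks} in disguise and makes the ``$O(\log N)$ remainder'' explicit (and non-negative, since $\chi_b^\sigma\ge 0$). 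The tail estimate for $j>\lceil\log_b N\rceil$ using linear vanishing at $0$ and the telescoping identity $T_{m+n}(x)=T_n(x)+T_m(x/b^n)$ are both right (note this identity is just re-indexing and does not actually use $1$-periodicity, contrary to what you write), and Fekete's lemma then gives $\lim_m\sup_x T_m(x)/m=\inf_m\sup_x T_m(x)/m=\beta_{b,\sigma}$. So far this matches, in outline, what Chaix and Faure do.

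However, the two points you defer as ``technical'' and ``routine'' are precisely the substance of the theorem, and one of them is a genuine logical gap as you have left it. First, you need $\limsup_N|S(N)|/\log N$ with $S(N)=\sum_j\varphi_b^\sigma(N/b^j)$, but you only obtain $\inf_m\sup_x T_m(x)/m$; since $\varphi_b^\sigma$ can be negative (e.g.\ $\varphi_2^{\tau_2}=-\|\cdot\|$, and more generally $|\varphi_b^\sigma|\le\varphi_b^\id$ with equality not forced), the ``negative extremizers'' contribute $-\inf_m\inf_x T_m(x)/m$ and one must show this never exceeds $\beta_{b,\sigma}$, or else replace the definition of $\beta_{b,\sigma}$ by one involving absolute values. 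You acknowledge this but give no argument; without it the claimed equality is not established. Second, passing from $\sup_{x\in\RR}T_m(x)$ to values $S(N)$ for integer $N$ is not a soft approximation step: it uses the descent machinery (Lemma~\ref{DesLem} and its Chaix--Faure refinement \cite[Lemme~5.3]{chafa}), i.e.\ that the $\varepsilon_j$'s can be steered to realise the extremal digit pattern while controlling the intervals at each descent step. This is exactly what the paper flags as ``more involved'' when going from Theorem~\ref{thmF81D} to Theorem~\ref{thmCF93}, and the same difficulty recurs here. As it stands, your proposal is a sound skeleton that mirrors the Chaix--Faure route, but the two deferred steps are the content, not footnotes, and one of them (the sign issue) you have identified but not closed.
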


Theorem~\ref{asymptL2_F} confirms that the generalized van der
Corput sequences $Y_b^{\sigma}$ do not have optimal
$L_2$-discrepancy with respect to the order of magnitude in $N$, which would be
$O(\sqrt{\log N}/N)$.

\begin{remark}\rm
In particular, see \cite[Th\'{e}or\`{e}mes 4.12 and 4.13]{chafa},
for $\sigma=\id$ we have:
\begin{equation*}\label{betaid}
t_1(Y_b^\id)=\left\{
\begin{array}{ll}\displaystyle
\frac{b^2+b-2}{8(b+1)\log b} & \mbox{ if $b$ is
even,}\\\displaystyle \frac{b^2-1}{8b \log b} & \mbox{ if $b$ is
odd,}
\end{array}\right.
\end{equation*}
(cf. Theorem~\ref{thmFL2} for $b=2$) and
\begin{equation*}\label{gammaid}
f(Y_b^\id)=\left\{
\begin{array}{ll}\displaystyle
\frac{\pi^2(b^3+b^2+4)}{48(b+1)\log b} & \mbox{ if $b$ is
even,}\\\displaystyle \frac{\pi^2(b^4+2b^2-3)}{48 b^2 \log b} &
\mbox{ if $b$ is odd.}
\end{array}\right.
\end{equation*}
These exact values for $f(Y_b^\id)$ are about four times smaller than
the upper bound obtained by Pag\`es with the von Neumann-Kakutani
transformation, see \cite[Proposition 2.5]{page}.
\end{remark}

\medskip

We now summarize the results from \cite{KP05}, where the
$L_2$-discrepancy of $Y_2^\Sigma$ is considered. From Theorem
\ref{thmFL2}, we know that
\begin{equation}\label{eqonesixth}
N L_{2,N} (Y_2) \sim \frac{\log N}{6 \log 2}.
\end{equation}
Again, the question arises how the $L_{2}$-discrepancy of
$Y_2^\Sigma$ is influenced by the properties of $\Sigma$. Opposed
to the star discrepancy, here one may ask not only whether the
constant in the $\log N$-term can be improved, but, in view of
\eqref{lowproinov}, also whether one could possibly obtain a
better order of magnitude. To give precise answers, we proceed in
a way that resembles the results for the star discrepancy. Let
$\Sigma= (\sigma_r)_{r\ge 0}$ be a sequence of permutations of
$\ZZ_2$, and define, for $m\in\NN$,
$$Z_m (\Sigma):=\#\{0\le r\le m-1:\ \sigma_r=\mathrm{id}\}.$$
Furthermore, let $z_m (\Sigma):=Z_m (\Sigma)-\frac{m}{2}$. Using
the quantity $z_m$ the following average-type result was shown in
\cite{KP05}.
\begin{theorem}[Kritzer and Pillichshammer]\label{thmKPL2}
 Let $\Sigma= (\sigma_r)_{r\ge 0}$ be a
sequence of permutations of $\ZZ_2$ for which
$A:=\lim_{m\To\infty} \frac{(z_m(\Sigma))^2}{m}$ exists. Then the
following two statements are true:
\begin{itemize}
 \item $$\limsup_{m\To\infty} \frac{1}{2^m}\sum_{N=1}^{2^m}\frac{(N L_{2,N}(Y_2^\Sigma))^2}{m}< \frac{A}{16}+\frac{4}{3}.$$
 \item For any $\varepsilon>0$ we have
$$\lim_{m\To\infty}\frac{\#\{1\le N\le 2^m:\ NL_{2,N}(Y_2^\Sigma)< (\log N)^{1/2+\varepsilon}\}}{2^m} =1.$$
\end{itemize}
\end{theorem}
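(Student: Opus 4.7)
The plan is to apply Theorem~\ref{thmCF93} in the special case $b=2$ and then carry out a second-moment computation over $N$. Since $\Sy_2=\{\id,\tau_2\}$ and one directly checks that $\varphi_{2,0}^\sigma\equiv 0$, $\varphi_{2,1}^{\id}=\|\cdot\|$, and $\varphi_{2,1}^{\tau_2}=-\|\cdot\|$, we have $\varphi_2^{\sigma_j}=\varepsilon_j\|\cdot\|$ and $\phi_2^{\sigma_j}=\|\cdot\|^2$, where $\varepsilon_j=1$ if $\sigma_j=\id$ and $\varepsilon_j=-1$ if $\sigma_j=\tau_2$. Substituting into Theorem~\ref{thmCF93} and using the identity $\sum_{i\neq j}a_i a_j=\big(\sum_i a_i\big)^{2}-\sum_i a_i^2$ gives the key identity
$$
(NL_{2,N}(Y_2^\Sigma))^2=\frac14\sum_{j=1}^\infty\left\|\frac{N}{2^j}\right\|^2+\frac14\left(\sum_{j=1}^\infty\varepsilon_{j-1}\left\|\frac{N}{2^j}\right\|\right)^{\!2}.
$$

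For the first item, I would average over $N\in\{0,\ldots,2^m-1\}$, the crucial input being the orthogonality relation
$$
\frac{1}{2^m}\sum_{N=0}^{2^m-1}\left\|\frac{N}{2^i}\right\|\left\|\frac{N}{2^j}\right\|=\frac{1}{16}\qquad (i\ne j,\ 1\le i,j\le m).
$$
This is established by expanding the triangle wave as $\|x\|=\tfrac14-\tfrac{2}{\pi^2}\sum_{k\ge 0}\cos(2\pi(2k+1)x)/(2k+1)^2$: since $2k+1$ is odd while $(2k'+1)2^{|j-i|}$ is even for $i\ne j$, no two of the relevant frequencies coincide modulo $2^m$, and the average over the equispaced grid $\{N/2^m\}$ picks up only the constant term $1/16$. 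Combined with the explicit formula $\EE_N\|N/2^j\|^2=\frac{1}{12}+\frac{1}{3\cdot 2^{2j+1}}$ for $j\le m$ and a bounded tail from the $j>m$ regime (where $\|N/2^j\|=N/2^j$), one obtains
$$
\EE_N\!\left[\left(\sum_j\varepsilon_{j-1}\|N/2^j\|\right)^{\!2}\right]=\frac{1}{16}\left(\sum_{j=1}^m\varepsilon_{j-1}\right)^{\!2}+O(m).
$$
Since $\sum_{j=1}^m\varepsilon_{j-1}=2Z_m(\Sigma)-m=2z_m(\Sigma)$, the main term equals $z_m(\Sigma)^2/4$. Dividing by $m$, sending $m\to\infty$, and invoking $A=\lim z_m(\Sigma)^2/m$ yields $\limsup\le A/16+C$ for an absolute constant $C$; careful bookkeeping of the $O(m)$ contributions (from the diagonal sum $\tfrac14\sum\|N/2^j\|^2$, from the $-m/16$ removed when passing from $\sum_{i\ne j}$ to $(\sum)^2-\mathrm{diag}$, and from the $j>m$ tail) shows that $C\le 4/3$.

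The second item is a routine Markov-type consequence of the first. From the first item, $\frac{1}{2^m}\sum_{N=1}^{2^m}(NL_{2,N}(Y_2^\Sigma))^2\le Km$ for some constant $K=K(A,\Sigma)$ and all sufficiently large $m$. For any $\varepsilon>0$, the indices $N\in\{1,\ldots,2^m\}$ violating $NL_{2,N}(Y_2^\Sigma)<(\log N)^{1/2+\varepsilon}$ split into those with $N\le 2^{m/2}$ (a fraction of size at most $2^{-m/2}$) and those with $N>2^{m/2}$; for the latter, $\log N\ge(m/2)\log 2$, so Markov's inequality bounds their number by $Km\cdot 2^m/((m/2)\log 2)^{1+2\varepsilon}=O(2^m/m^{2\varepsilon})$. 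Both fractions tend to $0$ as $m\to\infty$, which gives the claim.

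The main obstacle is establishing the orthogonality identity $\EE_N\|N/2^i\|\|N/2^j\|=1/16$ for $i\ne j$ together with the careful accounting of the several $O(m)$ corrections so that the absolute constant in front of $m$ works out to $4/3$; everything else is either a direct substitution into the formula from Theorem~\ref{thmCF93} or a standard moment argument.
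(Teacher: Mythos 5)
Your proposal is correct, and the route you take — substitute $b=2$ into Theorem~\ref{thmCF93} to get $4(NL_{2,N}(Y_2^\Sigma))^2=\sum_j\|N/2^j\|^2+(\sum_j\varepsilon_{j-1}\|N/2^j\|)^2$, average over $N$ using the orthogonality $\frac{1}{2^m}\sum_{N<2^m}\|N/2^i\|\|N/2^j\|=\frac{1}{16}$ for $1\le i\ne j\le m$, identify $\sum_{j=1}^m\varepsilon_{j-1}=2z_m(\Sigma)$, and finish with a Markov/Chebyshev argument for the second item — is exactly the method of the source \cite{KP05}, which likewise works from the explicit $L_2$-formula in base $2$ and introduces $z_m$ for precisely this second-moment computation. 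Your bookkeeping is in fact tighter than the stated constant: the explicit calculation yields a leading constant of $5/192$ in front of the $O(m)$ term (so the stated $4/3$ is a very generous bound), and your Cauchy--Schwarz handling of the cross-terms between $j\le m$ and $j>m$ needs $|z_m|=O(\sqrt m)$, which is supplied by the hypothesis that $z_m^2/m\to A$.
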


Considering the result in Theorem \ref{thmKPL2}, one might hope
that there is a sequence $\Sigma$ such that the order of
magnitude in $N$ of $L_{2,N}(Y_2^\Sigma)$ is lower than that of
$L_{2,N}(Y_2)$. Unfortunately, these hopes are crushed by the
following result that was also shown in \cite{KP05}.
\begin{theorem}[Kritzer and Pillichshammer]
 There exists a positive constant $c$ such that for all sequences $\Sigma= (\sigma_r)_{r\ge 0}$
of permutations of $\ZZ_2$ we have
$$N L_{2,N} (Y_2^\Sigma)\ge c\log N \ \ \ \mbox{for infinitely many $N\in\NN$.} $$
\end{theorem}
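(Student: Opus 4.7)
The starting point is the Chaix--Faure identity from Theorem~\ref{thmCF93} specialized to $b=2$. Since $\varphi_2^{\id}=\|\cdot\|$, $\varphi_2^{\tau_2}=-\|\cdot\|$ and $\phi_2^\sigma=\|\cdot\|^2$ for both $\sigma\in\Sy_2$, substituting into Theorem~\ref{thmCF93} and collecting terms (the diagonal plus a completed square for the off-diagonal) produces
\[
(NL_{2,N}(Y_2^\Sigma))^2 \;=\; \frac{1}{4}\sum_{j=1}^\infty \left\|\frac{N}{2^j}\right\|^2 \;+\; \frac{1}{4}\left(\sum_{j=1}^\infty \varepsilon_{j-1}\left\|\frac{N}{2^j}\right\|\right)^2,
\]
where $\varepsilon_{j-1}=+1$ if $\sigma_{j-1}=\id$ and $\varepsilon_{j-1}=-1$ if $\sigma_{j-1}=\tau_2$. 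Denote the two bracketed quantities by $S(N)$ and $T(N,\Sigma)^2$, so that $(NL_{2,N}(Y_2^\Sigma))^2=(S(N)+T(N,\Sigma)^2)/4$. Because $\|\cdot\|\le\tfrac12$ forces $S(N)=O(\log N)$, the first term alone can only supply a Proinov-type bound of order $\sqrt{\log N}$. The claim therefore reduces to showing that for every fixed $\Sigma$ there exist infinitely many $N$ with $|T(N,\Sigma)|\ge c\log N$.

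A natural first attempt is the B\'{e}jian--Faure ``bad'' integer $N_m=(2^{m+1}+(-1)^m)/3$ already appearing in Theorem~\ref{discLpb2}. Using $3N_m=2^{m+1}+(-1)^m$ and $\{2^k/3\}\in\{1/3,2/3\}$ according to the parity of $k$, one computes
\[
\left\|\frac{N_m}{2^j}\right\|\;=\;\frac{1}{3}+\frac{(-1)^{j-1}}{3\cdot 2^j}\qquad\text{for } 1\le j\le m+1,
\]
with a tail contribution $\sum_{j>m+1}\|N_m/2^j\|$ of size $O(1)$. Writing $E_m:=\sum_{j=1}^m\varepsilon_{j-1}$, this yields $T(N_m,\Sigma)=\tfrac{1}{3}E_{m+1}+O(1)$, since the alternating correction sums to at most $\sum_j 2^{-j}=O(1)$. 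In the \emph{imbalanced} case where $|E_m|\ge c_1 m$ for infinitely many $m$, one immediately gets $|T(N_m,\Sigma)|\ge c\log N_m$ along that subsequence and the theorem follows.

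The hard case is the \emph{balanced} one $|E_m|=o(m)$: the leading $\tfrac{1}{3}E_{m+1}$ contribution to $T(N_m,\Sigma)$ is wiped out and the B\'{e}jian--Faure integers no longer suffice. My plan here is to tailor $N$ to the sign pattern $\varepsilon$ bit by bit. Writing $N=\sum_{k\ge 0}n_k 2^k$ and $R_j:=\sum_{k=0}^{j-2}n_k 2^{k-j}\in[0,\tfrac12)$, one checks that
\[
\left\|\frac{N}{2^j}\right\|=\begin{cases}R_j,& n_{j-1}=0,\\ \tfrac{1}{2}-R_j,& n_{j-1}=1,\end{cases}
\]
so the single bit $n_{j-1}$ toggles the new summand between two values whose absolute difference is $|\tfrac12-2R_j|\in[0,\tfrac12]$. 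I would then construct $N$ greedily, choosing $n_{j-1}$ at each step $j$ so that $\varepsilon_{j-1}\|N/2^j\|$ has the same sign as the running partial sum $T_j=\sum_{i=1}^{j}\varepsilon_{i-1}\|N/2^i\|$. An ``aligned'' step raises $|T_j|$ by $\max(R_j,\tfrac12-R_j)\ge\tfrac14$, while a misaligned step lowers it by at most $\min(R_j,\tfrac12-R_j)\le\tfrac14$.

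The main obstacle is the cascade: the bit chosen at step $j$ feeds into $R_{j'}$ for every later step $j'>j$ and may drive $R_{j'}$ close to $\tfrac14$, where the switch is ineffective. The technical heart of the argument is therefore a combinatorial lemma showing that, in spite of this coupling, a positive fraction of the first $m$ steps can be kept ``sharp'' in the sense that $|R_j-\tfrac14|$ stays bounded away from zero, so that the greedy construction delivers $|T_m|\ge cm\asymp c\log N$. If the greedy analysis becomes too entangled, a fallback is to work instead with a structured family $\mathcal{F}_m$ of integers of cardinality $2^{\Theta(m)}$ whose binary digits encode $\varepsilon$ in some natural way, compute $\sum_{N\in\mathcal{F}_m}T(N,\Sigma)^2$ in closed form from the expression for $\|N/2^j\|$, and show that this second moment is $\gtrsim m^2|\mathcal{F}_m|$, which then extracts some $N\in\mathcal{F}_m$ with $|T(N,\Sigma)|\gtrsim\log N$ by a pigeonhole argument.
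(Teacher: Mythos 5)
Your reduction via the Chaix--Faure identity specialised to $b=2$ is correct: since $\phi_2^\sigma=\|\cdot\|^2$ and $\varphi_2^\sigma=\varepsilon\,\|\cdot\|$ with $\varepsilon=\pm1$, completing the square turns the double sum into $(NL_{2,N}(Y_2^\Sigma))^2=\tfrac14 S(N)+\tfrac14 T(N,\Sigma)^2$ with $S(N)=O(\log N)$, and the theorem indeed reduces to showing $|T(N,\Sigma)|\ge c\log N$ for infinitely many $N$. The imbalanced case via the B\'ejian--Faure integers $N_m$ is sound (the sign of the $(-1)^{j-1}$ correction in $\|N_m/2^j\|$ actually flips between odd and even $m$, but this correction telescopes to $O(1)$, so it is immaterial). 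Where the proposal genuinely stops is the balanced case; the ``combinatorial lemma'' you defer is the entire remaining content, so as written this is a plan, not a proof.

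That said, the gap is fillable, and the lemma you want is in fact automatic from the greedy itself --- no separate argument is needed. From $R_{j+1}=\tfrac14 n_{j-1}+\tfrac12 R_j$, the bit that \emph{minimises} $\|N/2^j\|$ (which is your choice at a step with $\varepsilon_{j-1}=-1$) sends $R_{j+1}$ to $R_j/2<1/8$ if $R_j<1/4$, and to $1/4+R_j/2\ge 3/8$ if $R_j\ge1/4$; either way $|R_{j+1}-\tfrac14|\ge\tfrac18$. Since the greedy value of each summand is exactly $\varepsilon_{j-1}/4+|R_j-\tfrac14|$, one obtains, up to a bounded tail beyond $j=m$, $T(N)\ge\tfrac14 E_m+\sum_{j\le m}|R_j-\tfrac14|\ge\tfrac14 E_m+\tfrac18\#\{k\le m-2:\varepsilon_k=-1\}+O(1)$. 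Running the same greedy for the reversed signs $-\varepsilon$ produces an $N'$ with $-T(N',\Sigma)\ge-\tfrac14 E_m+\tfrac18\#\{k\le m-2:\varepsilon_k=+1\}+O(1)$; adding the two gives $\max\bigl(T(N),\,-T(N',\Sigma)\bigr)\ge(m-1)/16+O(1)$, and since $|T|\ll\log N$ trivially, the constructed integer tends to $\infty$ with $m$, yielding infinitely many $N$. Note that this subsumes the B\'ejian--Faure step, so the imbalanced/balanced dichotomy is not needed at all. On the other hand, the second-moment fallback cannot rescue the argument in the unrestricted form you sketch: since $\int_0^1\|\omega\|\,\|\omega 2^d\|\rd\omega=1/16=\bigl(\int_0^1\|\omega\|\rd\omega\bigr)^2$ for every $d\ge1$ (compare Fourier series of the hat function), the summands $\|N/2^j\|$ are pairwise uncorrelated under the uniform measure, so $\tfrac1{2^m}\sum_{N<2^m}T(N,\Sigma)^2=(E_m/4)^2+m/48+O(1)$, which is $O(m)$ in the balanced case and only gives $|T|\gtrsim\sqrt{\log N}$. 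A restricted family $\mathcal{F}_m$ could in principle repair this, but constructing one with second moment $\gtrsim m^2$ is essentially the same bit-level adaptation to $\varepsilon$ as the greedy, not a shortcut.
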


As we see from this theorem, we cannot improve the order of the
$L_2$-discrepancy of $Y_2$ by permutations, but we can again ask
whether one can at least improve the constants in
\eqref{eqonesixth}. For this question, there is a positive answer
which implies that permutations never worsen the constant in
\eqref{eqonesixth} and can even improve the constant considerably.
The following result is also due to \cite{KP05}.
\begin{theorem}[Kritzer and Pillichshammer]
It is true that
$$\sup_{\Sigma} \limsup_{N\To\infty}\frac{NL_{2,N} (Y_2^\Sigma)}{\log N}=\frac{1}{6 \log 2},$$
$$\inf_{\Sigma} \limsup_{N\To\infty}\frac{NL_{2,N} (Y_2^\Sigma)}{\log N} \le \frac{1}{20 \log 2},$$
where the supremum and the infimum, respectively, are extended
over all sequences $\Sigma$ of permutations of $\ZZ_2$.
\end{theorem}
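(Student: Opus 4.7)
The starting point is to specialize the Chaix--Faure identity of Theorem \ref{thmCF93} to $b=2$. Since $\Sy_2=\{\id,\tau_2\}$, each sequence $\Sigma=(\sigma_r)_{r\ge 0}$ is encoded by a sign sequence $(\epsilon_r)_{r\ge 0}\in\{+1,-1\}^{\NN_0}$ with $\epsilon_r=+1$ if $\sigma_r=\id$ and $\epsilon_r=-1$ if $\sigma_r=\tau_2$. A direct computation from the definitions of $\varphi_b^\sigma$ and $\phi_b^\sigma$ gives $\varphi_2^{\id}=\|\cdot\|=-\varphi_2^{\tau_2}$ and $\phi_2^{\id}=\phi_2^{\tau_2}=\|\cdot\|^2$, and, after grouping the cross-terms $i\ne j$ into a square, Theorem \ref{thmCF93} becomes
$$
(NL_{2,N}(Y_2^\Sigma))^2 = \frac{1}{4}\sum_{j=1}^{\infty}\left\|\frac{N}{2^j}\right\|^2 + \frac{1}{4}\left(\sum_{j=1}^{\infty}\epsilon_{j-1}\left\|\frac{N}{2^j}\right\|\right)^2. \qquad (\ast)
$$
This identity is the common starting point for both halves of the theorem.

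For the supremum: since $\|\cdot\|\ge 0$, one has $\bigl|\sum_{j\ge 1}\epsilon_{j-1}\|N/2^j\|\bigr|\le\sum_{j\ge 1}\|N/2^j\|$ for every $\Sigma$ and every $N$, with equality at $\Sigma=(\id,\id,\ldots)$. Comparing $(\ast)$ with \eqref{discfoL2}, one obtains $(NL_{2,N}(Y_2^\Sigma))^2\le(NL_{2,N}(Y_2))^2$ pointwise in $N$, and Theorem \ref{discLpb2} then identifies the common limiting value $1/(6\log 2)$ as the supremum, attained by the classical van der Corput sequence.

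For the infimum: the bound $\|x\|^2\le\|x\|/2$ combined with Tijdeman's estimate \eqref{discbd_tijde} gives $\sum_{j\ge 1}\|N/2^j\|^2=O(\log N)$, so the first term of $(\ast)$ contributes only $O(\sqrt{\log N})=o(\log N)$ to $NL_{2,N}(Y_2^\Sigma)$. Thus it suffices to exhibit an explicit sign sequence $(\epsilon_r)$ satisfying
$$
\limsup_{N\to\infty}\frac{1}{\log N}\left|\sum_{j\ge 1}\epsilon_{j-1}\left\|\frac{N}{2^j}\right\|\right|\le \frac{1}{10\log 2}.
$$
A natural candidate is a short-period alternating pattern, e.g.\ $\epsilon_r=(-1)^r$ corresponding to $\Sigma=(\id,\tau_2,\id,\tau_2,\ldots)$, or a slightly longer periodic block tuned to sharpen the constant. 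For such a $\Sigma$ one groups the indices $j$ according to residues modulo the period and exploits that $\|N/2^j\|$ depends only on the lowest $j$ binary digits of $N$. The main obstacle is the resulting case analysis: one must identify the worst $N$---the natural candidates are $N\approx 2^{m+1}/3$, whose binary expansion $10101\ldots$ saturates Tijdeman's inequality---and verify that the cancellation produced by the chosen sign pattern reduces the constant from $1/(3\log 2)$ down to $1/(10\log 2)$.
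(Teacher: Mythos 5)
Your derivation of identity $(\ast)$ from Theorem~\ref{thmCF93} is correct (the key observation being that $\varphi_2^{\sigma}=\pm\|\cdot\|$, $\phi_2^{\sigma}=\|\cdot\|^2$ for both permutations, and the off-diagonal sum is $S^2-Q$ with $S=\sum_j\epsilon_{j-1}\|N/2^j\|$ and $Q=\sum_j\|N/2^j\|^2$). The supremum half is then clean and complete: the pointwise bound $(NL_{2,N}(Y_2^\Sigma))^2\le(NL_{2,N}(Y_2))^2$ follows from the triangle inequality, equality holds for $\Sigma=(\id,\id,\ldots)$, and \eqref{limsupLp} pins down $1/(6\log 2)$. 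That part matches the spirit of the paper's argument.

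The infimum half, however, has a genuine gap --- and the heuristic you offer to fill it is actually wrong. You correctly reduce the problem to bounding $\limsup_N |S|/\log N$ for the alternating $\Sigma=(\id,\tau_2,\id,\tau_2,\ldots)$, but you then claim the worst $N$ is the one with binary expansion $\ldots10101$ (i.e., $N\approx 2^{m+1}/3$), the extremizer for the classical sequence. For the alternating $\Sigma$ this choice is close to a \emph{best} case, not a worst case: when $\|N/2^j\|\to 1/3$ for every $j$, the alternating signs produce near-total cancellation and $S=O(1)$ as $m\to\infty$, which would suggest a constant $0$, not $1/(10\log 2)$. The actual extremizers for the alternating $\Sigma$ have binary expansion with a \emph{period-four} block, such as $\ldots01100110$; for such $N$ the residue classes $j\equiv 1,2,3,0\pmod 4$ give $\|N/2^j\|\to 1/5,\,2/5,\,1/5,\,2/5$, so $S\approx \tfrac{m}{4}(1/5-2/5+1/5-2/5)=-m/10$ and indeed $\tfrac12|S|/\log N\to 1/(20\log 2)$. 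Moreover, exhibiting a single $N$ attaining the bound does not suffice: you still need the matching upper bound $\max_{N\le 2^m}|S|\le m/10+O(1)$, which is the substantive case analysis carried out in the cited source \cite{KP05}. You acknowledge the gap, but the sketch you give for closing it would lead a reader to the wrong extremal $N$ and hence a non-proof.
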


Regarding the latter result, one can show that the value $\tfrac{1}{20 \log 2}$ is
attained for the sequence $\Sigma$ in which the identical
permutation $\mathrm{id}$ and the transposition $\tau_2$
alternate. It is even conjectured that this result is best
possible with respect to all choices of $\Sigma$. Note that, if
the conjecture is correct, this would mark a difference to the
star discrepancy, where an alternating sequence $\Sigma$ has
disadvantages due to the role of the quantity $T_m (\Sigma)$ in
Theorem \ref{thmKLPstar}. For the $L_2$-discrepancy it seems to be
the case that, while $S_m$ also plays a crucial role, the value of
$T_m$ has no influence on the results.

\paragraph{The state of the art for asymptotic constants related to $D_N, D_N^*, L_{2,N}$, and $F_N$.}
Generalizations of van der Corput sequences give the smallest
discrepancies and diaphony currently known within the class of
infinite one-dimensional sequences $X$ in $[0,1)$. We need the
following quality parameters to present these current ``records''.
Let
$$
d:=\inf_X(d(X))=\inf_X\left (\limsup_{N \rightarrow \infty}
\frac{N D_N(X)}{\log N}\right), \ \ \
d^*:=\inf_X(d^*(X))=\inf_X\left (\limsup_{N \rightarrow \infty}
\frac{N D^*_N(X)}{\log N}\right),
$$
$$
t_1:=\inf_X(t_1(X))=\inf_X\left (\limsup_{N \rightarrow \infty}
\frac{N L_{2,N}(X)}{\log N}\right), \ \ \ t_2(X):=\limsup_{N
\rightarrow \infty} \frac{(N L_{2,N}(X))^2}{\log N},
$$
$$
t_2:=\inf_X(t_2(X))\ \ \mbox{ and } \ \
f:=\inf_X(f(X))=\inf_X\left (\limsup_{N \rightarrow \infty}
\frac{(N F_N(X))^2}{\log N}\right),
$$
where the infima are taken over all infinite one-dimensional
sequences $X$ in $[0,1)$.

\medskip

For the extreme discrepancy $D_N$, it was shown, in chronological order, that $d<0.38$,
in base 12 by Faure \cite[Th\'{e}or\`{e}me 4]{Fau1981} in 1981,
$d<0.367$, in base 36 by Faure \cite[Th\'{e}or\`{e}me 1.2]{Fau92}
in 1992, and $d<0.354$ in base 84 by Ostromoukhov \cite[Theorem
4]{Ost09} in 2009, in each case for a specific permutation of
digits in the specified base. Hence with the lower bound from 
B\'ejian \cite{be1982} in 1982, one has the current estimate
$$0.12 < d < 0.354.$$

For the star discrepancy $D_N^*$, it was shown, in chronological order, that
$d^*<0.2236$, in base 12 by Faure \cite[Th\'{e}or\`{e}me
4]{Fau1981} in 1981, and $d^*<0.2223$  in base 60 by Ostromoukhov
\cite[Theorem 5]{Ost09} in 2009, in each case for a specific
permutation of digits in the specified base. Hence with the lower
bound from Larcher \cite{lar2014} in 2014 one has the current
estimate $$0.0646 < d ^*< 0.2223.$$

Concerning the $L_2$-discrepancy, the best permutations for the
set of $Y_b^\sigma$ sequences, denoted by $\sigma_b$, satisfy
$\lim_{b\rightarrow \infty} t_1(Y_b^{\sigma_b})=0 $, see
\cite[Th\'eor\`eme 4.14 and its proof]{chafa}. However, this result is
not surprising since $t_1=0$ as can be seen from the study of
symmetrized sequences (see Section \ref{sec_sym}) where estimates
for $t_2$ will be given.

Finally, for the diaphony, besides earlier results by Proinov
and Grozdanov \cite{progro88} with weak constants (around
15), it was shown, in chronological order, that $f<1.59$, in base 2 by Proinov and
Atanassov \cite{proat} in 1988, $f<1.316$, in base 19 by Chaix and
Faure \cite[Th\'eor\`eme 4.16]{chafa} in 1993 and $f<1.137$, in
base 57 by Pausinger and Schmid \cite[Theorem
3.2]{pausch10} in 2010. As to the lower bound, the only result  we
know is that of Proinov  \cite{pro86} from the year 1986 which
states that $f>0.0002$.

\paragraph{Comparison with $(n \alpha)$-sequences.} 
Concerning the extreme discrepancy of $(n\alpha)$-sequences, 
Ramshaw~\cite{Ram} obtained 
$$d((n \widetilde{\alpha}))=\frac{1}{5 \log((\sqrt{5}+1)/2)}=0.4156\ldots$$ 
with the golden ration $\widetilde{\alpha}=\tfrac{\sqrt{5}+1}{2}$ and 
it is a widely held belief that $\inf_{\alpha} d((n\alpha)) = d((n\widetilde{\alpha}))$.
Concerning the star discrepancy of $(n\alpha)$-sequences,
Dupain and S\'{o}s~\cite{DS84} proved that \label{dupSos}
\begin{equation*}\label{best_nalpha}
\inf_{\alpha} d^*((n\alpha)) = d^*((n \sqrt{2})) = \frac{1}{4
\log(\sqrt{2}+1)}=0.2836\ldots.
\end{equation*}


\subsection{The elementary descent method}\label{sec_Faure_meth}
\paragraph{Context, statement and comments.}
In this section, we are going to give an illustration---on the
simple example of the triadic van der Corput  sequence $Y_3$---of
the method yielding the results given in Sections
\ref{secdiscgenvdC}, \ref{bdremaindersets},  \ref{sec_sym}, and
\ref{cantor} for the generalized van der Corput sequences
$Y_b^\Sigma$. A more involved variant of this method also applies
to NUT digital $(0,1)$-sequences in prime base and to a larger
family, the so-called NUT $(0,1)$-sequences over $\ZZ_b$, that
will be considered in Section \ref{secdig01seq}. Before this illustration, it is necessary to state the fundamental
lemma resulting from the descent method and leading to the
results. However, we first need some notation and a discretization
lemma to prepare this statement.

For convenience, in the following we define the error (the deviation from uniform 
distribution) as
$$E(x, N, X):=N\Delta_{X,N}(x) \mbox{ and then } E([x,y), N, X):=E(y, N, X)-E(x, N, X),$$
where $\Delta_{X,N}(x)$ is the local discrepancy defined in
Section \ref{sec_udt}. Further, we call a rational number in
$[0,1)$ an {\it $m$-bit number}\label{nbit} if it belongs to the
set $\QQ(b^m)=\{0,\tfrac{1}{b^m},\tfrac{2}{b^m},\ldots ,
\tfrac{b^m -1}{b^m}\}$.

\begin{lemma}[discretization lemma]\label{DisLem}
Let $\alpha \in [0,1]$ and let $m, N$ be integers with $1\leq
N\leq b^m$. Furthermore, let $u,v$ be $m$-bit numbers such that $u
\le \alpha <v$ and $v=u+b^{-m}$, and let $x$ be the unique element
in $\{Y_b^\Sigma(0),\ldots, Y_b^\Sigma(b^m-1)\}$ such that $u \le
x < v$. Then, with $y(\alpha):= u$ if $\alpha \le x$ and
$y(\alpha):= v$ if $\alpha > x$, we have
$$E(\alpha,N,Y_b^\Sigma)=E(y(\alpha),N,Y_b^\Sigma)+(y(\alpha)-\alpha)N.$$
\end{lemma}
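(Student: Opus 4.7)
The plan is to reduce the claim to a single structural fact about $Y_b^\Sigma$ and then finish by an elementary counting argument from the definition of the error.

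The key observation is that the first $b^m$ terms of $Y_b^\Sigma$ coincide with $\QQ(b^m)$. Indeed, for any $n=n_0+n_1 b+\cdots+n_{m-1}b^{m-1}<b^m$ we have
$$Y_b^\Sigma(n)=\frac{\sigma_0(n_0)}{b}+\cdots+\frac{\sigma_{m-1}(n_{m-1})}{b^m}\in\QQ(b^m),$$
and because each $\sigma_j$ is a permutation of $\ZZ_b$, the map $n\mapsto Y_b^\Sigma(n)$ is injective on $\{0,\dots,b^m-1\}$; a cardinality comparison then forces $\{Y_b^\Sigma(0),\ldots,Y_b^\Sigma(b^m-1)\}=\QQ(b^m)$. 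Since $u,v\in\QQ(b^m)$ and $v=u+b^{-m}$, the only element of $\QQ(b^m)$ in $[u,v)$ is $u$ itself, so $x=u$.

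Combined with $u\le\alpha$, this identification $x=u$ shows that $\alpha\le x$ happens only when $\alpha=u=y(\alpha)$, in which case the desired identity is trivial. The remaining case is $\alpha\in(u,v)$, where $y(\alpha)=v$ and the target identity reads
$$E(\alpha,N,Y_b^\Sigma)=E(v,N,Y_b^\Sigma)+(v-\alpha)N.$$
Unpacking $E(\alpha,N,Y_b^\Sigma)=A([0,\alpha);N;Y_b^\Sigma)-N\alpha$ and analogously for $v$, this reduces to the single combinatorial assertion $A([0,\alpha);N;Y_b^\Sigma)=A([0,v);N;Y_b^\Sigma)$, i.e., that no index $n<N$ satisfies $Y_b^\Sigma(n)\in[\alpha,v)$.

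But this is immediate from the structural observation together with the hypothesis $N\le b^m$: since $\QQ(b^m)\cap(u,v)=\emptyset$, none of the first $b^m$ terms of $Y_b^\Sigma$ lies in $(u,v)$, hence \emph{a fortiori} none of the first $N\le b^m$ terms lies in $[\alpha,v)\subset(u,v)$. Plugging the count equality into the two definitions of $E$ above produces the claimed identity. I do not expect any substantial obstacle; the only step requiring any thought is recognising that $x$ must equal $u$, after which the rest is bookkeeping with the definition of the local discrepancy.
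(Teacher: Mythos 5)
Your reduction to the equality of counting functions $A(\alpha;N;Y_b^\Sigma)=A(y(\alpha);N;Y_b^\Sigma)$, followed by bookkeeping with the definition of $E$, is the right skeleton and matches the paper. However, the structural claim that $\{Y_b^\Sigma(0),\ldots,Y_b^\Sigma(b^m-1)\}=\QQ(b^m)$ (hence $x=u$) is false in general, and the argument collapses there. For $n<b^m$ the digits $n_r$ vanish for $r\ge m$, but $\sigma_r(0)$ need not vanish, so
$$Y_b^\Sigma(n)=\sum_{r=0}^{m-1}\frac{\sigma_r(n_r)}{b^{r+1}}+\tau,\qquad \tau:=\sum_{r\ge m}\frac{\sigma_r(0)}{b^{r+1}}\in\left[0,\tfrac{1}{b^m}\right),$$
and the first $b^m$ terms form $\QQ(b^m)+\tau$, not $\QQ(b^m)$. (Concretely, with $b=2$, $m=2$ and $\sigma_2$ the transposition, $Y_2^\Sigma(0)=1/8\notin\QQ(4)$.) Consequently $x=u+\tau$ may lie strictly inside $(u,v)$, the case distinction $\alpha\le x$ versus $\alpha>x$ is genuinely two-sided, and your reduction to ``$\alpha=u$ trivial, otherwise $y(\alpha)=v$'' misses the whole regime $u<\alpha\le x$. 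Note also that the lemma's very statement, with its nontrivial definition of $y(\alpha)$ via a comparison to $x$, already signals that $x\ne u$ must be allowed.

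The fix is to drop the identification of $x$ entirely: uniqueness of $x$ in $[u,v)$ among the first $b^m$ terms is all you need. If $\alpha\le x$, then $[u,\alpha)\subseteq[u,v)$ contains no term (the only candidate $x$ satisfies $x\ge\alpha$), so $A(\alpha;N;Y_b^\Sigma)=A(u;N;Y_b^\Sigma)$ for any $N\le b^m$. If $\alpha>x$, then $[\alpha,v)\subseteq[u,v)$ contains no term (the only candidate $x$ satisfies $x<\alpha$), so $A(\alpha;N;Y_b^\Sigma)=A(v;N;Y_b^\Sigma)$. In both cases $A(\alpha;N;Y_b^\Sigma)=A(y(\alpha);N;Y_b^\Sigma)$, and subtracting $N\alpha$ gives $E(\alpha,N,Y_b^\Sigma)=E(y(\alpha),N,Y_b^\Sigma)+(y(\alpha)-\alpha)N$. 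This is precisely the argument the paper alludes to when it says the lemma ``implies the equality of counting functions''.
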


This is a classical discretization property resulting from the
definition of $y(\alpha)$ which implies the equality of counting
functions $A(\alpha;N;Y_b^\Sigma)=A(y(\alpha);N;Y_b^\Sigma)$. From
Lemma~\ref{DisLem} it follows that it suffices to know the values
of the error $E(\alpha,N,Y_b^{\Sigma})$ for $\alpha \in \QQ(b^m)$.
Exactly these values are given by the following lemma which is the
key property in the proof of Theorem~\ref{thmF81D}.

\begin{lemma}[fundamental descent lemma]\label{DesLem}
Let $m,N$, and $\lambda$ be integers with $1\leq N\leq b^m$ and
$1\leq \lambda < b^m$, and let $\lambda=\lambda_1
b^{m-1}+\cdots+\lambda_{m-1}b+\lambda_m$ be the b-adic expansion
of $\lambda$. Then
\begin{equation}\label{DesLemForm}
E \left (\frac{\lambda}{ b^m},N,Y_b^\Sigma\right )
=\sum_{j=1}^m\varphi_{b,\varepsilon_j}^{\sigma_{j-1}}\left
(\frac{N}{ b^j} \right ),
\end{equation}
where the functions $\varepsilon_j=\varepsilon_j(\lambda,m,N)$ are
defined inductively by $\varepsilon_m=\eta_m=\lambda_m$ and, for
$1\leq j< m$,
$$
\eta_j=\lambda_j+\frac{\eta_{j+1}}{b}+\frac{1}{b} \left
(\varphi_{b,\varepsilon_{j+1}}^{\sigma_j} \right )' \left
(\frac{N}{b^{j+1}} \right) \ \  \mbox{ where $g'$ means the right
derivative of $g$,}
$$
$$
\varepsilon_j=\eta_j \quad {\rm if} \quad 0\leq \eta_j<b, \quad
{\rm and} \quad \varepsilon_j=0  \quad {\rm if} \quad \eta_j=b.
$$
\end{lemma}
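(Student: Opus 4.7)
I would prove the formula by induction on $m$, using the decomposition $[0,\lambda/b^m) = [0,\lambda_1/b)\sqcup[\lambda_1/b,\lambda/b^m)$. For the base case $m=1$, one has $\varepsilon_1=\eta_1=\lambda_1$, and the identity $E(\lambda_1/b,N,Y_b^\Sigma)=\varphi_{b,\lambda_1}^{\sigma_0}(N/b)$ follows directly from the definition of $\varphi_{b,h}^{\sigma}$: writing $N=bQ+r$ with $0\le r<b$, the count $\#\{n<N:\sigma_0(n_0)<\lambda_1\}$ equals $\lambda_1 Q+\#\{l<r:\sigma_0(l)<\lambda_1\}$, and subtracting $N\lambda_1/b$ exactly matches the two piecewise branches of $\varphi_{b,\lambda_1}^{\sigma_0}(r/b)$, distinguished by whether $\lambda_1\le\sigma_0(r)$ or $\lambda_1>\sigma_0(r)$. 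Periodicity of $\varphi_{b,h}^{\sigma}$ then lets us replace $r/b$ by $N/b$.

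For the inductive step, the first subinterval contributes exactly the expression just computed at level $1$, giving a candidate term $\varphi_{b,\lambda_1}^{\sigma_0}(N/b)$. For the second subinterval, any $Y_b^\Sigma(n)\in[\lambda_1/b,(\lambda_1+1)/b)$ forces $n_0=\sigma_0^{-1}(\lambda_1)$; parameterising such $n$ as $n=\sigma_0^{-1}(\lambda_1)+bn'$, one checks that $Y_b^\Sigma(n)-\lambda_1/b = Y_b^{\Sigma'}(n')/b$ with $\Sigma'=(\sigma_1,\sigma_2,\ldots)$, and that $n<N$ translates to $n'<N'$ for an integer $N'$ determined by $N\bmod b$ and $\sigma_0^{-1}(\lambda_1)$. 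The inductive hypothesis then expands $E(\lambda'/b^{m-1},N',Y_b^{\Sigma'})$, where $\lambda'$ has $b$-adic digits $\lambda_2,\ldots,\lambda_m$, as a sum over $j'=1,\ldots,m-1$, yielding candidate terms indexed by $j=j'+1=2,\ldots,m$.

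The main obstacle is the bookkeeping required to merge these two partial sums into $\sum_{j=1}^m\varphi_{b,\varepsilon_j}^{\sigma_{j-1}}(N/b^j)$. The inductive arguments $N'/b^{j'}$ do not match the target arguments $N/b^{j'+1}$: they differ by a rational in $\{0,-1/b^{j'+1}\}$ according to whether $N\bmod b$ exceeds $\sigma_0^{-1}(\lambda_1)$. Since each $\varphi_{b,h}^{\sigma}$ is piecewise linear with integer slopes, rewriting the inductive term at the target argument produces a shift that is captured exactly by the right-derivative correction $(\varphi_{b,\varepsilon_{j+1}}^{\sigma_j})'(N/b^{j+1})/b$ in the definition of $\eta_j$. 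This shift can in turn increment the ``next'' $\lambda$ by one, and the degenerate case $\eta_j=b$ (handled by setting $\varepsilon_j=0$ and passing a further $+1$ into $\eta_{j-1}$) mirrors the usual base-$b$ carry rule. Performing this matching term by term along the recursion converts the naive inductive expression into the stated one and completes the induction; the hard part is precisely verifying that the carries produced by the change of argument agree, at every level, with those prescribed by the $\eta_j\mapsto\varepsilon_j$ rule.
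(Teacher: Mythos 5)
Your proposal takes a genuinely different route from the paper's, and this matters: you strip off the \emph{most} significant digit $\lambda_1$ of $\lambda$, shift $\Sigma$ to $\Sigma'=(\sigma_1,\sigma_2,\ldots)$, replace $N$ by $N'\approx N/b$, and recurse; the paper instead strips off the \emph{least} significant digit $\lambda_m$, moving $\lambda$ to the nearest multiple of $b$ (with the direction of rounding governed by the case distinction on $N$), \emph{keeps $\Sigma$ fixed}, and exploits the $1$-periodicity of $\varphi_{b,h}^{\sigma}$ so that every term can be written directly with the original argument $N/b^j$. The paper's descent is engineered precisely so that the arguments come out as $N/b^j$ with no post-hoc adjustment; your top-down recursion does not have that feature, which is why you inherit the bookkeeping problem you flag. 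The key recursion for the $\eta_j$'s in the statement runs \emph{downward} from $j=m$ to $j=1$, so it is not the recursion that a forward induction on $m$ naturally produces, and it is not at all clear that the $\varepsilon'_{j'}$ arising from your inductive hypothesis coincide with the $\varepsilon_{j'+1}$ of the target.

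More concretely, there are two errors that compound the unresolved matching. First, the decomposition you state, namely that the count over $[\lambda_1/b,\lambda/b^m)$ equals $E(\lambda'/b^{m-1},N',Y_b^{\Sigma'})$, is not correct as an identity in $E$: because the normalization changes from $N\lambda'/b^m$ to $N'\lambda'/b^{m-1}$, there is an extra term $\lambda'(bN'-N)/b^m$ that you never account for. Second, your quantification of the argument mismatch is wrong: $N'/b^{j'}-N/b^{j'+1}=(bN'-N)/b^{j'+1}$, and with $N=bQ+r$ one has $bN'-N\in\{-r,\,b-r\}$, not $\{0,-1\}$; so the discrepancy can be as large as $(b-1)/b^{j'+1}$ in magnitude. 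Since the functions $\varphi_{b,h}^{\sigma}$ have slopes of magnitude up to $b$, such a mismatch produces nonnegligible corrections that the asserted right-derivative/carry mechanism is not shown to absorb. Your base case $m=1$ is correct and your observation that $Y_b^\Sigma(n)-\lambda_1/b=Y_b^{\Sigma'}(n')/b$ with $n=\sigma_0^{-1}(\lambda_1)+bn'$ is a legitimate structural fact, but the heart of the lemma --- verifying that the carries and the shifted arguments conspire to give exactly the stated $\varepsilon_j$ and $N/b^j$ --- is precisely what is left unproved, and the two errors above suggest it would not go through as you envisage without substantial repair.
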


The proof is a descending recursion for the $b$-adic resolution of
the argument $y=\lambda/b^m$ from $m$ to 1. At each step, the
$b$-adic resolution of $y$ is decreased by 1. The differences
between the discrepancy functions in a reduction step is controlled 
by means of the functions $\varphi_{b,h}^\sigma$
while the relation between the intervals depends on the right
derivatives of these functions.

With Lemmas \ref{DisLem} and \ref{DesLem}, it is easy to obtain
Theorem \ref{thmF81D}. For instance, consider $D_N^+$: First, from
Lemma \ref{DisLem}, we get
$$
\lim_{m\rightarrow\infty}\ \sup_{y \in \QQ(b^m)}
E(y,N,Y_b^\Sigma)=D_N^+(Y_b^\Sigma).
$$
Then, since $\psi_b^{\sigma,+}=\max_{0 \le h < b}
\varphi_{b,h}^{\sigma}$, for any $m$-bit number $y \in \QQ(b^m)$
we get from Lemma \ref{DesLem} the upper bound
$$E(y,N,Y_b^\Sigma)\leq \sum_{j=1}^m \psi_b^{\sigma_{j-1},+}\left (\frac{N}{b^j}\right ).$$
Now, using the algorithm for the construction of the
$\varepsilon_j$'s from the $\lambda_j$'s (at the end of Lemma
\ref{DesLem}) in reversed direction, it is easy to construct a
$\widetilde{\lambda} \in \{0,1,\ldots,b^m-1\}$ for which the above
upper bound is achieved by $E(\widetilde{\lambda}
b^{-m},N,Y_b^\Sigma)$. Finally, letting $m$ tend to $\infty$
gives the formula for $D_N^+$.

Theorem \ref{thmCF93} is not so easy and needs a further lemma
(\cite[Lemme 5.3]{chafa}) to explicitly compute the functions
$\varepsilon_j$; this means that recovering the functions
$\varphi_b^\sigma$ and $\phi_b^\sigma$ is more involved in this
case.

There are four versions of Lemma \ref{DesLem}: The first one was 
worked out for the study of the $L_2$-discrepancy of the
original van der Corput sequence in base 2 and its symmetrized
version \cite[Lemme 2.2]{fau1990}. The second one is the
generalization of the preceding one to arbitrary variable bases
with arbitrary sequences of permutations (the so-called
$Y_B^\Sigma$ sequences) in \cite[Lemme 5.2]{chafa} for the study
of both the $L_p$-discrepancy and the diaphony. The third and
fourth ones were necessary to extend the method to digital
$(0,1)$-sequences \cite[Lemma 6.2]{Fau05} and a generalization of
these sequences that also includes $Y_b^\Sigma$ sequences
\cite[Lemma 2]{FauPi13}. In Theorems \ref{thmbfstar} and
\ref{thmF81D} the descent method is only present in filigree
since, at that time, the relation between the intervals at each
step of the descent was not found yet, and it was not necessary
to get formulas for the extreme discrepancies. For non-French
speaking readers who would like to go thoroughly into the details
of the descent method we recommend \cite[Lemma 6.2]{Fau05} where
$X_b^C$ has to be replaced by $Y_b^\Sigma$ and the formula there
by Equation \eqref{DesLemForm}.

\paragraph{Illustration of the descent method.} The first 
$b^m$ points of $Y_b^\Sigma$, $y_0, \ldots, y_{b^m-1}$,
 can be represented on a checkerboard $B_m$ of $b^{2m}$ 
squares as black squares according to their position  $b^m y_k$ on the
abscissa (where $[0,1)$ is identified with $[0, b^m)$) and their index 
$k$ on the ordinate for $1 \le k \le b^m$. This is possible because Equation \eqref{DesLemForm} 
deals with $m$-bit numbers $\lambda / b^m$, the error $E(\lambda / b^m,N,Y_b^\Sigma)$ 
involves the $m$-bit part of $y_k=Y_b^\Sigma(k)$ only. Then, in this framework, 
the error is $b^m$ times the number $A$ of black squares minus the area of 
$[0,\lambda] \times [0,N]$ divided by $b^m$, i.e.,  
$$\widetilde{E}(\lambda, N, B_m):=b^m E(\lambda / b^m,N,Y_b^\Sigma)=b^m A-\lambda N.$$

One step of the descent method can be described, and the idea of
$\varphi$-functions pointed out, by means of this representation
as explained in the following special case of $Y_3$, for the first
step (from $m$ to $m-1$) and with $m=3$, i.e., $b^m=3^3=27$, see
Figure~\ref{fboard}.

\begin{figure}[htp]
\begin{center}
 \begin{picture}(0,0)%
\includegraphics{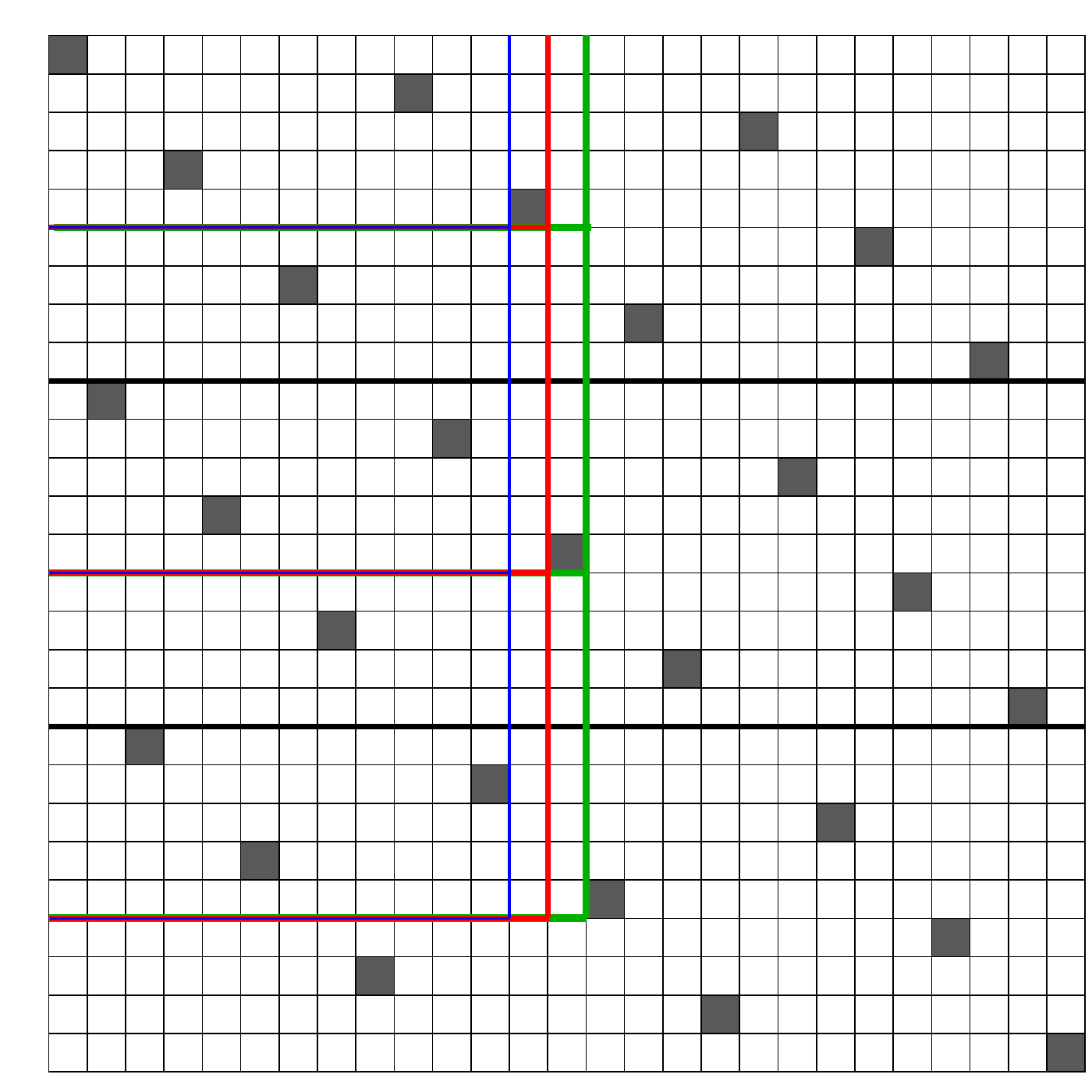}%
\end{picture}%
\setlength{\unitlength}{4144sp}%
\begingroup\makeatletter\ifx\SetFigFont\undefined%
\gdef\SetFigFont#1#2#3#4#5{%
  \reset@font\fontsize{#1}{#2pt}%
  \fontfamily{#3}\fontseries{#4}\fontshape{#5}%
  \selectfont}%
\fi\endgroup%
\begin{picture}(6393,6393)(-284,-5350)
\put(2386,884){\makebox(0,0)[lb]{\smash{{\SetFigFont{12}{14.4}{\rmdefault}{\mddefault}{\updefault}{$11$}%
}}}}
\put(2836,884){\makebox(0,0)[lb]{\smash{{\SetFigFont{12}{14.4}{\rmdefault}{\mddefault}{\updefault}{$13$}%
}}}}
\put(3286,884){\makebox(0,0)[lb]{\smash{{\SetFigFont{12}{14.4}{\rmdefault}{\mddefault}{\updefault}{$15$}%
}}}}
\put(3736,884){\makebox(0,0)[lb]{\smash{{\SetFigFont{12}{14.4}{\rmdefault}{\mddefault}{\updefault}{$17$}%
}}}}
\put(4186,884){\makebox(0,0)[lb]{\smash{{\SetFigFont{12}{14.4}{\rmdefault}{\mddefault}{\updefault}{$19$}%
}}}}
\put(4636,884){\makebox(0,0)[lb]{\smash{{\SetFigFont{12}{14.4}{\rmdefault}{\mddefault}{\updefault}{$21$}%
}}}}
\put(5086,884){\makebox(0,0)[lb]{\smash{{\SetFigFont{12}{14.4}{\rmdefault}{\mddefault}{\updefault}{$23$}%
}}}}
\put(5536,884){\makebox(0,0)[lb]{\smash{{\SetFigFont{12}{14.4}{\rmdefault}{\mddefault}{\updefault}{$25$}%
}}}}
\put(5986,884){\makebox(0,0)[lb]{\smash{{\SetFigFont{12}{14.4}{\rmdefault}{\mddefault}{\updefault}{$27$}%
}}}}
\put(-269,-1681){\makebox(0,0)[lb]{\smash{{\SetFigFont{12}{14.4}{\rmdefault}{\mddefault}{\updefault}{$11$}%
}}}}
\put(-269,-2131){\makebox(0,0)[lb]{\smash{{\SetFigFont{12}{14.4}{\rmdefault}{\mddefault}{\updefault}{$13$}%
}}}}
\put(-269,-2581){\makebox(0,0)[lb]{\smash{{\SetFigFont{12}{14.4}{\rmdefault}{\mddefault}{\updefault}{$15$}%
}}}}
\put(-269,-3031){\makebox(0,0)[lb]{\smash{{\SetFigFont{12}{14.4}{\rmdefault}{\mddefault}{\updefault}{$17$}%
}}}}
\put(-269,-3481){\makebox(0,0)[lb]{\smash{{\SetFigFont{12}{14.4}{\rmdefault}{\mddefault}{\updefault}{$19$}%
}}}}
\put(-269,-3931){\makebox(0,0)[lb]{\smash{{\SetFigFont{12}{14.4}{\rmdefault}{\mddefault}{\updefault}{$21$}%
}}}}
\put(-269,-4381){\makebox(0,0)[lb]{\smash{{\SetFigFont{12}{14.4}{\rmdefault}{\mddefault}{\updefault}{$23$}%
}}}}
\put(-269,-4831){\makebox(0,0)[lb]{\smash{{\SetFigFont{12}{14.4}{\rmdefault}{\mddefault}{\updefault}{$25$}%
}}}}
\put(-269,-5281){\makebox(0,0)[lb]{\smash{{\SetFigFont{12}{14.4}{\rmdefault}{\mddefault}{\updefault}{$27$}%
}}}}
\put(181,884){\makebox(0,0)[lb]{\smash{{\SetFigFont{12}{14.4}{\rmdefault}{\mddefault}{\updefault}{$1$}%
}}}}
\put(631,884){\makebox(0,0)[lb]{\smash{{\SetFigFont{12}{14.4}{\rmdefault}{\mddefault}{\updefault}{$3$}%
}}}}
\put(1081,884){\makebox(0,0)[lb]{\smash{{\SetFigFont{12}{14.4}{\rmdefault}{\mddefault}{\updefault}{$5$}%
}}}}
\put(1531,884){\makebox(0,0)[lb]{\smash{{\SetFigFont{12}{14.4}{\rmdefault}{\mddefault}{\updefault}{$7$}%
}}}}
\put(1981,884){\makebox(0,0)[lb]{\smash{{\SetFigFont{12}{14.4}{\rmdefault}{\mddefault}{\updefault}{$9$}%
}}}}
\put(-44,884){\makebox(0,0)[lb]{\smash{{\SetFigFont{12}{14.4}{\rmdefault}{\mddefault}{\updefault}{$0$}%
}}}}
\put(-179,794){\makebox(0,0)[lb]{\smash{{\SetFigFont{12}{14.4}{\rmdefault}{\mddefault}{\updefault}{$0$}%
}}}}
\put(-179,569){\makebox(0,0)[lb]{\smash{{\SetFigFont{12}{14.4}{\rmdefault}{\mddefault}{\updefault}{$1$}%
}}}}
\put(-179,119){\makebox(0,0)[lb]{\smash{{\SetFigFont{12}{14.4}{\rmdefault}{\mddefault}{\updefault}{$3$}%
}}}}
\put(-179,-331){\makebox(0,0)[lb]{\smash{{\SetFigFont{12}{14.4}{\rmdefault}{\mddefault}{\updefault}{$5$}%
}}}}
\put(-179,-781){\makebox(0,0)[lb]{\smash{{\SetFigFont{12}{14.4}{\rmdefault}{\mddefault}{\updefault}{$7$}%
}}}}
\put(-179,-1231){\makebox(0,0)[lb]{\smash{{\SetFigFont{12}{14.4}{\rmdefault}{\mddefault}{\updefault}{$9$}%
}}}}
\end{picture}%
\caption{Checkerboard $B_3$ corresponding to the first $3^3$ elements of $Y_3$.}
     \label{fboard}
\end{center}
\end{figure}

We distinguish three cases corresponding to $9(i-1) < N \le  9i$,
where $i \in \{1,2,3\}$, and for each of them we distinguish the
abscissas $\lambda$ of the form $3\mu$ (blue lines),
$3\mu+1$ (red lines), and $3\mu+2$ (green lines).
Concerning the three cases for $N$, the idea is to express the
error by moving the abscissa $\lambda$ to the nearest multiple of
3, $3\mu$ or $3(\mu+1)$, say $3\nu$, in such a way that the black
squares $(27y_k,k)$ with $9(i-1) < k \le9i$ still remain in the
new rectangle $[0,3\nu]\times [0,N]$ in the three cases. In this way,
the process is the same for any $N$ within a given case, and it
will give a single formula for the error. Further,
$\widetilde{E}(3\nu, N, B_3)=\widetilde{E}(3\nu,N-9(i-1), B_3)$
since $\widetilde{E}(3\nu,9(i-1), B_3)=0$, and hence (cf.
Figure~\ref{fboard2}) $$\widetilde{E}(3\nu, N,
B_3)=\widetilde{E}(\nu,N-9(i-1), B_2).$$

\begin{figure}[htp]
\begin{center}
\begin{picture}(0,0)%
\includegraphics{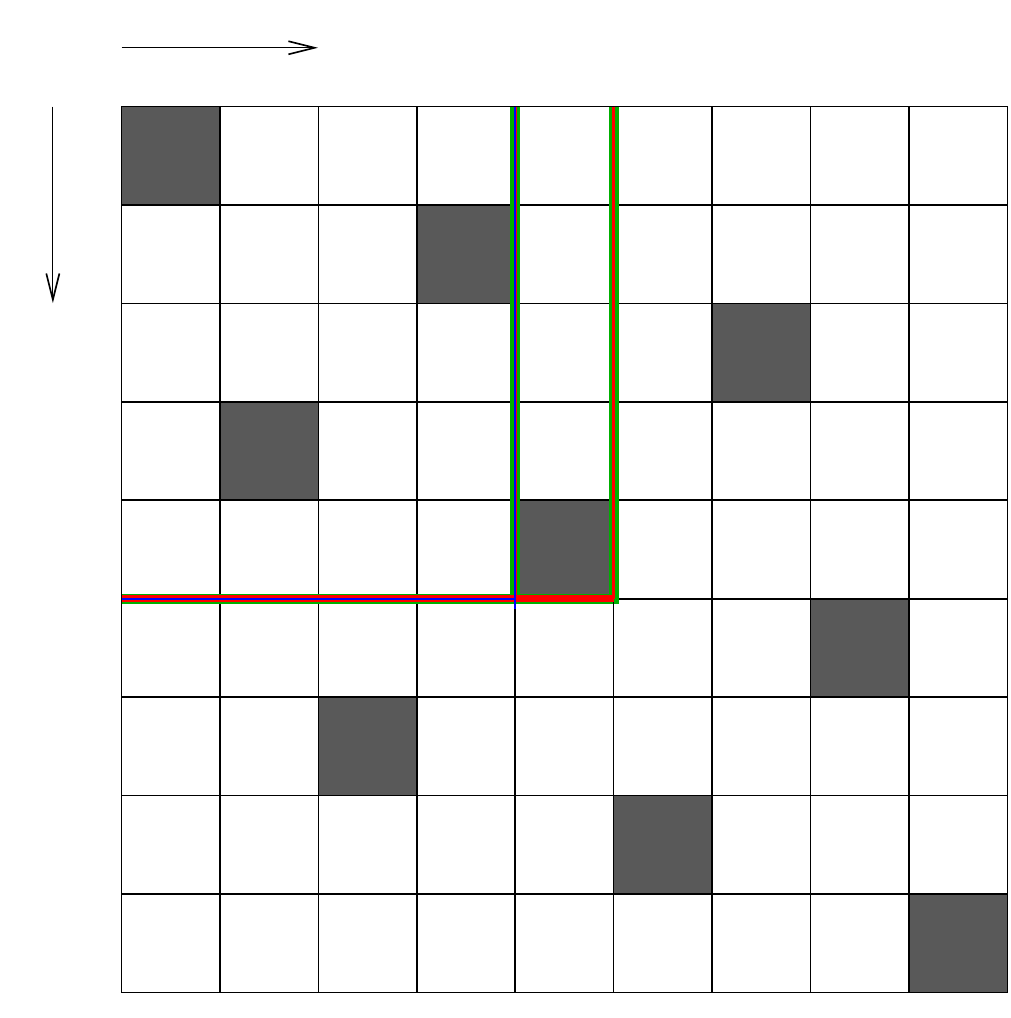}%
\end{picture}%
\setlength{\unitlength}{4144sp}%
\begingroup\makeatletter\ifx\SetFigFont\undefined%
\gdef\SetFigFont#1#2#3#4#5{%
  \reset@font\fontsize{#1}{#2pt}%
  \fontfamily{#3}\fontseries{#4}\fontshape{#5}%
  \selectfont}%
\fi\endgroup%
\begin{picture}(4617,4638)(-554,-3325)
\put(361,884){\makebox(0,0)[lb]{\smash{{\SetFigFont{12}{14.4}{\rmdefault}{\mddefault}{\updefault}{$1$}%
}}}}
\put(811,884){\makebox(0,0)[lb]{\smash{{\SetFigFont{12}{14.4}{\rmdefault}{\mddefault}{\updefault}{$2$}%
}}}}
\put(1261,884){\makebox(0,0)[lb]{\smash{{\SetFigFont{12}{14.4}{\rmdefault}{\mddefault}{\updefault}{$3$}%
}}}}
\put(1711,884){\makebox(0,0)[lb]{\smash{{\SetFigFont{12}{14.4}{\rmdefault}{\mddefault}{\updefault}{$4$}%
}}}}
\put(2116,884){\makebox(0,0)[lb]{\smash{{\SetFigFont{12}{14.4}{\rmdefault}{\mddefault}{\updefault}{$5$}%
}}}}
\put(2566,884){\makebox(0,0)[lb]{\smash{{\SetFigFont{12}{14.4}{\rmdefault}{\mddefault}{\updefault}{$6$}%
}}}}
\put(3061,884){\makebox(0,0)[lb]{\smash{{\SetFigFont{12}{14.4}{\rmdefault}{\mddefault}{\updefault}{$7$}%
}}}}
\put(3466,884){\makebox(0,0)[lb]{\smash{{\SetFigFont{12}{14.4}{\rmdefault}{\mddefault}{\updefault}{$8$}%
}}}}
\put(3961,884){\makebox(0,0)[lb]{\smash{{\SetFigFont{12}{14.4}{\rmdefault}{\mddefault}{\updefault}{$9$}%
}}}}
\put(-224,344){\makebox(0,0)[lb]{\smash{{\SetFigFont{12}{14.4}{\rmdefault}{\mddefault}{\updefault}{$1$}%
}}}}
\put(-224,-106){\makebox(0,0)[lb]{\smash{{\SetFigFont{12}{14.4}{\rmdefault}{\mddefault}{\updefault}{$2$}%
}}}}
\put(-224,-556){\makebox(0,0)[lb]{\smash{{\SetFigFont{12}{14.4}{\rmdefault}{\mddefault}{\updefault}{$3$}%
}}}}
\put(-224,-1006){\makebox(0,0)[lb]{\smash{{\SetFigFont{12}{14.4}{\rmdefault}{\mddefault}{\updefault}{$4$}%
}}}}
\put(-224,-1456){\makebox(0,0)[lb]{\smash{{\SetFigFont{12}{14.4}{\rmdefault}{\mddefault}{\updefault}{$5$}%
}}}}
\put(-224,-1906){\makebox(0,0)[lb]{\smash{{\SetFigFont{12}{14.4}{\rmdefault}{\mddefault}{\updefault}{$6$}%
}}}}
\put(-224,-2356){\makebox(0,0)[lb]{\smash{{\SetFigFont{12}{14.4}{\rmdefault}{\mddefault}{\updefault}{$7$}%
}}}}
\put(-224,-2806){\makebox(0,0)[lb]{\smash{{\SetFigFont{12}{14.4}{\rmdefault}{\mddefault}{\updefault}{$8$}%
}}}}
\put(-224,-3256){\makebox(0,0)[lb]{\smash{{\SetFigFont{12}{14.4}{\rmdefault}{\mddefault}{\updefault}{$9$}%
}}}}
\put(  1,884){\makebox(0,0)[lb]{\smash{{\SetFigFont{12}{14.4}{\rmdefault}{\mddefault}{\updefault}{$0$}%
}}}}
\put(-224,794){\makebox(0,0)[lb]{\smash{{\SetFigFont{12}{14.4}{\rmdefault}{\mddefault}{\updefault}{$0$}%
}}}}
\put(-44,1154){\makebox(0,0)[lb]{\smash{{\SetFigFont{12}{14.4}{\rmdefault}{\mddefault}{\updefault}{$\lambda$}%
}}}}
\put(-539,704){\makebox(0,0)[lb]{\smash{{\SetFigFont{12}{14.4}{\rmdefault}{\mddefault}{\updefault}{$N$}%
}}}}
\end{picture}%
\caption{Checkerboard $B_2$ corresponding to the first $3^2$
elements of $Y_3$.}
     \label{fboard2}
\end{center}
\end{figure}

\begin{figure}[htp]
\begin{center}
\begin{picture}(0,0)%
\includegraphics{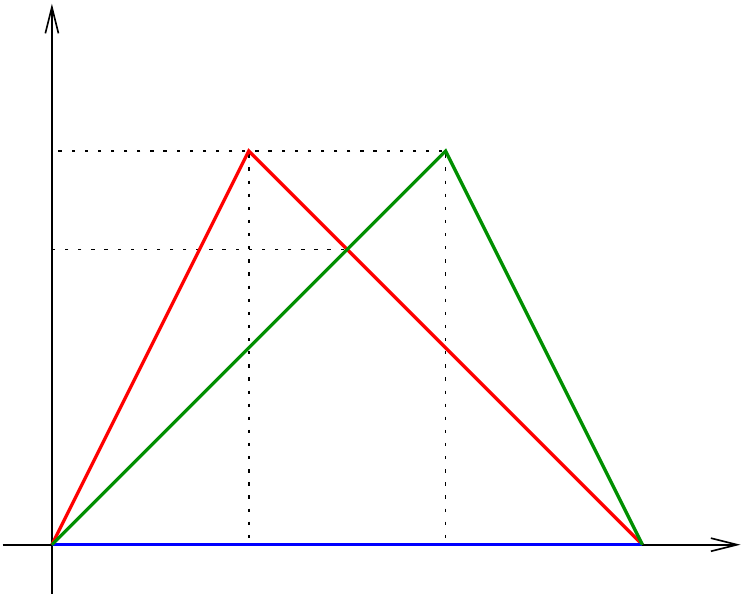}%
\end{picture}%
\setlength{\unitlength}{4144sp}%
\begingroup\makeatletter\ifx\SetFigFont\undefined%
\gdef\SetFigFont#1#2#3#4#5{%
  \reset@font\fontsize{#1}{#2pt}%
  \fontfamily{#3}\fontseries{#4}\fontshape{#5}%
  \selectfont}%
\fi\endgroup%
\begin{picture}(3399,2790)(2689,-4639)
\put(3781,-4561){\makebox(0,0)[lb]{\smash{{\SetFigFont{12}{14.4}{\rmdefault}{\mddefault}{\updefault}{$\frac{1}{3}$}%
}}}}
\put(4681,-4561){\makebox(0,0)[lb]{\smash{{\SetFigFont{12}{14.4}{\rmdefault}{\mddefault}{\updefault}{$\frac{2}{3}$}%
}}}}
\put(5581,-4561){\makebox(0,0)[lb]{\smash{{\SetFigFont{12}{14.4}{\rmdefault}{\mddefault}{\updefault}{$1$}%
}}}}
\put(2746,-2581){\makebox(0,0)[lb]{\smash{{\SetFigFont{12}{14.4}{\rmdefault}{\mddefault}{\updefault}{$\frac{2}{3}$}%
}}}}
\put(2746,-3031){\makebox(0,0)[lb]{\smash{{\SetFigFont{12}{14.4}{\rmdefault}{\mddefault}{\updefault}{$\frac{1}{2}$}%
}}}}
\put(2746,-4561){\makebox(0,0)[lb]{\smash{{\SetFigFont{12}{14.4}{\rmdefault}{\mddefault}{\updefault}{$0$}%
}}}}
\put(4141,-2761){\makebox(0,0)[lb]{\smash{{\SetFigFont{12}{14.4}{\rmdefault}{\mddefault}{\updefault}{$\psi_3^{\id}$}%
}}}}
\put(3331,-4111){\makebox(0,0)[lb]{\smash{{\SetFigFont{12}{14.4}{\rmdefault}{\mddefault}{\updefault}{$\varphi_{3,2}^{\id}$}%
}}}}
\put(4996,-4111){\makebox(0,0)[lb]{\smash{{\SetFigFont{12}{14.4}{\rmdefault}{\mddefault}{\updefault}{$\varphi_{3,1}^{\id}$}%
}}}}
\end{picture}%
\caption{Graph of $\varphi_{3,0}^\id$ (blue), $\varphi_{3,1}^\id$
(red) and $\varphi_{3,2}^\id$ (green);
$\psi_3^{\id}=\max(\varphi_{3,1}^\id,\varphi_{3,1}^\id)$.}
     \label{graphpsi}
\end{center}
\end{figure}

Now, it remains to compute the difference
$\delta=\widetilde{E}(\lambda, N, B_3)-\widetilde{E}(3\nu, N,
B_3)$ for each of the nine possibilities to make the step from
$B_3$ to $B_2$:
\begin{itemize}
\item If $\lambda=3\mu$ (blue lines) then for any $N$ we have $\delta=0$ and $\nu=\mu$.\\
\item If $\lambda=3\mu+1$ (red lines) then for $0<N\le 9$
we have $\delta=2N$ and $\nu=\mu+1$, and for $9<N\le 27$
we have $\delta=27-N$ and $\nu=\mu$.\\
\item If $\lambda=3\mu+2$ (green lines) then for $0<N\le
18$ we have $\delta=N$ and $\nu=\mu+1$, and for $18<N\le 27$ we
have $\delta=2(27-N)$ and $\nu=\mu$.
\end{itemize}

Coming back to the remainder and setting $\varphi_{3,0}^\id=0$,
$\varphi_{3,1}^\id(x)=\min(2x, 1-x)$, and
$\varphi_{3,2}^\id(x)=\min(x, 2(1-x))$ (see Figure~\ref{graphpsi})
it is easy to check that the three cases above lead to the first
step of Equation \eqref{DesLemForm}, where $1< i \le  3$ and
$\varepsilon=\lambda \bmod 3$:
$$
E\left (\frac{\lambda}{27},N,Y_b^\Sigma\right )=E\left
(\frac{\nu}{9}, N-9(i-1), Y_b^\Sigma\right
)+\varphi_{3,\varepsilon}^\id\left(\frac{N}{27}\right ).
$$
The next steps follow in the same way, but it is necessary to
control the intervals that appear inductively in the descending
recursion which explains the sophisticated definition of the
$\varepsilon_j$'s in Equation \eqref{DesLemForm} and the length of
the proof of this formula (5 pages in \cite{Fau05}).

\medskip

Concluding this illustration for base 3, we give the analog
of Theorem \ref{thmbfstar} for the triadic van der Corput sequence
$Y_3$ and the corresponding graph (Figure~\ref{f3'}) showing the
structure of the discrepancy by means of the function
$\psi_3^\id$, see \cite{fau1978}.
\begin{theorem}[Faure]\label{thmfauY3}
For $N \in \NN$ we have
\begin{equation}
ND_N^*(Y_3) = ND_N(Y_3)=\sum_{r=1}^{\infty} \psi_3^\id\left
(\frac{N}{3^r}\right )  = \sum_{r=1}^m
\psi_3^\id\left(\frac{N}{3^r}\right )+\frac{N}{3^m}\ \ \mbox{
whenever $1 \le N \le 3^m$},
\end{equation}
$$
ND_N^*(Y_3) \le \frac{\log N}{2 \log 3} +\frac{3}{4}+\frac{\log
2}{2 \log 3},
$$
and
$$
\limsup_{N \rightarrow \infty} \left(ND_N^*(Y_3)-\frac{\log N}{2
\log 3}\right)=\frac{3}{4}+ \frac{\log 2}{2 \log 3}.
$$
\end{theorem}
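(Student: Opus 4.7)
The first equality and the infinite-series representation are inherited directly from the existing machinery. By Remark~\ref{alphabid}, the identical permutation satisfies $\psi_3^{\id,-}\equiv 0$ (all $\varphi_{3,h}^{\id}$ are non-negative), so $D_N^-(Y_3)=0$ and hence $D_N^*(Y_3)=D_N(Y_3)$; Theorem~\ref{thmF81D} applied with $b=3$ and $\Sigma=(\id)_{r\ge 0}$ then gives $ND_N(Y_3)=\sum_{r=1}^\infty\psi_3^{\id}(N/3^r)$. For the finite-sum reformulation I would use that $\psi_3^{\id}(x)=2x$ on $[0,1/3]$, as is visible from Figure~\ref{graphpsi}: when $N\le 3^m$ and $r\ge m+1$, the argument $N/3^r$ already lies in $[0,1/3]$, and the tail geometric series evaluates to $\sum_{r=m+1}^{\infty}2N/3^r=N/3^m$.

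The limsup is then a computation with the right extremal sequence. I expect $N_m:=(3^m-1)/2$, whose base-$3$ expansion consists of $m$ consecutive $1$'s, to be extremal. For this $N_m$ one has $\{N_m/3^r\}=1/2-1/(2\cdot 3^r)\in[1/3,1/2]$ for $1\le r\le m$, and since $\psi_3^{\id}(x)=1-x$ on that interval, summation gives
$$\sum_{r=1}^m\psi_3^{\id}\!\left(\frac{N_m}{3^r}\right)+\frac{N_m}{3^m}=\frac{m}{2}+\frac{3}{4}-\frac{3^{1-m}}{4}.$$
Combined with $\log N_m=m\log 3-\log 2+\log(1-3^{-m})$, this yields $N_m D_{N_m}^*(Y_3)-\log N_m/(2\log 3)\to 3/4+\log 2/(2\log 3)$, which is the ``$\ge$'' half of the limsup.

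The main obstacle is the matching uniform upper bound, which simultaneously delivers the displayed inequality and the ``$\le$'' half of the limsup. The task is to show that for every $m\in\NN$ and every integer $N$ with $1\le N\le 3^m$,
$$\sum_{r=1}^m\psi_3^{\id}\!\left(\frac{N}{3^r}\right)+\frac{N}{3^m}-\frac{\log N}{2\log 3}\le\frac{3}{4}+\frac{\log 2}{2\log 3}.$$
I would attack this by analysing the left-hand side as a function of the base-$3$ digits $(n_0,\ldots,n_{m-1})$ of $N$, exploiting the piecewise-linear structure of $\psi_3^{\id}$ (slopes $\pm 1,\pm 2$, peaks of height $2/3$ at $x=1/3$ and $x=2/3$, local minimum $1/2$ at $x=1/2$) together with an exchange argument: flipping one digit at a time and tracking both the change in the finite sum and the concomitant change in $\log N$, one shows that the all-ones pattern $N_m$ is the (asymptotic) extremiser because the logarithmic ``cost'' of moving from $N_m$ to any neighbouring digit pattern outweighs the gain in the sum. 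The careful handling of boundary cases and lower-order corrections in this digit-by-digit comparison is where the bulk of the work lies.
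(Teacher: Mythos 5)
Your handling of the first display is correct: $\psi_3^{\id,-}\equiv 0$ gives $D_N^*=D_N$, Theorem~\ref{thmF81D} gives the infinite series, and the finite reformulation follows from $\psi_3^{\id}(x)=2x$ on $[0,1/3]$ so that the tail $\sum_{r>m}2N/3^r=N/3^m$. Your choice and evaluation of the extremal sequence $N_m=(3^m-1)/2$ is also correct: the base-$3$ digit string is all ones, $\{N_m/3^r\}=1/2-1/(2\cdot 3^r)\in[1/3,1/2]$, and $\psi_3^{\id}=1-x$ there, giving $N_m D^*_{N_m}(Y_3)=m/2+3/4-3^{1-m}/4$ and hence the ``$\geq$'' half of the limsup.

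However, the argument is incomplete where you yourself flag the difficulty: the uniform upper bound $ND_N^*(Y_3)\le \tfrac{\log N}{2\log 3}+\tfrac{3}{4}+\tfrac{\log 2}{2\log 3}$ is asserted via a ``digit-flipping exchange argument'' that is only described, not carried out, and it is far from routine. Changing a single base-$3$ digit $n_j$ of $N$ perturbs \emph{all} the fractional parts $\{N/3^r\}$ with $r>j$ simultaneously (with possible wrap-arounds), so the effect on $\sum_r\psi_3^{\id}(N/3^r)$ is global across the sum, not a local trade-off between one term and $\log N$; a single digit flip can also change the number of leading digits of $N$, which changes $\log N$ by an amount that is not comparable term-by-term with the change in the sum. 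Moreover, this sharp constant cannot be extracted from the general machinery: Theorem~\ref{asymptD} with $\alpha_{3,\id}=1/2$ only yields $ND_N(Y_3)\le \tfrac{\log N}{2\log 3}+2$, strictly weaker than the claimed $\tfrac{\log N}{2\log 3}+1.0655\ldots$. The standard way to obtain the sharp bound (as in B\'ejian--Faure for base $2$ and Faure's 1978 note for base $3$) is to analyse the recursion $t_{r+1}=(n_r+t_r)/3$ governing the fractional parts and to show, e.g.\ by pairing or by a maximisation over two consecutive digits, that the partial sums $\tfrac1m\sum_{r=1}^m\psi_3^{\id}(N/3^r)$ cannot exceed $1/2$ by more than an explicit, summable correction. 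Without this analysis, the displayed inequality and the ``$\leq$'' half of the limsup remain unproved.
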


It is even possible to obtain generating formulas like for $Y_2$
(cf. \eqref{genrecY2}), but, due to the form of the function
$\psi_3^\id$, they are not as simple as for $Y_2$ where
$\psi_2^\id= \| \cdot \|$ (the ``hat-function''). Again the
recursion becomes particularly apparent when we use the notation
$D(N):=N D_N^{\ast}(Y_3)$. Set  $D(0)=0$, then for any $N \in
\NN_0$
$$
D(3N)=D(N), \ \
D(3N+\tfrac{1}{2})=\tfrac{2}{3}D(N)+\tfrac{1}{3}D(N+
\tfrac{1}{2})+\tfrac{1}{3},
$$
$$
D(3N+1)=\tfrac{1}{3}D(N)+\tfrac{2}{3}D(N+\tfrac{1}{2})+\tfrac{2}{3}
\ \ , \ \  D(3N+\tfrac{3}{2})=D(N+\tfrac{1}{2})+\tfrac{1}{2},
$$
$$
D(3N+2)=\tfrac{1}{3}D(N+1)+\tfrac{2}{3}D(N+\tfrac{1}{2})+\tfrac{2}{3},
$$
$$
D(3N+\tfrac{5}{2})=\tfrac{2}{3}D(N+1)+\tfrac{1}{3}D(N+\tfrac{1}{2})+\tfrac{2}{3}.
$$

\begin{figure}[ht]
\begin{center}
\includegraphics[width=115mm]{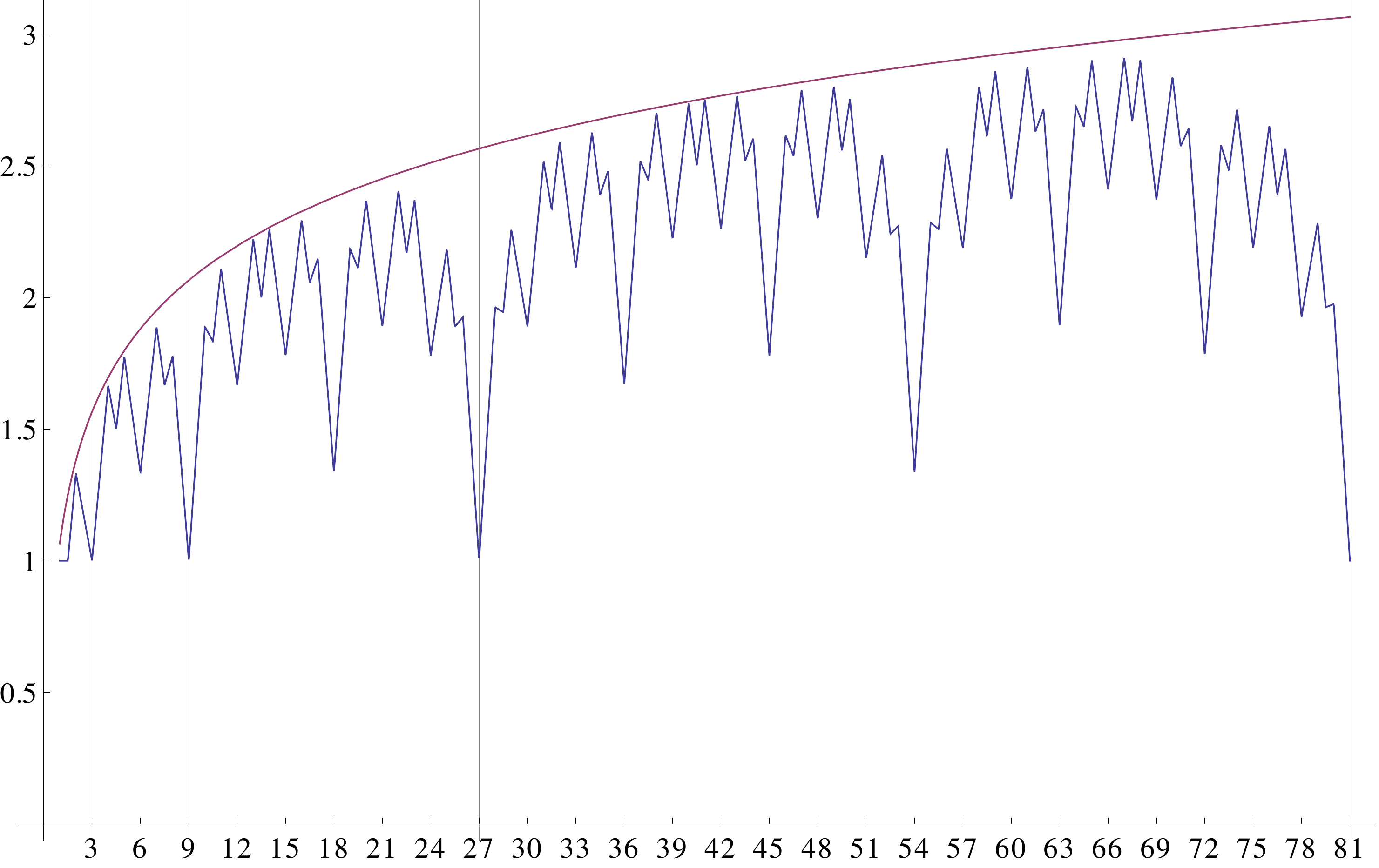}
\caption{$D(N)=N D_N^*(Y_3)$ of the van der Corput sequence in
base 3 for $N=1,2,\ldots,81$, in comparison with $N\mapsto
\tfrac{\log N}{2 \log 3}+\tfrac{3}{4}+\tfrac{\log 2}{2 \log 3}$.}
\label{f3'}
\end{center}
\end{figure}

For larger bases, the complexity of such generating formulas
increases and reduces the interest in searching for them.


\subsection{The von Neumann-Kakutani transformation}\label{NKtransform}

The van der Corput sequence can also be obtained from the von
Neumann-Kakutani transformation. This was first observed by
Lambert \cite{lamb1,lamb2}.

The {\it $b$-adic von Neumann-Kakutani transformation}
$T_b:[0,1)\rightarrow [0,1)$ is a piecewise translation map given
by
$$T_b(x) = x - 1 + \frac{1}{b^k} + \frac{1}{b^{k+1}}
\ \ \ \mbox{ for $x \in \left[1 - \frac{1}{b^k}, 1 -
\frac{1}{b^{k+1}}\right)$ with $k = 0, 1, 2, \ldots$.}
$$ 
A plot of
$T_b$ for $b=2$ is presented in Figure~\ref{f3}. The von
Neumann-Kakutani transformation is an ergodic and measure
preserving transformation $T_b:[0,1) \rightarrow [0,1)$, that is,
for every Borel set $A$, $T_bA=A$ only if $\lambda(A)=0$ or $\lambda(A)=1$,
and $\lambda(T_bA)=\lambda(A)$, where $\lambda$ denotes the
Lebesgue measure.
\begin{figure}[htp]
\begin{center}
\begin{picture}(0,0)%
\includegraphics{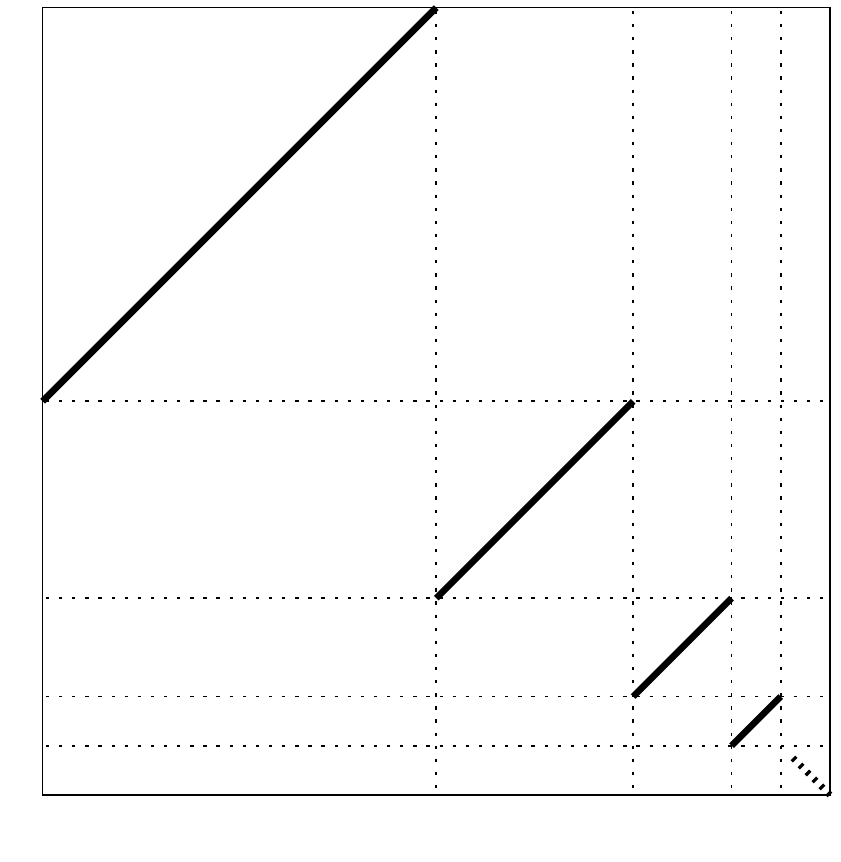}%
\end{picture}%
\setlength{\unitlength}{4144sp}%
\begingroup\makeatletter\ifx\SetFigFont\undefined%
\gdef\SetFigFont#1#2#3#4#5{%
  \reset@font\fontsize{#1}{#2pt}%
  \fontfamily{#3}\fontseries{#4}\fontshape{#5}%
  \selectfont}%
\fi\endgroup%
\begin{picture}(3828,3936)(2506,-4864)
\put(4456,-4786){\makebox(0,0)[lb]{\smash{{\SetFigFont{12}{14.4}{\rmdefault}{\mddefault}{\updefault}{$\frac{1}{2}$}%
}}}}
\put(2656,-4786){\makebox(0,0)[lb]{\smash{{\SetFigFont{12}{14.4}{\rmdefault}{\mddefault}{\updefault}{$0$}%
}}}}
\put(5806,-4786){\makebox(0,0)[lb]{\smash{{\SetFigFont{12}{14.4}{\rmdefault}{\mddefault}{\updefault}{$\frac{7}{8}$}%
}}}}
\put(5356,-4786){\makebox(0,0)[lb]{\smash{{\SetFigFont{12}{14.4}{\rmdefault}{\mddefault}{\updefault}{$\frac{3}{4}$}%
}}}}
\put(2521,-4156){\makebox(0,0)[lb]{\smash{{\SetFigFont{12}{14.4}{\rmdefault}{\mddefault}{\updefault}{$\frac{1}{8}$}%
}}}}
\put(2521,-4606){\makebox(0,0)[lb]{\smash{{\SetFigFont{12}{14.4}{\rmdefault}{\mddefault}{\updefault}{$0$}%
}}}}
\put(2521,-3706){\makebox(0,0)[lb]{\smash{{\SetFigFont{12}{14.4}{\rmdefault}{\mddefault}{\updefault}{$\frac{1}{4}$}%
}}}}
\put(2521,-2806){\makebox(0,0)[lb]{\smash{{\SetFigFont{12}{14.4}{\rmdefault}{\mddefault}{\updefault}{$\frac{1}{2}$}%
}}}}
\put(6031,-4786){\makebox(0,0)[lb]{\smash{{\SetFigFont{12}{14.4}{\rmdefault}{\mddefault}{\updefault}{$\frac{15}{16}$}%
}}}}
\end{picture}%
\caption{The von Neumann-Kakutani transformation for $b=2$.}
     \label{f3}
\end{center}
\end{figure}

Likewise, $T_b$ can also be constructed inductively with a
so-called {\it cutting and stacking construction} which is often
used in ergodic theory: at first cut $[0,1)$ into $b$ intervals
$I_j^{(1)}=[\tfrac{j}{b},\tfrac{j+1}{b})$ for $j =
0,1,\ldots,b-1$. Define the transformation
$T_{b,1}:[0,\tfrac{b-1}{b}) \rightarrow [\tfrac{1}{b},1)$ as the
translation of $I_j^{(1)}$ onto $I_{j+1}^{(1)}$ for
$j=0,1,\ldots,b-1$. In the second step cut all intervals
$I_j^{(1)}$ into $b$ subintervals of the form
$I_j^{(2)}=[\tfrac{j}{b^2},\tfrac{j+1}{b^2})$ for
$j=0,1,\ldots,b^2 -1$. Then the transformation
$T_{b,2}:[0,\tfrac{b^2-1}{b^2}) \rightarrow [\tfrac{1}{b^2},1)$ is
the extension of $T_{b,1}$ which translates $I_{b^2-b+j}^{(2)}$
onto $I_{b^2-b+j+1}^{(2)}$ for $j=0,1,\ldots,b-1$. In this way one
can inductively construct a sequence of mappings
$T_{b,1},T_{b,2},T_{b,3},\ldots$. The first two steps for $b=2$
are illustrated in Figure~\ref{f4}. Finally the von
Neumann-Kakutani
transformation is defined as $T_b:=\lim_{n \rightarrow \infty} T_{b,n}$. See also \cite{oekt}.\\
\begin{figure}[htp]
\begin{center}
\begin{picture}(0,0)%
\includegraphics{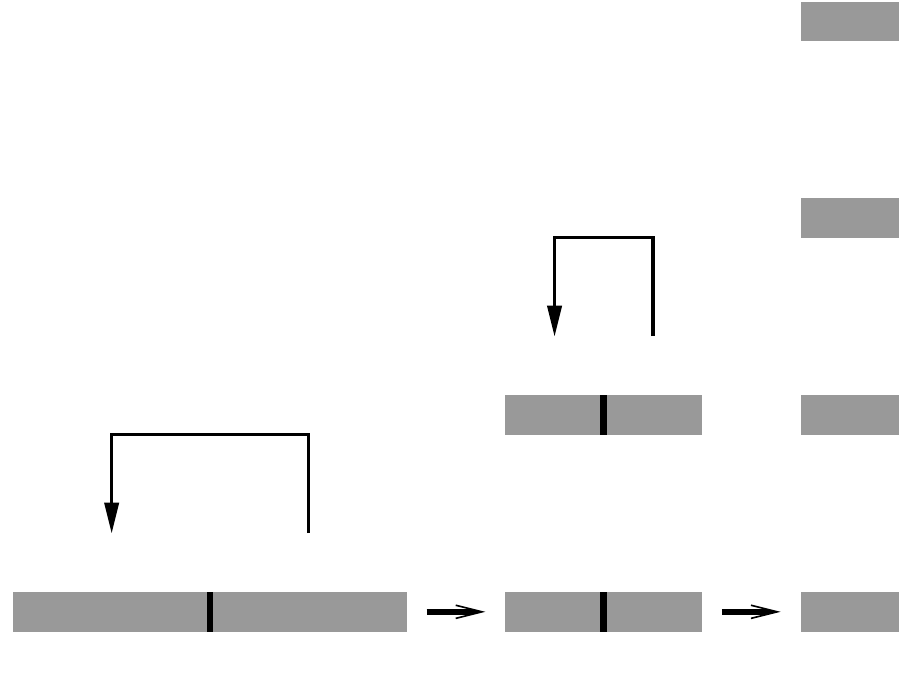}%
\end{picture}%
\setlength{\unitlength}{4144sp}%
\begingroup\makeatletter\ifx\SetFigFont\undefined%
\gdef\SetFigFont#1#2#3#4#5{%
  \reset@font\fontsize{#1}{#2pt}%
  \fontfamily{#3}\fontseries{#4}\fontshape{#5}%
  \selectfont}%
\fi\endgroup%
\begin{picture}(4111,3184)(3991,-2164)
\put(4006,-2086){\makebox(0,0)[lb]{\smash{{\SetFigFont{12}{14.4}{\rmdefault}{\mddefault}{\updefault}{$0$}%
}}}}
\put(5806,-2086){\makebox(0,0)[lb]{\smash{{\SetFigFont{12}{14.4}{\rmdefault}{\mddefault}{\updefault}{$1$}%
}}}}
\put(4906,-2086){\makebox(0,0)[lb]{\smash{{\SetFigFont{12}{14.4}{\rmdefault}{\mddefault}{\updefault}{$\frac{1}{2}$}%
}}}}
\put(6256,-2086){\makebox(0,0)[lb]{\smash{{\SetFigFont{12}{14.4}{\rmdefault}{\mddefault}{\updefault}{$0$}%
}}}}
\put(6706,-2086){\makebox(0,0)[lb]{\smash{{\SetFigFont{12}{14.4}{\rmdefault}{\mddefault}{\updefault}{$\frac{1}{4}$}%
}}}}
\put(7156,-2086){\makebox(0,0)[lb]{\smash{{\SetFigFont{12}{14.4}{\rmdefault}{\mddefault}{\updefault}{$\frac{1}{2}$}%
}}}}
\put(6256,-1186){\makebox(0,0)[lb]{\smash{{\SetFigFont{12}{14.4}{\rmdefault}{\mddefault}{\updefault}{$\frac{1}{2}$}%
}}}}
\put(7201,-1186){\makebox(0,0)[lb]{\smash{{\SetFigFont{12}{14.4}{\rmdefault}{\mddefault}{\updefault}{$1$}%
}}}}
\put(6706,-1186){\makebox(0,0)[lb]{\smash{{\SetFigFont{12}{14.4}{\rmdefault}{\mddefault}{\updefault}{$\frac{3}{4}$}%
}}}}
\put(7606,-2086){\makebox(0,0)[lb]{\smash{{\SetFigFont{12}{14.4}{\rmdefault}{\mddefault}{\updefault}{$0$}%
}}}}
\put(8056,-2086){\makebox(0,0)[lb]{\smash{{\SetFigFont{12}{14.4}{\rmdefault}{\mddefault}{\updefault}{$\frac{1}{4}$}%
}}}}
\put(7606,-286){\makebox(0,0)[lb]{\smash{{\SetFigFont{12}{14.4}{\rmdefault}{\mddefault}{\updefault}{$\frac{1}{4}$}%
}}}}
\put(8056,-286){\makebox(0,0)[lb]{\smash{{\SetFigFont{12}{14.4}{\rmdefault}{\mddefault}{\updefault}{$\frac{1}{2}$}%
}}}}
\put(7606,-1186){\makebox(0,0)[lb]{\smash{{\SetFigFont{12}{14.4}{\rmdefault}{\mddefault}{\updefault}{$\frac{1}{2}$}%
}}}}
\put(8056,-1186){\makebox(0,0)[lb]{\smash{{\SetFigFont{12}{14.4}{\rmdefault}{\mddefault}{\updefault}{$\frac{3}{4}$}%
}}}}
\put(7606,614){\makebox(0,0)[lb]{\smash{{\SetFigFont{12}{14.4}{\rmdefault}{\mddefault}{\updefault}{$\frac{3}{4}$}%
}}}}
\put(8056,614){\makebox(0,0)[lb]{\smash{{\SetFigFont{12}{14.4}{\rmdefault}{\mddefault}{\updefault}{$1$}%
}}}}
\end{picture}%
\caption{The cutting and stacking construction of $T_b$ for $b=2$.}
     \label{f4}
\end{center}
\end{figure}

Define the iterates of $T_b$ by $$T_b^{n} x =T_b T_b^{n-1}x \ \ \
\mbox{ for $n \in \NN$},$$ where we formally set $T_b^0x=x$. Then
it is easily seen that the orbit of the iterates of $T_b$ with
starting point $0$ is exactly the van der Corput sequence. For
example for $b=2$ we obtain (cf. Figure~\ref{f3}) $$0 \rightarrow
1/2 \rightarrow 1/4 \rightarrow 3/4 \rightarrow 1/8 \rightarrow
\cdots.$$

It is known that $T_b$ is not only ergodic, but it is even
uniquely ergodic. From this it follows that the orbit of the
iterates of $T_b$ is for every starting point $x \in [0,1)$
uniformly distributed modulo 1. Hence the sequence $$(T_b^n(x))_{n
\ge 0}$$ can be seen as a natural generalization of the van der
Corput sequence in base $b$, which is obtained for the choice
$x=0$. A proof of this fact and more information in this direction
can be found in the excellent survey \cite{GHL} by Grabner,
Hellekalek, and Liardet.

Pag\`{e}s \cite{page} showed that the orbit of the von
Neumann-Kakutani transformation with arbitrary starting point $x
\in [0,1)$ has star discrepancy of order of magnitude $O((\log
N)/N)$ which is optimal according to Schmidt's lower bound
\eqref{lowschmid}.


\subsection{Bounded remainder intervals for the van der Corput sequence}\label{bdremaindersets}

It follows from the lower bound \eqref{lowschmid} that for every
sequence $X$ in $[0,1)$ the sequence $(N D_N^*(X))_{N \in \NN}$ is
unbounded as $N \rightarrow \infty$. On the other hand the
question arises whether there exist $t \in [0,1)$ for which $N
\Delta_{X,N}(t)$ is bounded in $N$. If yes, then an interval
$[0,t)$ with this property is called a {\it bounded remainder
interval}. Bounded remainder intervals or, more generally, bounded
remainder sets have been investigated in a series of papers by
W.M. Schmidt, see, in particular, \cite{schcompo,sch1974}.

For the van der Corput sequence in base $b$ one can exactly
characterize those $t \in [0,1)$ which yield bounded remainder
intervals. This was first done by Hellekalek~\cite{hel1980}.

\begin{theorem}[Hellekalek]\label{bdremainder}
An interval $[0,t)$ is a bounded remainder interval for the van
der Corput sequence in base $b$ if and only if $t$ is a $b$-adic
rational, i.e., $t \in \QQ(b^m)$ for some $m \in \NN_0$.
\end{theorem}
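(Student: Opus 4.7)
For the \emph{if} direction, suppose $t = k/b^m \in \QQ(b^m)$, and decompose $[0, t) = \bigsqcup_{j=0}^{k-1} J_{j,m}$ where $J_{j,m} = [j/b^m, (j+1)/b^m)$. By equation \eqref{anzbadicint} each cylinder satisfies $\#\{n < N : y_n \in J_{j,m}\} = \lfloor N/b^m \rfloor + \theta_j$ with $\theta_j \in \{0,1\}$; summing over $j$ and subtracting $Nt = kN/b^m$ gives $|E(t,N,Y_b)| \le k$ independently of $N$, so $[0,t)$ is a bounded remainder interval.

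For the \emph{only if} direction, assume $t \in (0, 1)$ is not a $b$-adic rational. Two observations drive the argument. First, blocking the first $Sb^m$ indices into $S$ consecutive blocks of length $b^m$ and using $y_{qb^m + r} = y_r + Y_b(q)/b^m$ yields the \emph{self-similarity identity}
\begin{equation*}
E(t, Sb^m, Y_b) = E(\{b^m t\}, S, Y_b) \qquad (S, m \in \NN).
\end{equation*}
Second, choose $j$ with $b^{-j} < t$, which is possible since $t > 0$; then $y_{b^j M} = y_M/b^j$ lies in $[0, b^{-j}) \subset [0, t)$ for every $M \ge 0$, yielding the \emph{guaranteed increment}
\begin{equation*}
E(t, b^j M + 1, Y_b) = E(t, b^j M, Y_b) + (1-t) = E(\{b^j t\}, M, Y_b) + (1-t).
\end{equation*}

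If $t$ is rational (but not $b$-adic), the orbit $(\{b^m t\})_{m \ge 0}$ is eventually periodic; replacing $t$ by a point $t^*$ of its periodic cycle, for which bounded remainder transfers via the self-similarity identity and which is itself non-$b$-adic rational, I may assume $\{b^p t\} = t$ for some $p \ge 1$. Taking $j = p$ in the guaranteed increment and setting $N_0 = 1$, $N_{\ell+1} = b^p N_\ell + 1$ gives $E(t, N_{\ell+1}, Y_b) = E(t, N_\ell, Y_b) + (1-t)$; induction yields $E(t, N_\ell, Y_b) = (\ell+1)(1-t) \to \infty$ as $\ell \to \infty$, contradicting bounded remainder.

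For irrational $t$ the orbit never returns and this combinatorial iteration does not close on itself. The cleanest completion is via Section~\ref{NKtransform}: since $(y_n)$ is the orbit of $0$ under the uniquely ergodic von Neumann--Kakutani transformation $T_b$, boundedness of the Birkhoff sums $\sum_{n<N}(\chi_{[0,t)}(y_n) - t)$ forces $\chi_{[0,t)} - t$ to be a measurable coboundary for $T_b$. On the $b$-adic odometer this coboundary equation admits a measurable solution only when $[0, t)$ is clopen in the natural $b$-adic topology, i.e.\ a finite union of $b$-adic cylinders, forcing $t \in \bigcup_m \QQ(b^m)$. This last step is the main obstacle in the ``only if'' direction; an elementary substitute would require an adaptive iteration combining the self-similarity identity with the unbounded growth of $\sup_\tau |E(\tau, N, Y_b)| = N D_N^*(Y_b)$ recorded in \eqref{discbd_tijde}.
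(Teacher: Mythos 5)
Your sufficiency (``if'') argument matches the paper's sketch exactly. For the converse, note that the paper does not actually prove it; it refers to Hellekalek \cite{hel1980} and to \cite{fau1983,fau1986,GHL}. Your treatment of rational non-$b$-adic $t$ is correct and self-contained: the self-similarity identity $E(t,Sb^m,Y_b)=E(\{b^m t\},S,Y_b)$ follows from the scaling law $Y_b(qb^m+r)=Y_b(r)+Y_b(q)/b^m$ together with the fact that $r\mapsto b^mY_b(r)$ permutes $\{0,\dots,b^m-1\}$; the bounded-remainder property then transfers along the $b$-ary orbit of $t$, and at a periodic point $t^*$ with $\{b^pt^*\}=t^*$ (which indeed satisfies $t^*=k/(b^p-1)>b^{-p}$, so the forced hit applies) the iteration $N_{\ell+1}=b^pN_\ell+1$ yields $E(t^*,N_\ell,Y_b)=(\ell+1)(1-t^*)\to\infty$, the desired contradiction.

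For irrational $t$ the gap you flag is real and is not merely a matter of invoking the right theorem. The Gottschalk--Hedlund theorem converts ``bounded Birkhoff sums along one orbit'' into ``coboundary with a continuous transfer function'' only when the observable itself is \emph{continuous} on the compact minimal system. On the $b$-adic odometer the lift of $\chi_{[0,t)}-t$ is continuous (indeed locally constant) if and only if $t$ is $b$-adic, which is precisely the case we have excluded; for non-$b$-adic $t$ it has a discontinuity at the preimage of $t$ and the theorem does not apply. Worse, for a discontinuous observable, boundedness of the sums along a single orbit imposes essentially no constraint on the measurable class of $f$: modifying $f$ on the measure-zero orbit of $0$ leaves it unchanged in $L^{\infty}$ while allowing the orbital sums to be arranged arbitrarily, so ``bounded remainder for $Y_b$'' $\Rightarrow$ ``measurable coboundary'' does not follow. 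A viable elementary route, consistent with the references the paper points to, is the explicit lower bound of Theorem~\ref{th1F83}: since $U_a+V_a$ grows without bound whenever the base-$b$ digit string of $a$ is not eventually $0$ and not eventually $b-1$ --- i.e.\ whenever $a$ is not a $b$-adic rational --- that estimate yields unboundedness of $N\vert\Delta_{Y_b,N}(a)\vert$ for all non-$b$-adic $a$ at once, subsuming both your rational and irrational cases and making the coboundary detour unnecessary.
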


The sufficiency of the condition in Theorem \ref{bdremainder} is almost trivial. Writing
the interval $[0,a b^{-m})$ with $a \in \{0,1,\ldots,b^m -1\}$ as
the disjoint union $\bigcup_{k=0}^{a-1}[k b^{-m},(k+1)b^{-m})$ it
follows from \eqref{anzbadicint} that, for suitable $\theta_k \in
\{0,1\}$,
$$N \Delta_{Y_b,N}(a b^{-m})=\sum_{k=0}^{a-1}
\left(\left\lfloor\frac{N}{b^m}\right\rfloor +\theta_k\right) - N
\frac{a}{b^m} \ll 1.$$ The necessity of the condition needs some
investigation for which we refer to \cite{hel1980}. Proofs of the
result can also be found in \cite{fau1983,fau1986,GHL}.
Quantitative versions of Theorem~\ref{bdremainder} are given in
\cite{DLP05,fau1983,fau1986}.

We begin with the van der Corput sequence in base $2$ and present
the result of \cite{DLP05}. For $t \in [0,1)$ with base 2
representation $t=\tau_1/2+\tau_2/2^2+\cdots$ and $m \in \NN$
define
$$f_{t}(m):=\#\{j<m\, : \, \tau_j \not= \tau_{j+1}\}\ \ \ \mbox{ and }\ \ \
g_{t}(m):=\frac{1}{2} \sum_{u=0}^{m-1}\|2^u t\|.$$
We have the following result.
\begin{theorem}[Drmota, Larcher, and Pillichshammer]\label{th7}
Let $t \in [0,1)$ with an infinite base 2 representation. Then for
$\varepsilon >0$ we have
\begin{eqnarray*}
\lim_{m \rightarrow \infty}\frac{\#\left\{N \le 2^m :
(1-\varepsilon)g_t(m) < \Delta_{Y_2,N}(t) <
(1+\varepsilon)g_t(m)\right\}}{2^m}=1.
\end{eqnarray*}
\end{theorem}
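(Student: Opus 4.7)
My plan is to follow the strategy used in Theorem~\ref{thD1}: represent the (un-normalised) local discrepancy as a sum over a dyadic decomposition of $[0,t)$, compute its mean and variance as $N$ varies uniformly over $\{1,\ldots,2^m\}$, and conclude via Chebyshev's inequality.

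First I would decompose $[0,t)$ using the binary expansion $t=\sum_{j\ge 1}\tau_j 2^{-j}$. Setting $t_{u-1}:=\sum_{j<u}\tau_j 2^{-j}$, one has $[0,t)=\bigsqcup_{u:\tau_u=1}I_u$ with $I_u=[t_{u-1},t_{u-1}+2^{-u})$ a dyadic interval of level $u$. By the counting identity~\eqref{anzbadicint}, $y_n\in I_u$ if and only if $n\equiv r_u\pmod{2^u}$, where $r_u=\sum_{j=1}^{u-1}\tau_j 2^{j-1}$ is the bit-reversal of the integer $2^u t_{u-1}$. Summing the contributions from the pieces yields the exact representation
\[
N\Delta_{Y_2,N}(t)=\sum_{u:\,\tau_u=1}\bigl(\mathbf{1}[r_u<N\bmod 2^u]-\{N/2^u\}\bigr).
\]

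Next I would compute $\EE[N\Delta_{Y_2,N}(t)]$ when $N$ is uniform on $\{1,\ldots,2^m\}$. For $u\le m$ the residue $N\bmod 2^u$ is uniform on $\{0,\ldots,2^u-1\}$, so each summand has mean $\tfrac12-(2r_u+1)/2^{u+1}$; the tail $u>m$ contributes a bounded amount controlled by a convergent geometric series. A careful rearrangement of the resulting double sum---substituting $r_u=\sum_{j<u}\tau_j 2^{j-1}$ and interchanging summation in $u$ and $j$---produces the key identity
\[
\EE\bigl[N\Delta_{Y_2,N}(t)\bigr]=g_t(m)+O(1).
\]
This step converts a ``prefix'' sum over the digits $\tau_1,\ldots,\tau_{u-1}$ (entering through $r_u$) into the ``suffix'' sum $\tfrac{1}{2}\sum_{u=0}^{m-1}\|2^u t\|$ that defines $g_t(m)$, and it is the combinatorial heart of the argument, relying on the bit-reversal symmetry intrinsic to the van der Corput construction.

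The variance estimate is more straightforward: since the $u$-th summand depends on $N$ only through its lowest $u$ binary digits, treating the bits of $N$ as approximately independent Bernoullis yields $\mathrm{Var}[N\Delta_{Y_2,N}(t)]=O(m)$. Under the standing hypothesis that $t$ has an infinite binary expansion, one verifies that $g_t(m)\to\infty$ fast enough that $m/g_t(m)^2\to 0$; Chebyshev then gives
\[
\frac{\#\{N\le 2^m:\bigl|N\Delta_{Y_2,N}(t)-g_t(m)\bigr|>\varepsilon g_t(m)\}}{2^m}=O\!\left(\frac{m}{\varepsilon^{2}\,g_t(m)^2}\right)\longrightarrow 0,
\]
which is the claim. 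The main obstacle throughout is the mean-matching identity above: reconciling the prefix nature of $r_u$ with the suffix nature of $\|2^u t\|$ requires delicate algebraic bookkeeping and exploits the self-dual bit structure of $Y_2$; the variance bound and the final Chebyshev step are comparatively routine, although the growth of $g_t(m)$ along non-dyadic $t$ may deserve a brief separate verification.
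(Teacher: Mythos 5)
Your decomposition of $[0,t)$ into dyadic intervals $I_u$, the identification of $r_u$ via bit-reversal, and the resulting exact representation of $N\Delta_{Y_2,N}(t)$ are all sound, and the mean identity $\mathbb{E}[N\Delta_{Y_2,N}(t)]=g_t(m)+O(1)$ is correct (and does require the combinatorial manipulation you flag). However, there is a genuine gap in the variance step and the final limiting argument.

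First, the claim that ``$t$ has an infinite binary expansion implies $m/g_t(m)^2\to 0$'' is simply false. The paper records the two-sided bound $\tfrac{1}{8}(f_t(m)+1)\le g_t(m)\le\tfrac{1}{2}(f_t(m)+1)$, so $g_t(m)\asymp f_t(m)$, the number of digit changes. For $t=\sum_{k\ge 1}2^{-2^k}$ one has $f_t(m)\asymp\log m$, hence $m/g_t(m)^2\asymp m/(\log m)^2\to\infty$. With your variance bound $O(m)$, Chebyshev gives nothing for such $t$, yet the theorem still asserts concentration. The same problem appears for $t$ built from runs of increasing length, where $g_t(m)\asymp\sqrt{m}$ and $m/g_t(m)^2$ stays bounded away from $0$.

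Second, the bound $\mathrm{Var}[N\Delta_{Y_2,N}(t)]=O(m)$ obtained by ``treating the bits of $N$ as approximately independent Bernoullis'' is a crude overcount that misses the essential cancellation. Group the summands $X_u=\mathbf{1}[N\bmod 2^u>r_u]-\{N/2^u\}$ according to maximal runs of $1$'s in the digits of $t$. For a run $\tau_{a}=\cdots=\tau_{b}=1$, the partial sum $\sum_{u=a}^{b}X_u$ equals $A([t_{a-1},t_{a-1}+2^{-a}(1-2^{-(b-a+1)}));N;Y_2)-N\cdot 2^{-a}(1-2^{-(b-a+1)})$, which is the error over a \emph{single} dyadic-type interval and is therefore $O(1)$ uniformly in the run length; it does not scale with $b-a$. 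So the diagonal variance $\sum_u\mathrm{Var}(X_u)$, which you estimate as $\Theta(\#\{u\le m:\tau_u=1\})$, wildly overstates the true variance. To close the argument you must show $\mathrm{Var}[N\Delta_{Y_2,N}(t)]=o(g_t(m)^2)$ — in fact the right order is $O(f_t(m))\asymp O(g_t(m))$, driven by the (approximately independent, $O(1)$-bounded) per-run contributions — and this requires exhibiting the strong negative covariances inside each run. As written, the Chebyshev step does not follow from what you have proved, and the asserted limit fails to be established for all non-dyadic $t$.
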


The functions $f_{t}$ and $g_{t}$ are intimately related by the
inequality
$$\frac{1}{8}(f_t(m)+1) \le g_t(m) \le \frac{1}{2}(f_t(m)+1) \ \ \
\mbox{ for all $t \in [0,1)$ and $m \in \NN$,}$$ see
\cite[Lemma~5]{DLP05}. So we obtain the following corollary.

\begin{corollary}\label{cor1}
Let $t \in [0,1)$ with an infinite base 2 representation. Then for
$\varepsilon >0$ we have
\begin{eqnarray*}
\lim_{m \rightarrow \infty}\frac{\#\left\{N \le 2^m :
(1-\varepsilon)f_t(m) < \Delta_{Y_2,N}(t) \right\}}{2^m}=1.
\end{eqnarray*}
\end{corollary}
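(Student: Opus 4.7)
The plan is to deduce Corollary~\ref{cor1} as an immediate consequence of Theorem~\ref{th7}, by plugging in the explicit comparison
$$\tfrac{1}{8}(f_t(m)+1) \le g_t(m) \le \tfrac{1}{2}(f_t(m)+1)$$
that has just been quoted from \cite[Lemma~5]{DLP05}. The idea is simply to replace $g_t(m)$ in the conclusion of Theorem~\ref{th7} by its lower bound in terms of $f_t(m)$, and then to absorb the numerical factor into the $\varepsilon$.

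In more detail, I would fix $\varepsilon>0$ and choose an auxiliary parameter $\varepsilon'>0$ (its value to be pinned down at the end by matching constants). Applying Theorem~\ref{th7} with $\varepsilon'$ in place of $\varepsilon$ produces a set
$$\mathcal{N}_m := \left\{N \le 2^m \ :\ \Delta_{Y_2,N}(t) > (1-\varepsilon')\, g_t(m)\right\}$$
whose cardinality satisfies $|\mathcal{N}_m|/2^m \to 1$ as $m \to \infty$. For each $N \in \mathcal{N}_m$, the lower comparison $g_t(m) \ge \tfrac{1}{8}(f_t(m)+1)$ gives immediately
$$\Delta_{Y_2,N}(t) \;>\; (1-\varepsilon')\, g_t(m) \;\ge\; \tfrac{1-\varepsilon'}{8}\,(f_t(m)+1).$$

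Choosing $\varepsilon'$ so that $\tfrac{1-\varepsilon'}{8}(f_t(m)+1)$ is at least $(1-\varepsilon)f_t(m)$---which is where the constants have to be matched carefully, using that the additive $+1$ contributes an extra margin---one obtains the inequality appearing in the corollary on the entire set $\mathcal{N}_m$. Since $|\mathcal{N}_m|/2^m \to 1$, the claimed density statement follows.

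There is no genuine obstacle in this argument: all the distributional content is already packaged inside Theorem~\ref{th7}, and the proof amounts to replacing $g_t(m)$ by the cited lower bound involving $f_t(m)$. The only thing that needs care is the bookkeeping of the numerical constants $\tfrac{1}{8}$, $1-\varepsilon'$, $1-\varepsilon$, together with the additive term $+1$, in order to ensure that the resulting lower bound is no smaller than the $(1-\varepsilon)\, f_t(m)$ that the corollary demands.
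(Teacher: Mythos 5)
Your strategy---apply Theorem~\ref{th7} with an auxiliary $\varepsilon'$ and substitute the comparison $g_t(m) \ge \tfrac{1}{8}(f_t(m)+1)$---is exactly the derivation the survey sketches (``So we obtain the following corollary''). The constant-matching step, which you flag as the only delicate point and assert can be made to work, does not close. From Theorem~\ref{th7} you obtain, on a set of $N$'s of density tending to one, $\Delta_{Y_2,N}(t) > (1-\varepsilon')\,g_t(m) \ge \tfrac{1-\varepsilon'}{8}(f_t(m)+1)$. To conclude $\Delta_{Y_2,N}(t) > (1-\varepsilon)f_t(m)$ you would need $\tfrac{1-\varepsilon'}{8}(f_t(m)+1) \ge (1-\varepsilon)f_t(m)$; when $f_t(m)\to\infty$, dividing by $f_t(m)$ and letting $m\to\infty$ forces $\tfrac{1-\varepsilon'}{8} \ge 1-\varepsilon$, i.e.\ $\varepsilon \ge \tfrac{7+\varepsilon'}{8} > \tfrac{7}{8}$. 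The additive ``$+1$'' contributes only a bounded correction and cannot compensate for a multiplicative loss of $\tfrac{1}{8}$ as $m$ grows.

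What your argument actually yields is the density-one lower bound $\Delta_{Y_2,N}(t) > \tfrac{1-\varepsilon'}{8}f_t(m)$, with leading constant $\tfrac{1}{8}$ rather than $1$; and this is not just slack in the method. For $t = 1/3$, whose base-$2$ digits alternate, one has $f_t(m) = m-1$ while $g_t(m) = m/6$, so Theorem~\ref{th7} pins $\Delta_{Y_2,N}(t)$ near $m/6$ for most $N \le 2^m$, which is strictly below $(1-\varepsilon)(m-1)$ for every $\varepsilon < \tfrac{5}{6}$. Thus Corollary~\ref{cor1} as printed appears to be missing a constant factor (presumably $\tfrac{1}{8}$, cf.\ \cite{DLP05}); read with $c\,f_t(m)$, $c < \tfrac{1}{8}$, in place of $(1-\varepsilon)f_t(m)$, your proof---carrying the constant through rather than trying to absorb it---is correct.
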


Note that Corollary \ref{cor1} is a strong quantitative version of
Theorem~\ref{bdremainder} (for $b=2$). It follows from
\cite[Theorem~9]{DLP05} that for every $N \in \NN$ we have
$|\Delta_{Y_2,N}(t)| \le f_t(\lfloor \log_2 N\rfloor)+4$ for every
$t \in [0,1)$. This result even holds for more general digital
$(0,1)$-sequences over $\ZZ_2$, the definition of which can be
found in
Section~\ref{secdig01seq}.\\

Next, we present another quantitative version of Theorem
\ref{bdremainder} for generalized van der Corput sequences in base
$b$. We first need some notation: For $a \in [0,1)$ and a real
$y>0$, let $u_a(y,k)$ be the number of occurrences of $k\in \ZZ_b$ among the 
first $\lfloor y \rfloor +1$ digits in the
$b$-adic expansion of $a$,and
set $U_a(y)=\sum_{k=1}^{b-2} u_a(y,k)$. Further we call $V_a(y)$
the number of pairs $(0,b-1)$ or $(b-1,0)$ among the 
first $\lfloor y \rfloor +1$ digits in the
$b$-adic expansion of $a$. Let also $\log_b$ denote the logarithm to base $b$, i.e., $\log_b x=(\log x)/\log b$. The following
result is \cite[Th\'eor\`eme 1]{fau1983}:
\begin{theorem}[Faure]\label{th1F83}
For any sequence $Y_b^\Sigma$ we have the following estimates.
\begin{enumerate}
\item For every $N \in \NN$ we have 
$$
\ N \vert \Delta_{Y_b^\Sigma,N}(a) \vert \le
\frac{b+4}{4}U_a(\log_b N)+\frac{(b-1)^2}{2 b^2} V_a(\log_b
N)+\frac{b}{4}+3.
$$
\item \ There exist infinitely many $N \in \NN$ such that
$$
N \vert \Delta_{Y_b^\Sigma,N}(a) \vert \ge \frac{1}{4}U_a(\log_b
N)+\frac{1}{2} V_a(\log_b N)-1.
$$
\end{enumerate}
\end{theorem}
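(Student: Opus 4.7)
The plan is to combine the discretization lemma with the fundamental descent lemma from Section~\ref{sec_Faure_meth} and then perform a digit-by-digit analysis of the resulting telescoping sum. Both the upper and the lower bound will be extracted from one and the same closed-form expression for the error, so the real work lies in relating the auxiliary indices $\varepsilon_j$ appearing in that expression to the base-$b$ digits of $a$.

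First I would set up the discretization. Given $N \in \NN$, choose $m$ with $b^{m-1} < N \le b^m$, i.e.\ $m = \lceil \log_b N \rceil$, and let $\lambda = \lambda_1 b^{m-1} + \cdots + \lambda_m$, where $(\lambda_1,\ldots,\lambda_m)$ are the first $m$ digits in the $b$-adic expansion of $a$. By Lemma~\ref{DisLem}, $E(a, N, Y_b^\Sigma) = E(\lambda'/b^m, N, Y_b^\Sigma) + \vartheta$ for a suitable $\lambda' \in \{\lambda, \lambda+1\}$ and an error term $\vartheta = (y(a) - a)N$ of absolute value at most $1$, because $|y(a)-a| \le b^{-m}$ and $N \le b^m$. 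Applying Lemma~\ref{DesLem} to $\lambda'/b^m$ then yields, via \eqref{DesLemForm},
$$
E\!\left(\frac{\lambda'}{b^m}, N, Y_b^\Sigma\right) = \sum_{j=1}^{m} \varphi_{b,\varepsilon_j}^{\sigma_{j-1}}\!\left(\frac{N}{b^j}\right),
$$
with $\varepsilon_j$ built recursively from the $\lambda_j$ and from right derivatives of the $\varphi$-functions.

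The heart of the argument is the digit-by-digit analysis of this sum. Two pointwise facts are needed: (i) $\varphi_{b,0}^\sigma \equiv 0$; and (ii) for $h \in \{1,\ldots,b-1\}$, the piecewise-linear sawtooth $\varphi_{b,h}^\sigma$ has a sup-norm that is controlled by an explicit function of $h$. Following the recursion defining the $\varepsilon_j$ one verifies the combinatorial rule: $\varepsilon_j = 0$ whenever $\lambda_j \in \{0,\, b-1\}$ and the pair $(\lambda_j,\lambda_{j+1})$ is neither $(0,b-1)$ nor $(b-1,0)$; whenever $\lambda_j \in \{1,\ldots,b-2\}$, $\varepsilon_j$ lies in $\{1,\ldots,b-1\}$ and one has the bound $\|\varphi_{b,\varepsilon_j}^{\sigma_{j-1}}\|_\infty \le (b+4)/4$; and whenever $(\lambda_j,\lambda_{j+1})$ is a carry pair $(0,b-1)$ or $(b-1,0)$, the sharper estimate $\|\varphi_{b,\varepsilon_j}^{\sigma_{j-1}}\|_\infty \le (b-1)^2/(2b^2)$ applies. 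Summing these bounds over $j=1,\ldots,m$, adding the $O(1)$ discretization error together with a boundary correction coming from the last digit and from the term $N/b^m \le 1$, yields part~(1) with the claimed constants $b/4 + 3$ in the additive term. As a sanity check, for $a \in \QQ(b^m)$ the counts $U_a$ and $V_a$ are eventually constant, so this recovers the sufficiency direction of Theorem~\ref{bdremainder}.

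For part~(2), I would construct infinitely many $N$ at which every nontrivial summand $\varphi_{b,\varepsilon_j}^{\sigma_{j-1}}(N/b^j)$ is simultaneously at least a definite fraction---namely $1/4$ for middle digits and $1/2$ for carry pairs---of its maximum and has controllable sign. This can be done by the same reverse-construction used in the proof of Theorem~\ref{thmF81D}: starting from the top level and working down, choose the base-$b$ digits of $N$ at each step so as to place $N/b^j$ in the subinterval where $|\varphi_{b,\varepsilon_j}^{\sigma_{j-1}}|$ achieves (at least a definite fraction of) its maximum, with the signs matched so as to prevent cancellation. The main obstacle will be verifying the combinatorial rule for $\varepsilon_j$ in part~(1): the recursion produces occasional wraparounds when $\eta_j = b$, and one has to check carefully that these occur precisely at the carry pairs $(0,b-1)$ and $(b-1,0)$ and nowhere else, while simultaneously pinning down the explicit constants $(b+4)/4$ and $(b-1)^2/(2b^2)$ from the exact piecewise-linear shapes of the $\varphi_{b,h}^\sigma$ as $h$ and $\sigma$ range over the possibilities that actually arise from the descent.
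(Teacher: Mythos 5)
Your framework---discretize with Lemma~\ref{DisLem}, expand with the descent formula~\eqref{DesLemForm}, then estimate the resulting $\varphi$-sum digit by digit---is the right starting point and is the framework that Faure's cited proof works in. But the quantitative core of your outline contains two errors, each of which is fatal as stated. First, the combinatorial rule you assert for the $\varepsilon_j$'s does not hold. The recursion in Lemma~\ref{DesLem} makes $\varepsilon_j$ depend not only on the pair $(\lambda_j,\lambda_{j+1})$ but on the entire tail $\lambda_{j+1},\ldots,\lambda_m$ and, crucially, on $N$ and $\Sigma$, through the carry term $\tfrac{1}{b}\bigl(\varphi_{b,\varepsilon_{j+1}}^{\sigma_j}\bigr)'(N/b^{j+1})$, which equals $-\varepsilon_{j+1}$ or $b-\varepsilon_{j+1}$ according to which of the $b$ subintervals $N/b^{j+1}$ falls in. In particular, a run of consecutive digits equal to $b-1$ that contains no transitions can still have $\varepsilon_j = b-1 \ne 0$ at many of its positions for suitable $N$, so the claim that $\varepsilon_j = 0$ whenever $\lambda_j\in\{0,b-1\}$ and $(\lambda_j,\lambda_{j+1})$ is not a transition is false.

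Second, and independently, the sup-norm bound at a carry-pair index is wrong. There $\varepsilon_j\in\{0,1\}$ or $\{0,b-1\}$, and a direct computation gives $\|\varphi_{b,1}^{\id}\|_\infty = \|\varphi_{b,b-1}^{\id}\|_\infty = (b-1)/b$, which exceeds $(b-1)^2/(2b^2)$ by a factor $2b/(b-1)>2$; for $b=2$ the sup-norm is $1/2$ while the theorem's coefficient of $V_a$ is $1/8$. A term-by-term sup-norm estimate therefore cannot deliver the coefficient $(b-1)^2/(2b^2)$ in part~(1): the small coefficient must come from tracking signs and cancellation along digit runs (which is exactly the kind of information the right-derivative mechanism encodes), not from bounding each summand by its maximum. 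Finally, for part~(2) the reverse construction used for Theorem~\ref{thmF81D} optimizes $E(\lambda b^{-m},N,Y_b^\Sigma)$ over $\lambda$ with $N$ fixed, whereas here $\lambda$ is fixed by $a$ and you must construct $N$; since the $\varepsilon_j$'s themselves depend on $N$, picking the digits of $N$ one level at a time does not automatically preserve the $\varepsilon_j$'s you were relying on, and this feedback needs to be handled explicitly rather than assumed away.
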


As an immediate consequence, we obtain the generalization of Theorem
\ref{bdremainder} to generalized van der Corput sequences in base
$b$. A simpler version exists for $Y_2$, see \cite[Corollaire
2]{fau1983}. The proof heavily relies on the properties of
$\psi$-functions introduced in Section~\ref{secdiscgenvdC} for the
study of $D_N(Y_b^\Sigma$).

There exists yet another formula for the remainder of van der Corput
sequences $Y_b$ that draws an interesting parallel between these
sequences and $(n\alpha)$-sequences. It was proved (see
\cite[Th\'eor\`eme 2]{fau1983}) using the famous property of
elementary intervals (see Definition~\ref{def01seq} in Section
\ref{secdig01seq}) for the van der Corput sequence.
\begin{theorem}[Faure]\label{th2F83}
For given $a \in [0,1)$ and $N \in \NN$, let their $b$-adic
expansions be $a=\sum_{j=0}^\infty a_j b^{-j-1}$ and
$N=\sum_{i=0}^\infty N_i b^i$. Put $I(N,a)=\sum_{i=0}^\infty
\min(N_i,a_i)$ and let $W(N,a)$ be the number of pairs $(-,+)$ in
the sequence of signs of $(a_i-N_i)_{i \ge 0}$ (disregarding
zeros). Then we have
$$
N \Delta_{Y_b,N}(a)=-\sum_{j=0}^\infty \sum_{i=0}^j N_i a_j
b^{i-j-1} + I(N,a) + W(N,a).
$$
\end{theorem}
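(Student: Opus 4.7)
The plan is to compute $N\Delta_{Y_b,N}(a)=A(a;N;Y_b)-Na$ by partitioning $[0,a)$ into its canonical $b$-adic elementary intervals, exploiting the fact that $Y_b$ is a $(0,1)$-sequence (in particular, each elementary interval of length $b^{-m}$ contains exactly one of the first $b^m$ terms of $Y_b$), and then matching the resulting digit sums against a double expansion of $Na$.

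First I would write the disjoint decomposition
\begin{equation*}
[0,a)=\bigsqcup_{j=0}^{\infty}\bigsqcup_{k=0}^{a_j-1} E_{j,k},\qquad E_{j,k}=\Bigl[\sum_{i=0}^{j-1}a_ib^{-i-1}+\tfrac{k}{b^{j+1}},\;\sum_{i=0}^{j-1}a_ib^{-i-1}+\tfrac{k+1}{b^{j+1}}\Bigr),
\end{equation*}
so each $E_{j,k}$ is an elementary interval of length $b^{-j-1}$. From the digit-reflection definition of $Y_b$, the point $y_n$ lies in $E_{j,k}$ if and only if the first $j+1$ base-$b$ digits of $n$ are $(a_0,a_1,\ldots,a_{j-1},k)$, i.e.\ $n\equiv r(j,k)\pmod{b^{j+1}}$ with $r(j,k):=\sum_{i=0}^{j-1}a_ib^i+kb^j$. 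Writing $N=q_jb^{j+1}+s_j$ with $q_j=\lfloor N/b^{j+1}\rfloor$ and $s_j=\sum_{i=0}^{j}N_ib^i$, a direct count yields $\#\{n<N:y_n\in E_{j,k}\}=q_j+\mathbf{1}\{r(j,k)<s_j\}$.

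Summing over $k\in\{0,\ldots,a_j-1\}$ gives a level-$j$ contribution $a_jq_j+C_j$, where $C_j$ is the number of $k$ satisfying $r(j,k)<s_j$. A short case distinction between $k<N_j$, $k=N_j$, and $k>N_j$, together with the elementary bound $|S_j(N)-S_j(a)|<b^j$ for the prefix sums $S_j(x):=\sum_{i=0}^{j-1}x_ib^i$, yields
\begin{equation*}
C_j=\min(a_j,N_j)+\mathbf{1}\{a_j>N_j\}\,\mathbf{1}\{S_j(a)<S_j(N)\}.
\end{equation*}
On the other hand, since $q_j=\sum_{i>j}N_ib^{i-j-1}$, expanding $Na=\sum_{i,j}a_jN_ib^{i-j-1}$ as a Cauchy product and splitting at $i=j$ gives $\sum_{j\ge 0}a_jq_j=Na-\sum_{j=0}^{\infty}\sum_{i=0}^{j}N_ia_jb^{i-j-1}$. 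Subtracting $Na$ from the total count now produces the announced dominant term $-\sum_{j}\sum_{i\le j}N_ia_jb^{i-j-1}$.

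It remains to identify $\sum_{j}C_j$ with $I(N,a)+W(N,a)$. The first piece $\sum_j\min(a_j,N_j)$ is $I(N,a)$ by definition. The main step, and the true obstacle of the proof, is to show
\begin{equation*}
\sum_{j\ge 0}\mathbf{1}\{a_j>N_j,\;S_j(a)<S_j(N)\}=W(N,a).
\end{equation*}
The sign of $S_j(a)-S_j(N)$ is governed by the largest index $i<j$ at which $a_i\ne N_i$; hence the $j$th indicator equals $1$ precisely when $\mathrm{sgn}(a_j-N_j)=+$ and the most recent non-zero sign in $(\mathrm{sgn}(a_i-N_i))_{i<j}$ is $-$, which is exactly a $(-,+)$ pair in the zero-suppressed sign sequence. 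The delicate point is the zero-handling: one must verify that a run of equal digits $a_i=N_i$ is indeed ``invisible'' for the prefix-sum comparison, and one must avoid an off-by-one error at the very first non-zero sign (which contributes no pair, consistently with $S_j(a)=S_j(N)$ prior to it). Once this identification is in place, all remaining steps are routine manipulations of finite digit sums.
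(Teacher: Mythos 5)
Your argument is correct and follows the same route the paper indicates for this result, namely the elementary-interval property of $Y_b$ (the paper only cites Faure's 1983 proof and points to this property). In particular, the residue count in each $E_{j,k}$, the case analysis giving $C_j=\min(a_j,N_j)+\mathbf{1}\{a_j>N_j\}\mathbf{1}\{S_j(a)<S_j(N)\}$ via the bound $|S_j(a)-S_j(N)|<b^j$, the Cauchy-product split of $Na$ at $i=j$, and the bijection between the indices $j$ with $a_j>N_j$ and $S_j(a)<S_j(N)$ and the $(-,+)$ pairs in the zero-suppressed sign sequence all check out.
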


The structure of this formula for $Y_b$ is exactly the same as
that for $(n\alpha)$-sequences obtained in \cite[Equation
2.5]{Gil68}. For the sake of completeness, we recall this formula:

\medskip

\noindent{\it For given $x \in [0,1)$ and $N \in \NN$, let
$x=\sum_{j=1}^\infty x_jm_j$ and $N=\sum_{i=1}^\infty y_i q_i$
(where $y_i=0$ for sufficiently large $i$) be their canonical expansions
in semi-regular continued fractions (the $(p_i/q_i)$'s approach
$\alpha$ and $m_i=q_i\alpha-p_i$). Put $s_{i,j}=q_im_j-p_i^j$,
$I(N,x)=\sum_{i=1}^\infty \min(x_i,y_i)$ and let $W(N,x)$ be the
number of pairs $(-,+)$ in the sequence of signs of $(x_i-y_i)_{i
\ge 0}$ (disregarding zeros). Then we have
\begin{equation*}
N \Delta_{(n\alpha),N}(a)=-\sum_{j=0}^\infty \sum_{i=0}^j x_i y_j
s_{i,j} + I(N,x) + W(N,x).
\end{equation*}}

These formulas underline the relation between van der Corput
sequences and $(n\alpha)$-sequences through systems of numeration
that permit the precise study of their irregularities of
distribution. The simplicity of $b$-adic systems underlying van
der Corput sequences as compared to continued fraction systems for
$(n\alpha)$-sequences, appears to be the reason for a better
understanding, and also a better asymptotic behavior of the former. In
the same vein, these two families of sequences are included in a
much larger family of sequences of minimal discrepancy, the
so-called ``self similar sequences'' introduced by Borel in
\cite{borel}.

\medskip

From Theorem \ref{th2F83} it is possible to obtain an improvement
of  \cite[Th\'eor\`eme 1 and Corollaire 2]{fau1983} for the van
der Corput sequence $Y_2$, see \cite[Corollaire 3 and Section
4.5]{fau1983} .

\medskip

The paper \cite{fau1986} is a variant of Bohl's lemma for
$(n\alpha)$-sequences applied to generalized van der Corput
sequences. Bohl's lemma for $(n\alpha)$-sequences asserts that the
remainder on an interval $J\subseteq [0,1)$ only depends the length of
$J$. Its very short proof is based on the density of  $(n\alpha)$
and on the fact that the counting function does not change under
translations of $J$ by $\alpha \pmod 1$. In contrast to
systems of numeration, the variant for $Y_b^\Sigma$ sequences is
much more involved (see \cite[Th\'eor\`eme 1]{fau1986}), but it
leads to the same property for bounded remainder intervals, i.e.,
for any $Y_b^\Sigma$ sequence, $J$ is a bounded remainder interval
if and only if ${\rm length}(J)$ is a $b$-adic rational.

\subsection{Functions with bounded remainder}

The question dealt with in Section~\ref{bdremaindersets} can be
generalized. Let $f:[0,1]\rightarrow \RR$ be a Riemann-integrable
function with $\int_0^1 f(x) \rd x=0$ and let $X=(x_n)_{n \ge 0}$
be a sequence in $[0,1)$. Under which condition on $f$ do we have
$$\sum_{n=0}^{N-1}f(x_n) \ll 1?$$ The classical example studied in
Section~\ref{bdremaindersets} corresponds to $f(x)=1_{[0,t)}(x)-t$
for $t \in [0,1)$.

In \cite{hellar} Hellekalek and Larcher studied this
question for sequences induced by the von Neumann-Kakutani
transformation $T_b$ (and also sequences induced by irrational
rotations). For $x \in [0,1)$ define
$$G_N(x):=\sum_{n=0}^{N-1}f(T_b^n x).$$ Recall that $(T_b^n(0))_{n
\ge 0}$ is the van der Corput sequence in base $b$ (see
Section~\ref{NKtransform}).

\begin{theorem}[Hellekalek and Larcher]\label{thmHelLar}
Let $f \in C^1([0,1])$ with $\int_0^1 f(x) \rd x=0$ and Lipschitz
continuous first derivative $f'$. Then we have:
\begin{enumerate}
 \item If $f(0)=f(1)$, then $\sup_N|G_N(x)|< \infty$ for all $x \in [0,1)$.
 \item If $\sup_N|G_N(x)|< \infty$ for some $x \in [0,1)$, then $f(0)=f(1)$.
 \item If $f(1)< f(0)$, then $-\infty < \liminf_{N \rightarrow \infty} G_N(0)$ and $\limsup_{N \rightarrow \infty} G_N(0) = \infty$.
 \item If $f(1)> f(0)$, then $-\infty = \liminf_{N \rightarrow \infty} G_N(0)$ and $\limsup_{N \rightarrow \infty} G_N(0) < \infty$.
\end{enumerate}
\end{theorem}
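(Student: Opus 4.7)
The approach exploits the identification of $(T_b^n(0))_{n \ge 0}$ with the van der Corput sequence $Y_b$ from Section~\ref{NKtransform}, together with the structural fact that whenever $\omega \in [0, b^{-j})$ the first $b^j$ iterates of $T_b$ involve no odometer carry past position $j$ and hence form the shifted grid $\{\omega + k b^{-j} : 0 \le k < b^j\}$. For such $\omega$, I would Taylor expand $f$ about each $k b^{-j}$---to second order, thanks to the Lipschitz bound on $f'$---and apply Euler--Maclaurin to $\sum_k f(k b^{-j})$ and $\sum_k f'(k b^{-j})$ using the hypotheses $\int_0^1 f = 0$ and $\int_0^1 f' = f(1) - f(0)$. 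This yields the Denjoy--Koksma-type estimate
\begin{equation*}
G_{b^j}(\omega) = \sum_{k=0}^{b^j-1} f(k b^{-j} + \omega) = \frac{f(0)-f(1)}{2}\bigl(1 - 2 b^j \omega\bigr) + O(b^{-j}).
\end{equation*}

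For $x = 0$ and general $N$, I would write $N = \sum_i n_i b^i$ in base $b$ and decompose $\{0, \ldots, N-1\}$ into $\sum_i n_i$ sub-blocks of sizes $b^i$, each starting at a time $s$ with $b^i \mid s$. Since $T_b^s(0) = Y_b(s)$ has vanishing first $i$ digits when $b^i \mid s$, each sub-block starting point lies in $[0, b^{-i})$ and the previous estimate applies. The explicit form $\omega_{i,\ell} = \ell b^{-(i+1)} + \varepsilon_i$ with $\varepsilon_i := \sum_{i' > i} n_{i'} b^{-(i'+1)} \le b^{-(i+1)}$ gives, after summation,
\begin{equation*}
G_N(0) = \frac{f(0)-f(1)}{2}\, S_N + O(1), \qquad S_N = \sum_{i} n_i\!\left(1 - \tfrac{n_i-1}{b} - 2 b^i \varepsilon_i\right),
\end{equation*}
where the $O(1)$ error comes from the geometric summation of per-block errors $\sum_k b^{-j_k} \le b$. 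The key algebraic observation is $S_N \ge 0$, which follows from $(n_i - 1)/b + 2 b^i \varepsilon_i \le (b-2)/b + 2/b = 1$. This immediately gives statement (1) at $x = 0$ (the main term vanishes when $f(0) = f(1)$); statement (3), since $S_N \ge 0$ forces $\liminf G_N(0) > -\infty$, while choosing $N = b^{m_1} + \cdots + b^{m_r}$ with $m_i - m_{i+1}$ very large makes every $b^{m_k}\varepsilon_{m_k}$ negligible so that $S_N \approx r \to \infty$; statement (4) by applying (3) to $-f$; and statement (2) as the contrapositive of (3) and (4).

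For a general starting point $x \in [0, 1)$, the same decomposition works, but each sub-block of size $b^j$ now has $h_x$ ``exceptional'' rungs (where $h_x$ is the common head of $x$ at level $j$) in which an odometer carry past position $j$ shifts the orbit tail by at most $b^{-j}$. The main obstacle is to control the accumulated contribution of these exceptions. The key observation is that across successive sub-blocks at a fixed level $j$, the tails of the starting points $T_b^{s+kb^j}(x)$ form an arithmetic progression in the odometer (with a common odometer increment corresponding to adding $1$ at position $j+1$), so the carry-induced corrections can be reorganized telescopically and absorbed into an $O(1)$ error rather than growing logarithmically in $N$. The Lipschitz property of $f'$ is essential here, as it yields a second-order Taylor remainder sharp enough that after telescoping only $O(1)$ survives; mere continuity of $f'$ would not suffice.
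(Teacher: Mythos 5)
The paper itself does not prove this theorem (it only cites \cite{hellar}), so your proposal can only be judged on its internal correctness. Your treatment of $x=0$ is sound: the per-block estimate $G_{b^j}(\omega)=\tfrac{f(0)-f(1)}{2}(1-2b^j\omega)+O(b^{-j})$ is correctly derived, the block decomposition and the formula for $S_N$ are right, $S_N\ge 0$ follows from the digit bound you give, and the choice of $N=b^{m_1}+\cdots+b^{m_r}$ with widely spaced exponents does force $S_N\to\infty$. This establishes statements (1), (3), (4) at $x=0$.

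There is, however, a genuine gap in the remainder. First, statement (2) is \emph{not} the contrapositive of (3) and (4): those give $\sup_N|G_N(0)|=\infty$ when $f(0)\neq f(1)$, whereas the contrapositive of (2) requires $\sup_N|G_N(x)|=\infty$ for \emph{every} $x\in[0,1)$. To get (2) you would have to carry the unboundedness argument through for general $x$, which you have not done. Second, the extension to general $x$ is only asserted. After telescoping over $\ell$ at fixed level $j$, the carry correction is $E_j=\sum_{a<A_j}\bigl[f(Y_b(a)+\tau_j')-f(Y_b(a)+\tau_j)\bigr]$ with $A_j<b^j$ and $|\tau_j'-\tau_j|\le b^{-j}$; the Lipschitz bound on $f'$ makes the second-order part $O(b^{-j})$ (summable), but the first-order part is $(\tau_j'-\tau_j)\sum_{a<A_j}f'(Y_b(a)+\tau_j)$, which is $O(1)$ per level and hence $O(\log N)$ in total---the same size as the main term, not $O(1)$. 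When $f(0)=f(1)$ this leading part disappears (since $\sum_{a<A_j}f'(Y_b(a)+\tau_j)=A_j\int_0^1 f'+O(\log A_j)=O(j)$ by the low discrepancy of van der Corput, making $\sum_j|E_j|=O(1)$), so (1) for general $x$ can indeed be salvaged along these lines; but when $f(0)\neq f(1)$ the accumulated correction carries a factor $f(1)-f(0)$ and must be merged into the main term, producing a modified quantity $\Sigma(N,x)$ for which one would still have to prove boundedness from one side and unboundedness from the other. That is the crux of (2)--(4) for general $x$, and your proposal neither identifies it nor resolves it.
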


Recall that the Theorem of Rademacher states that every Lipschitz
continuous function on an interval is continuously differentiable
in almost every point of the interval (in the sense of Lebesgue
measure). Hence for the functions considered in
Theorem~\ref{thmHelLar} we obtain (by using partial integration)
that for $h\not=0$ we have
\begin{eqnarray*}
\widehat{f}(h) = \int_0^1f(x) {\rm e}^{-2 \pi \icomp h x}\rd x =
\frac{1}{2 \pi \icomp h} \int_0^1f'(x) {\rm e}^{-2 \pi \icomp h
x}\rd x = \frac{1}{(2 \pi \icomp h)^2} \int_0^1f''(x) {\rm e}^{-2
\pi \icomp h x}\rd x,
\end{eqnarray*}
and therefore $|\widehat{f}(h)| \ll |h|^{-2}$.

This motivates the following definition: let
$\mathcal{F}_{\alpha}$ be the class of all 1-periodic functions
$f:\RR \rightarrow \RR$ with an absolutely convergent Fourier
series expansion with $|\widehat{f}(h)| \ll |h|^{-\alpha}$ for all
$h \not=0$ and $\int_0^1f(x) \rd x=0$. In \cite{DP06} Dick and Pillichshammer proved the following result
for the classical van der Corput sequence in base $b$:

\begin{theorem}[Dick and Pillichshammer]
Let $Y_b=(y_n)_{n \ge 0}$ be the van der Corput sequence in base
$b$. For $\alpha>1$ we have
\begin{eqnarray}\label{result1}
\left|\sum_{n=0}^{N-1}f(y_n)\right|\ll 1\quad \ \mbox{ for all $f
\in \mathcal{F}_{\alpha}$}.
\end{eqnarray}
Furthermore, there exists a function $g \in \mathcal{F}_1$ such
that $$\limsup_{N \rightarrow \infty}
\left|\sum_{n=0}^{N-1}g(y_n)\right|=\infty.$$
\end{theorem}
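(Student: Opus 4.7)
The plan splits into two parts. For the positive assertion I would expand $f\in\mathcal F_\alpha$ in its absolutely convergent Fourier series $f(x)=\sum_{h\neq 0}\widehat f(h)\,e^{2\pi\icomp h x}$, interchange the two sums, and reduce the problem to a uniform bound on the exponential sums
$$E_N(h):=\sum_{n=0}^{N-1}e^{2\pi\icomp h y_n},\qquad h\in\ZZ\setminus\{0\}.$$
The key lemma is that, for every $h\neq 0$ and every $N\in\NN$,
$$|E_N(h)|\le C_b\,b^{v_b(h)},\qquad C_b:=1+1/\sin(\pi/b),$$
where $v_b(h)$ denotes the largest integer $v$ with $b^v\mid h$. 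I would prove it by writing $N=\sum_{i=0}^{M-1}N_i b^i$ in base $b$ and partitioning $\{0,\ldots,N-1\}$ into standard digit blocks indexed by $j$: block $j$ consists of those $n$ whose digits above position $j$ coincide with those of $N$, whose $j$-th digit lies in $\{0,\ldots,N_j-1\}$, and whose lower $j$ digits are arbitrary. Because the lowest $j$ digits enter $y_n$ only via digit reversal, the sum over them is the complete geometric sum $\sum_{s=0}^{b^j-1}e^{2\pi\icomp h s/b^j}$, equal to $b^j$ when $b^j\mid h$ and to $0$ otherwise. The remaining sum over the position-$j$ digit is a trivial arithmetic sum of size at most $b^j(b-1)$ when $j<v_b(h)$, a geometric sum bounded by $1/\sin(\pi/b)$ when $j=v_b(h)$ (with ratio $e^{2\pi\icomp h''/b}$ for $h''=h/b^{v_b(h)}$ coprime to $b$), and vanishes when $j>v_b(h)$. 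Summing the block contributions yields the lemma.

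Plugging the lemma in and using $|\widehat f(h)|\ll|h|^{-\alpha}$,
$$\Big|\sum_{n=0}^{N-1}f(y_n)\Big|\le\sum_{h\neq 0}|\widehat f(h)|\,|E_N(h)|\ll\sum_{h\neq 0}\frac{b^{v_b(h)}}{|h|^\alpha}=2\sum_{v\ge 0}b^{-v(\alpha-1)}\sum_{\substack{h''\ge 1\\\gcd(h'',b)=1}}h''^{-\alpha},$$
after the substitution $h=\pm b^v h''$ with $h''$ coprime to $b$. Both factors are finite precisely when $\alpha>1$, which gives \eqref{result1}.

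For the counterexample, the same estimate barely fails at $\alpha=1$ since $\sum_v b^{v\cdot 0}=\infty$, and I would turn this borderline into a genuine counterexample by combining the $L_1$-discrepancy lower bound $NL_{1,N}(Y_b)\gg\log N$ (the $p=1$ instance of \eqref{lowproinov}) with a duality-plus-smoothing construction. For each $N$, the signed indicator $f_N=\mathrm{sgn}(\Delta_{Y_b,N})$ has primitive $F_N(t)=\int_0^t f_N(s)\,\rd s$ of total variation $1$, and integration by parts (using $\Delta_{Y_b,N}(0)=\Delta_{Y_b,N}(1)=0$) yields $\big|\sum_{n=0}^{N-1} F_N(y_n)-N\int_0^1 F_N\big|=NL_{1,N}(Y_b)\gg\log N$. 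Since $F_N$ is only piecewise linear and hence not in $\mathcal F_1$, convolving with a smooth mollifier of width $\sim 1/N$ produces $\widetilde F_N\in\mathcal F_1$ with $|\widehat{\widetilde F_N}(h)|\ll\min(|h|^{-1},N^2|h|^{-3})$; the Lipschitz bound $|\widetilde F_N-F_N|\le 1/N$ preserves the $\log N$ deviation up to an $O(1)$ error. I would then set
$$g(x)=\sum_{j=1}^\infty c_j\Big(\widetilde F_{N_j}(x)-\int_0^1\widetilde F_{N_j}(y)\,\rd y\Big)$$
for a fast-growing sequence $N_j$ (say $N_j=b^{b^j}$) and small positive weights $c_j$ chosen so that $\sum_j c_j<\infty$ (delivering $|\widehat g(h)|\ll 1/|h|$) and $\sum_j c_j(\log N_j)^2<\infty$ (delivering absolute Fourier summability), while still arranging $c_{j_0}\log N_{j_0}\to\infty$ along the subsequence $(N_{j_0})$.

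The main obstacle is this last balancing act. Naive estimates for the cross-terms $\sum_{j\neq j_0}c_j\big(\sum_n\widetilde F_{N_j}(y_n)-N_{j_0}\int\widetilde F_{N_j}\big)$ via Koksma--Hlawka only give $O(c_j\log N_{j_0})$ per term, of the same order as the diagonal contribution $c_{j_0}\log N_{j_0}$. One must instead feed the sharper bound on $E_{N_{j_0}}(h)$ from the lemma above into the Fourier expansion of the cross-term, exploit the rapid decay of $\widehat{\widetilde F_{N_j}}(h)$ beyond $|h|\sim N_j$, and use the super-exponential spacing of $(N_j)$ to show that the cross-terms are $o(c_{j_0}\log N_{j_0})$. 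Verifying this off-diagonal estimate while maintaining $g\in\mathcal F_1$ is the technical heart of the construction.
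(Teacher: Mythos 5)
Your argument is correct. The block decomposition of $\{0,\ldots,N-1\}$ by the leading digit where $n$ first differs from $N$ is exactly the standard one, the reversal of the lowest $j$ digits of $n$ indeed gives $y_n$ (restricted to those digits) ranging over the full set $\{s/b^j : 0\le s<b^j\}$, and the resulting complete/incomplete geometric sums yield $|E_N(h)|\le C_b\,b^{v_b(h)}$ as you claim. The interchange of summation is legitimate because $f\in\mathcal F_\alpha$ has an absolutely convergent Fourier series \emph{by definition}, and the final double-sum computation gives convergence precisely for $\alpha>1$. This is the natural Fourier-analytic route and, so far as one can tell from the citation, matches the approach of \cite{DP06}.

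\textbf{Part 2 (the counterexample).} Here there is a genuine gap: the three numerical conditions you impose on $(c_j)$ are mutually contradictory. You want $\sum_j c_j(\log N_j)^2<\infty$ (and indeed the cost of absolute Fourier summability is at least $\sum_j c_j\sum_h|\widehat{\widetilde F_{N_j}}(h)|\asymp\sum_j c_j\log N_j$, so the weaker condition $\sum_j c_j\log N_j<\infty$ is already necessary), \emph{and} $c_{j_0}\log N_{j_0}\to\infty$. But $\sum_j c_j\log N_j<\infty$ forces $c_j\log N_j\to 0$. So the ``diagonal'' term $c_{j_0}\bigl(\sum_n\widetilde F_{N_{j_0}}(y_n)-N_{j_0}\int\widetilde F_{N_{j_0}}\bigr)\asymp c_{j_0}\log N_{j_0}$, which your plan relies on to produce the blow-up, itself tends to zero. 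Sharpening the cross-term estimates, as you suggest, cannot help: the problem is not that the off-diagonal terms overwhelm the diagonal, but that membership in $\mathcal F_1$ already extinguishes the diagonal. Even allowing square-root cancellation in $\sum_j c_j\widehat{\widetilde F_{N_j}}(h)$ does not change the order of $\sum_h|\widehat g(h)|$: one still needs $\sum_j c_j\log N_j<\infty$. In short, \emph{no} choice of $(c_j,N_j)$ can make this particular superposition of smoothed sign-discrepancy primitives work; the counterexample must be constructed differently — e.g.\ by prescribing $\widehat g(h)$ directly (concentrating it on integers with large $b$-adic valuation, where $|E_N(h)|$ is large, with a slowly varying extra factor to force summability while exploiting the precise phase structure of $E_N(b^v)$).

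\textbf{A minor citation error.} The claim $NL_{1,N}(Y_b)\gg\log N$ is true, but does not follow from \eqref{lowproinov}, which only gives $NL_{p,N}(X)\gg\sqrt{\log N}$ for \emph{arbitrary} sequences $X$. The stronger $\log N$ order is a specific property of the van der Corput sequence; cf.\ Theorem~\ref{discLpb2} and \eqref{limsupLp}.
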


More accurate results in this direction can be found in
\cite{DP06}.

\subsection{Subsequences of the van der Corput sequence}

In recent years also distribution properties of subsequences (or
of index transformed versions) of the van der Corput sequence have
been studied. These properties are intimately related to uniform
distribution of integer sequences. For $m \in \NN$ a sequence
$(k_n)_{n \ge 0}$ of integers is uniformly distributed modulo $m$
if
$$\lim_{N\rightarrow \infty} \frac{\#\{n \in \{0,1,\ldots,N-1\}\ :
\ k_n\equiv j \pmod{m}\}}{N}=\frac{1}{m}\ \ \ \mbox{ for all $j
\in\{0,1,\ldots,m-1\}$}.
$$ 
For information on uniform distribution
of integer sequences we refer to \cite[Chapter~5,
Section~1]{kuinie}.

Let $Y_b=(y_n)_{n \ge 0}$ be the van der Corput sequence in base
$b$ and let $(k_n)_{n \ge 0}$ be a sequence in $\NN_0$. Then it is
well known (see, e.g., \cite[Remark~4.4]{HLKP}) that the sequence
$(y_{k_n})_{n \ge 0}$ is uniformly distributed modulo 1 if and
only if the sequence $(k_n)_{n \ge 0}$ is uniformly distributed
modulo $b^d$ for all $d \in \NN_0$.

From this insight together with known results on distribution
properties of integer sequences (see, e.g., \cite{kuinie} and the
references therein) one can obtain the following results which are
taken from \cite{HLKP}.

Let $Y_b=(y_n)_{n \ge 0}$ denote the van der Corput sequence in
base $b$. Then the following statements hold true:
\begin{enumerate}
\item The sequence $Y_b^{u,v}=(y_{u n+v})_{n \ge 0}$ for $(u,v)
\in \NN \times \NN_0$ is uniformly distributed modulo 1 if and
only if $\gcd(u,b)=1$. 
\item For every $\alpha \in \NN$, $\alpha
\ge 2$, the sequence $(y_{n^\alpha})_{n \ge 0}$ is not uniformly
distributed modulo~1. 
\item Let $p_n$, $n \in \NN_0$, denote the
$(n+1)$-st prime number, i.e., $p_0=2, p_1=3, p_2=5, \ldots$. Then
the sequence $(y_{p_n})_{n \ge 0}$ is not uniformly distributed
modulo~1. 
\item Let $F_n$, $n \in \NN_0$, denote the $n$-th
Fibonacci number, i.e., $F_0=0, F_1=1$, and $F_n=F_{n-1}+F_{n-2}$
for $n \ge 2$. Then the sequence $Y_b^{{\rm Fib}}=(y_{F_n})_{n \ge
0}$ in base $b$ is uniformly distributed modulo 1 if and only if
$b=5^k$ for some $k \in \NN$. 
\item Let $F_n$ be as above. Then,
in any base $b$, the sequence $(y_{\lfloor\log F_n \rfloor})_{n
\ge 1}$ is uniformly distributed modulo 1. 
\item Let $\xi \in \RR
\setminus \QQ$ or $\xi=1/d$ for some nonzero integer $d$. Then, in
any base $b$, the sequence $(y_{\lfloor n \xi \rfloor})_{n \ge 0}$
is uniformly distributed modulo 1. 
\item For an integer $q \ge 2$
and $n \in \NN_0$ let $s_{q}(n)$ denote the $q$-ary sum-of-digits
function. Then, in any base $b$, the sequence
$Y_b^{s_q}=(y_{s_{q}(n)})_{n \ge 0}$ is uniformly distributed
modulo~1.
\end{enumerate}

In some cases we also have estimates for the star discrepancy of
index transformed van der Corput sequences which we collect in the
following theorem.

\begin{theorem}
\begin{enumerate}
\item Let $(u,v) \in \NN \times \NN_0$ with $\gcd(u,b)=1$ and let
$Y_b^{u,v}$ be the van der Corput sequence in base $b$ indexed by
the arithmetic progression $u n+v$. Then we have
$$D_N^{\ast}(Y_b^{u,v})\le D_N^{\ast}(Y_b).$$
\item Let $b=5^{\ell}$ and let $Y_b^{{\rm Fib}}$ be the van der
Corput sequence in base $b$ indexed by the Fibonacci numbers. Then
we have $$D_N^{\ast}(Y_b^{{\rm Fib}}) \ll_b \frac{1}{\sqrt{N}}.$$
\item Let $b,q \ge 2$ be integers and let $Y_b^{s_q}$ be the van
der Corput sequence in base $b$ indexed by the $q$-ary
sum-of-digits function $s_q$. Then
$$\frac{1}{\sqrt{\log N}} \ll_q D_N^{\ast}(Y_b^{s_q}) \ll_{q,b} \frac{\log \log N}{\sqrt{\log N}}.$$
If $q<14$, then the upper bound can be improved to
$$D_N^{\ast}(Y_b^{s_q}) \ll_{q,b} \frac{1}{\sqrt{\log N}}.$$ \item
Let $\alpha\in (0,1)$ and let $Y_b^{\alpha}$ be the van der Corput
sequence in base $b$ indexed by $\lfloor n^{\alpha}\rfloor$. Then
$$\frac{1}{N^{\alpha}}\ll_{\alpha} D_N^*(Y_b^{\alpha})\ll_{\alpha} \frac{\log N}{N^{\alpha}}.$$
\end{enumerate}
\end{theorem}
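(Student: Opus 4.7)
The four parts share two ingredients: the elementary-interval identity that $y_m\in J_{k,d}:=[k/b^d,(k+1)/b^d)$ iff $m\equiv A(k)\pmod{b^d}$ for a bijection $A:\ZZ_{b^d}\to\ZZ_{b^d}$, as used already in the uniform distribution proof of $Y_b$; and the Erdős-Turán inequality, which converts exponential-sum estimates for an index sequence into discrepancy bounds for the subsequence $(y_{k_n})$. For part~1, $\gcd(u,b)=1$ implies $\gcd(u,b^d)=1$ for every $d$, so $n\mapsto un+v\pmod{b^d}$ is a bijection and $y_{un+v}\in J_{k,d}$ iff $n\equiv B(k)\pmod{b^d}$ with $B(k):=u^{-1}(A(k)-v)\bmod b^d$. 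Writing $N=qb^d+r$ and reducing to $t=k/b^d$ via Lemma~\ref{DisLem}, the quantity $N\Delta_{\cdot,N}(k/b^d)$ takes the form $\#\{j<k:\pi(j)<r\}-rk/b^d$ with $\pi\in\{A,B\}$; a case analysis on $r$ (following \cite{HLKP}) shows that the supremum over $k$ is extremised in absolute value by $\pi=A$, so discretisation gives the inequality for arbitrary $t$.

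Parts~2 and~3 both invoke Erdős-Turán to reduce to controlling exponential sums $S_h(N):=\sum_{n<N}\exp(2\pi\icomp h\,y_{k_n})$ for integer cut-offs $h\le H$. For part~2 ($k_n=F_n$, $b=5^\ell$), the Pisano period of $(F_n\bmod b^d)$ is $4b^d$ and each residue is hit exactly four times per period; decomposing $S_h$ into complete periods (which cancel after regrouping) and a tail of length $O(b^d)$ yields $|S_h(N)|\ll\sqrt N$, and choosing $H\asymp\sqrt N$ produces $D_N^*(Y_b^{\mathrm{Fib}})\ll_b 1/\sqrt N$. For part~3 one substitutes the Gelfond/Delange-type bound $|\sum_{n<N}\exp(2\pi\icomp\theta\,s_q(n))|\ll N^{1-\gamma(q)}$, where $\gamma(q)>0$ is a spectral-gap constant for the $q$-ary sum-of-digits function, and optimises $H\asymp(\log N)^{1/2}$; this yields $\log\log N/\sqrt{\log N}$ in general, while for $q<14$ the constant $\gamma(q)$ is large enough to absorb the logarithmic losses and give $1/\sqrt{\log N}$. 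The matching lower bound $\gg 1/\sqrt{\log N}$ is obtained by exhibiting one frequency $h$ (a multiple of $b$) where $|S_h|\gg N/\sqrt{\log N}$ by a central limit theorem for $s_q(n)$.

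For part~4, the index sequence $(\lfloor n^\alpha\rfloor)_{n<N}$ runs over the values $k=0,1,\ldots,M$ with $M=\lfloor(N-1)^\alpha\rfloor$, each $k$ appearing with multiplicity $m_k=\lfloor(k+1)^{1/\alpha}\rfloor-\lfloor k^{1/\alpha}\rfloor\asymp k^{1/\alpha-1}$. The identity $\sum_{n<N}(\mathbf{1}_{[0,t)}(y_{\lfloor n^\alpha\rfloor})-t)=\sum_{k=0}^M m_k(\mathbf{1}_{[0,t)}(y_k)-t)$, together with Abel summation and the estimate $K\,D_K^*(Y_b)\ll\log K$ from \eqref{discbd_tijde} applied for $K\le M+1\ll N^\alpha$, yields $|N\Delta_{Y_b^\alpha,N}(t)|\ll_\alpha N^{1-\alpha}\log N$, hence $D_N^*(Y_b^\alpha)\ll_\alpha\log N/N^\alpha$. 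For the matching lower bound, one picks $t$ just smaller than $y_M$: the whole multiplicity $m_M\asymp N^{1-\alpha}$ attached to $k=M$ sits on one side of $t$, creating a bias of order $m_M/N\asymp N^{-\alpha}$ in $\Delta_{Y_b^\alpha,N}(t)$.

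The main obstacle is part~3: uniform control of $S_h$ across the full Fourier cut-off range rests on the delicate Gelfond machinery for $q$-ary digit exponential sums, and the seemingly mysterious threshold $q<14$ is not a pedagogical simplification but reflects a genuine loss in the spectral gap $\gamma(q)$ once $q$ is large, which is exactly what forces the extra $\log\log N$ factor in the general upper bound.
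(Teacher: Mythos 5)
The paper does not prove this theorem; it is a compilation of results cited verbatim from \cite{HLKP} (part 1), \cite{P12Fib} (part 2), and \cite{KrLaPi} (parts 3 and 4). So there is no ``paper's proof'' to compare against, and your proposal has to stand on its own merits. Your part 1 is sound and is essentially the approach in \cite{HLKP}: reduce to elementary intervals $J_{k,d}$, use that $n\mapsto un+v$ permutes $\ZZ_{b^d}$ when $\gcd(u,b)=1$, and compare the resulting counting functions with those of $Y_b$. The common framework you identify --- the distribution of the index sequence $(k_n)$ modulo $b^d$ for all $d$ controls $D_N^*(y_{k_n})$ --- is exactly the right one, and the paper itself emphasises it.

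However, in parts 2 and 3 you abandon that framework for Erd\H{o}s--Tur\'an, and this is where the argument breaks. For part 2, $e(h\,y_{F_n})$ is \emph{not} periodic in $n$; only its truncation to $d$ digits is, and the truncation error is $O(hb^{-d})$ per term. So the honest estimate is $|S_h|\ll b^d + Nhb^{-d}$, hence $|S_h|\ll\sqrt{Nh}$ after optimising $d$, not $\ll\sqrt N$ as you assert. Feeding $|S_h|\ll\sqrt{Nh}$ into Erd\H{o}s--Tur\'an yields only $D_N^*\ll N^{-1/3}$, strictly weaker than the claimed $N^{-1/2}$. The elementary-interval route avoids this entirely: $y_{F_n}\in J_{k,d}$ iff $F_n\equiv A(k)\pmod{b^d}$ \emph{exactly}, the Pisano period $4b^d$ with $4$-fold coverage gives the count in each $J_{k,d}$ as $N/b^d+O(1)$ with an absolute $O(1)$, so $N|\Delta(k/b^d)|\ll b^d$, and optimising $b^d\asymp\sqrt N$ after discretisation gives $D_N^*\ll N^{-1/2}$. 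You should have stuck with the same mechanism as part 1.

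Part 3 has a more fundamental problem. The Gelfond/Delange exponential-sum bound $|\sum_{n<N}e(\theta s_q(n))|\ll N^{1-\gamma(q,\theta)}$ has an exponent that degenerates to zero as $\theta\to 0$, so there is no uniform $\gamma(q)>0$ over the frequency range you need. If there were, your own calculation --- $|S_h|\ll N^{1-\gamma}$ plugged into Erd\H{o}s--Tur\'an with $H\asymp\sqrt{\log N}$ --- would give $D_N^*\ll 1/\sqrt{\log N}$ for \emph{every} $q$, contradicting both the claimed $\log\log N/\sqrt{\log N}$ bound for $q\ge 14$ and the fact that your explanation of the $q<14$ threshold never actually enters the computation. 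Moreover, polynomial-in-$N$ decay of $D_N^*$ is impossible here since $s_q(n)$ takes only $O(\log N)$ distinct values for $n<N$. The actual mechanism in \cite{KrLaPi} is a local normal approximation for $s_q(n)$: the index sequence concentrates near $\frac{q-1}{2}\log_q N$ with Gaussian fluctuations of scale $\sqrt{\log N}$, and one controls how the values $s_q(n)\bmod b^d$ distribute over residue classes via Berry--Esseen-type estimates; the threshold $q<14$ is a numerical artefact of those estimates, not a spectral-gap phenomenon.

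Your part 4 outline is the right idea, but the Abel summation step needs care: the second differences $m_{k+1}-m_k$ contain $O(1)$ floor-induced fluctuations on top of the smooth term $\asymp k^{1/\alpha-2}$, and $\sum_k|m_{k+1}-m_k|$ is \emph{not} automatically $\ll N^{1-\alpha}$ for $\alpha>1/2$ (a naive bound gives $O(M)=O(N^\alpha)$). One has to exploit cancellation in the floor fluctuations, or reorganise the sum, to recover $\ll N^{1-\alpha}\log N$; simply invoking Abel summation and the bound $KD_K^*(Y_b)\ll\log K$ does not suffice as written.
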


The first point is \cite[Theorem~6.1]{HLKP}, the second one is
\cite[Theorem~2.1]{P12Fib}. The third and the fourth points are
taken from \cite{KrLaPi}.

\subsection{Distribution properties of consecutive elements of the van der Corput sequence}

One test for the statistical independence of successive
pseudorandom numbers is the so-called {\it serial test}. This test
can be viewed as a way of measuring the deviation from the
property of complete uniform distribution. For a given sequence
$X=(x_n)_{n \ge 0}$ and a fixed dimension $s \ge 2$, consider the
overlapping tuples
$$\bsx_n=(x_n,x_{n+1},\ldots,x_{n+s-1}) \ \ \ \mbox{ for $n \in \NN_0$}$$
and consider the discrepancy of the resulting $s$-dimensional
sequence $X^{(s)}=(\bsx_n)_{n \ge 0}$. For a random sequence
$Z=(\bsz_n)_{n \ge 0}$ in $[0,1)^s$ the law of the iterated
logarithm states that
$$\limsup_{N \rightarrow \infty}\frac{\sqrt{2N} D_N^*(Z)}{\sqrt{\log\log N}}=1 \ \ \ \lambda_s^{\infty}{\rm -a.e.},$$
where $\lambda_s$ is the $s$-dimensional Lebesgue measure. This
serves as a benchmark for the order of magnitude of
$D_N^*(X^{(s)})$. More information on the serial test and further
statistical tests for the goodness of fit for the empirical distribution 
of uniform pseudorandom numbers can be found in the book of Niederreiter~\cite[Section~7.2]{niesiam}.\\

We now return to the van der Corput sequence. Let
$Y_b^{(s)}=(\bsy_n)_{n \ge 0}$ where
$\bsy_n=(Y_b(n),Y_b(n+1),\ldots,Y_b(n+s-1))$. In \cite{blaze} 
Bla\v{z}ekov\'{a} showed the following result for $s=2$:

\begin{theorem}[Bla\v{z}ekov\'{a}]
For $N \in \NN$ we have
$$D_N^*(Y_b^{(2)})= \max\left(\frac{1}{b}\left(1-\frac{1}{b}\right),\frac{1}{4}  \left(1-\frac{1}{b}\right)^2\right)+O(D_N^*(Y_b)).$$
\end{theorem}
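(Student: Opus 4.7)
The plan is to exploit the identity $Y_b(n+1) = T_b(Y_b(n))$ for the $b$-adic von Neumann-Kakutani transformation $T_b$ from Section~\ref{NKtransform}. This identity yields $\bsy_n = (y_n, T_b(y_n))$, so $Y_b^{(2)}$ is the image of the $1$-dimensional van der Corput sequence under the map $x \mapsto (x, T_b(x))$. In particular, the sequence $Y_b^{(2)}$ lives on the graph of $T_b$, a countable disjoint union of unit-slope line segments sitting in the squares $[1-b^{-k},1-b^{-k-1}) \times [b^{-k-1},b^{-k})$ for $k \geq 0$.

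The first step is to convert the $2$-dimensional count to a $1$-dimensional one. For any $\alpha,\beta \in [0,1]$,
\begin{equation*}
\#\{n < N : \bsy_n \in [0,\alpha) \times [0,\beta)\} = \#\{n < N : y_n \in S(\alpha,\beta)\},
\end{equation*}
where $S(\alpha,\beta) := [0,\alpha) \cap T_b^{-1}([0,\beta))$. By the piecewise-translation description of $T_b$ in Section~\ref{NKtransform}, the preimage $T_b^{-1}([0,\beta))$ is a disjoint union of intervals with $b$-adic endpoints, one per level-$k$ strip $[1-b^{-k},1-b^{-k-1})$; the same then holds for $S(\alpha,\beta)$. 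Truncating the decomposition at a suitable level $K$ (with tail of Lebesgue measure $O(b^{-K})$, whose empirical mass is controlled by applying the $1$-dimensional bound $D_N^*(Y_b)=D_N(Y_b)$ to the single interval $[1-b^{-K},1)$), and applying the $1$-dimensional bound to the remaining subintervals, gives
\begin{equation*}
\left|\frac{1}{N}\#\{n<N : \bsy_n \in [0,\alpha)\times[0,\beta)\} - \lambda\bigl(S(\alpha,\beta)\bigr)\right| = O\bigl(D_N^*(Y_b)\bigr).
\end{equation*}

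The second step is to compute $M := \sup_{\alpha,\beta \in [0,1]} \bigl|\lambda(S(\alpha,\beta)) - \alpha\beta\bigr|$ in closed form, via a short case analysis. If $\alpha \leq (b-1)/b$ and $\beta \leq 1/b$, then $S(\alpha,\beta) = \emptyset$ because $T_b^{-1}([0,1/b)) \subseteq [1-1/b,1)$; the deviation equals $\alpha\beta$, maximized at $(\alpha,\beta) = ((b-1)/b,\,1/b)$ with value $\tfrac{1}{b}(1-\tfrac{1}{b})$. If $\alpha \leq (b-1)/b$ and $\beta > 1/b$, only the $k=0$ segment of the graph of $T_b$ enters the box, giving $\lambda(S(\alpha,\beta)) = \min(\alpha,\beta-1/b)$; a one-variable calculus argument shows that $|\min(\alpha,\beta-1/b)-\alpha\beta|$ is maximized on this region at $\alpha = (b-1)/(2b)$, $\beta = (b+1)/(2b)$, with value $\tfrac{1}{4}(1-\tfrac{1}{b})^2$. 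The remaining cases with $\alpha > (b-1)/b$ reduce to the previous ones by the self-similarity of $T_b$ (the restriction of $T_b$ to $[1-b^{-k},1)$ is an affinely rescaled copy of $T_b$ on $[0,1)$) and yield no larger deviation. Combining the two steps produces the claimed formula.

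The main technical obstacle lies in the error term. Since $S(\alpha,\beta)$ is in general an infinite union of intervals, a naive termwise application of the $1$-dimensional discrepancy bound accumulates to $O((\log N)D_N^*(Y_b))$. To reach the sharper $O(D_N^*(Y_b))$ one must choose the truncation depth $K$ carefully and exploit the geometric decay (by a factor $b^{-1}$) of the lengths of the strips $[1-b^{-k},1-b^{-k-1})$, so that contributions from deep levels $k$ are dominated by a single $1$-dimensional discrepancy estimate on the entire tail $[1-b^{-K},1)$ rather than being summed individually.
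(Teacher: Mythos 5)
Your reduction to the one-dimensional sequence via the von Neumann--Kakutani transform is the right idea (and matches the modern viewpoint discussed around the theorem in the text, cf.\ Fialov\'a--Strauch and Aistleitner--Hofer), and your case analysis producing the two candidate values $\tfrac{1}{b}(1-\tfrac{1}{b})$ and $\tfrac{1}{4}(1-\tfrac{1}{b})^2$ checks out. The gap is in the error analysis, and the issue you flag as ``the main technical obstacle'' is in fact illusory. Your proposed fix --- truncate at depth $K$, apply $D_N(Y_b)$ to each of the first $K$ sub-intervals, and control the tail $[1-b^{-K},1)$ by a single discrepancy estimate --- yields a total error of order $(K+1)D_N(Y_b)+b^{-K}$, and no choice of $K$ makes this $O(D_N(Y_b))$: for fixed $K$ the term $b^{-K}$ is a nonvanishing constant, while taking $K\gtrsim\log_b N$ to kill it reinstates an extra factor of $\log N$ in the first term, exactly the loss you were trying to avoid.

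What you are missing is that there is nothing to truncate. For each $\beta\in(0,1)$ there is a unique $k^*\in\NN_0$ with $b^{-(k^*+1)}\le\beta<b^{-k^*}$; the strips $k<k^*$ contribute nothing to $T_b^{-1}([0,\beta))$, strip $k^*$ contributes the single interval $[1-b^{-k^*},\,\beta+1-b^{-k^*}-b^{-k^*-1})$, and the strips $k>k^*$ lie entirely inside $T_b^{-1}([0,\beta))$ and coalesce into the single interval $[1-b^{-(k^*+1)},1)$. Hence $T_b^{-1}([0,\beta))$ is a union of exactly two intervals, so $S(\alpha,\beta)=[0,\alpha)\cap T_b^{-1}([0,\beta))$ is a union of at most two intervals, and applying the one-dimensional extreme discrepancy $D_N(Y_b)=D_N^*(Y_b)$ to each gives an error of at most $2D_N^*(Y_b)$, uniformly in $(\alpha,\beta)$, with no truncation and no summation over levels. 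A further small correction: your claim that $T_b^{-1}([0,\beta))$ is a union of intervals with $b$-adic endpoints is false, since the right endpoint $\beta+1-b^{-k^*}-b^{-k^*-1}$ of the partial strip is generally not $b$-adic; this is harmless once you invoke $D_N$ rather than the exact $b$-adic counting formula \eqref{anzbadicint}, but as written that sentence would mislead a reader into expecting an $O(1/N)$ error per sub-interval.
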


In particular, $\lim_{N \rightarrow \infty} D_N^*(Y_b^{(2)})=
\max(\frac{1}{b}(1-\frac{1}{b}),\frac{1}{4} (1-\frac{1}{b})^2)>0$.
Thus the van der Corput sequence $Y_b$ does not pass the serial
test for $s=2$ and is therefore not a pseudorandom sequence. In
\cite{fialstr} Fialova and Strauch showed that every point
of $Y_b^{(2)}$ lies on the line segment described by the formula
$$y=x-1+\frac{1}{b^k}+\frac{1}{b^{k+1}} \ \ \
\mbox{ for $x \in\left[1-\frac{1}{b^k},1-\frac{1}{b^{k+1}}\right]$
with $k \in\NN_0$}$$ and they calculated the limit distribution of
the sequence $Y_b^{(2)}$.

These findings have been generalized by Aistleitner and
Hofer~\cite{aisthof} to general $s \ge 2$: let $T_b$ denote the
$b$-adic von Neumann-Kakutani transformation. Define the map
$\gamma:[0,1) \rightarrow [0,1)^s$ by setting
$$\gamma(t):=(t,T_bt,T_b^2 t,\ldots,T_b^{s-1} t)$$ and set
$$\Gamma:=\{\gamma(t) \ : \ t \in [0,1)\}.$$ The Lebesgue measure
$\lambda$ on $[0,1)$ induces a measure $\nu$ on $\Gamma$ by
setting $$\nu(A)=\lambda(\{t \ : \ \gamma(t)\in A\})\ \ \mbox{ for
$A \subseteq \Gamma$.}$$  Furthermore, $\nu$ induces a measure
$\mu$ on $[0,1)^s$ by embedding $\Gamma$ into $[0,1)^s$, i.e.,
$$\mu(B)=\nu(B \cap \Gamma) \ \ \mbox{ for $B \subseteq [0,1)^s$.}$$

\begin{theorem}[Aistleitner and Hofer]\label{aishofthm}
The limit measure of $Y_b^{(s)}$ is $\mu$.
\end{theorem}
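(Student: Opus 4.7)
The plan is to exploit the identity $y_{n+1} = T_b(y_n)$, where $T_b$ is the $b$-adic von Neumann-Kakutani transformation from Section~\ref{NKtransform} and $y_n = Y_b(n)$. This identity, already noted by Lambert, follows directly from comparing the piecewise-translation formula for $T_b$ with the effect on the $b$-adic digit reflection when $n$ is replaced by $n+1$. Iterating it gives $y_{n+j} = T_b^j(y_n)$ for all $j \ge 0$, so that
$$\bsy_n = (y_n, T_b y_n, T_b^2 y_n, \ldots, T_b^{s-1} y_n) = \gamma(y_n).$$
In particular every point of $Y_b^{(s)}$ already lies on $\Gamma$, which immediately explains why the limit measure has to be supported on $\Gamma$.

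Next I would establish weak convergence of the empirical measures $\mu_N := \frac{1}{N}\sum_{n=0}^{N-1} \delta_{\bsy_n}$ to $\mu$ by testing against continuous functions. Fix $f \in C([0,1]^s)$. Using the identity $\bsy_n = \gamma(y_n)$, one has
$$\int_{[0,1)^s} f \rd \mu_N = \frac{1}{N}\sum_{n=0}^{N-1} (f \circ \gamma)(y_n).$$
The function $f \circ \gamma : [0,1) \to \RR$ is bounded, and it is continuous outside the countable set of discontinuity points of the iterates $T_b, \ldots, T_b^{s-1}$ (which are $b$-adic rationals). Hence $f \circ \gamma$ is Riemann-integrable on $[0,1]$. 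Since the van der Corput sequence $(y_n)_{n \ge 0}$ is uniformly distributed modulo $1$, the classical criterion of Weyl for Riemann-integrable integrands gives
$$\lim_{N \to \infty} \int_{[0,1)^s} f \rd \mu_N = \int_0^1 (f \circ \gamma)(t) \rd t.$$

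To finish, I would identify the right-hand side with $\int f \rd \mu$. By the very definition of $\nu$ as the push-forward of Lebesgue measure on $[0,1)$ under $\gamma$, the change-of-variables formula gives $\int_0^1 (f \circ \gamma)(t) \rd t = \int_\Gamma f \rd \nu$; since $\mu$ extends $\nu$ by $0$ outside $\Gamma$, this equals $\int_{[0,1)^s} f \rd \mu$. Thus $\mu_N \to \mu$ weakly, which is exactly the statement of Theorem~\ref{aishofthm}. The main obstacle is the control of the discontinuities of $f \circ \gamma$: one has to know that the failure set of continuity of the iterates $T_b^j$ is Lebesgue null, so that Riemann-integrability is available and uniform distribution of the one-dimensional sequence $(y_n)$ can be transferred to the Cesàro averages of $f \circ \gamma$. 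Everything else is bookkeeping flowing from Lambert's identity $y_{n+1} = T_b(y_n)$.
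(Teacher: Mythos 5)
Your proposal is correct and takes the same route the paper indicates: the paper's only statement about the proof is that it ``is based on the fact that the van der Corput sequence can be viewed as the orbit of the origin under the ergodic von Neumann-Kakutani transformation,'' which is precisely the identity $y_{n+1}=T_b(y_n)$, i.e.\ $\bsy_n=\gamma(y_n)$, that you exploit. What you add, correctly, is the reduction to one-dimensional uniform distribution via the Riemann-integrable form of Weyl's criterion. One point worth making explicit when you write this up: the sequence $(y_n)_{n\ge 0}$ consists \emph{entirely} of $b$-adic rationals, which is exactly the (countable, hence Lebesgue-null) discontinuity set of $\gamma$; the sandwich-by-step-functions proof of the Riemann-integrable Weyl criterion still applies because step-function averages converge by u.d.\ for \emph{every} subinterval of $[0,1)$, with no genericity assumption on endpoints, and the pointwise inequalities $f_1\le f\circ\gamma\le f_2$ are required to hold at every point including the discontinuities. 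So the ``obstacle'' you flagged is real but is handled by exactly the observation you sketched.
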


Note that the measure $\mu$ is a copula on $[0,1]^s$ since every
distribution function of a multi-dimensional sequence  is a copula
if all uni-variate projections of the sequence are uniformly
distributed modulo 1.

The proof of Theorem~\ref{aishofthm} in \cite{aisthof} is based on
the fact that the van der Corput sequence can be viewed as the
orbit of the origin under the ergodic von
Neumann-Kakutani transformation (see Section~\ref{NKtransform}).\\

Recently Lertchoosakul and Nair \cite{LertRad} studied the
same question for sequences of the form
$$(Y_b(k_n+n_1),\ldots,Y_b(k_n+n_s))_{n \ge 0},$$ where
$(n_1,\ldots,n_s)$ is a fixed $s$-tuple of non-negative integers
and where the integer sequence $(k_n)_{n \ge 0}$ is Hartmann
uniformly distributed on $\ZZ$. It is shown in \cite{LertRad} that
the asymptotic distribution function exists and is a copula.

\section{Generalizations and variants of the van der Corput sequence}\label{sec_vari}


Several authors have studied different ways of generalizing or
modifying the van der Corput sequence. In this section, we
describe some of the most important variants.


\subsection{Symmetrized generalized van der Corput sequences}\label{sec_sym}

One way to overcome the defect that, compared to the lower bound
in \eqref{lowproinov}, the van der Corput sequence does not have
optimal order of $L_p$-discrepancy is based on a process
which was initially introduced by Davenport for
$(n\alpha)$-sequences (see \cite{Dav56} or
\cite[Theorem~1.75]{DT97}). This method is also known as {\it
Davenport's reflection principle}. Later
Proinov~\cite{pro1988a}, who named the process {\it symmetrization
of a sequence}, obtained the same result with generalized van der
Corput sequences.

\begin{definition}\rm
A sequence $Y=(y_n)_{n \ge 0}$ is called {\it symmetric} if for
all $n \in \NN_0$, one has $ y_{2n}+y_{2n+1}=1$. A symmetric
sequence $Y=(y_n)_{n \ge 0}$ is said to be {\it produced} by a
sequence $X=(x_n)_{n \ge 0}$ if for all $n\in \NN_0$ one has $
y_{2n}=x_n$ or $y_{2n+1}=x_n$. In this case, the sequence $Y$ is
called a {\it symmetrized version of} $X$ and is denoted as $Y=
\widetilde{X}$.
\end{definition}

Two years after Proinov, Faure \cite[Th\'eor\`eme 3]{fau1990}
obtained an exact formula for the $L_2$-discrepancy of the
symmetrized  sequence $\widetilde{Y}_2$ of the van der Corput
sequence $Y_2$ (for short,  the {\it symmetrized  van der Corput
sequence}).  The proof is based on the descent method (see Section
\ref{sec_Faure_meth}) worked out for the first time for that study
of $Y_2$ and $\widetilde{Y}_2$.

We remark that the symmetrized van der Corput sequence in base 2
can also be obtained with the help of the tent-transformation
$\Psi:[0,1]\rightarrow [0,1]$, given by $\Psi(x)=1-|2x-1|$, which
is a Lebesgue measure preserving function. If $Y_2=(y_n)_{n \ge
0}$ denotes the van der Corput sequence in base 2, then
$$\widetilde{Y}_2=\Psi(Y_2)=(\Psi(y_n))_{n \ge 0}.$$

\begin{theorem}[Faure]\label{L2symvdc2} For any positive integers $m$ and $N$ such that $1\le N \le2^m$, we have
\[
(NL_{2,N}(\widetilde{Y}_2))^2=\sum_{j=1}^m (1-2\|2^jY_2(N+1)\|)\
\left\|\frac{N}{2^j}\right\|^2+\frac{N^2}{3\cdot 4^m}\cdot
\]
\end{theorem}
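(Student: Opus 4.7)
The plan is to exploit the tent-map representation of the symmetrized sequence, $\widetilde Y_2=\Psi(Y_2)$ with $\Psi(t)=1-|2t-1|$, in order to reduce everything to quantities attached to the ordinary van der Corput sequence $Y_2$, for which Faure's formula \eqref{discfoL2} and the fundamental descent lemma of Section~\ref{sec_Faure_meth} are already at hand. The preimage identity $\Psi^{-1}([0,x))=[0,x/2)\cup(1-x/2,1)$ yields the pointwise formula
$$
N\Delta_{\widetilde Y_2,N}(x)=N\Delta_{Y_2,N}(x/2)-N\Delta_{Y_2,N}(1-x/2) \qquad (x\in[0,1]).
$$
Squaring, integrating in $x$, substituting $u=x/2$ and using the symmetry of the resulting integrand under $u\mapsto 1-u$, I get
$$
(NL_{2,N}(\widetilde Y_2))^2=2(NL_{2,N}(Y_2))^2-2J_N,\qquad J_N:=\int_0^1 g(u)\,g(1-u)\,\mathrm du,
$$
with $g(u):=N\Delta_{Y_2,N}(u)$. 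Since the first summand is known explicitly from \eqref{discfoL2}, the whole problem collapses to an exact evaluation of the cross-correlation $J_N$.

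To handle $J_N$ I would use the discretisation lemma together with the descent lemma, both available in base~$2$. Because $1\le N\le 2^m$, every jump of $g$ sits in $\mathbb Q(2^m)$, so on each dyadic interval $J_{k,m}=[k/2^m,(k+1)/2^m)$ both $g$ and $u\mapsto g(1-u)$ are affine (with slopes $-N$ and $+N$) plus at most one unit jump. The endpoint values of $g$ are given by
$$
g(\lambda/2^m)=\sum_{j=1}^{m}\varphi_{2,\varepsilon_j(\lambda,m,N)}^{\mathrm{id}}\!\left(\frac{N}{2^j}\right)=\sum_{j=1}^{m}\varepsilon_j(\lambda,m,N)\,\Bigl\|\frac{N}{2^j}\Bigr\|,
$$
since $\varphi_{2,0}^{\mathrm{id}}\equiv0$ and $\varphi_{2,1}^{\mathrm{id}}=\|\cdot\|$. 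Integrating $g(u)g(1-u)$ on each $J_{k,m}$ and summing over $k$ splits the result into (i) the pure affine contribution, which is independent of $k$ and sums to exactly $-\tfrac{N^2}{6\cdot 4^m}$, cancelling precisely the tail $\sum_{j>m}\|N/2^j\|^2=\tfrac{N^2}{3\cdot 4^m}$ of \eqref{discfoL2} up to the residual term appearing in the statement; and (ii) a bilinear combination of the $\|N/2^j\|$'s whose coefficients are the bilinear sums $\sum_k \varepsilon_j(k,m,N)\,\varepsilon_\ell(2^m-1-k,m,N)$.

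Plugging the explicit form of $2(NL_{2,N}(Y_2))^2$ from \eqref{discfoL2} into the identity of the first paragraph and comparing with the target formula, the off-diagonal terms $\|N/2^j\|\|N/2^\ell\|$ with $j\neq\ell$ must cancel out, and only the diagonal part $\sum_{j=1}^m c_j\|N/2^j\|^2$ survives together with the isolated term $N^2/(3\cdot 4^m)$. The identification $c_j=1-2\|2^jY_2(N+1)\|$ is the combinatorial heart of the argument and constitutes the main obstacle: it requires tracking, through the recursion that defines the symbols $\varepsilon_j(\cdot,m,N)$, how the carry propagation between the binary digits of $N$ and $N+1$ controls the relative orientation of the descent on $J_{k,m}$ and on its mirror $J_{2^m-1-k,m}$. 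The emergence of $Y_2(N+1)$ rather than $Y_2(N)$ reflects the fact that the orientation on each $J_{k,m}$ is governed by which of its endpoints is the ``next'' point $y_N$ to be inserted, and the reflected binary expansion of $N+1$ at positions $\ge j$ is exactly what $\|2^jY_2(N+1)\|$ records. Once this coefficient identification is carried out, collecting all terms yields the stated formula; the remaining pieces — tent-map reduction, piecewise integration on the $J_{k,m}$, and the base-case computation producing $N^2/(3\cdot 4^m)$ — are routine within the descent-method framework developed in \cite{fau1990}.
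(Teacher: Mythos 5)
Your tent-map reduction is correct: the pointwise identity $N\Delta_{\widetilde Y_2,N}(x)=N\Delta_{Y_2,N}(x/2)-N\Delta_{Y_2,N}(1-x/2)$ and the resulting algebra giving $(NL_{2,N}(\widetilde Y_2))^2=2(NL_{2,N}(Y_2))^2-2J_N$ with $J_N=\int_0^1 g(u)g(1-u)\,\mathrm du$ both check out, and the choice to compute $J_N$ by breaking the integral over the dyadic grid $\QQ(2^m)$ and using the descent lemma at the endpoints is a sensible plan. However, there is a genuine gap at exactly the place you call the ``combinatorial heart of the argument.'' You assert, but never establish, that (a) the ``pure affine contribution'' over each $J_{k,m}$ is independent of $k$ and sums to $-N^2/(6\cdot 4^m)$; (b) the bilinear sums $\sum_k\varepsilon_j(k,m,N)\varepsilon_\ell(2^m-1-k,m,N)$ reproduce the off-diagonal terms of $2(NL_{2,N}(Y_2))^2$ with precisely coefficient one, so that they cancel; and (c) the diagonal coefficient is exactly $2\|2^jY_2(N+1)\|$, i.e.\ $2J_N=\sum_{j<\ell}\|N/2^j\|\,\|N/2^\ell\|+2\sum_{j=1}^m\|2^jY_2(N+1)\|\,\|N/2^j\|^2$. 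All three of (a)--(c) are presented heuristically (``reflects the fact that\ldots,'' ``records\ldots,'' ``once this coefficient identification is carried out\ldots'') rather than carried out. In particular the emergence of $N+1$ rather than $N$, and the carry-propagation bookkeeping through the recursion for $\varepsilon_j(\cdot,m,N)$ and its mirror $\varepsilon_j(2^m-1-\cdot,m,N)$, is precisely the nontrivial content of the theorem; an outline that flags it as ``the main obstacle'' and then stops is not a proof.

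For comparison, the paper's own route (presented in \cite{fau1990} and summarized in Section~\ref{sec_Faure_meth}) does not detour through the cross-correlation $J_N$: Faure develops a descent lemma tailored to the local discrepancy of the symmetrized sequence itself, explicitly tracking, step by step of the descent from resolution $m$ to $1$, both the value of the error and the interval carrying the argument, and from this reads off the exact formula directly. Your reduction buys a conceptually clean split into a known quantity $(NL_{2,N}(Y_2))^2$ plus a single new object $J_N$, but it does not avoid the combinatorial work --- it repackages it into the evaluation of $J_N$, which you would still have to do with the full force of the descent machinery. Until (a)--(c) are actually derived, the argument is incomplete.
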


\begin{corollary}\label{corL2symvdc2}
The preceding theorem implies the following formula and estimate:
\[
t_2(\widetilde{Y}_2)=\inf_{m \ge 1} \max_{1\le N \le 2^m} \left
(\frac{1}{m \log 2}\sum_{j=1}^m (1-2\|2^jY_2(N+1)\|)\ \left
\|\frac{N}{2^j}\right\|^2\right )
\]
and
\[
  0.089\ldots=\frac{421}{6750 \log 2} \le t_2(\widetilde{Y}_2) <0.103.
\]
\end{corollary}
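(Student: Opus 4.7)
The plan is to convert the exact formula of Theorem~\ref{L2symvdc2} into an asymptotic estimate for $t_2(\widetilde{Y}_2)$, and then to bound the resulting variational quantity numerically. First, for any $N \ge 1$ set $m = \lceil \log_2 N \rceil$, so that $2^{m-1} < N \le 2^m$, and write
\[
G_m(N) := \sum_{j=1}^m \bigl(1-2\|2^j Y_2(N+1)\|\bigr) \left\|\frac{N}{2^j}\right\|^2.
\]
Theorem~\ref{L2symvdc2} then yields $(NL_{2,N}(\widetilde{Y}_2))^2 = G_m(N) + N^2/(3\cdot 4^m)$, where the error term is at most $1/3$, while $\log N = m \log 2 + O(1)$. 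Dividing by $\log N$ gives
\[
\frac{(NL_{2,N}(\widetilde{Y}_2))^2}{\log N} = \frac{G_m(N)}{m \log 2} + O\!\left(\frac{1}{m}\right),
\]
so the problem is reduced to the behaviour of $G_m(N)/m$.

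Next I would derive the infimum--maximum formula. Set $a_m := \max_{1 \le N \le 2^m} G_m(N)$. The key step is a subadditivity-type inequality $a_{m_1+m_2} \le a_{m_1} + a_{m_2}$ (up to a negligible error absorbed in the limit). To prove it, split the sum defining $G_{m_1+m_2}(N)$ at $j = m_1$, use the identity $2^{m_1+i} Y_2(N+1) \equiv 2^i Y_2(\lfloor (N+1)/2^{m_1}\rfloor) \pmod 1$, and note that $\|N/2^{m_1+i}\|$ differs from $\|\lfloor N/2^{m_1}\rfloor/2^i\|$ only by $O(2^{-i})$, which is summable. The second block is then bounded by $a_{m_2}$ and the first by $a_{m_1}$. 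Fekete's subadditive lemma yields $\lim_m a_m/m = \inf_m a_m/m$, and combining this with the first paragraph shows that both $t_2(\widetilde{Y}_2)$ and $\inf_m a_m/(m\log 2)$ equal the same common limit, which is the claimed formula.

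For the explicit estimates, the upper bound $t_2(\widetilde{Y}_2) < 0.103$ follows immediately from the infimum formula by choosing a concrete $m$, computing $a_m$ (either by enumerating $N \in \{1,\ldots,2^m\}$ or by a finer structural analysis of which digit patterns maximise $G_m$), and checking $a_m/(m\log 2) < 0.103$. The lower bound $421/(6750 \log 2)$ requires producing a sequence $(N_k)_{k \ge 1}$ with $N_k \to \infty$ for which $G_{m_k}(N_k)/m_k \to 421/6750$. The shape of the denominator $6750 = 2\cdot 3^3 \cdot 5^3$ strongly suggests a short-period binary expansion for $N_k$ (likely period dividing a small integer) chosen so that both factors $(1-2\|2^j Y_2(N_k+1)\|)$ and $\|N_k/2^j\|^2$ tend to prescribed averages whose product, via a geometric-series summation, gives the stated constant.

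The main obstacle is the subadditivity step: the weights $(1-2\|2^j Y_2(N+1)\|)$ depend on the high-order binary digits of $N+1$ while $\|N/2^j\|$ depends on the low-order digits of $N$, so the two blocks do not decouple cleanly and the ``seam'' between them must be estimated with care. Once subadditivity is in hand, Fekete's lemma and the numerical verifications are routine; the remaining effort is to guess the optimal periodic digit pattern for the lower bound, after which its verification is a finite calculation.
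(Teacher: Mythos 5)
Your first paragraph is correct: the exact formula from Theorem~\ref{L2symvdc2} does reduce the problem to the asymptotics of $G_m(N)/(m\log 2)$, and the error terms $N^2/(3\cdot 4^m)\le 1/3$ and $\log N=m\log 2+O(1)$ are handled properly. The difficulty is the second paragraph, and there the proposal has a real gap.

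Fekete's lemma concludes $\lim a_m/m=\inf a_m/m$ only under \emph{exact} subadditivity $a_{m_1+m_2}\le a_{m_1}+a_{m_2}$. Your block decomposition gives at best $a_{m_1+m_2}\le a_{m_1}+a_{m_2}+O(1)$, for several reasons which you gloss over: the ``seam'' perturbs $\|N/2^j\|$ by $O(2^{-i})$ in each term of a block; the identity $\{2^{m_1+i}Y_2(N+1)\}=Y_2(\lfloor(N+1)/2^{m_1+i}\rfloor)$ replaces $Y_2(N+1)$ by $Y_2$ evaluated at $\lfloor(N+1)/2^{m_1}\rfloor$, which is generically $\lfloor N/2^{m_1}\rfloor$ and \emph{not} $\lfloor N/2^{m_1}\rfloor+1$, so the block does not reproduce $G_{m_2}$ at a shifted index; and in the first block the amplitude factor $\|2^jY_2(N+1)\|$ depends on bits of $N+1$ above position $m_1$, so it cannot be matched to a $G_{m_1}$-evaluation with a bounded argument. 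Each effect is $O(1)$ per block. Splitting into $n$ blocks of width $m_0$, the cumulative error is $O(n)$, i.e.\ $O(1/m_0)$ after normalization---vanishing as $m_0\to\infty$, but not for a fixed $m_0$. Approximate subadditivity with additive constant $C$ only yields $\lim a_m/m=\inf(a_m+C)/m$, which can strictly exceed $\inf a_m/m$: take $a_1=0$ and $a_m=m$ for $m\ge 2$, then $\inf a_m/m=0$ while $\lim a_m/m=1$. Thus the step ``Fekete's lemma yields $\lim=\inf$'' is not justified, and the inf-max identity does not follow from what you wrote.

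The numerical bounds are also not ``routine'' as you claim. The upper bound $t_2<0.103$ does follow from a suitable $m$ once the identity is established, but the lower bound $t_2\ge 421/(6750\log 2)$ cannot come from the inf-max formula at all, since the infimum only produces upper bounds. One must exhibit an explicit sequence of $N$'s along which $(NL_{2,N})^2/\log N$ tends to a value at least $421/(6750\log 2)$; you only speculate on what it might look like. Incidentally, $6750 = 2\cdot(2^4-1)^3$ points to a period-4 digit pattern rather than the period-3 or period-5 guesses your factorisation remark suggests, but this is not substantiated in the proposal either.

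Finally, the paper itself contains no proof of this corollary: it is stated as a consequence of the theorem with the details deferred to~\cite{fau1990}, where Faure extracts the asymptotic constant by a direct analysis of the functions appearing in the descent lemma (cf.\ Section~\ref{sec_Faure_meth}), not via a soft Fekete-type compactness argument. The two routes are not equivalent in precision; the corollary needs exact constants, and that is precisely what the descent-method bookkeeping provides and your sketch does not.
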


Numerical computations suggest that the lower bound in Corollary \ref{corL2symvdc2} is the
exact value of $t_2(\widetilde{Y}_2)$.

\medskip

The proof of the preceding result is based on the descent method
which permits a very precise study of the $L_2$-discrepancy of
$\widetilde{Y}_2$, but which (until now) is ineffective for the
$L_p$-discrepancy. The optimal order of $L_{2,N}(\widetilde{Y}_2)$
was also shown in \cite{lp} by means of a Walsh series
expansion of the local discrepancy and Parseval's identity. On
the other hand, quite recently, Kritzinger and
Pillichshammer~\cite{KriPi2015} obtained a general optimal
estimation for the $L_p$-discrepancy of $\widetilde{Y}_2$ (with
non-explicit constants) by means of strong analytic tools. This
is a new illustration of (at least) two complementary approaches in
discrepancy theory that are source a of emulation and
progress (see also Sections \ref{NKtransform} and
\ref{bdremaindersets}).

\begin{theorem}[Kritzinger and Pillichshammer]\label{Lp_sym_vdc}
For every $p \in [1,\infty)$ we have
 \[L_{p,N}(\widetilde{Y}_2)\ll_p \frac{\sqrt{\log{N}}}{N}.\]
\end{theorem}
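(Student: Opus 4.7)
The plan is to reduce to $p>2$ and then use a Haar expansion of the local discrepancy combined with a square-function/Littlewood--Paley inequality. For $p\in[1,2]$ Jensen's inequality gives $L_{p,N}(\widetilde{Y}_2)\le L_{2,N}(\widetilde{Y}_2)$, so the bound follows at once from Corollary~\ref{corL2symvdc2} (which provides the optimal order $\sqrt{\log N}/N$ in the $L_2$ case). Hence it is enough to handle $p>2$, where the plan is to expand $\Delta_{\widetilde{Y}_2,N}$ in the normalized Haar basis $\{h_{j,k}\}$ on $[0,1)$,
\[
\Delta_{\widetilde{Y}_2,N}(x)\;=\;c_0+\sum_{j\ge 0}\sum_{k=0}^{2^j-1}\mu_{j,k}\,h_{j,k}(x),
\]
control the Haar coefficients $\mu_{j,k}$ by exploiting the symmetry relation $\widetilde{y}_{2n}+\widetilde{y}_{2n+1}=1$, and finally sum the squares of the coefficients after a Littlewood--Paley step.

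The Haar coefficient $\mu_{j,k}$ equals (up to normalization) the signed count of points of $\widetilde{Y}_2$ in the two halves of the dyadic interval $I_{j,k}=[k/2^j,(k+1)/2^j)$, corrected by the Lebesgue measure. For coarse levels $2^j\le N$ a direct counting argument using \eqref{anzbadicint} gives $|\mu_{j,k}|\ll 2^{-3j/2}$, which already produces a geometric sum of the right order after squaring. The crucial novelty compared with $Y_2$ itself is the bound at fine levels $2^j>N$: here the symmetrization $\widetilde{y}_{2n+1}=1-\widetilde{y}_{2n}$ forces adjacent Haar contributions to cancel in pairs, and decomposing $N=2^{m_1}+2^{m_2}+\cdots+2^{m_r}$ with $m_1>\cdots>m_r\ge 0$ reduces the analysis to the pure dyadic case $N=2^m$, for which $\widetilde{Y}_2$ restricted to the first $2^m$ points is a symmetric $(0,m,1)$-net and the coefficients can be handled by the same Walsh/Haar machinery that underpinned the proof of Theorem~\ref{L2symvdc2}.

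With these coefficient bounds in hand, the $L_p$-norm is estimated via the dyadic Littlewood--Paley (Burkholder) inequality for the Haar system, which asserts that for every $p\in(1,\infty)$,
\[
\|\Delta_{\widetilde{Y}_2,N}\|_{L^p}\;\ll_p\;\left\|\Bigl(\sum_{j,k}\mu_{j,k}^{2}\,\tfrac{h_{j,k}^{2}}{\|h_{j,k}\|_2^{2}}\Bigr)^{1/2}\right\|_{L^p}.
\]
For $p>2$ one now applies Minkowski's inequality in the form $\|\sum g_{j,k}\|_{L^{p/2}}\le \sum\|g_{j,k}\|_{L^{p/2}}$ to the square inside, using that the $h_{j,k}$ have disjoint supports within each level~$j$; summing the levelwise contributions then produces $\ll_p \sqrt{\log N}/N$, matching the lower bound in \eqref{lowproinov}.

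The principal obstacle will be the coefficient bounds at fine levels for \emph{general} $N$, not just for $N=2^m$. The splitting $N=2^{m_1}+\cdots+2^{m_r}$ works cleanly only if the blocks of points aligned with each $2^{m_i}$ retain the required symmetry, and tracking this through the Haar decomposition is delicate; a secondary technical point is that the constants in the Littlewood--Paley step depend on $p$, which is why the final estimate is only $\ll_p$ and not uniform in $p$. Once both points are handled the upper bound $L_{p,N}(\widetilde{Y}_2)\ll_p\sqrt{\log N}/N$ follows, showing that symmetrization restores the optimal order of $L_p$-discrepancy that was missing for $Y_2$ itself (cf.\ Theorem~\ref{discLpb2}).
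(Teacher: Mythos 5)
Your framework (Haar expansion, Littlewood--Paley square function, coefficient estimates, Minkowski for $p/2\ge 1$) matches the paper's proof; the $p\in[1,2]$ reduction via monotonicity is also fine. However, there are two substantive problems with the coefficient estimates, and one of them kills the argument.

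First, your coarse-level bound $|\mu_{j,k}|\ll 2^{-3j/2}$ for $2^j\le N$ is not strong enough. With $L_\infty$-normalized Haar functions (the paper's convention), this corresponds to $\mu_{j,k}\ll 2^{-2j}$, so the contribution to the square function at level $j$ is $2^{2j}\mu_{j,k}^2\ll 2^{-2j}$ and the sum over $j$ is $O(1)$. This only gives $L_{p,N}(\widetilde{Y}_2)\ll_p 1$, not $\sqrt{\log N}/N$. The bound that actually makes the proof work is $|\mu_{j,m}^{N,\sym}|\le \tfrac{1}{2^{j-1}N}$ for $j<\lceil\log_2 N\rceil$ (Lemma~\ref{ex_HK_sym}), which carries the essential factor $1/N$: then $2^{2j}\mu^2\ll 1/N^2$ uniformly in $j$ over the roughly $\log_2 N$ coarse levels, yielding $\ll(\log N)/N^2$ inside the square function and $\sqrt{\log N}/N$ after the root. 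Without the $1/N$, the sum over levels does not see $\log N/N^2$.

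Second, you locate the benefit of symmetrization in the ``fine levels $2^j>N$.'' This is misplaced: for $j\ge\lceil\log_2 N\rceil$ the dyadic intervals are too small to contain points, and the Haar coefficient is simply $-2^{-2j-2}$ whether or not the sequence is symmetrized; this is a tail that sums to $O(1/N^2)$ automatically. The single place where symmetrization is decisive is the constant coefficient $\mu_{-1,0}^{N,\sym}$: without symmetrization it can be as large as $(\log N)/N$ for infinitely many $N$, which is exactly what makes $L_{p,N}(Y_2)$ of order $(\log N)/N$; the pairing $y_{2n}+y_{2n+1}=1$ forces $\mu_{-1,0}^{N,\sym}=0$ for $N$ even and $|\mu_{-1,0}^{N,\sym}|\le 1/(2N)$ for $N$ odd (Equation~\eqref{cuc_part}). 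Your sketched reduction of general $N$ to the dyadic case $N=2^m$ via binary decomposition would also need care precisely here, because symmetry is a pairwise phenomenon at consecutive indices and is disturbed at an odd $N$; the paper sidesteps this by a direct even/odd case analysis of $\mu_{-1,0}$ rather than splitting $N$ into dyadic blocks. If you replace your coarse-level bound with $\ll 2^{-j}/N$ and treat $\mu_{-1,0}$ via the even/odd dichotomy, the rest of your outline goes through.
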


We conjecture that an analogous result to Theorem \ref{Lp_sym_vdc} also holds for general bases $b$ (this is
work in progress). The proof of Theorem~\ref{Lp_sym_vdc} is based
on dyadic Haar functions and on Littlewood-Paley theory. We sketch
the basic ideas:

A {\it dyadic interval} of length $2^{-j}$, $j\in {\mathbb N}_0,$
in $[0,1)$ is an interval of the form
$$ 
I=I_{j,m}:=\left[\frac{m}{2^j},\frac{m+1}{2^j}\right) \ \ \mbox{for } \  m=0,1,\ldots,2^j-1.
$$

We also define $I_{-1,0}=[0,1)$. The left and right halves of
$I=I_{j,m}$ are the dyadic intervals $I^+ = I_{j,m}^+ =I_{j+1,2m}$
and $I^- = I_{j,m}^- =I_{j+1,2m+1}$, respectively. The {\it Haar
function} $h_I = h_{j,m}$ with support $I$ is the function on
$[0,1)$ which is  $+1$ on the left half of $I$, $-1$ on the right
half of $I$ and 0 outside of $I$. The $L_\infty$-normalized {\it
Haar system} consists of all Haar functions $h_{j,m}$ with
$j\in{\mathbb N}_0$ and $m=0,1,\ldots,2^j-1$ together with the
indicator function $h_{-1,0}$ of $[0,1)$. Normalized in
$L_2([0,1))$ we obtain the {\it orthonormal Haar basis} of
$L_2([0,1))$.

The {\it Haar coefficients} of a function $f \in L_p([0,1))$ are
defined as 
$$
\mu_{j,m}(f):=\langle f,h_{j,m} \rangle=\int_{0}^{1}
f(t)h_{j,m}(t)\rd t\ \ \ \mbox{for $j\in \NN_{-1}$ and $m
\in\mathbb{D}_j$},
$$ 
where we use the abbreviations
$\NN_{-1}:=\NN_{0}\cup\{-1\}$,
$\mathbb{D}_{j}:=\{0,1,\dots,2^j-1\}$ for $j\in\NN_0$, and
$\mathbb{D}_{-1}:=\{0\}$.

In order to prove Theorem~\ref{Lp_sym_vdc} we apply the {\it
Littlewood-Paley inequality} which involves the square function
$$ 
S(f) := \left( \sum_{j \in \NN_{-1}} \sum_{m \in \mathbb{D}_{j}} 2^{2\max\{0,j\}} \,
\langle f , h_{j,m} \rangle^2 \, {\mathbf 1}_{I_{j,m}}
\right)^{1/2} \ \ \ \mbox{ for $f\in L_p([0,1))$.}
$$

\begin{lemma}[Littlewood-Paley inequality]\label{lpi}
Let $p \in (1,\infty)$. Then there exist $c_p,C_p>0$ such that for
every $f\in L_p([0,1))$ we have
 $$c_p \| f \|_{L_p} \le \| S(f) \|_{L_p} \le C_p \| f \|_{L_p}.$$
\end{lemma}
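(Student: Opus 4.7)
The plan is to recognize the square function $S(f)$ as the classical dyadic martingale square function, so that the lemma becomes a special case of Burkholder's inequality. Let $\mathcal{F}_j$ denote the $\sigma$-algebra generated by the dyadic intervals of length $2^{-j-1}$ for $j\in\NN_{-1}$, with $\mathcal{F}_{-1}=\{\emptyset,[0,1)\}$; write $E_j f=\mathbb{E}[f\mid \mathcal{F}_j]$ and $d_j f:=E_j f-E_{j-1}f$ for the associated martingale differences, so that $f=\sum_{j\ge -1}d_j f$ in $L_p$ for every $p\in(1,\infty)$.

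The first step is to check that the Haar expansion coincides with this martingale decomposition. Since $\{2^{j/2}h_{j,m}\}_{j\in\NN_{-1},\,m\in\mathbb{D}_j}$ is the orthonormal Haar basis of $L_2([0,1))$, one has for $j\ge 0$ that
$$d_j f=\sum_{m\in\mathbb{D}_j}2^j\mu_{j,m}(f)\,h_{j,m},$$
and $d_{-1}f=\mu_{-1,0}(f)\,h_{-1,0}$. Because for fixed $j\ge 0$ the Haar functions $h_{j,m}$ take values in $\{-1,+1\}$ and have pairwise disjoint supports, one obtains pointwise
$$|d_j f|^2=\sum_{m\in\mathbb{D}_j}2^{2j}\mu_{j,m}(f)^2\,\mathbf{1}_{I_{j,m}},$$
so that $S(f)^2=\sum_{j\in\NN_{-1}}|d_j f|^2$ is exactly the usual martingale square function of the dyadic martingale $(E_j f)_{j\ge -1}$.

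Once this identification is made, the content of the lemma is Burkholder's inequality for martingale square functions: for every $p\in(1,\infty)$ there are constants $c_p,C_p>0$ such that
$$c_p\|f\|_{L_p}\le \Bigl\|\Bigl(\sum_{j\ge -1}|d_j f|^2\Bigr)^{1/2}\Bigr\|_{L_p}\le C_p\|f\|_{L_p}$$
for every $L_p$-bounded martingale on a filtered probability space. This reduces the proof to citing a well-known result (e.g.\ Burkholder's original paper, or Stein's book on harmonic analysis). For a self-contained argument one would prove the upper bound by a good-$\lambda$ inequality combined with a Calder\'{o}n--Zygmund decomposition that yields a weak-type $(1,1)$ bound for $S$, then interpolate against the trivial $L_2$-identity $\|S(f)\|_{L_2}=\|f\|_{L_2}$ (which follows from Parseval) to cover $p\in(1,2]$; the range $p\in(2,\infty)$ follows by a duality argument based on polarization of $\langle f,g\rangle=\sum_j\langle d_j f,d_j g\rangle$.

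The main obstacle is the non-trivial direction of Burkholder's inequality: the $L_p$-boundedness of the square function for $p\neq 2$ is not a soft consequence of orthogonality, but requires a real-variable argument of Calder\'{o}n--Zygmund type. Both the weak-$(1,1)$ bound for $S$ and the good-$\lambda$ comparison between $S(f)$ and the dyadic maximal function $f^*=\sup_j|E_j f|$ are where the genuine work lies. Once this circle of martingale inequalities is established, the specialization to the Haar setting used in the proof of Theorem~\ref{Lp_sym_vdc} is immediate.
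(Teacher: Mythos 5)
The paper itself does not prove Lemma~\ref{lpi}: it is invoked as a classical fact (the dyadic Littlewood--Paley/Burkholder inequality) and used without proof, so there is no ``paper's proof'' to compare against. Your identification of $S(f)$ with the dyadic martingale square function is exactly right: with $\mathcal{F}_j$ the $\sigma$-algebra of dyadic intervals of length $2^{-j-1}$ and the convention $E_{-2}f=0$, one has $d_jf=\sum_{m\in\mathbb{D}_j}2^j\mu_{j,m}(f)h_{j,m}$ for $j\ge 0$ and $d_{-1}f=\mu_{-1,0}(f)h_{-1,0}$, so that $\sum_{j\ge -1}|d_jf|^2$ reproduces the weight $2^{2\max\{0,j\}}$ in the paper's definition of $S(f)$ (since $2^{2\max\{0,-1\}}=1$). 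The lemma is then exactly Burkholder's two-sided square function inequality for the dyadic martingale $(E_jf)_{j\ge -1}$, and citing that is a complete proof.

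One remark on your sketch of a self-contained argument: the chain ``weak-$(1,1)$ for $S$ plus Marcinkiewicz interpolation against the $L_2$-isometry'' correctly yields $\|S(f)\|_p\ll_p\|f\|_p$ for $p\in(1,2]$, and the polarization identity $\langle f,g\rangle=\sum_j\langle d_jf,d_jg\rangle$ together with Cauchy--Schwarz and H\"older then gives the \emph{lower} bound $\|f\|_p\ll_p\|S(f)\|_p$ for $p\in[2,\infty)$ from the upper bound at the conjugate exponent. But duality does \emph{not} directly produce the \emph{upper} bound $\|S(f)\|_p\ll_p\|f\|_p$ for $p>2$; that direction requires extra input, e.g.\ a vector-valued Calder\'on--Zygmund argument, the good-$\lambda$ comparison between $S(f)$ and the dyadic maximal function $f^*$ as in Burkholder--Davis--Gundy, or Burkholder's original extremal/Bellman-function method, each of which treats all $p\in(1,\infty)$ simultaneously. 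Since you explicitly reduce to citing Burkholder, this imprecision does not affect the validity of your proof of the lemma, but the self-contained sketch as written would leave the range $p>2$ of the upper estimate unproved.
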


So it remains to find good estimates for the Haar coefficients
$\langle f , h_{j,m} \rangle$ when $f$ is the local discrepancy of
the first $N$ terms of $\widetilde{Y}_2$. Let
$\mu_{j,m}^{N,\sym}:=\langle \Delta_{\widetilde{Y}_2,N},
h_{j,m}\rangle$. The following estimate was shown in
\cite{KriPi2015}:

\begin{lemma}\label{ex_HK_sym}
For $j \in \NN_0$ and $m \in \mathbb{D}_j$ we have
$$|\mu_{j,m}^{N,\sym}|\left\{
\begin{array}{ll}
\leq \frac{1}{2^{j-1}}\frac{1}{N} & \mbox{ if } j < \lceil \log_{2}{N}\rceil,\\ \\
= 2^{-2j-2} & \mbox{ if } j\geq \lceil \log_{2}{N}\rceil.
\end{array}\right.$$ Furthermore we also have
\begin{equation}\label{cuc_part}
|\mu_{-1,0}^{N,\sym}|= \begin{cases} 0 &\mbox{if } N=2M, \\
                       \left|\frac{1}{2N}-\frac{Y_2(M)}{N}\right|\leq \frac{1}{2N}\ & \mbox{if } N=2M+1. \end{cases}
\end{equation}
\end{lemma}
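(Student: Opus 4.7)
The plan is to evaluate $\mu_{j,m}^{N,\sym}$ directly from the definition $\langle \Delta_{\widetilde{Y}_2,N},h_{j,m}\rangle$ by splitting $\Delta_{\widetilde{Y}_2,N}(x)=A(x;N;\widetilde{Y}_2)/N-x$ into its linear and discrete pieces. A one-line computation (comparing the averages of $x$ on the two half-intervals of $I_{j,m}$) gives $\int_0^1(-x)\,h_{j,m}(x)\,\rd x=2^{-2j-2}$ for every $j\geq 0$. For the discrete part, expanding $A(x;N;\widetilde{Y}_2)=\sum_{n<N}\mathbf{1}[\widetilde{y}_n<x]$ and integrating term-by-term shows that a point $\widetilde{y}_n\notin I_{j,m}$ contributes nothing (since $\int h_{j,m}=0$), while a point $\widetilde{y}_n\in I_{j,m}$ contributes $-d(\widetilde{y}_n,\partial I_{j,m})$, where $d(\cdot,\partial I_{j,m})$ denotes the distance to the nearer of the two endpoints of $I_{j,m}$ (an easy case distinction depending on whether $\widetilde{y}_n\in I_{j,m}^+$ or $\widetilde{y}_n\in I_{j,m}^-$). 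Consequently
\[
\mu_{j,m}^{N,\sym} \;=\; 2^{-2j-2}\;-\;\frac{1}{N}\sum_{\substack{n<N\\ \widetilde{y}_n\in I_{j,m}}} d(\widetilde{y}_n,\partial I_{j,m}).
\]

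For $j\geq\lceil\log_2 N\rceil$ the equality is then immediate. Every $y_n$ with $n<N$ has a finite binary expansion of length $\leq L:=\lceil\log_2 N\rceil\leq j$; the same holds for $1-y_n$, hence for every $\widetilde{y}_n$. Thus each $\widetilde{y}_n$ is a multiple of $2^{-L}\geq 2^{-j}$, and the only such multiple lying in the interval $I_{j,m}$ of length $2^{-j}$ is its left endpoint $m/2^j$ itself, at which $d=0$. The discrete part of the displayed formula vanishes identically, giving $|\mu_{j,m}^{N,\sym}|=2^{-2j-2}$.

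For $0\leq j<\lceil\log_2 N\rceil$ I would exploit the pairing $\widetilde{y}_{2k}+\widetilde{y}_{2k+1}=1$. Take first $N=2M$; an odd $N=2M+1$ adds only the unpaired point $\widetilde{y}_{2M}=Y_2(M)$, which perturbs the discrete sum by at most $2^{-j-1}$ and can be absorbed. For $j\geq 1$ the interval $I_{j,m}$ and its mirror $I_{j,m'}$, $m':=2^j-1-m$, are disjoint, so each pair $(y_k,1-y_k)$ places at most one of its two members in $I_{j,m}$. A short check gives the identity $d(1-y_k,\partial I_{j,m})=d(y_k,\partial I_{j,m'})$, allowing the sum to be rewritten purely in terms of $Y_2$:
\[
\sum_{\substack{n<N\\ \widetilde{y}_n\in I_{j,m}}} d(\widetilde{y}_n,\partial I_{j,m}) \;=\; \sum_{\substack{k<M\\ y_k\in I_{j,m}\cup I_{j,m'}}} d(y_k,\partial),
\]
with $\partial$ denoting the boundary of whichever of the two intervals contains $y_k$. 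In the ideal subcase $M=2^a$, $j<a$, the first $2^a$ points of $Y_2$ form a $(0,a,1)$-net and an elementary arithmetic-progression computation shows this sum equals exactly $M/2^{2j+1}=N\cdot 2^{-2j-2}$, so the coefficient vanishes. For general $M$ I would decompose $M=\sum_i 2^{a_i}$ along its binary expansion and apply the shift identity $y_{2^{a_i}+n'}=y_{n'}+2^{-a_i-1}$ valid for $n'<2^{a_i}$, treating each block as a shifted net; each complete block again contributes its expected value, and only the final partial block can produce a defect, bounded by $2\cdot 2^{-j}$. Dividing by $N$ yields $|\mu_{j,m}^{N,\sym}|\leq 2^{1-j}/N=1/(2^{j-1}N)$. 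The case $j=0$ (where $I_{0,0}$ is its own mirror) is a minor variant: each pair then contributes twice the quantity $\min(y_k,1-y_k)$, and the same binary bookkeeping bounds the deviation of $2\sum_{k<M}\min(y_k,1-y_k)$ from $M/2$ by $O(1)$.

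Finally the $j=-1$ case is handled by direct integration: $\mu_{-1,0}^{N,\sym}=\int_0^1\Delta_{\widetilde{Y}_2,N}\,\rd x=\tfrac{1}{2}-\tfrac{1}{N}\sum_{n<N}\widetilde{y}_n$. For $N=2M$ the pairing $y_k+(1-y_k)=1$ gives $\sum\widetilde{y}_n=M$ and the coefficient vanishes; for $N=2M+1$ the extra $\widetilde{y}_{2M}=Y_2(M)$ leaves $\mu_{-1,0}^{N,\sym}=\tfrac{1}{2N}-Y_2(M)/N$, which is \eqref{cuc_part}, with absolute value $\leq 1/(2N)$ since $Y_2(M)\in[0,1)$. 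The main obstacle in the proof is the bookkeeping in the middle range: one must verify that the partial-block defects coming from the interaction of the shifts $2^{-a_i-1}$ with the four endpoints of $I_{j,m}\cup I_{j,m'}$ telescope to $O(2^{-j})$ uniformly in $N$, rather than picking up an unwanted $\log N$ factor that a Koksma-type estimate would give generically.
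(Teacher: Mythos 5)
The paper does not reproduce the proof of Lemma~\ref{ex_HK_sym} (it cites \cite{KriPi2015}), so a direct comparison is not possible; I review your argument on its own terms.

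Your basic decomposition is correct and is the natural starting point. A short computation indeed gives $\int_0^1(-x)h_{j,m}(x)\,\rd x=2^{-2j-2}$, and $\int_{\widetilde{y}_n}^1 h_{j,m}\,\rd x=-d(\widetilde{y}_n,\partial I_{j,m})$ when $\widetilde{y}_n\in I_{j,m}$ and $0$ otherwise, yielding exactly
$\mu_{j,m}^{N,\sym}=2^{-2j-2}-N^{-1}\sum_{\widetilde{y}_n\in I_{j,m}}d(\widetilde{y}_n,\partial I_{j,m})$.
The regime $j\geq\lceil\log_2 N\rceil$ then follows as you say: each $\widetilde{y}_n$ with $n<N$ lies on the $2^{-(L-1)}$-grid (in fact $\widetilde{y}_n\in\{Y_2(\lfloor n/2\rfloor),1-Y_2(\lfloor n/2\rfloor)\}$ and $\lfloor n/2\rfloor<2^{L-1}$), so the only candidate in $I_{j,m}$ is the left endpoint, where $d=0$. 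The $j=-1$ computation and the identity $d(1-y_k,\partial I_{j,m})=d(y_k,\partial I_{j,m'})$ also check out.

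The gap is in the phrasing ``only the final partial block can produce a defect.'' In the binary decomposition $M=\sum_i 2^{a_i}$ there can be up to $j+1$ blocks with $a_i\leq j$, and each of them in isolation could contribute a defect of order $2^{-j}$; summing these naively would give $O(j\,2^{-j})$, which is too large. The fix, which you should make explicit, is that these ``tiny'' blocks are not independent: they occupy the contiguous index range $[n_0,M)$ where $n_0=\sum_{a_i>j}2^{a_i}$ is a multiple of $2^{j+1}$ and $M-n_0<2^{j+1}$. By the shift identity, $Y_2$ restricted to $[n_0,n_0+2^{j+1})$ is a shifted $2^{-(j+1)}$-grid with shift $\delta=Y_2(n_0)<2^{-(j+1)}$, so $I_{j,m}$ contains exactly two of these grid points, at distances $d_+=\delta$ and $d_-=2^{-j-1}-\delta$ from the boundary. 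The crucial cancellation $d_++d_-=2^{-j-1}$ (and the analogous fact that a block with $a_i\geq j+1$ contributes exactly $2^{a_i-2j-2}$ to $I_{j,m}$ \emph{independently of its shift}, since the arithmetic-progression sums cancel the shift dependence) is what keeps the total defect at $O(2^{-j})$ uniformly, with the constant coming out to $2$ once the odd-$N$ perturbation is added. Without spelling this out, the argument does not obviously avoid the $\log N$ loss you yourself flag. With it, the approach goes through, and it is the kind of direct Haar-coefficient computation one would expect the cited reference to use.
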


Equation \eqref{cuc_part} is the most crucial part in the proof of
Theorem~\ref{Lp_sym_vdc}. It shows how the symmetrization trick
keeps the Haar coefficient $\mu_{-1,0}^{N,\sym}$ small enough in
order to achieve the optimal $L_p$-discrepancy rate. For the
non-symmetrized van der Corput sequence the corresponding Haar
coefficient is at least of order $(\log N)/N$ for infinitely many
$N \in \NN$ which causes the large $L_p$-discrepancy of $Y_2$.

With the ingredients of Lemmas~\ref{lpi} and \ref{ex_HK_sym} one can derive the result in Theorem~\ref{Lp_sym_vdc} (cf. \cite{KriPi2015}):\\

\noindent{\it Proof of Theorem~\ref{Lp_sym_vdc}.} Using
Lemma~\ref{lpi} with $f=\Delta_{\widetilde{Y}_2,N}$ we have
\begin{eqnarray*}
L_{p,N}(\widetilde{Y}_2) & \ll_p &
\|S(\Delta_{\widetilde{Y}_2,N})\|_{L_p}
\\
& = & \left\| \left( \sum_{j \in \NN_{-1}}
\sum_{m \in \mathbb{D}_{j}} 2^{2\max\{0,j\}} \, (\mu_{j,m}^{N,\sym})^2 \, {\mathbf 1}_{I_{j,m}} \right)^{1/2} \right\|_{L_p}\\
& = & \left\|\sum_{j \in \NN_{-1}} 2^{2\max\{0,j\}}
\sum_{m \in \mathbb{D}_{j}} \, (\mu_{j,m}^{N,\sym})^2 \, {\mathbf 1}_{I_{j,m}} \right\|_{L_{p/2}}^{1/2}\\
& \le & \left( \sum_{j \in \NN_{-1}} 2^{2\max\{0,j\}} \left\|
\sum_{m \in \mathbb{D}_{j}} \, (\mu_{j,m}^{N,\sym})^2 \, {\mathbf
1}_{I_{j,m}} \right\|_{L_{p/2}} \right)^{1/2},
\end{eqnarray*}
where we used Minkowski's inequality for the $L_{p/2}$-norm. Now
Lemma~\ref{ex_HK_sym} gives
\begin{align*}
\sum_{j \in \NN_{-1}} 2^{2\max\{0,j\}} \left\| \sum_{m \in
\mathbb{D}_{j}}  \mu_{j,m}^2 \, {\mathbf 1}_{I_{j,m}}
\right\|_{L_{p/2}} \le & \frac{1}{4N^2} \|
\bsone_{\left[0,1\right)} \|_{L_{p/2}}
        +\frac{4}{N^2}\sum_{j=0}^{\lceil \log_{2}{N} \rceil -1}
\left\| \sum_{m \in \mathbb{D}_{j}}  \, {\mathbf 1}_{I_{j,m}} \right\|_{L_{p/2}} \\
       &+\frac{1}{16}\sum_{j=\lceil \log_{2}{N} \rceil}^{\infty}2^{-2j}
\left\| \sum_{m \in \mathbb{D}_{j}}   \, {\mathbf 1}_{I_{j,m}} \right\|_{L_{p/2}} \\
       \le & \frac{1}{4N^2}+\frac{4}{N^2}(\log_{2}{N}+1)+\frac{1}{12}\frac{1}{4^{\lceil \log_{2}{N} \rceil}} \ll \frac{\log{N}}{N^2},
\end{align*}
where we regarded the fact that $\sum_{m \in \mathbb{D}_{j}}  \,
{\mathbf 1}_{I_{j,m}}=1$ for a fixed $j \in \NN_0$. $\qed$

\medskip

A general approach to obtain optimal upper bounds for the
$L_2$-discrepancy of generalized van der Corput sequences in any
base $b$ comes from an inequality between $L_{2,N}(\widetilde{X})$
and the diaphony $F_N(X)$, see \cite[Theorem A]{progro87} and also
\cite[Section 4.2.3]{chafa}:
\begin{theorem}[Proinov and Grozdanov]\label{progro}
There exist two constants $a_1$ and $a_2$ such that
\[
(NL_{2,N}(\widetilde{X}))^2 \le \left ( \frac{1}{\pi} \left
\lfloor \frac{N}{2} \right \rfloor F_{\left \lfloor N/2 \right
\rfloor}(X)+a_1 \right )^2+a_2^2.
\]
\end{theorem}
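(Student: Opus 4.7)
The plan is to apply Koksma's formula \eqref{FoKoks} to $\widetilde X$ and exploit the defining symmetry $\tilde x_{2n}+\tilde x_{2n+1}=1$ at two places: first to kill the linear ``sum'' term, and second to identify the trigonometric sums defining the diaphony of $\widetilde X$ with those defining the diaphony of $X$.

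First I would treat $N=2M$ even. Applying \eqref{FoKoks} to the first $N$ elements of $\widetilde X$, the bracket $\sum_{n=0}^{N-1}(\tfrac12-\tilde x_n)$ splits into pairs $(\tfrac12-\tilde x_{2k})+(\tfrac12-\tilde x_{2k+1})$, each of which vanishes because $\tilde x_{2k}+\tilde x_{2k+1}=1$. Thus
$$
L_{2,2M}^2(\widetilde X)=\frac{F_{2M}^2(\widetilde X)}{4\pi^2}.
$$
For the diaphony I would use $\exp(2\pi\icomp m(1-x))=\exp(-2\pi\icomp m x)$ for integer $m$ to obtain
$$
\sum_{n=0}^{2M-1}\exp(2\pi\icomp m\tilde x_n)=2\sum_{k=0}^{M-1}\cos(2\pi m x_k),
$$
so that $\bigl|\sum_n\exp(2\pi\icomp m\tilde x_n)\bigr|^2\le 4\bigl|\sum_k\exp(2\pi\icomp m x_k)\bigr|^2$. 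Inserting this into the definition of $F_{2M}^2(\widetilde X)$ and comparing with the definition of $F_M^2(X)$, the factor $4$ cancels against $1/(2M)^2$ vs.\ $1/M^2$ and yields $F_{2M}(\widetilde X)\le F_M(X)$. Combining these two observations gives, for even $N$,
$$
N L_{2,N}(\widetilde X)=2M L_{2,2M}(\widetilde X)\le \frac{M F_M(X)}{\pi}=\frac{\lfloor N/2\rfloor F_{\lfloor N/2\rfloor}(X)}{\pi}.
$$

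For odd $N=2M+1$ I would pass from $N$ to $N-1=2M$ by a standard one-point perturbation bound. Writing $N\Delta_{\widetilde X,N}(x)-(N-1)\Delta_{\widetilde X,N-1}(x)=\mathbf 1_{[0,x)}(\tilde x_{N-1})-x$ and using Minkowski's inequality on $L_2([0,1))$, one estimates the residual $L_2$-norm by $\bigl(\int_0^1(\mathbf 1_{[0,x)}(y)-x)^2\,dx\bigr)^{1/2}$, which for any $y\in[0,1)$ equals $\sqrt{y^3/3+(1-y)^3/3}\le 1/\sqrt3$. Combined with the even case applied to $N-1$ this gives
$$
N L_{2,N}(\widetilde X)\le\frac{\lfloor N/2\rfloor F_{\lfloor N/2\rfloor}(X)}{\pi}+\frac{1}{\sqrt 3},
$$
valid for all $N$. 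Squaring this proves the theorem with, for instance, $a_1=1/\sqrt 3$ and $a_2=0$.

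The only delicate step is the diaphony comparison $F_{2M}(\widetilde X)\le F_M(X)$; the rest is routine manipulation of \eqref{FoKoks} and a one-point stability estimate. A minor subtlety is that the definition of ``produced by'' allows the roles of $\tilde x_{2n}$ and $\tilde x_{2n+1}$ to be swapped independently in $n$, but all the sums that appear are invariant under such swaps, so the argument goes through without modification.
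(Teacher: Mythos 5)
The paper states this theorem without proof, citing Proinov and Grozdanov (1987), so there is no in-text argument to compare against; your proof is correct and is the expected line of attack. Two small remarks: (i) the paper's display \eqref{FoKoks} is missing a factor $1/N^2$ in front of the first summand (compare the scalings: $L_{2,N}^2$ and $F_N^2$ are $O(1)$, but $\bigl(\sum_{n<N}(\tfrac12-x_n)\bigr)^2$ can be of order $N^2$), but since you only invoke Koksma's formula in the situation where that term vanishes identically the misprint is harmless; (ii) the key diaphony step is cleanest if written as $\sum_{n<2M}e^{2\pi\icomp m\tilde x_n}=2\operatorname{Re}\sum_{k<M}e^{2\pi\icomp m x_k}$, whence $\bigl|\sum_{n<2M}e^{2\pi\icomp m\tilde x_n}\bigr|\le 2\bigl|\sum_{k<M}e^{2\pi\icomp m x_k}\bigr|$ by dropping the imaginary part — exactly what you wrote in cosine form. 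The one-point $L_2$-stability bound $1/\sqrt3$ for the odd-$N$ case is also correct, and your closing remark that all sums are invariant under the per-index swap $\tilde x_{2n}\leftrightarrow\tilde x_{2n+1}$ disposes of the only possible ambiguity in the definition of a symmetrized sequence.
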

\begin{corollary} \label{corprogro}
The asymptotic behavior of $L_{2,N}(\widetilde{Y}_b^\Sigma)$ is
related to the asymptotic behavior of $F_N(Y_b^\Sigma)$ by the
inequality
\[
t_2(\widetilde{Y}_b^\Sigma)=\limsup_{N \rightarrow \infty}
\frac{(N L_{2,N}(\widetilde{Y}_b^\Sigma))^2}{\log N} \le
\frac{1}{\pi} f(Y_b^\Sigma).
\]
\end{corollary}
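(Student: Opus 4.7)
The plan is to apply Theorem~\ref{progro} with $X = Y_b^\Sigma$, divide by $\log N$, and pass to the $\limsup$. Write $M = M(N) = \lfloor N/2 \rfloor$; Theorem~\ref{progro} then gives
\[
\frac{(NL_{2,N}(\widetilde{Y}_b^\Sigma))^2}{\log N} \le \frac{1}{\log N}\Bigl(\tfrac{1}{\pi}MF_M(Y_b^\Sigma) + a_1\Bigr)^2 + \frac{a_2^2}{\log N}.
\]
Expanding the square produces three contributions: a principal term $(MF_M(Y_b^\Sigma))^2/(\pi^2\log N)$, a cross-term of order $MF_M(Y_b^\Sigma)/\log N$, and a remainder $(a_1^2+a_2^2)/\log N$.

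The last obviously vanishes as $N\to\infty$. For the cross-term, if $f(Y_b^\Sigma)=\infty$ the asserted bound is vacuous, so one may assume $f(Y_b^\Sigma)<\infty$, in which case $MF_M(Y_b^\Sigma) = O(\sqrt{\log M}) = O(\sqrt{\log N})$ and the cross-term is $O(1/\sqrt{\log N}) \to 0$. For the principal term I would exploit that $M=\lfloor N/2\rfloor$ yields $\log N = \log M + O(1)$, hence $\log M/\log N \to 1$, and that $M(N)$ ranges over every sufficiently large positive integer as $N$ ranges over $\NN$ (each $M$ is attained by both $N=2M$ and $N=2M+1$). Consequently
\[
\limsup_{N\to\infty}\frac{(MF_M(Y_b^\Sigma))^2}{\pi^2\log N} = \frac{1}{\pi^2}\limsup_{M\to\infty}\frac{(MF_M(Y_b^\Sigma))^2}{\log M} = \frac{f(Y_b^\Sigma)}{\pi^2}.
\]
Collecting the three estimates yields $t_2(\widetilde{Y}_b^\Sigma) \le f(Y_b^\Sigma)/\pi^2$, which in particular implies the stated bound $t_2(\widetilde{Y}_b^\Sigma) \le f(Y_b^\Sigma)/\pi$.

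No step in this argument poses any real obstacle; the proof is entirely routine once Theorem~\ref{progro} is available. The only mild care is required in passing from $\limsup_N$ to $\limsup_M$, which is justified because $M(N)=\lfloor N/2\rfloor$ is eventually surjective onto $\NN$, so the two $\limsup$'s agree.
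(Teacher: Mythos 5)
Your argument is correct and is the natural way to deduce the corollary from Theorem~\ref{progro}; the paper does not supply its own proof, so there is nothing to compare against beyond this. Your observation that the argument actually yields the sharper bound $t_2(\widetilde{Y}_b^\Sigma) \le f(Y_b^\Sigma)/\pi^2$ is worth keeping: the subsequent paragraph in the paper indeed uses $t_2 = f/\pi^2$ to compute its numerical records (e.g.\ $1.316/\pi^2 \approx 0.133 < 0.134$ and $1.137/\pi^2 \approx 0.115 < 0.116$), so the $\frac{1}{\pi}$ in the displayed corollary is evidently a misprint for $\frac{1}{\pi^2}$, and your proof establishes the intended statement. The bookkeeping is all fine: the cross and constant terms vanish after division by $\log N$ (the cross term because finiteness of $f$ forces $\lfloor N/2\rfloor F_{\lfloor N/2\rfloor}(Y_b^\Sigma) = O(\sqrt{\log N})$, the infinite case being vacuous), and the passage from $\limsup_N$ to $\limsup_M$ is legitimate since $M=\lfloor N/2\rfloor$ is eventually surjective and $\log M/\log N\to 1$.
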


The state of the art regarding the optimal constant $t_2=\inf_X(t_2(X))$
is the following (see the end of
Section \ref{secdiscgenvdC} for the definition). From Corollary
\ref{corprogro}, we have the constants $t_2=f / \pi^2$ resulting
from state of the art for the constants $f$ hence, for short,
$t_2<0.134$ in base 19 by Chaix and Faure in 1993 and $t_2<0.116$
in base 57 by Pausinger and Schmid in 2010. However, these values
remain larger than the bound obtained by Faure in 1990 for base 2
with an exact formula for $t_2(\widetilde{Y}_2)$, see Corollary
\ref{corL2symvdc2}, namely $t_2<0.103$. This means that 
$$
0.051599\ldots \le \inf_X \limsup_{N\To\infty} \frac{N L_{2,N} (X)}{\sqrt{\log N}}\le 0.32093\ldots,
$$
where the infimum is extended over all sequences $X$ in $[0,1)$. Note that the lower and upper bounds in the latter equation roughly 
differ by a factor of 6. 


\subsection{$(0,1)$-sequences and NUT sequences}\label{secdig01seq}

\paragraph{General remarks on $(0,1)$-sequences.}

We already observed in Section~\ref{sec_udt} that for every $m \in
\NN_0$ and $k \in \{0,1,\ldots,b^m-1\}$ exactly one of $b^m$
consecutive elements of the van der Corput sequence in base $b$
belongs to the so-called {\it $b$-adic elementary interval}
$[\frac{k}{b^m},\frac{k+1}{b^m})$. This fundamental property of
the van der Corput sequence is the basis of the following
definition which goes back to Niederreiter \cite{niesiam}; see
also \cite{DP10}.

\begin{definition}\rm\label{def01seq}
A sequence $(x_n)_{n\geq 0}$ in the unit interval $[0,1)$ is
called a $(0,1)$-{\it sequence in base $b$} if every  elementary
interval $E$ of the form $[\frac{k}{b^m},\frac{k+1}{b^m})$, with
$m\in \NN_0$ and $k \in \{0,1,\ldots,b^m-1\}$, contains exactly
one element of the point set $$\{x_n\, : \, lb^m \leq n \leq
(l+1)b^m-1\} \ \ \ \mbox{ for every $l \in \NN_0$.}$$
\end{definition}

Of course, for every $b \ge 2$ the van der Corput sequence $Y_b$
is a $(0,1)$-sequence in base~$b$.

To take into account important special cases, it is convenient to
work with a slightly modified definition. To this end define for
every $x\in [0,1]$ and for every $m\in \NN$ the $m$-{\it truncated
expansion} of $x$ by $[x]_{b,m}=\sum_{i=1}^m \xi_i b^{-i}$, where
$x=\sum_{i=1}^\infty \xi_i b^{-i}$ is the $b$-adic expansion of
$x$, with the possibility that $\xi_i=b-1$ for all but finitely
many $i$. Then a sequence $(x_n)_{n\geq 0}$ in the unit interval $[0,1)$ is
called a $(0,1)$-{\it sequence in base $b$ in the broad sense} if
every elementary interval $E$ of the form
$[\frac{k}{b^m},\frac{k+1}{b^m})$, with $m\in \NN_0$ and $k \in
\{0,1,\ldots,b^m-1\}$, contains exactly one element of the point
set 
$$
\{[x_n]_{b,m}\, : \, lb^m \leq n \leq (l+1)b^m-1\} \ \ \
\mbox{ for every $l \in \NN_0$.}
$$
Faure~\cite[Proposition~3.1]{Fau07} showed that every generalized
van der Corput sequence in base $b$ is a $(0,1)$-sequence in base
$b$ in the broad sense. The truncation is required for sequences
$Y_b^\Sigma$ where $\sigma_r(0)=b-1$ for all sufficiently large $r$. \\

It is known that among all $(0,1)$-sequences in base $b$ the
classical van der Corput sequence in base $b$ has the worst star
discrepancy. This was first shown by Pillichshammer~\cite{pil04}
for $b=2$ and later generalized by Kritzer~\cite{kri05} to general
$b$ and next by Faure \cite{Fau07} to $(0,1)$-sequences in the
broad sense (a further extension to $(t,1)$-sequences in the broad
sense is given in \cite{FL12}, see Section \ref{sec_tssequ} for
the meaning of $t$).

\begin{theorem}[Faure, Kritzer, and Pillichshammer]\label{thmKP01}
Let $X_b$ be an arbitrary $(0,1)$-sequence in base $b$ in the
broad sense and let $Y_b$ be the van der Corput sequence in base
$b$. Then we have
$$D_N^*(X_b) \le D_N^*(Y_b).$$ In particular, $$\limsup_{N
\rightarrow \infty} \sup_{X_b} \frac{N D_N^{\ast}(X_b)}{\log
N}=\frac{\alpha_{b,{\rm id}}}{\log b},$$ where $\alpha_{b,{\rm
id}}$ is defined as in \eqref{defalphabid} and where the supremum
is extended over all $(0,1)$-sequences $X_b$ in base $b$.
\end{theorem}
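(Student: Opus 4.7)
My approach reduces the comparison to one between generalized van der Corput sequences and then to a pointwise inequality among the functions $\psi_b^{\sigma,\pm}$.

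First, by the discretization lemma (Lemma~\ref{DisLem}) it suffices to bound $|E(k/b^m,N,X_b)|$ for $m$ chosen so that $N\le b^m$ and $k\in\{0,1,\ldots,b^m\}$. The local discrepancy at a $b$-adic rational depends only on the pattern of elementary-interval occupancy by the first $N$ points of $X_b$, not on their positions within those intervals. Using this, one can associate to the $(0,1)$-sequence $X_b$ a sequence of permutations $\Sigma=(\sigma_j)_{j\ge 0}$ (at level $j$, $\sigma_j\in \Sy_b$ describes the order in which the $b$ sub-intervals of a level-$(j-1)$ elementary interval are visited) such that, at every $b$-adic rational of level $m$,
$$
E(k/b^m,N,X_b)=E(k/b^m,N,Y_b^\Sigma).
$$
Hence the task reduces to showing $D_N^*(Y_b^\Sigma)\le D_N^*(Y_b)$ for every sequence of permutations $\Sigma$.

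By Theorem~\ref{thmF81D},
$$
ND_N^{\pm}(Y_b^\Sigma)=\sum_{j=1}^\infty \psi_b^{\sigma_{j-1},\pm}\!\left(\frac{N}{b^j}\right),\qquad ND_N^*(Y_b^\Sigma)=\max\bigl(ND_N^+(Y_b^\Sigma),ND_N^-(Y_b^\Sigma)\bigr).
$$
For $\sigma=\id$ we have $\psi_b^{\id,-}\equiv 0$, so $ND_N^*(Y_b)=\sum_{j=1}^\infty \psi_b^{\id,+}(N/b^j)$. The crux is then to establish the pointwise inequalities
$$
\psi_b^{\sigma,+}(x)\le \psi_b^{\id,+}(x)\qquad\text{and}\qquad \psi_b^{\sigma,-}(x)\le \psi_b^{\id,+}(x)
$$
for every $\sigma\in\Sy_b$ and every $x\in\RR$. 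Granting these, summing over $j$ gives $ND_N^{\pm}(Y_b^\Sigma)\le ND_N^*(Y_b)$ and hence $ND_N^*(Y_b^\Sigma)\le ND_N^*(Y_b)$, which is the main inequality. The $\limsup$ statement then follows from Theorem~\ref{asymptD} together with~\eqref{defalphabid}, since the extremal value $\alpha_{b,\id}/\log b$ is attained precisely by $X_b=Y_b$.

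The main obstacle is establishing the pointwise domination of $\psi_b^{\sigma,\pm}$ by $\psi_b^{\id,+}$ uniformly in $\sigma\in\Sy_b$. This requires a case analysis on the intervals $[(k-1)/b,k/b)$, $k=1,\ldots,b$, on which the $\varphi_{b,h}^\sigma$ are piecewise affine, together with a careful examination of how the envelopes $\max_h \varphi_{b,h}^\sigma$ and $\max_h (-\varphi_{b,h}^\sigma)$ deform as $\sigma$ varies; the intuition is that only $\sigma=\id$ concentrates all error strictly on one side, and any other permutation produces a smaller one-sided maximum. A secondary subtlety is the bookkeeping needed to associate $\Sigma$ to a $(0,1)$-sequence in the \emph{broad} sense; for ordinary $(0,1)$-sequences this association is straightforward, but truncated expansions (as handled in~\cite{Fau07,FL12}) complicate the reduction and are precisely what necessitates the broad-sense formulation in the first place.
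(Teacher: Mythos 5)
The reduction in your first step is the crux, and it is not established---and I believe it is false as stated. You claim that to any $(0,1)$-sequence $X_b$ in the broad sense one can associate a single sequence of permutations $\Sigma=(\sigma_j)_{j\ge 0}$ so that $E(k/b^m,N,X_b)=E(k/b^m,N,Y_b^\Sigma)$ at every $b$-adic rational $k/b^m$. Your parenthetical says $\sigma_j$ should record ``the order in which the $b$ sub-intervals of a level-$(j-1)$ elementary interval are visited'', but for a general $(0,1)$-sequence that order typically differs from one level-$(j-1)$ interval to another, so there is no well-defined $\sigma_j$. NUT digital $(0,1)$-sequences $X_b^C$ with nontrivial off-diagonal entries are a concrete source of this: the exact descent formulas (Theorem~\ref{thmfau05}) involve $\psi_b^{\delta_{j-1}\uplus\theta_{j-1}(N),\pm}$, where $\theta_{j-1}(N)$ depends on $N$ through the off-diagonal entries of $C$, so the ``effective permutation at level $j$'' drifts with $N$ and cannot be frozen into a single $\Sigma$. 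The family of error profiles attainable by $(0,1)$-sequences is therefore genuinely larger than that of the $Y_b^\Sigma$, and your reduction collapses precisely the generality that the broad-sense statement is about.

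What is correct in your proposal is the pointwise bound on the $\psi$-functions. Since $\psi_b^{\sigma,\pm}\ge 0$, $\psi_b^\sigma=\psi_b^{\sigma,+}+\psi_b^{\sigma,-}$, $\psi_b^\sigma\le\psi_b^\id$ (quoted in Section~\ref{secdig01seq}), and $\psi_b^{\id,-}\equiv 0$ so $\psi_b^\id=\psi_b^{\id,+}$ (Remark~\ref{alphabid}), one indeed gets $\psi_b^{\sigma,\pm}\le\psi_b^{\id,+}$ pointwise. Fed into Theorem~\ref{thmF81D} and, via Theorem~\ref{thmGNUT}, into the NUT formulas, this proves $D_N^*\le D_N^*(Y_b)$ for generalized van der Corput sequences and for NUT $(0,1)$-sequences over $\ZZ_b$---that is Theorem~\ref{Y2worst}, not Theorem~\ref{thmKP01}. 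The statement for \emph{arbitrary} $(0,1)$-sequences in the broad sense goes back to \cite{pil04,kri05,Fau07}, where the argument runs a direct descent over $b$-adic resolutions: using only the elementary-interval property that each block of $b^j$ consecutive points hits each level-$j$ interval once, one bounds the one-sided error contribution at each level $j\le m$ by $\psi_b^{\id,+}(N/b^j)$ without ever representing $X_b$ as some $Y_b^\Sigma$. Supplying that direct counting step is what your write-up is missing.
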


For the construction of a $(0,1)$-sequence in
base $b$ one frequently uses the digital method which is based on the
action of infinite $\NN \times \NN$ matrices, over a finite
commutative ring with identity and with $b$ elements, on the
$b$-adic digits of $n \in \NN_0$. For simplicity, here we only
present the case where $b$ is a prime power and where $R=\FF_b$,
the finite field with $b$ elements. For the general case ($b \ge
2$ and $R$ an arbitrary commutative ring with identity and with
$b$ elements) we refer to \cite{N87,niesiam}. We also require a
bijection $\varphi:\ZZ_b \rightarrow \FF_b$ with the property that
$\varphi(0)=0$.

\begin{definition}\label{Defdig01}\rm
Let $b$ be a prime power. Choose an $\NN \times \NN$ matrix
$C=(c_{r,k})_{r,k\geq 0}$ with entries in $\FF_b$ such that for
any $m\in \NN$ every left upper $m\times m$ sub-matrix of $C$ is
non-singular. Then the sequence $X_b^C=(x_n)_{n \ge 0}$ defined by
\begin{equation}\label{DvdC}
x_n=\sum_{r=0}^\infty \frac{\varphi^{-1}(\xi_{n,r})}{b^{r+1}},
\quad {\rm where } \quad \xi_{n,r}=\sum_{k=0}^\infty c_{r,k}
\varphi(n_k),
\end{equation}
and where the $n_k \in \ZZ_b$ are the $b$-adic digits of $n$, is
called a {\it digital $(0,1)$-sequence over $\FF_b$}. Note that
the second summation in \eqref{DvdC} is finite and performed in
$\FF_b$, but the first one can be infinite with the possibility
that $\varphi^{-1}(\xi_{n,r})=b-1$ for all but finitely many $r$.
\end{definition}

If $b$ is a prime number, then we identify $\FF_b$ with $\ZZ_b$
equipped with addition and multiplication modulo $b$. In this case
we choose for $\varphi$ the identity. We then obtain
the original van der Corput sequence $Y_b$ with the $\NN \times
\NN$ identity matrix $I$, i.e.,  we have $Y_b=X_b^I$.

It is well known that a digital $(0,1)$-sequence over $\FF_b$ is a
$(0,1)$-sequence in base $b$ if $C$ is such that for all $k \ge 0$
we have $c_{r,k}=0$ for $r$ sufficiently large (see
\cite{DP10,LP14,niesiam}), but in any case it is a
$(0,1)$-sequence in base $b$ in the broad sense (see
\cite{Fau07}). The truncation is required for sequences $X_b^C$ in
the case when the matrix $C$ yields digits
$\varphi^{-1}(x_{n,r})=b-1$ for all sufficiently large $r$.

\paragraph{NUT digital $(0,1)$-sequences.}
An important special case of digital $(0,1)$-sequences is obtained
when the associated matrix $C$ is a non-singular upper triangular
(NUT) matrix. In this case the first summation in \eqref{DvdC} is
finite as well. These sequences are called {\it NUT digital
$(0,1)$-sequences over $\FF_b$}. In \cite{Fau05} Faure proved
formulas in the vein of Theorems~\ref{thmF81D} and \ref{thmCF93}
for the different notions of discrepancies of NUT digital
$(0,1)$-sequences over $\FF_b$ (where $b$ is a prime number).

To recall these formulas we first need to introduce some
definitions to deal with NUT digital $(0,1)$-sequences. Let again
$b$ be a prime number, identify $\FF_b$ with $\ZZ_b$, and let $\varphi={\rm
id}$. The symbol $\uplus$ is used to denote the translated (or
shifted) permutation corresponding to a given permutation $\sigma
\in \Sy_b$ by an element $v\in {\mathbb F}_b$ in the following
sense:
$$
(\sigma \uplus v)(i):=\sigma (i)+v \pmod{b} \quad {\rm for \
all}\quad i \in {\mathbb F}_b,
$$
and for any $r\in \NN_0$ we introduce the quantity
$$
\theta_r(N):=\displaystyle\sum_{k=r+1}^\infty c_{r,k} N_k\pmod{b},
$$
where the $N_k$'s are the $b$-adic digits of $N$. Note that
$N_k=0$ for all $k\geq m$ if $0\leq N < b^m$, thus $\theta_r(N)=0$
for all $r\geq m-1$ in this case. Finally, for $r \ge 0$, we
define the permutation $\delta_r$ by $\delta_r(i):=c_{r,r} i
\pmod{b}$ for $i \in \FF_b$ and for future use we set
$\Delta:=(\delta_r)_{r \ge 0}$.
We can now formulate the following result.
\begin{theorem}[Faure]\label{thmfau05}
Let $X_b^C$ be a NUT digital $(0,1)$-sequence over $\FF_b$ with
prime $b$. Then, using the notation introduced in Section
\ref{secdiscgenvdC},  for any $N \in \NN$ we have
$$
D_N^+(X_b^C)=\frac{1}{N}\sum_{j=1}^\infty
\psi_b^{\delta_{j-1}\uplus\theta_{j-1}(N),+} \left ( \frac{N}{b^j}
\right ),
$$
$$
D^-_N(X_b^C)=\frac{1}{N}\sum_{j=1}^\infty
\psi_b^{\delta_{j-1}\uplus \theta_{j-1}(N),-}\left ( \frac{N}{
b^j} \right ),
$$
$$
D_N(X_b^C)=\frac{1}{N}\sum_{j=1}^\infty \psi_b^{\delta_{j-1}}
\left ( \frac{N}{b^j} \right ),
$$
$$
D_N^{\ast}(X_b^{C})=\max(D^+_N(X_b^{C}),D^-_N(X_b^{C})),
$$
\smallskip
$$
(NF_N(X_b^C))^2=\frac{4 \pi^2}{b^2}\sum_{j=1}^\infty
\chi_b^{\sigma_{j-1}} \left ( \frac{N}{b^j} \right),
$$
$$
(N L_{2,N}(X_b^C))^2=\\
\frac{1}{b}\sum_{j=1}^\infty
\phi_b^{\delta_{j-1}\uplus\theta_{j-1}(N)} \left (\frac{N}{b^j}
\right )+ \frac{1}{b^2} \sum_{i \neq j}
\varphi_b^{\delta_{i-1}\uplus\theta_{i-1}(N)}\left (\frac{N}{b^i}
\right ) \varphi_b^{\delta_{j-1}\uplus\theta_{j-1}(N)}\left
(\frac{N}{b^j} \right ).
$$
\end{theorem}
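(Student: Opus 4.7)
\medskip

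\noindent\emph{Proof proposal.} The plan is to adapt the elementary descent method of Section~\ref{sec_Faure_meth} to NUT digital $(0,1)$-sequences, mirroring the derivation of Theorems~\ref{thmF81D} and \ref{thmCF93} for generalized van der Corput sequences. The crucial structural observation is that, thanks to the upper-triangularity of $C$, the $r$-th digit of $x_n$ is $\xi_{n,r} = c_{r,r}\,n_r + \sum_{k>r} c_{r,k}\, n_k \pmod{b}$. When we examine the first $N$ points of $X_b^C$ with $N < b^m$, the tail sum $\sum_{k>r} c_{r,k} n_k$ interacts with the digits of $N$ precisely through the quantity $\theta_r(N)$. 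In effect, as long as we only look at the first $N$ points, the action of $C$ on the $r$-th digit can be replaced by the permutation $\delta_r \uplus \theta_r(N)$, thereby identifying $X_b^C$ (for the purpose of counting up to~$N$) with a generalized van der Corput sequence whose $r$-th permutation is $\delta_r\uplus\theta_r(N)$.

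With this identification in mind, I would first establish the direct analogue of the discretization lemma (Lemma~\ref{DisLem}) for $X_b^C$, which reduces the computation of $E(\alpha,N,X_b^C)$ to the case of $m$-bit arguments $\lambda/b^m$. The main technical step is then the NUT-analogue of the fundamental descent lemma (Lemma~\ref{DesLem}): by a descending recursion on the $b$-adic resolution of $\lambda/b^m$, one should obtain
$$
E\!\left(\frac{\lambda}{b^m},N,X_b^C\right) = \sum_{j=1}^{m} \varphi_{b,\varepsilon_j}^{\delta_{j-1}\uplus\theta_{j-1}(N)}\!\left(\frac{N}{b^j}\right),
$$
with $\varepsilon_j=\varepsilon_j(\lambda,m,N)$ built inductively via the same right-derivative mechanism as in Lemma~\ref{DesLem}, but with $\sigma_j$ replaced by $\delta_j\uplus\theta_j(N)$ at each step. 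This is the main obstacle: the bookkeeping required to see that the effect of lowering the $b$-adic resolution of $\lambda$ by one, combined with the NUT action of $C$, produces exactly the shifted permutation $\delta_{j-1}\uplus\theta_{j-1}(N)$ and not something more complicated, needs a careful analysis of how the column entries $c_{j-1,k}$ for $k\ge j$ contribute to the counting function at level $j$.

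Once this descent formula is in hand, the six claimed identities follow by the very same arguments as in the classical van der Corput case. For $D_N^+$ one maximises over $\lambda$ and uses a reverse construction of $\tilde\lambda$ to realise the bound, producing $\psi_b^{\delta_{j-1}\uplus\theta_{j-1}(N),+}$; symmetrically for $D_N^-$; and $D_N^* = \max(D_N^+,D_N^-)$ is immediate. For the $L_2$-discrepancy and the diaphony, one would integrate the square of the descent expression over $\alpha \in [0,1]$, decomposing into diagonal contributions giving $\phi_b^{\delta_{j-1}\uplus\theta_{j-1}(N)}$ and cross terms giving the product of $\varphi_b^{\delta_{i-1}\uplus\theta_{i-1}(N)}$ factors, exactly as in \cite[Lemme~5.3]{chafa}.

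Finally, the clean (shift-free) form of the formulas for $D_N$ and $F_N$ is explained by shift-invariance: since $\psi_b^{\sigma\uplus v}=\psi_b^{\sigma}$ (the maximum of pairwise differences $|\varphi_{b,h}^\sigma - \varphi_{b,h'}^\sigma|$ is unchanged when $\sigma$ is relabelled by an additive constant) and, likewise, $\chi_b^{\sigma\uplus v}=\chi_b^\sigma$ (the sum of squared pairwise differences is invariant under relabelling the index $h$), the dependence on $\theta_{j-1}(N)$ disappears in these two cases. Thus the only genuinely new ingredient beyond the proof strategy of \cite{Fau1981} and \cite{chafa} is the descent lemma for NUT matrices; everything else is a line-by-line translation.
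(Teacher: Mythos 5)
Your proposal matches the strategy the paper points to in \cite[Lemma~6.2]{Fau05}: establish a NUT analogue of the discretization and descent lemmas in which the effective permutation at level $j$ is $\delta_{j-1}\uplus\theta_{j-1}(N)$, then read off the six formulas exactly as in Theorems~\ref{thmF81D} and \ref{thmCF93}. One small imprecision in your closing remark: the invariances $\psi_b^{\sigma\uplus v}=\psi_b^\sigma$ and $\chi_b^{\sigma\uplus v}=\chi_b^\sigma$ hold not because of a mere relabelling of the index $h$, but because the family $\{\varphi_{b,h}^{\sigma\uplus v}\}_{h\in\ZZ_b}$ is the family $\{\varphi_{b,h}^\sigma\}_{h\in\ZZ_b}$ translated by a single common function, so all pairwise differences (and hence $\psi_b$ and $\chi_b$) are unchanged — your conclusion is nevertheless correct.
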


The formulas in Theorem \ref{thmfau05} for $D_N$ and $F_N$ show
that digital $(0,1)$-sequences generated by NUT matrices having
the same diagonal entries have the same extreme discrepancy and
the same diaphony. Therefore, for
 $D_N$ and $F_N$ generalized van der Corput sequences and NUT digital 
$(0,1)$-sequences have the same behavior, i.e., 
$D_N(X_b^C)=D_N(Y_b^\Delta)$ and $F_N(X_b^C)=F_N(Y_b^\Delta)$. 
This is a remarkable feature of NUT digital $(0,1)$-sequences that 
yields the asymptotic behavior of $D_N(X_b^C)$ and $F_N(X_b^C)$, 
directly using the results in Section \ref{secdiscgenvdC}.

For $D_N^+$, $D_N^-$, $D_N^*$, and $L_{2,N}$, the formulas are
similar to those for generalized van der Corput sequences, but the
situation is more complicated because of the quantity
$\theta_{j-1}(N)$ which depends on $N$, via the $b$-adic expansion
of $N-1$, and on the NUT generator matrix $C$, via its entries
strictly above the diagonal. Very few results are available,
for example Theorem~\ref{thmupone} below for $b=2$, or loose
bounds for $L_{2,N}$ in a very special case in base $b$ by
Grozdanov~\cite{Gro96}.

\paragraph{Generalization of NUT digital $(0,1)$-sequences.}
A further generalization of NUT digital $(0,1)$-sequences, in
relation with the search for best possible lower bounds for the
star discrepancy, has recently been given by Faure and
Pillichshammer in \cite{FauPi13}. The resulting family in {\it
arbitrary} integer bases---named {\it NUT $(0,1)$-sequences over
$\ZZ_b$}---include both, generalized van der Corput sequences
$Y_b^\Sigma$, and NUT digital $(0,1)$-sequences $X_b^C$. So far,
this is the most general family of van der Corput type sequences.

The idea is to put arbitrary permutations in place of the diagonal
entries of the NUT matrix $C$ defining a NUT digital
$(0,1)$-sequence:
\begin{definition}\label{defXSigC}\rm
For any integer $b\ge 2$, let $\Sigma=(\sigma_r)_{r \ge 0} \in
\Sy_b^\NN$ and let $C=(c_r^k)_{r \ge 0, k \ge r+1}$ be a strict
upper triangular matrix with entries in $\ZZ_b$.
Then, for $n\in \NN_0$, the $n$-th element of the sequence
$X_b^{\Sigma,C}=(x_n)_{n \ge 0}$ is defined by
\begin{equation}\label{GNUT}
x_n=\sum_{r=0}^\infty \frac{x_{n,r}}{b^{r+1}} \quad {\rm with}
\quad x_{n,r}=\sigma_r(n_r)+\sum_{k=r+1}^\infty c_r^kn_k \pmod{b},
\end{equation}
where the $n_k$'s are the $b$-adic digits of $n$. We call
$X_b^{\Sigma,C}$ a {\it NUT $(0,1)$-sequence over $\ZZ_b$}.
\end{definition}

If all entries of $C$ are zero, then we have
$X_b^{\Sigma,C}=Y_b^\Sigma$, the generalized van der Corput
sequences.  If $\sigma_r=\delta_r$ for all $r \ge 0$ with
$\delta_r$ as in Theorem \ref{thmfau05}, then we have
$X_b^{\Sigma,C}=X_b^{C'}$, the NUT digital $(0,1)$-sequence where
$C'=(c_r^k)_{r \ge 0, k \ge r}$ is obtained from $C$ by putting
{\em invertible} entries $c_r^r \in \ZZ_b$ on the diagonal.

Every NUT $(0,1)$-sequence over $\ZZ_b$ is a $(0,1)$-sequence in
the broad sense (the proof works like for generalized van der Corput
sequences in \cite{Fau07}).

It is quite remarkable that Theorem \ref{thmfau05} for NUT digital
$(0,1)$-sequences in prime base $b$ is still valid for this
generalization (the basic trick is to keep bijections of $\mathbb
{Z}_b$ ``on the diagonal'', which is the only place where
computing inverses is required, all other operations concerning
entries above the diagonal are performed in the ring $\mathbb
{Z}_b$):
\begin{theorem}[Faure and Pillichshammer]\label{thmGNUT}
Let $X_b^{\Sigma,C}$ be a NUT $(0,1)$-sequence over $\ZZ_b$ where
$b \ge 2$ is an integer. Then the results of Theorem
\ref{thmfau05} are still valid for $X_b^{\Sigma,C}$ with
permutations $\sigma_r$ replacing the permutations $\delta_r$.
\end{theorem}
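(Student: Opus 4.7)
The plan is to extend the descent method---used in the proofs of Theorems~\ref{thmF81D}, \ref{thmCF93}, and \ref{thmfau05}---from the digital prime-base setting to the general setting of NUT $(0,1)$-sequences over $\ZZ_b$. The central observation is that~\eqref{GNUT} can be rewritten as $x_{n,r}=(\sigma_r\uplus v_r(n))(n_r)$, with $v_r(n):=\sum_{k=r+1}^\infty c_r^k n_k\pmod b$ depending only on the $b$-adic digits of $n$ of index strictly greater than $r$. Since $\sigma_r$ is a bijection of $\ZZ_b$, the map $n_r\mapsto x_{n,r}$ is, for any fixed choice of the higher-order digits of $n$, itself a bijection of $\ZZ_b$. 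This is precisely the property required in~\cite{Fau05} of the diagonal entries $\delta_r$ in the prime-base case, and crucially it is provided here \emph{without} appealing to the field structure of $\FF_b$: only bijectivity, not the invertibility of a multiplication, is used, so arbitrary integer bases $b\ge 2$ are admissible.

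Concretely, I would first establish the analog of the fundamental descent lemma (Lemma~\ref{DesLem}) for $X_b^{\Sigma,C}$: for $1\le N\le b^m$ and $1\le\lambda<b^m$ with $b$-adic expansion $\lambda=\lambda_1b^{m-1}+\cdots+\lambda_m$,
\begin{equation*}
E\!\left(\frac{\lambda}{b^m},N,X_b^{\Sigma,C}\right)=\sum_{j=1}^m\varphi_{b,\varepsilon_j}^{\sigma_{j-1}\uplus\theta_{j-1}(N)}\!\left(\frac{N}{b^j}\right),
\end{equation*}
with $\theta_{j-1}(N)=\sum_{k\ge j}c_{j-1}^k N_k\pmod b$ and the $\varepsilon_j=\varepsilon_j(\lambda,m,N)$ defined by the same inductive right-derivative rule as in Lemma~\ref{DesLem}, but with $\sigma_{j-1}$ everywhere replaced by $\sigma_{j-1}\uplus\theta_{j-1}(N)$. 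The proof is a descending recursion on the $b$-adic resolution of $\lambda/b^m$: at the step from level $j+1$ down to $j$, one regroups the first $N$ points of $X_b^{\Sigma,C}$ according to the value of their $(j-1)$-th output digit and verifies that cycling $n_{j-1}$ through $\ZZ_b$ with the higher-order input digits frozen produces exactly the shifted permutation $\sigma_{j-1}\uplus\theta_{j-1}(N)$ on that coordinate.

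With the descent lemma in hand, the formulas for $D_N^+(X_b^{\Sigma,C})$, $D_N^-(X_b^{\Sigma,C})$ and $D_N^*(X_b^{\Sigma,C})$ follow from the discretization Lemma~\ref{DisLem} together with the definitions $\psi_b^{\sigma,\pm}=\max_{0\le h<b}(\pm\varphi_{b,h}^\sigma)$ exactly as in the proofs of Theorems~\ref{thmF81D} and~\ref{thmfau05}, the upper bounds being matched by the inverse construction of the $\lambda_j$'s from prescribed $\varepsilon_j$'s. The simplification of the formulas for $D_N$ and $F_N$ (in which the shifts $\theta_{j-1}(N)$ disappear) then reduces to the translation invariances $\psi_b^{\sigma\uplus v}=\psi_b^\sigma$ and $\chi_b^{\sigma\uplus v}=\chi_b^\sigma$, both of which follow because the map $\uplus v$ permutes the family $\{|\varphi_{b,h}^\sigma-\varphi_{b,h'}^\sigma|:0\le h<h'<b\}$ among itself. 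Note that no analogous invariance holds for $\varphi_b^\sigma$ or $\phi_b^\sigma$; this is precisely why the shifts persist in the $L_{2,N}$ formula, whose derivation also requires the auxiliary identity of~\cite[Lemme~5.3]{chafa} to compute the $\varepsilon_j$'s explicitly and to unfold squares and cross-terms.

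The main obstacle is the technical bookkeeping inside the descent step: one must verify that the shift acquired by the permutation at level $j-1$ is \emph{precisely} $\theta_{j-1}(N)$---and not some other function of the higher-order digits of $N$---and that the inductive definition of $\varepsilon_j$ through the right derivative of $\varphi_{b,\varepsilon_{j+1}}^{\sigma_j\uplus\theta_j(N)}$ remains stable as the permutation varies with $j$. Once this is checked, the arguments are formally identical to the prime-base case of~\cite[Lemma~6.2]{Fau05} with $X_b^C$ replaced by $X_b^{\Sigma,C}$ and $\delta_r$ replaced by $\sigma_r$, the sole use of the $\FF_b$-structure in the original argument being the bijectivity of $n_r\mapsto\delta_r(n_r)$, which is supplied here directly by the arbitrary permutation $\sigma_r$.
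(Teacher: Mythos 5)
Your proposal is correct and follows essentially the same route as the paper's, which is itself only a pointer: the paper defers the full argument to \cite{FauPi13} and explains that the proof "follows closely the proofs of \cite[Theorems~1--4]{Fau05}", with the basic trick being "to keep bijections of $\ZZ_b$ on the diagonal, which is the only place where computing inverses is required." You have identified precisely this point — that $x_{n,r}=(\sigma_r\uplus v_r(n))(n_r)$ where the outer map is a bijection of $\ZZ_b$ for every fixed choice of higher-order digits, so the prime-field structure was never actually used on the diagonal — and have outlined the same two-stage structure (a descent lemma analogous to Lemma~\ref{DesLem} with $\sigma_{j-1}\uplus\theta_{j-1}(N)$ in place of $\delta_{j-1}\uplus\theta_{j-1}(N)$, followed by the passage to $D_N^\pm$, $D_N$, $D_N^*$, $F_N$, $L_{2,N}$ via $\psi$, $\chi$, $\varphi$, $\phi$ as in Theorems~\ref{thmF81D} and \ref{thmfau05}). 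Your remark that the translation invariances $\psi_b^{\sigma\uplus v}=\psi_b^\sigma$ and $\chi_b^{\sigma\uplus v}=\chi_b^\sigma$ make the shifts $\theta_{j-1}(N)$ drop out for $D_N$ and $F_N$ but not for $D_N^\pm$, $D_N^*$, $L_{2,N}$ is exactly the explanation given in the paper's discussion of Theorem~\ref{thmfau05}. The one thing to watch in a complete write-up is that the invariance argument (the family $\{\lvert\varphi_{b,h}^\sigma-\varphi_{b,h'}^\sigma\rvert\}$ being permuted by $\uplus v$) should be proved rather than asserted, and that the right-derivative bookkeeping inside the descent step — which you correctly flag as the hard part — needs to be checked with the same care as in \cite[Lemma~6.2]{Fau05}; but as a blueprint your plan matches the paper's.
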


The proof of Theorem \ref{thmGNUT} is rather complex and requires a lot of prerequisites,
but it follows closely the proofs of \cite[Theorems~1--4]{Fau05}.
See \cite{FauPi13} for a sketch where differences between both
proofs are pointed out.

Some applications of Theorem \ref{thmGNUT} will be given later. We
now turn to some special digital $(0,1)$-sequences over $\FF_2$.

\paragraph{Special digital $(0,1)$-sequences over $\FF_2$.}
In base $b=2$ one knows a NUT digital $(0,1)$-sequence over
$\FF_2$ with smaller star discrepancy than the van der Corput
sequence in base $2$. This sequence was given in
\cite[Theorem~3]{pil04}.

\begin{theorem}[Pillichshammer]\label{thmupone}
For the star discrepancy of the NUT digital $(0,1)$-sequence over
$\FF_2$ generated by the matrix $$C_1:=\left(
\begin{array}{lllll}
1 & 1 & 1 & 1 & \ldots \\
0 & 1 & 1 & 1 & \ldots \\
0 & 0 & 1 & 1 & \ldots \\
0 & 0 & 0 & 1 & \ldots \\
\vdots & \vdots & \vdots & & \ddots
\end{array}
\right)$$ we have $$0.2885\ldots=\frac{1}{5 \log 2}\le \limsup_{N
\rightarrow \infty}\frac{N D_N^*(X_2^{C_1})}{\log N} \le
\frac{5099}{22528 \log 2} =0.3265\ldots.$$
\end{theorem}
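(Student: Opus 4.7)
The plan is to invoke Theorem \ref{thmfau05} with $b=2$ and $C = C_1$, reducing the star discrepancy of $X_2^{C_1}$ to a combinatorial quantity determined by the binary expansion of $N$. Since every entry on and above the diagonal of $C_1$ equals $1 \in \FF_2$, the induced diagonal permutations satisfy $\delta_r = \id$ for all $r \ge 0$, while the auxiliary quantities reduce to
$$
\theta_r(N) \;=\; \sum_{k=r+1}^{\infty} N_k \pmod{2},
$$
i.e., the XOR of the binary digits of $N$ strictly above position $r$. Writing $\varepsilon_j := \theta_{j-1}(N) \in \{0,1\}$, the shifted permutation $\id \uplus \varepsilon_j$ equals $\id$ when $\varepsilon_j = 0$ and the transposition $\tau_2$ when $\varepsilon_j = 1$. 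Using $\psi_2^{\id,+} = \psi_2^{\tau_2,-} = \|\cdot\|$ and $\psi_2^{\id,-} = \psi_2^{\tau_2,+} = 0$, Theorem \ref{thmfau05} becomes
$$
N D_N^+(X_2^{C_1}) = \sum_{j:\varepsilon_j=0} \left\|\frac{N}{2^j}\right\|, \qquad
N D_N^-(X_2^{C_1}) = \sum_{j:\varepsilon_j=1} \left\|\frac{N}{2^j}\right\|,
$$
and $N D_N^*(X_2^{C_1}) = \max(N D_N^+(X_2^{C_1}), N D_N^-(X_2^{C_1}))$. Crucially, the sum of the two right-hand sides equals $\sum_{j\ge 1}\|N/2^j\| = N D_N(Y_2)$ by \eqref{disc_henri}, so the question becomes how unevenly this classical quantity can be split between the two parity classes determined by the tail-XOR sequence $(\varepsilon_j)$.

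For the lower bound, I would exhibit a subsequence $N_m \sim 2^m$ for which the larger of the two partial sums $S_0(N_m)$, $S_1(N_m)$ captures roughly the fraction $3/5$ of the total $N_m D_{N_m}(Y_2)$; combined with the fact (see Theorem \ref{thmbfstar}) that the latter can be made of order $m/(3\log 2) + O(1)$ along suitable $N_m$, this yields a larger partial sum of order $m/(5\log 2)$. Since $\varepsilon_{j-1} = \varepsilon_j \oplus N_{j-1}$, the parity pattern changes exactly at the $1$-positions in the expansion of $N_m$; choosing $N_m$ so that its $1$-bits are grouped into well-separated "$11$" pairs forces $\varepsilon_j$ to equal $1$ only on the sparse set of indices immediately above each pair, and a direct evaluation of $\|N_m/2^j\|$ on these few "defective" indices confirms that they carry only a minority of the mass. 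A careful tuning of the inter-pair spacing, guided by the known extremisers in Theorem \ref{discLpb2}, should produce the claimed bound $N_m D_{N_m}^*(X_2^{C_1}) \ge \log N_m/(5\log 2) + O(1)$.

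For the upper bound, the objective is to show that $\max(S_0(N), S_1(N)) \le \frac{5099}{22528\log 2}\log N + O(1)$ uniformly in $N$. The strategy is a block decomposition: partition the indices $1,\ldots,m$ into consecutive blocks of length $11$, and note that within each block the relevant data---eleven values of $\|N/2^j\|$ together with the eleven parities $\varepsilon_j$---are governed by eleven consecutive binary digits of $N$ and the single parity bit inherited from the higher positions. For each of the $2^{11} = 2048$ local eleven-bit patterns one performs a finite optimization computing the worst contribution to $\max(S_0, S_1)$ as the maximum of a piecewise linear expression, and the factorisation $22528 = 11 \cdot 2^{11}$ strongly suggests that the numerator $5099$ arises as precisely this block-wise optimum. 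Summing over the roughly $m/11$ blocks and dividing by $\log N \approx m \log 2$ produces the announced constant, with boundary effects absorbed in the $O(1)$ remainder.

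The main obstacle is clearly the upper bound: the passage from Theorem \ref{thmfau05} to the partial-sum formulation is painless, but extracting the sharp constant $5099/22528$ requires the right choice of block length (the value $11$ is very specific and presumably optimal), an exhaustive case analysis over the $2^{11}$ local patterns, and careful handling of the coupling between consecutive blocks via the inherited parity bit. The lower bound, by contrast, should follow from a direct construction once the relationship between $\varepsilon_j$ and the binary expansion of $N$ has been made explicit.
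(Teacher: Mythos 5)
The survey does not reproduce a proof of this theorem; it only cites \cite[Theorem~3]{pil04}, so there is no in-paper argument to compare against directly. Your reduction step is nevertheless correct and clean: with $C_1$ having all $1$'s on and above the diagonal, Theorem~\ref{thmfau05} gives $\delta_r=\id$ and $\theta_r(N)=\sum_{k>r}N_k\bmod 2$, and using $\psi_2^{\id,+}=\|\cdot\|$, $\psi_2^{\tau_2,+}=0$ together with the mirror identities for $\psi^{-}$ you correctly land on
\[
N D_N^+(X_2^{C_1})=\sum_{j:\,\varepsilon_j=0}\Bigl\|\tfrac{N}{2^j}\Bigr\|,\qquad N D_N^-(X_2^{C_1})=\sum_{j:\,\varepsilon_j=1}\Bigl\|\tfrac{N}{2^j}\Bigr\|,
\]
with $\varepsilon_j=\theta_{j-1}(N)$ and $S_0+S_1=ND_N(Y_2)$. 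Be aware, though, that this route is historically backwards: Pillichshammer's 2004 paper predates Faure's 2005 formula, so the original proof obtains the same exact representation via its own Theorem~1 rather than via Theorem~\ref{thmfau05}; the two are equivalent, but the provenance is different.

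The two remaining steps are genuinely incomplete and this is where the real work lies. For the lower bound, the arithmetic $\tfrac{3}{5}\cdot\tfrac{1}{3\log 2}=\tfrac{1}{5\log 2}$ is fine, but you do not write down an explicit $N_m$, nor verify that the ``well-separated $11$-pairs'' pattern simultaneously (a) keeps the set $\{j:\varepsilon_j=1\}$ sparse and (b) ensures that the weights $\|N_m/2^j\|$ carried by those sparse indices are small while the remaining indices carry nearly the extremal mass; both points must be checked against the explicit form of $\|N_m/2^j\|$ for the chosen digit pattern, and as written the claim is asserted rather than proved. For the upper bound, the guess that $22528=11\cdot 2^{11}$ encodes a block-of-length-$11$ optimization is a sensible reverse-engineering, but bounding $\max(S_0,S_1)$ by summing per-block worst cases is not automatic: the maximum of two global sums does not decompose additively across blocks, $\|N/2^j\|$ depends on \emph{all} bits of $N$ below position $j$ (not just those inside the block containing $j$), and so the blocks are coupled through the fractional parts as well as through the inherited parity bit. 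You gesture at this coupling but leave its resolution to an unspecified ``$O(1)$ remainder'' absorption; a correct argument has to make that averaging or transfer-type step precise. As it stands the proposal is a plausible roadmap to the result rather than a proof of either bound.
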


The next example from \cite{Fau07} shows that the van der Corput
sequence in base 2, is not the worst distributed with respect to
the extreme discrepancy $D_N$ among digital $(0,1)$-sequences in
base $2$ (but $Y_2$ is the worst distributed with respect to
$D_N$, $D_N^*$, $F_N$ and $L_{2,N}$ among {\it NUT digital
$(0,1)$-sequences in base $b$}, see Theorem \ref{Y2worst} below).

\begin{theorem}[Faure]\label{D2worst}
Let $C_2$ be the matrix
$$
C_2:= \def\vspace[#1]{\noalign{\vskip #1}}
\begin{pmatrix}
1 & 0 & 0 & 0 &\cdots\cr \vspace[3pt] 1 & 1 & 0 & 0 &\cdots\cr
\vspace[3pt] 1 & 0 & 1 & 0 &\cdots\cr \vspace[3pt] 1 & 0 & 0 & 1
&\cdots\cr \vspace[3pt] \vdots &\vdots &\vdots &\vdots &\ddots\cr
\end{pmatrix}.
$$
Then, for any $N\in \NN$ we have
$$2D_N(Y_2)-\frac{5}{2N} \leq D_N(X_2^{C_2}) \leq 2D_N(Y_2) \quad{\it and}$$
$$D^*_N(Y_2)-\frac{3}{2N} \leq D^*_N(X_2^{C_2}) \leq D^*_N(Y_2)=D_N(Y_2).$$
Moreover, the sequence $X_2^{C_2}$ is the worst distributed among
all $(0,1)$-sequences in base $2$ (in the broad sense) with
respect to the extreme discrepancy $D_N$.
\end{theorem}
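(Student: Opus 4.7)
The plan is to make the structure of $X_2^{C_2}$ explicit and then read off its discrepancies from those of the van der Corput sequence $Y_2$. From the shape of $C_2$ one computes that for every $m\ge 0$
\[
X_2^{C_2}(2m)=\tfrac{1}{2}Y_2(m),\qquad X_2^{C_2}(2m+1)=1-\tfrac{1}{2}Y_2(m),
\]
the second equality being understood in the broad sense when $m=0$. For even $n=2m$ the bit $n_0=0$ annihilates the first column of $C_2$ and the matrix reduces to the identity on the higher bits, so $X_2^{C_2}(2m)=Y_2(2m)=Y_2(m)/2$; for odd $n=2m+1$ the bit $n_0=1$ activates all the $1$'s below the diagonal, flipping every subsequent bit of $n$ and thereby reflecting the even partner across $1/2$. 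In particular $X_2^{C_2}(2m)+X_2^{C_2}(2m+1)=1$, so $X_2^{C_2}$ is the symmetrized version in the sense of Section~\ref{sec_sym} of the rescaled van der Corput sequence $(Y_2(m)/2)_{m\ge 0}$.

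Next, I would translate this into the counting function. For $\alpha\le 1/2$ only even-indexed points can fall in $[0,\alpha)$, giving $A(\alpha;2N;X_2^{C_2})=A(2\alpha;N;Y_2)$, while for $\alpha>1/2$ all $N$ even-indexed points lie in $[0,\alpha)$ and the odd-indexed ones enter iff $Y_2(m)>2(1-\alpha)$, giving $A(\alpha;2N;X_2^{C_2})=2N-A(2(1-\alpha);N;Y_2)$. Substituting these into the definition of $\Delta$ produces the piecewise formula
\[
\Delta_{X_2^{C_2},2N}(\alpha)=\tfrac{1}{2}\,\Delta_{Y_2,N}(2\alpha)\ \ (\alpha\le 1/2),\qquad \Delta_{X_2^{C_2},2N}(\alpha)=-\tfrac{1}{2}\,\Delta_{Y_2,N}(2(1-\alpha))\ \ (\alpha>1/2).
\]

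The crucial input from the paper is $D_N^-(Y_2)=0$, which is immediate from Theorem~\ref{thmF81D} since $\psi_2^{\id,-}\equiv 0$; consequently $D_N(Y_2)=D_N^+(Y_2)=D_N^*(Y_2)$. Taking suprema of the piecewise local discrepancy yields
\[
D_{2N}^+(X_2^{C_2})=D_{2N}^-(X_2^{C_2})=\tfrac{1}{2}D_N(Y_2),
\]
whence $D_{2N}(X_2^{C_2})=D_N(Y_2)=2D_{2N}(Y_2)$ and $D_{2N}^*(X_2^{C_2})=\tfrac{1}{2}D_N(Y_2)=D_{2N}^*(Y_2)$, the right-hand equalities coming from the recursion $ND_N(Y_2)=2ND_{2N}(Y_2)$ in~\eqref{genrecY2}. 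For odd indices $2N+1$ there is a single unpaired extra point $Y_2(N)/2$ perturbing the counting function by at most one unit; tracking its contributions to both $D^+$ and $D^-$ separately yields the residual constants $5/(2N)$ and $3/(2N)$ quoted in the statement.

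Finally, the worst-distributed claim follows by combining the above with Theorem~\ref{thmKP01}: for any $(0,1)$-sequence $X$ in base $2$ in the broad sense one has $D_N(X)\le 2D_N^*(X)\le 2D_N^*(Y_2)=2D_N(Y_2)$, and the lower bound on $D_N(X_2^{C_2})$ then forces $D_N(X)\le D_N(X_2^{C_2})+5/(2N)$, so $X_2^{C_2}$ asymptotically maximizes the extreme discrepancy in the class. The main obstacle I anticipate is the odd-$N$ bookkeeping: the clean factor-of-two identity valid for even indices must be carried through under a broken terminal pair, and recovering the precise constants $5/2$ and $3/2$ requires detailed tracking of where the unpaired point $Y_2(N)/2$ lies relative to the sub-intervals extremizing $D^+$ and $D^-$ of $Y_2$.
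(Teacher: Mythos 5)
Your identification $X_2^{C_2}(2m)=\tfrac12 Y_2(m)$ and $X_2^{C_2}(2m+1)=1-\tfrac12 Y_2(m)$ is correct and is exactly the remark the paper makes immediately after the theorem ($X_2^{C_2}=\widetilde{Y}_2$), so the strategy of reading off discrepancies of $X_2^{C_2}$ from those of $Y_2$ via the fold $\alpha\mapsto 2\alpha$ / $\alpha\mapsto 2(1-\alpha)$ is the right one. The piecewise formula for $\Delta_{X_2^{C_2},2N}$, the input $\psi_2^{\id,-}\equiv 0$ hence $D_N^-(Y_2)=0$, and the resulting even-index identities $D_{2N}(X_2^{C_2})=D_N(Y_2)=2D_{2N}(Y_2)$ and $D_{2N}^*(X_2^{C_2})=D_{2N}^*(Y_2)$ are all correctly derived (using $D_N=D_N^++D_N^-$ and the recursion $D(2N)=D(N)$ from \eqref{genrecY2}).

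However there are two genuine gaps. First, the theorem is a two-sided inequality for \emph{all} $N$, with the slack entirely concentrated at odd $N$, and you do not actually carry out the odd-index analysis: you correctly foresee that inserting the unpaired point $Y_2(M)/2$ into the counting function and tracking it through $D^+$ and $D^-$ is where the constants $5/2$ and $3/2$ must come from, but you simply quote those constants from the statement rather than deriving them. In particular it is not a priori clear that the perturbation is one-sided (giving the upper bound $D_N(X_2^{C_2})\le 2D_N(Y_2)$ with no slack, while the lower bound loses $5/(2N)$); proving that asymmetry is precisely the content of the odd-$N$ bookkeeping, and your proposal stops before it. Second, your argument for the final ``worst distributed'' assertion establishes only $D_N(X)\le D_N(X_2^{C_2})+5/(2N)$ for general $N$ (exact equality only at even $N$), whereas the theorem asserts $X_2^{C_2}$ is \emph{the} worst distributed with respect to $D_N$, i.e.\ $D_N(X)\le D_N(X_2^{C_2})$ for all $N$. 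Since your own two-sided bound allows $D_N(X_2^{C_2})<2D_N(Y_2)$ at odd $N$, the chain $D_N(X)\le 2D_N^*(X)\le 2D_N^*(Y_2)=2D_N(Y_2)$ does not close the gap; one needs either an exact identity $D_N(X_2^{C_2})=2D_N(Y_2)$ at all $N$ (which you have not shown and which the theorem's lower bound does not assert) or a direct comparison of $D_N(X)$ against $D_N(X_2^{C_2})$ as in \cite{Fau07}.
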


The sequence $X_2^{C_2}$ was already considered by Larcher
and Pillichshammer \cite{lp} to show that there exists a symmetrized
version of a digital $(0,1)$-sequence in base 2, namely
$\widetilde{X}_2^{C_2}$, that does not have optimal order of
$L_2$-discrepancy. It should be remarked that
$X_2^{C_2}=\widetilde{Y}_2$. We see here that $X_2^{C_2}$ has
about the same star discrepancy as
 $Y_2$, but its extreme
discrepancy $D_N$ is about twice $D_N(Y_2)$ and it is the worst
among all $(0,1)$-sequences in base 2. Thus, this sequence appears
to be one with bad distribution properties (recall that the symmetrized sequence of
$Y_2$ has optimal order of $L_2$-discrepancy).

\medskip

The $L_2$-discrepancy of NUT digital $(0,1)$-sequences over
$\FF_2$ is studied in \cite{DLP05}.

\begin{theorem}[Drmota, Larcher, and Pillichshammer]\label{th1}
Let $X_2^C$ be a NUT digital $(0,1)$-sequence over $\FF_2$
generated by a NUT matrix $C$ and let $Y_2$ be the van der Corput
sequence in base 2. Then we have
\begin{eqnarray}\label{ineq1}
(N L_{2,N}(X_2^C))^2 \le (N L_{2,N}(Y_2))^2 \le \left(\frac{\log
N}{6 \log 2}\right)^2 + O(\log N)
\end{eqnarray}
and
\begin{eqnarray}\label{eq2}
\limsup_{N \rightarrow \infty}\sup_{X_2^C}\frac{N
L_{2,N}(X_2^C)}{\log N} = \frac{1}{6 \log 2}
\end{eqnarray}
where the supremum is extended over all NUT digital
$(0,1)$-sequences $X_2^C$ over $\FF_2$. Furthermore, for the
matrix $C_1$ defined in Theorem~\ref{thmupone} there exists a real $c>0$ such that 
$$L_{2,N}(X_2^{C_1}) \ge c \frac{\log N}{N}\ \ \ \mbox{ for infinitely many $N \in \NN$.}$$
\end{theorem}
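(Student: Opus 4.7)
The plan is to combine the explicit formula for the $L_2$-discrepancy from Theorem~\ref{thmfau05} with the known behavior of $L_{2,N}(Y_2)$, and to exhibit a specific family of $N$'s witnessing the lower bound for $X_2^{C_1}$.

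\textbf{Step 1: Simplified formula for $(NL_{2,N}(X_2^C))^2$ in base 2.} In base $b=2$, every NUT matrix has diagonal entries equal to $1$, so $\delta_{j-1}=\mathrm{id}$ for all $j$. The permutation $\mathrm{id}\uplus\theta_{j-1}(N)$ equals $\mathrm{id}$ when $\theta_{j-1}(N)=0$ and equals the transposition $\tau_2$ when $\theta_{j-1}(N)=1$. Since $\varphi_2^{\mathrm{id}}=\|\cdot\|$, $\varphi_2^{\tau_2}=-\|\cdot\|$, and $\phi_2^{\mathrm{id}}=\phi_2^{\tau_2}=\|\cdot\|^2$, setting $\varepsilon_j(N):=(-1)^{\theta_{j-1}(N)}\in\{\pm1\}$, Theorem~\ref{thmfau05} collapses to
$$
(NL_{2,N}(X_2^C))^2 \;=\; \frac{1}{4}\sum_{j=1}^\infty \Bigl\|\frac{N}{2^j}\Bigr\|^2 \;+\; \frac{1}{4}\Bigl(\sum_{j=1}^\infty \varepsilon_j(N)\Bigl\|\frac{N}{2^j}\Bigr\|\Bigr)^2.
$$
For the van der Corput sequence ($C=I$, all $\theta_{j-1}\equiv 0$, hence all $\varepsilon_j=+1$) this recovers exactly Faure's formula~\eqref{discfoL2}.

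\textbf{Step 2: The chain of inequalities \eqref{ineq1}.} Comparing the two formulas term by term and applying the triangle inequality $|\sum_j \varepsilon_j \|N/2^j\||\le \sum_j\|N/2^j\|$ gives $(NL_{2,N}(X_2^C))^2\le (NL_{2,N}(Y_2))^2$. The upper bound $(NL_{2,N}(Y_2))^2\le \bigl(\tfrac{\log N}{6\log 2}\bigr)^2+O(\log N)$ then follows directly from Theorem~\ref{thmFL2}.

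\textbf{Step 3: The equality \eqref{eq2}.} Since $Y_2$ itself is a NUT digital $(0,1)$-sequence (generated by the identity matrix), the supremum in \eqref{eq2} is at least $\limsup_{N\to\infty} NL_{2,N}(Y_2)/\log N=\tfrac{1}{6\log 2}$, by~\eqref{limsupLp}. Combined with \eqref{ineq1} this gives equality.

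\textbf{Step 4: The lower bound for $X_2^{C_1}$.} For $C_1$ one has $c_{r,k}=1$ for $k\ge r$, so $\theta_r(N)=\sum_{k\ge r+1} N_k\bmod 2$, and consequently $\varepsilon_{j+1}(N)=(-1)^{N_j}\varepsilon_j(N)$: the sign flips exactly at positions where $N$ has a binary digit $1$. Since $\tfrac14\sum_j\|N/2^j\|^2=O(\log N)$, one must exhibit infinitely many $N$ for which the signed sum
$$
S(N):=\sum_{j=1}^\infty \varepsilon_j(N)\Bigl\|\frac{N}{2^j}\Bigr\|
$$
satisfies $|S(N)|\gg \log N$. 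The strategy is to take $N$ with a tightly controlled binary structure, for instance $N$ whose binary expansion is chosen so that on long blocks of positions $j$ the quantity $\|N/2^j\|$ stays close to $1/3$ (as in the extremal $N$ appearing in Theorem~\ref{discLpb2}) \emph{and} the sign $\varepsilon_j$ stays constant on those blocks; averaging, one obtains $|S(N)|\gtrsim m/C=c\log_2 N$ for a suitable constant $c>0$. Together with Step~1, $(NL_{2,N}(X_2^{C_1}))^2\ge \tfrac14 S(N)^2\ge c^2(\log N)^2/4$, which is the claim.

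\textbf{Main obstacle.} Steps 1--3 are formal consequences of Theorem~\ref{thmfau05}, Theorem~\ref{thmFL2}, and the triangle inequality, so the real work is Step~4. The difficulty is that $\varepsilon_j(N)$ is governed by the \emph{high-order} binary digits of $N$ while $\|N/2^j\|$ is governed by the \emph{low-order} digits, so a naive choice of $N$ (e.g.\ $N=2^m-1$ or $N=(4^m-1)/3$) produces massive cancellation and yields only $|S(N)|=O(1)$. One therefore has to construct $N$ whose two digit ranges are synchronised so that sign changes of $\varepsilon_j$ coincide with the points at which $\|N/2^j\|$ is small, guaranteeing that the long constant-sign stretches contribute coherently to $S(N)$.
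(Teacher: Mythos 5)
Your Steps 1--3 are correct: specializing Theorem~\ref{thmfau05} to $b=2$ and recombining the double sum does give $(NL_{2,N}(X_2^C))^2 = \tfrac14\sum_{j\ge1}\|N/2^j\|^2 + \tfrac14\bigl(\sum_{j\ge1}\varepsilon_j(N)\|N/2^j\|\bigr)^2$, from which \eqref{ineq1} and \eqref{eq2} follow by the triangle inequality, Theorem~\ref{thmFL2}, and the fact that $Y_2$ corresponds to $C=I$. This is a legitimate route, though not the cited one: as the remark after Theorem~\ref{Y2worst} points out, the original proof of Theorem~\ref{th1} in \cite{DLP05} used Walsh-series analysis, independently of Faure's $\varphi$-function formulas, so you are giving a ``formula-based'' alternative argument for these two assertions rather than reproducing the reference.

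Step~4 is where the entire content of the third assertion lies, and you do not prove it --- your ``main obstacle'' paragraph correctly flags that the sketch is incomplete, but the proposed strategy is also internally inconsistent. For $C_1$ the sign recursion is $\varepsilon_{j+1}=(-1)^{N_j}\varepsilon_j$, so ``$\varepsilon_j$ constant on a block'' forces $N_j=0$ on that block; but then $\{N/2^{j}\}$ halves at each step inside the block and $\sum_j\|N/2^j\|$ contributes only $O(1)$ over the whole block, no matter how long it is. Conversely, the choice $N=(4^m-1)/3$ from Theorem~\ref{discLpb2} keeps $\|N/2^j\|\approx 1/3$ throughout, but makes $\varepsilon_j$ $4$-periodic with pattern $(+,+,-,-)$, again giving $S(N)=O(1)$. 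What one actually needs is a periodic digit pattern whose \emph{weighted} signed sum per period is nonzero. A candidate is $N_m$ with binary digits $N_j=1,1,0,1,1,0,\ldots$, i.e.\ $N_m=3(8^m-1)/7$: the sign pattern becomes $3$-periodic $(+,-,-)$ while the values $\|N_m/2^j\|$ approach $(2/7,1/7,3/7)$ periodically, so each period contributes roughly $\pm(2/7-1/7-3/7)=\mp2/7$, giving $|S(N_m)|\gtrsim (\log N_m)/(10\log 2)$. Exhibiting such an $N$ and verifying the periodic estimates (fixing as well the convention on whether $\theta_r$ is evaluated on the digits of $N$ or $N-1$) is precisely what Step~4 is missing.
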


For central limit theorems for the discrepancy of NUT digital
$(0,1)$-sequences over $\FF_2$ we refer to \cite[Theorems~5 and
6]{DLP05}.

\paragraph{The worst distributed sequence among NUT $(0,1)$-sequences $X_b^{\Sigma,C}$.}
Due to the theory of $\varphi_b$-functions for the study of van
der Corput sequences and their generalizations, it is possible to
bound the discrepancies and the diaphony of sequences obtained.

In \cite[Lemmas~1--3]{fau05}, the following bounds were obtained:
$$
\psi_b^\sigma \le \psi_b^\id,\ \ \ \vert \varphi_b^\sigma\vert
\le \varphi_b^\id \ \mbox{ and }\ \chi_b^\sigma \le \chi_b^\id,
$$
where $\sigma$ is an arbitrary permutation in $\Sy_b$ (the first
estimate actually stems from \cite[Section 5.5.4]{Fau1981}).
According to Theorems \ref{thmfau05} and \ref{thmGNUT} (where the
$L_2$-discrepancy is expressed by means of Koksma's formula
\eqref{FoKoks}), we obtain the following theorem (not published
until now):
\begin{theorem}[Faure and Pillichshammer]\label{Y2worst}
For any $N \in \NN$ we have
$$
D_N^*(X_b^{\Sigma,C})\leq D_N(X_b^{\Sigma,C})\leq
D_N^*(Y_b)=D_N(Y_b),
$$
$$L_{2,N}(X_b^{\Sigma,C})\leq L_{2,N}(Y_b) \quad
\mbox{ and } \quad F_N(X_b^{\Sigma,C})\leq F_N(Y_b).
$$
In other words, the van der Corput sequence in base $b$ is the
worst distributed among NUT $(0,1)$-sequences $X_b^{\Sigma,C}$,
especially among generalized van der Corput sequences $Y_b^\Sigma$
and among NUT digital $(0,1)$-sequences $X_b^C$, with respect to
$D_N^*, D_N, L_{2,N}$ and $F_N$.
\end{theorem}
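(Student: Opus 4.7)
The plan is to apply the exact formulas of Theorem~\ref{thmGNUT} to both $X_b^{\Sigma,C}$ and $Y_b$, and then compare the resulting series term by term using the pointwise inequalities
\[
\psi_b^\sigma \le \psi_b^\id, \qquad |\varphi_b^\sigma| \le \varphi_b^\id, \qquad \chi_b^\sigma \le \chi_b^\id,
\]
which hold for every $\sigma \in \Sy_b$ by \cite[Lemmas~1--3]{fau05} (the first already appearing in \cite[Section~5.5.4]{Fau1981}). Observe that $Y_b$ is the special case $X_b^{\Sigma_0,0}$ with $\Sigma_0 = (\id)_{r\ge 0}$ and all entries of $C$ equal to zero, so that every $\theta_{j-1}(N) = 0$; the formulas of Theorem~\ref{thmGNUT} then specialize to those of Theorems~\ref{thmF81D} and~\ref{thmCF93} with each $\sigma_{j-1}$ replaced by $\id$.

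Two preliminaries require only a line each. The bound $D_N^*(X_b^{\Sigma,C}) \le D_N(X_b^{\Sigma,C})$ is the general fact from Remark~\ref{genfactsdisc}, while $D_N^*(Y_b) = D_N(Y_b)$ follows from $\psi_b^{\id,-} = 0$ as noted in Remark~\ref{alphabid}. Inserting $\psi_b^{\sigma_{j-1}} \le \psi_b^\id$ into the formula of Theorem~\ref{thmGNUT} immediately yields
\[
N D_N(X_b^{\Sigma,C}) = \sum_{j=1}^\infty \psi_b^{\sigma_{j-1}}\!\left(\frac{N}{b^j}\right) \le \sum_{j=1}^\infty \psi_b^\id\!\left(\frac{N}{b^j}\right) = N D_N(Y_b),
\]
and the diaphony estimate follows in exactly the same way from $\chi_b^{\sigma_{j-1}} \le \chi_b^\id$.

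The only genuine content lies in the $L_2$-comparison. Setting the shifted permutation $\tau_j := \sigma_{j-1} \uplus \theta_{j-1}(N) \in \Sy_b$, Theorem~\ref{thmGNUT} provides
\[
(NL_{2,N}(X_b^{\Sigma,C}))^2 = \frac{1}{b}\sum_{j=1}^\infty \phi_b^{\tau_j}\!\left(\frac{N}{b^j}\right) + \frac{1}{b^2}\sum_{i\neq j}\varphi_b^{\tau_i}\!\left(\frac{N}{b^i}\right)\varphi_b^{\tau_j}\!\left(\frac{N}{b^j}\right).
\]
For the diagonal terms I would first deduce $\phi_b^\sigma \le \phi_b^\id$ for every $\sigma \in \Sy_b$ from the defining identity $\chi_b^\sigma = b\phi_b^\sigma - (\varphi_b^\sigma)^2$: combining $\chi_b^\sigma \le \chi_b^\id$ with $(\varphi_b^\sigma)^2 \le (\varphi_b^\id)^2$ yields $b(\phi_b^\sigma - \phi_b^\id) \le 0$. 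For the mixed terms I would use that $\varphi_b^\id \ge 0$ (a restatement of $\psi_b^{\id,-} = 0$), so that
\[
\varphi_b^{\tau_i}\varphi_b^{\tau_j} \le |\varphi_b^{\tau_i}|\,|\varphi_b^{\tau_j}| \le \varphi_b^\id\!\left(\frac{N}{b^i}\right)\varphi_b^\id\!\left(\frac{N}{b^j}\right).
\]
Summing these pointwise bounds exactly recovers $(NL_{2,N}(Y_b))^2$ on the right.

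The main subtlety is precisely the double sum in the $L_2$-formula: the products $\varphi_b^{\tau_i}\varphi_b^{\tau_j}$ can change sign, so a careless estimate could destroy cancellations and fail to give the correct comparison. The non-negativity of $\varphi_b^\id$ rescues the argument, since it allows each product of absolute values to be reabsorbed into the formula for $Y_b$ without overcounting. No separate analysis of the auxiliary quantities $\theta_{j-1}(N)$ is needed, because the three base inequalities from \cite{fau05} hold uniformly on $\Sy_b$ and $\sigma_{j-1} \uplus \theta_{j-1}(N)$ is again an element of $\Sy_b$.
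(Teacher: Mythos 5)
Your proof is correct, and each of its ingredients is legitimate. The $D_N$ and $F_N$ comparisons are exactly as in the paper: term-by-term comparison using $\psi_b^\sigma\le\psi_b^\id$ and $\chi_b^\sigma\le\chi_b^\id$ (with the observation that the shifted permutations $\sigma_{j-1}\uplus\theta_{j-1}(N)$ still lie in $\Sy_b$ so the uniform bounds apply). Your derivation of $\phi_b^\sigma\le\phi_b^\id$ from $\chi_b^\sigma=b\phi_b^\sigma-(\varphi_b^\sigma)^2$ is sound, and using $\varphi_b^\id\ge 0$ to control the off-diagonal terms is the right observation.

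The one place where you take a genuinely different route is the $L_{2,N}$-comparison. The paper says it routes through Koksma's formula \eqref{FoKoks}, i.e. it writes $(NL_{2,N}(X))^2 = \bigl(N\int_0^1\Delta_{X,N}\bigr)^2 + \tfrac{1}{4\pi^2}(NF_N(X))^2$ and bounds the two non-negative summands separately: the first via $\bigl|\sum_j\varphi_b^{\tau_j}\bigr|\le\sum_j|\varphi_b^{\tau_j}|\le\sum_j\varphi_b^\id$ using only $|\varphi_b^\sigma|\le\varphi_b^\id$, and the second via $\chi_b^\sigma\le\chi_b^\id$ (which you have already used for the diaphony). You instead expand the explicit $L_2$-formula of Theorem~\ref{thmGNUT} into diagonal ($\phi_b$) and off-diagonal ($\varphi_b\varphi_b$) pieces and bound those. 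The two decompositions are algebraically equivalent rearrangements of the same series, as the identity $\chi_b^\sigma=b\phi_b^\sigma-(\varphi_b^\sigma)^2$ shows, so both work; the Koksma split is slightly cleaner because it avoids the need to derive $\phi_b^\sigma\le\phi_b^\id$ and instead hands you two manifestly non-negative blocks, each controlled by one of the two base inequalities. Your version trades that for a small extra algebraic step but has the advantage of staying entirely inside the framework of the $\varphi_b,\phi_b,\chi_b$ functions without invoking the diaphony-to-$L_2$ link.
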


\begin{remark}\rm
In \cite[Theorem 2 and the subsequent Remark]{fau05} Theorem \ref{Y2worst} was
already stated for $Y_b^\Sigma$ and $X_b^C$. The first part of
Theorem \ref{th1} for $L_{2,N}(X_2^C)$ was obtained independently
of \cite[Theorem 2]{fau05} by using Walsh series analysis. For
$D_N^*$, the result of Theorem \ref{Y2worst} generalizes the bound
from Theorem \ref{thmKP01}. This is not true for $D_N$, according
to Theorem \ref{D2worst}. The latter theorem, for base 2, has an
analog in base $b$ but with a non-digital $(0,1)$-sequence in the
broad sense, see \cite[Theorem~5.3]{Fau07}.
\end{remark}

\paragraph{Some applications of Theorems \ref{thmfau05} and \ref{thmGNUT}.}
We present two applications, the first one concerning best possible lower bounds 
for the star discrepancy and the second one concerning linear digit 
scramblings, which are also relevant for applications in quasi-Monte Carlo rules, 
and scramblings of (digital) NUT $(0,1)$-sequences.

\medskip

\begin{itemize}
\item Best possible lower bounds for the star discrepancy are
dealt with in \cite[Section 5]{FauPi13} for the (currently)
largest family of van der Corput type sequences which was
introduced in Definition~\ref{defXSigC}. We only give an excerpt
of the results.
\begin{theorem}[Faure and Pillichshammer]\label{BestLowBd}
Let $\mathcal{C}_{SUT}$ be the class of all strict upper
triangular $\NN \times \NN$ matrices, let $\sigma \in \Sy_b
\mbox{ such that } D_N^*(Y_b^\sigma)=D_N(Y_b^\sigma)$ and let $\Sigma=(\sigma,\sigma,\ldots)$. Then (see
Theorem \ref{asymptD*} for notation) we have
$$
\inf_{\substack{\Sigma \in \{\sigma,\tau_b\circ\sigma\}^{\NN} \\ C
\in \mathcal{C}_{SUT}}} d^*(X_b^{\Sigma,C}) =
\frac{\alpha_{b,\sigma}}{2 \log b}\cdot
$$
\end{theorem}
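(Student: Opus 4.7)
The overall plan is to prove the two inequalities separately. For the lower bound I would invoke Theorem~\ref{thmGNUT} to express
\begin{equation*}
ND_N(X_b^{\Sigma,C})=\sum_{j=1}^{\infty}\psi_b^{\sigma_{j-1}}\left(\frac{N}{b^j}\right),
\end{equation*}
and observe that this formula for the \emph{extreme} discrepancy does not involve the strictly above-diagonal entries of $C$. The key structural identity is $\psi_b^{\tau_b\circ\sigma}=\psi_b^\sigma$, which follows from the swapping identities $\psi_b^{\tau_b\circ\sigma,+}=\psi_b^{\sigma,-}$ and $\psi_b^{\tau_b\circ\sigma,-}=\psi_b^{\sigma,+}$ recorded in the remark after Theorem~\ref{asymptD*}. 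Consequently, for any $\Sigma\in\{\sigma,\tau_b\circ\sigma\}^{\NN}$ every summand collapses to $\psi_b^{\sigma}(N/b^j)$, so $ND_N(X_b^{\Sigma,C})=ND_N(Y_b^\sigma)$ for every $N$, independently of the admissible choice of $\Sigma$ and $C$. This yields $d(X_b^{\Sigma,C})=d(Y_b^\sigma)=\alpha_{b,\sigma}/\log b$, and combining with the elementary $D_N^{*}=\max(D_N^{+},D_N^{-})\ge(D_N^{+}+D_N^{-})/2=D_N/2$ gives the uniform lower bound $d^*(X_b^{\Sigma,C})\ge\alpha_{b,\sigma}/(2\log b)$.

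For the matching upper bound I would exhibit a concrete witness: let $C$ be the zero strict upper triangular matrix, which by Definition~\ref{defXSigC} forces $X_b^{\Sigma,C}=Y_b^{\Sigma}$, and take $\Sigma=\Sigma^\sigma_{\mathcal{A}}$ as in Theorem~\ref{asymptD*}. That theorem then supplies
\begin{equation*}
d^*(Y_b^{\Sigma^\sigma_{\mathcal{A}}})=\frac{\alpha_{b,\sigma}^{+}+\alpha_{b,\sigma}^{-}}{2\log b},
\end{equation*}
so the task reduces to checking that $\alpha_{b,\sigma}^{+}+\alpha_{b,\sigma}^{-}=\alpha_{b,\sigma}$. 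This is where the standing hypothesis $D_N^*(Y_b^\sigma)=D_N(Y_b^\sigma)$ enters: combined with $D_N=D_N^{+}+D_N^{-}$ and $D_N^{*}=\max(D_N^{+},D_N^{-})$, it forces $\min(D_N^{+}(Y_b^\sigma),D_N^{-}(Y_b^\sigma))=0$ for every $N$, and, by the continuous piecewise-linear nature of $\psi_b^{\sigma,\pm}$ together with the non-negativity of each summand in Theorem~\ref{thmF81D}, this propagates to identical vanishing of one of the functions $\psi_b^{\sigma,\pm}$. Assuming (without loss of generality) that $\psi_b^{\sigma,-}\equiv 0$, one obtains $\alpha_{b,\sigma}^{-}=0$ and $\psi_b^\sigma=\psi_b^{\sigma,+}$, hence $\alpha_{b,\sigma}^{+}=\alpha_{b,\sigma}$, and the two bounds coincide.

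The step I expect to require the most care is the identification $\alpha_{b,\sigma}^{+}+\alpha_{b,\sigma}^{-}=\alpha_{b,\sigma}$ under the hypothesis. Only the easy direction $\alpha_{b,\sigma}\le\alpha_{b,\sigma}^{+}+\alpha_{b,\sigma}^{-}$ is immediate, and no generic identity upgrades it to an equality, so the argument must genuinely exploit the hypothesis to rule out the a priori possibility that the supports of $\psi_b^{\sigma,+}$ and $\psi_b^{\sigma,-}$ are visited by disjoint families of scaled arguments $N/b^j$ as $N$ varies.
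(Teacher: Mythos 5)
Your overall plan is sound and is, as far as I can tell, the intended route: the paper offers no in-line proof for this statement but refers to \cite[Corollary~1]{FauPi13}, and the ingredients you assemble — Theorem~\ref{thmGNUT} for the $C$-independence of the extreme discrepancy, the swap $\psi_b^{\tau_b\circ\sigma,\pm}=\psi_b^{\sigma,\mp}$ (hence $\psi_b^{\tau_b\circ\sigma}=\psi_b^\sigma$), the elementary bound $D_N^*\ge D_N/2$, and Theorem~\ref{asymptD*} with the witness $\Sigma^\sigma_{\mathcal A}$, $C=0$ — are exactly the tools the surrounding text sets up for this purpose. Both the lower-bound half and the choice of the matching witness are correct.

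The one place where your sketch is genuinely underjustified is the ``propagation'' claim: that $\min(D_N^+(Y_b^\sigma),D_N^-(Y_b^\sigma))=0$ for every $N$ forces one of $\psi_b^{\sigma,\pm}$ to vanish identically. Continuity, piecewise-linearity, and termwise non-negativity are \emph{not} enough on their own, because the hypothesis only tells you that for each individual $N$ one of the two sums vanishes, leaving open a priori the possibility that which sum vanishes depends on $N$. The missing idea is a digit-placement argument: since $N D_N^{\pm}(Y_b^\sigma)=\sum_{j\ge 1}\psi_b^{\sigma,\pm}(N/b^j)$ with every summand $\ge 0$, $D_N^{+}=0$ means $\psi_b^{\sigma,+}$ vanishes on the entire orbit $\{N/b^j\bmod 1\}_{j\ge 1}$, and that orbit is controlled by the $b$-adic digits of $N$ in a graded way (the top $a$ digits of $N\bmod b^j$ determine $\{N/b^j\}$ to within $b^{-a}$). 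If both $\psi_b^{\sigma,+}$ and $\psi_b^{\sigma,-}$ were nonzero, each would be bounded away from $0$ on some nontrivial $b$-adic subinterval; by stacking two disjoint blocks of digits one constructs an $N$ whose orbit lands in the $\psi^{+}$-interval at one scale $j_1$ and in the $\psi^{-}$-interval at a much larger scale $j_2$, whence both $D_N^{+}>0$ and $D_N^{-}>0$, a contradiction. With that filled in, $\psi_b^{\sigma,-}\equiv 0$ (say) gives $\alpha_{b,\sigma}^{-}=0$ and $\psi_b^\sigma=\psi_b^{\sigma,+}$, so $\alpha_{b,\sigma}^{+}=\alpha_{b,\sigma}$ and your two bounds match. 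You were right to flag this as the delicate step; the fix is elementary but needs to be spelled out rather than attributed to piecewise-linearity alone.
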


A proof can be found in \cite[Corollary~1]{FauPi13}. Besides the
identity $\id$, it is not difficult to find permutations
satisfying the condition $D_N^*(Y_b^\sigma)=D_N(Y_b^\sigma)$ from
Theorem~\ref{BestLowBd}. This can be done, for example, by using
intricate permutations (see for instance \cite[Section 2.3]{Fau92}
for the definition of intrication of two permutations). Moreover,
a systematic computer search performed by Pausinger returned 26, 58, 340, and 1496 such
permutations in bases 6, 7, 8, and 9, respectively. Further, the best sequence currently known
with respect to the star discrepancy found by
Ostromoukhov (see the end of Section \ref{secdiscgenvdC}), namely
$Y_{60}^{{\Sigma^{\sigma_0}_\mathcal{A}}}$, satisfies this
condition. Hence in base $b=60$ we have
$$
\inf_{\substack{\Sigma \in
\{\sigma_0,\tau_{60}\circ\sigma_0\}^{\NN} \\ C \in
\mathcal{C}_{SUT}}} d^*(X_{60}^{\Sigma,C}) = 0.2222\ldots.
$$
The case of identity in Theorem~\ref{BestLowBd} is of special
interest because $\alpha_{b,\id}$ is explicitly known for any
integer $b \ge 2$, see Equations \eqref{defalphabid} and
\eqref{d*alter}.

\begin{corollary}\label{infbX}
With the notation of Theorem \ref{BestLowBd}, we obtain
$$
\inf_{b \ge 2} \inf_{\substack{\Sigma \in \{\id,\tau_b\}^{\NN} \\
C \in \mathcal{C}_{SUT}}} d^* (X_b^{\Sigma,C}) =
d^*(Y_3^{\Sigma^\id_\mathcal{A}})=\frac{1}{4 \log 3}=0.2275\ldots.
$$
\end{corollary}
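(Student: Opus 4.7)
The plan is to apply Theorem \ref{BestLowBd} with the identity permutation $\sigma = \id$, for which Remark \ref{alphabid} tells us that $D_N^*(Y_b^\id) = D_N(Y_b^\id)$, and then to optimise the resulting explicit expression over the base $b$. For $\sigma = \id$, Theorem \ref{BestLowBd} yields
$$
\inf_{\substack{\Sigma \in \{\id,\tau_b\}^{\NN} \\ C \in \mathcal{C}_{SUT}}} d^*(X_b^{\Sigma,C}) = \frac{\alpha_{b,\id}}{2\log b},
$$
and by \eqref{d*alter} this equals $d^*(Y_b^{\Sigma^\id_\mathcal{A}})$, which is given by the closed form
$$
F(b) := \frac{\alpha_{b,\id}}{2\log b} = \begin{cases} \dfrac{b^2}{8(b+1)\log b} & \text{if $b$ is even,}\\[1ex] \dfrac{b-1}{8\log b} & \text{if $b$ is odd.} \end{cases}
$$
Thus the double infimum in the statement equals $\inf_{b \ge 2} F(b)$, and it suffices to show that this infimum is attained at $b=3$.

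Direct evaluation gives $F(2) = \tfrac{1}{6\log 2} = 0.2404\ldots$, $F(3) = \tfrac{1}{4\log 3} = 0.2275\ldots$, $F(4) = \tfrac{1}{5\log 2} = 0.2885\ldots$, and $F(5) = \tfrac{1}{2\log 5} = 0.3107\ldots$, so $b=3$ beats $b \in \{2,4,5\}$. It remains to verify that $F(b) > F(3)$ for every integer $b \ge 6$. For this I would treat the odd and even cases separately: on odd $b \ge 3$, the function $b \mapsto \tfrac{b-1}{8\log b}$ is strictly increasing (its derivative has the sign of $\log b - 1 + 1/b$, which is positive for $b \ge 3$), so $F(b) \ge F(5) > F(3)$ for odd $b \ge 5$; on even $b \ge 4$, the function $b \mapsto \tfrac{b^2}{8(b+1)\log b}$ is likewise strictly increasing for $b \ge 4$ (its logarithmic derivative is $\tfrac{2}{b} - \tfrac{1}{b+1} - \tfrac{1}{b\log b}$, positive once $b \ge 4$), so $F(b) \ge F(6) = \tfrac{36}{56\log 6} = 0.3587\ldots > F(3)$ for even $b \ge 6$.

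Combining the two monotonicity statements with the explicit values at $b \in \{2,3,4,5\}$, we conclude that $\inf_{b \ge 2} F(b) = F(3) = \tfrac{1}{4\log 3}$, which is the claimed identity. The only slightly delicate point is the monotonicity check, but it reduces to elementary calculus on two real-variable functions and poses no real obstacle.
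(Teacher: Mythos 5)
Your proof is correct and takes exactly the route the paper suggests: apply Theorem~\ref{BestLowBd} with $\sigma=\id$ (which satisfies the hypothesis $D_N^*(Y_b^\id)=D_N(Y_b^\id)$ because $\psi_b^{\id,-}=0$, as noted in Remark~\ref{alphabid}), invoke the explicit formula \eqref{defalphabid}/\eqref{d*alter} for $\alpha_{b,\id}$, and then minimize over $b$. The paper leaves the minimization implicit; your calculus verification — that $(b-1)/(8\log b)$ is increasing for odd $b\ge 3$ and $b^2/(8(b+1)\log b)$ is increasing for even $b\ge 4$, combined with the direct comparison of $F(2),F(3),F(4),F(5)$ — is a clean and correct way to fill that gap.
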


This result can be seen as an analog for van der Corput sequences
and NUT $(0,1)$-sequences of the best lower bound for
$(n\alpha)$-sequences from Dupain and S\'{o}s~\cite{DS84}
mentioned on page~\pageref{dupSos}. So, compared to the best $(n
\alpha)$-sequence, we obtain a smaller value for $d^*$ with the
generalized van der Corput sequence in base 3 obtained by alternating 
the permutations $\mathrm{id}$ and $\tau_3$.

Finally we show that the constant $1/(6 \log 2)$, which is best
possible for the star discrepancy of any generalized van der
Corput sequence according to \cite[Corollary~4]{KLP07} (cf.
Remark~\ref{re5bejKrLaPi}), is even best possible for the star
discrepancy of any  generalized NUT digital sequence in base 2.
The following result is \cite[Corollary 3]{FauPi13}:

\begin{corollary}\label{dsdNUTseq}
Let $X_2^{\Sigma, C}$ be a generalized NUT digital sequence in
base 2 generated by the NUT matrix $C$, with permutations
$\Sigma \in \{\id,\tau_2\}^{\NN}$. Then, we have
$$
\inf_{\substack{\Sigma \in \{\id,\tau_2\}^{\NN} \\ C \in
\mathcal{C}_{NUT}}} d^*(X_2^{\Sigma,C})=\frac{1}{6 \log
2}=0.2404\ldots,
$$
where $\mathcal{C}_{NUT}$ is the class of all NUT $\NN \times \NN$
matrices over $\FF_b$.
\end{corollary}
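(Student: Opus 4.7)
The plan is to derive Corollary \ref{dsdNUTseq} as a direct specialization of Theorem \ref{BestLowBd} with $b=2$ and $\sigma=\id$. The first step is to verify the hypothesis of Theorem \ref{BestLowBd}, namely that $D_N^*(Y_2^{\id})=D_N(Y_2^{\id})$ for every $N\in\NN$. This is exactly the observation recorded in Remark \ref{alphabid}: since $\psi_b^{\id,-}=0$ in any base $b$, Theorem \ref{thmF81D} forces $D_N^-(Y_b)=0$, whence $D_N^*(Y_b)=D_N^+(Y_b)=D_N(Y_b)$, and in particular the identity on $\ZZ_2$ meets the requirement.

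Next, I would compute $\alpha_{2,\id}$ explicitly from formula \eqref{defalphabid}. Since $b=2$ is even,
$$\alpha_{2,\id}=\frac{b^2}{4(b+1)}=\frac{4}{12}=\frac{1}{3}.$$
At the same time, $\tau_2\circ\id=\tau_2$, so the set $\{\sigma,\tau_b\circ\sigma\}$ appearing in Theorem \ref{BestLowBd} is precisely $\{\id,\tau_2\}$, which matches the index set of the infimum in Corollary \ref{dsdNUTseq}. Plugging the above values into Theorem \ref{BestLowBd} yields
$$\inf_{\substack{\Sigma\in\{\id,\tau_2\}^{\NN}\\ C\in\mathcal{C}_{NUT}}} d^*(X_2^{\Sigma,C})=\frac{\alpha_{2,\id}}{2\log 2}=\frac{1}{6\log 2}=0.2404\ldots,$$
which is the claim.

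The only non-trivial point to check is the identification of the class of matrices: the generalized NUT $(0,1)$-sequences of Definition \ref{defXSigC} are parametrized by a strict upper triangular part $C$ together with the diagonal permutations encoded in $\Sigma$, so the infimum over $\mathcal{C}_{NUT}$ in the corollary is the same infimum that appears in Theorem \ref{BestLowBd}, with the diagonal freedom absorbed into $\Sigma\in\{\id,\tau_2\}^{\NN}$. Consequently the main obstacle is not genuinely present here; all the work (constructing the sequence $\Sigma^{\id}_{\mathcal{A}}$ attaining $d^*(Y_2^{\Sigma^{\id}_{\mathcal{A}}})=\alpha_{2,\id}/(2\log 2)$ through Theorem \ref{asymptD*}, and establishing the matching lower bound over arbitrary strict upper triangular $C$) has already been carried out in Theorem \ref{BestLowBd}. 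The corollary is therefore a clean specialization whose whole content is the numerical evaluation $\alpha_{2,\id}/(2\log 2)=1/(6\log 2)$, showing that even when one enlarges the family of generalized van der Corput sequences to that of generalized NUT digital $(0,1)$-sequences over $\FF_2$, the optimal leading constant for the star discrepancy remains $1/(6\log 2)$, in perfect agreement with the bound \eqref{stdiscestasm} of Remark \ref{re5bejKrLaPi}.
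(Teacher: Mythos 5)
Your proposal is correct and follows precisely the route the paper intends: Corollary \ref{dsdNUTseq} is obtained by specializing Theorem \ref{BestLowBd} to $b=2$ and $\sigma=\id$, with the hypothesis $D_N^*(Y_2^\id)=D_N(Y_2^\id)$ supplied by Remark \ref{alphabid} ($\psi_b^{\id,-}=0$) and the numerical value coming from $\alpha_{2,\id}=1/3$ in \eqref{defalphabid}. Your closing remark that over $\FF_2$ the NUT-matrix diagonal is forced to be $1$, so that ranging $C$ over $\mathcal{C}_{NUT}$ is effectively the same as ranging the strict upper part over $\mathcal{C}_{SUT}$ with the diagonal freedom absorbed into $\Sigma$, is exactly the observation needed to reconcile the statements of Theorem \ref{BestLowBd} and the corollary.
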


\item The notion of linear digit scramblings and scramblings of
(digital) NUT $(0,1)$-sequences
 stems from Matou\v{s}ek \cite{Mat98} in his attempt to classify the very 
general scramblings introduced earlier by Owen. A \emph{linear digit scrambling} 
is a permutation of the set ${\mathbb Z_b}$  of the form
$$
\pi(k)=hk+g \pmod {b},
$$
where $h\neq 0$ and $g$ are given in ${\mathbb Z_b}$ with $b$
${\emph prime}$. The definition also works for any base $b$,
provided that the multiplication by $h$ remains a bijection. If
$g=0$, we obtain the so-called \emph {multipliers} (or
multiplicative factors)  $h$ of preceding papers on NUT digital
$(0,1)$-sequences (see \cite{fau05,Fau08}). The additive factor
$g$ is a translation also called \emph{digital shift} (see
\cite{KrLaPi}).

\medskip

It is quite remarkable that swapping permutations $\tau_b$ are
linear digit scramblings for any base, since
$\tau_b(k)=b-1-k=(b-1)k+b-1 \pmod{b}$.

\medskip
In the following we give some examples showing the effect of
linear digit scramblings with regard to the asymptotic behavior of
the star discrepancy:
\begin{itemize}
\item[--] In base 2 $\tau_2$ and the digital shift $k\mapsto k+1$
over $\mathbb{F}_2$ are the same permutations. \item[--] In base
3, all 6 permutations are linear digit scramblings, namely
$\id$, $\id+1$, $\id+2$, $2\id$, $2\id+1$ and $2\id+2=\tau_3$. All
of them have the same asymptotic behavior with respect to $D_N^*$
and there is no improvement with regard to
$d^*(Y_3^{{\Sigma^I_\mathcal{A}}})$, see Theorem \ref{asymptD*}
and Equation~\eqref{d*alter}. \item[--] In base 4 there is no
improvement  with linear digit scramblings, but the permutation
$\sigma=3(12)+1$ (where $(12)$ denotes the transposition which
exchanges 1 and 2) produces the sequence $Y_4^\sigma$  with
$d^*(Y_4^\sigma)=1/(5 \log 2)$, which is the same as for the
sequence associated with the two-dimensional Halton-Zaremba point
set from \cite{HaZa}. Moreover, $Y_4^{\Sigma^\id_\mathcal{A}}$ in
\eqref{d*alter} has also the same asymptotic constant $1/(5 \log
2)$, which means that the single permutation $\sigma$ is also
equivalent to the sequence of permutations
$\Sigma^\id_\mathcal{A}$ in base 4. \item[--] In base 233, which
is our best prime base for $D_N^*$ up to base 1301 (see
\cite{fau05}), the phenomenon becomes really apparent: with the
two linear digit scramblings $\rho(k)=89k$ and $\pi(k)=89k+44$ we
have
$$\limsup_{N\rightarrow\infty}\frac{ND_N^*(Y_{233}^{\Sigma^\rho_\mathcal{A}})}{\log N}<\frac{469+365}{466\log 233}=0.328\ldots,$$
$$\limsup_{N\rightarrow\infty} \frac{ND_N^*(Y_{233}^\pi)}{\log N}<\frac{368}{233\log 233}=0.289\ldots,$$
$$\limsup_{N\rightarrow\infty} \frac{ND_N^*(Y_{233}^{\Sigma^\pi_\mathcal{A}})}{\log N}<\frac{368}{233\log 233}=0.289\ldots, \ \ {\rm  while \ }$$
$$\limsup_{N\rightarrow\infty} \frac{ND_N^*(Y_{233}^{\Sigma^{\id}_\mathcal{A}})}{\log N}=\frac{232}{8\log 233}=5.32\ldots.$$
Here, we observe even more: adding the digital shift 44 still
improves the behavior of the sequence even without using the
sophisticated sequence $\Sigma_\mathcal{A}$. This fact is in
accordance  with a remark of Matou\v sek in \cite[p. 540]{Mat98}:
``Introducing additive terms makes the situation much simpler and
more regular.'' Such computations with very large bases have been
performed for some scrambling techniques applied to Halton sequences and
Faure sequences (which are examples of $(0,s)$-sequences), see Section
\ref{sec_mult}.
\end{itemize}

\end{itemize}


\subsection{Van der Corput sequences with respect to different digital expansions}\label{cantor}

Van der Corput sequences have not only been introduced with
respect to the classical $b$-adic digit expansion. Here we present
two further prominent examples.

\paragraph{Cantor expansions.} Van der Corput sequences with
respect to Cantor expansions have been studied by Faure
\cite{Fau1981} and Chaix and Faure \cite{chafa}. We call
$B=(b_j)_{j \ge 0}$ with $b_0=1$ and integers $b_i \ge 2$ for all
$i \ge 2$ a {\it Cantor base} and we set $B_k:=b_0\cdots b_k$ for
$k \in \NN_0$.

The special case of ordinary $b$-adic expansions, $b \ge 2$ an
integer, is contained if we choose $b_1=b_2=\cdots =b$ and hence
$B_k=b^k$. The main difference between $B$-adic and ordinary
$b$-adic expansions is that in the general case the $i$-th digit
can take values in $\{0,\ldots,b_{i+1}-1\}$, which may vary for
each $i$ and even become arbitrarily large. Each integer $n$
possesses a unique finite representation
\[ n =  n_0 b_0+n_1 b_0b_1+n_2b_0b_1b_2+ \cdots=\sum_{i\ge 0}n_iB_i,\]
with $n_i\in\{0,\ldots,b_{i+1}-1\}$ for $i \in \NN_0$. We will
call this the $B$-{\it adic expansion} or the {\it Cantor
expansion} of $n$. Additionally, each real number $x\in[0,1)$ has
a representation of the form \[ x =
\frac{x_0}{b_0b_1}+\frac{x_1}{b_0b_1b_2}+\frac{x_2}{b_0b_1b_2b_3}+\dots
=
 \sum_{i\ge 0} \frac{x_i}{
B_{i+1}},\] with $x_i\in\{0,\ldots,b_{i+1}-1\}$ for $i \in \NN_0$.

\begin{definition}\label{vdc2Qant}\rm
Let $B$ be a Cantor base as described above and let
$\Sigma=(\sigma_r)_{r\geq 0}$ be a sequence of corresponding
permutations $\sigma_r$ of $\{0,1,\ldots,b_{r+1}-1\}$ for $r \ge
0$.
\begin{itemize}
\item The {\it $B$-adic radical inverse function with respect to
$\Sigma$} is defined as $Y_B^{\Sigma}: \NN_0 \rightarrow [0,1)$,
$$Y_B^{\Sigma}(n)=\frac{\sigma_0(n_0)}{B_1}+\frac{\sigma_1(n_1)}{B_2}+\frac{\sigma_2(n_2)}{B_3}+\cdots,$$
for $n \in \NN_0$ with Cantor expansion $n=n_0B_0+n_1 B_1+n_2
B_2+\cdots$, where $n_i \in \{0,1,\ldots,b_{i+1}-1\}$. \item The
{\it generalized van der Corput sequence in Cantor base $B$
associated with $\Sigma$} is defined as $Y_B^{\Sigma}:=(y_n)_{n
\ge 0}$ with $y_n=Y_B^{\Sigma}(n)$.
\end{itemize}
\end{definition}


Theorems \ref{thmF81D} and \ref{thmCF93} extend in a natural way
to Cantor bases, see \cite[Section 3.4.2, Th\'eor\`eme 1
bis]{Fau1981} and \cite[Th\'eor\`emes 4.1 to 4.3]{chafa} for their
respective statements. In the following, we first give a general
asymptotic estimate related to the bases $b_j$ of the Cantor
base $B$, and then we deal with a special construction
 associated with two pairs $(b,\sigma)$ and $(c,\rho)$ which is useful 
for building new low discrepancy sequences from other ones already known.

\begin{theorem}[Chaix and Faure]
For every Cantor base $B$ for which $\sum_{j=1}^n b_j \ll n$, and
every sequence $\Sigma$ of corresponding permutations we have
$$ND_N^{\ast}(Y_B^{\Sigma})\ll \log N.$$ If the sequence
$\Sigma$ consists only of the identities, then the condition
$\sum_{j=1}^n b_j \ll n$ is even necessary.
\end{theorem}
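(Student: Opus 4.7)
The plan is to invoke the Cantor-base generalization of Theorem~\ref{thmF81D} (``Th\'eor\`eme 1 bis'' of \cite[Section 3.4.2]{Fau1981}), which yields the identity
$$
ND_N(Y_B^\Sigma)=\sum_{j=1}^{\infty}\psi_{b_j}^{\sigma_{j-1}}\!\left(\frac{N}{B_j}\right),
$$
and to combine it with two crude pointwise bounds on the $\psi$-functions: the trivial $\psi_b^\sigma\le b$ on $[0,1)$, and the linear estimate $\psi_b^\sigma(x)\le (b-1)x$ for $x\in[0,1/b)$. The latter is immediate from the definition of the $\varphi_{b,h}^\sigma$ on the slab $k=1$, where each of them is linear through the origin with slope of absolute value at most $b$.

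For the sufficiency, let $m=m(N)$ be the unique integer with $B_{m-1}\le N<B_m$. Since each $b_j\ge 2$ one has $B_m\ge 2^m$, so $m\ll \log N$. Split the series at index $m$. The head is controlled by the hypothesis,
$$
\sum_{j=1}^{m}\psi_{b_j}^{\sigma_{j-1}}\!\left(\frac{N}{B_j}\right)\le \sum_{j=1}^{m}b_j\ll m\ll \log N,
$$
while for $j\ge m+1$ one has $N<B_m\le B_{j-1}$, so $N/B_j<1/b_j$ and the linear bound gives $\psi_{b_j}^{\sigma_{j-1}}(N/B_j)\le b_j\cdot N/B_j=N/B_{j-1}$. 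Since $N/B_j$ drops by at least a factor $2$ with each step, the tail sum is bounded by $2N/B_m\le 2$. Thus $ND_N(Y_B^\Sigma)\ll \log N$, and the first assertion follows from $D_N^\ast\le D_N$.

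For the necessity, take $\Sigma=\mathrm{id}$ and argue by averaging. Assume by contradiction that $A_m:=m^{-1}\sum_{j=1}^m b_j$ is unbounded along some subsequence. For each fixed $j\le m$, as $N$ runs through $\{0,1,\dots,B_m-1\}$, the residue $N/B_j\bmod 1$ visits each point $i/B_j$ ($0\le i<B_j$) exactly $B_m/B_j$ times, and since the slopes of $\psi_{b_j}^{\mathrm{id}}$ are bounded by $b_j$, the standard Lipschitz estimate for left-endpoint Riemann sums gives
$$
\frac{1}{B_m}\sum_{N=0}^{B_m-1}\psi_{b_j}^{\mathrm{id}}\!\left(\frac{N}{B_j}\right)=\int_0^1 \psi_{b_j}^{\mathrm{id}}(x)\,dx+O\!\left(\frac{1}{B_{j-1}}\right).
$$
Unfolding $\psi_{b}^{\mathrm{id}}=\psi_{b}^{\mathrm{id},+}$ on each subinterval $[(k-1)/b,k/b)$ yields $\psi_{b}^{\mathrm{id}}(x)=\max\bigl((k-1)(1-x),(b-k)x\bigr)$, with minimum value $(k-1)(b-k)/(b-1)$; together with the identity $\sum_{k=1}^{b}(k-1)(b-k)=(b-1)b(b-2)/6$ this forces $\int_0^1 \psi_b^{\mathrm{id}}(x)\,dx\gg b$. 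Summing over $j\le m$, and absorbing the Riemann errors and the $j>m$ contributions (both $O(1)$, the latter by the same tail argument used above), the average of $ND_N(Y_B^{\mathrm{id}})$ over $N\le B_m$ is $\gg mA_m$. Hence some $N_m\le B_m$ satisfies $N_m D_{N_m}^\ast(Y_B^{\mathrm{id}})\gg mA_m$. By concavity of $\log$, $\log B_m=\sum_{j=1}^m \log b_j\le m\log A_m$, so
$$
\frac{N_m D_{N_m}^\ast(Y_B^{\mathrm{id}})}{\log N_m}\gg \frac{mA_m}{m\log A_m}=\frac{A_m}{\log A_m}\to\infty,
$$
contradicting $ND_N^\ast\ll\log N$.

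The main obstacle I anticipate is the bookkeeping in the necessity half: confirming that the lower bound $\int_0^1 \psi_b^{\mathrm{id}}\gg b$ is uniform in $b$ (which reduces to the elementary identity above, but requires carefully tracking the V-shape of $\psi_b^{\mathrm{id}}$ on each subinterval, with minimum at $x=(k-1)/(b-1)$), and checking that all three error contributions---the Riemann sum error $O(1/B_{j-1})$, the $j>m$ tail, and the conversion $D_N^\ast\leftrightarrow D_N$---combine to a uniformly bounded $O(1)$ absorbed into the implicit constant. The sufficiency direction, by contrast, is essentially a two-line computation once the $\psi$-formula is in hand.
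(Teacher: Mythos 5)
Your proof is correct. The paper does not reproduce the argument but only cites Th\'eor\`emes~4.5 and 4.7 of \cite{chafa}, so a direct comparison with the original is not possible here; your route---starting from the Cantor-base exact formula $ND_N(Y_B^\Sigma)=\sum_{j\ge 1}\psi_{b_j}^{\sigma_{j-1}}(N/B_j)$, bounding the head $j\le m(N)$ via a uniform $O(b)$ bound on $\psi_b^\sigma$ and the tail via the linear estimate $\psi_b^\sigma(x)=(b-1)x$ on $[0,1/b)$, and proving the necessity by averaging over $N<B_m$ and invoking Jensen's $\log B_m\le m\log A_m$---is the natural one in this framework, and it is sound. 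Two small points of rigour you should tidy. First, the truly ``trivial'' uniform bound is $\psi_b^\sigma<2b$ (each $|\varphi_{b,h}^\sigma|<b$ since it is a difference of a counting number in $\{0,\dots,b-1\}$ and a number in $[0,b)$, and $\psi_b^\sigma=\max_{0\le h<h'<b}|\varphi_{b,h}^\sigma-\varphi_{b,h'}^\sigma|$); your claimed $\psi_b^\sigma\le b$ is in fact true, but it requires the inequality $\psi_b^\sigma\le\psi_b^{\id}$ (quoted in the paper from \cite{fau05}) rather than being a one-liner, and only a constant factor is at stake. Second, the explicit lower bound $\int_0^1\psi_b^{\id}(x)\,dx\ge(b-2)/6$ is vacuous at $b=2$ and marginal at $b=3$, so one should check these two bases directly (e.g.\ $\int_0^1\psi_2^{\id}(x)\,dx=\int_0^1\|x\|\,dx=1/4$) before concluding the uniform $\int_0^1\psi_b^{\id}(x)\,dx\gg b$. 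Neither issue affects the validity of the argument.
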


The sufficiency of the condition on $B$ is
\cite[Th\'{e}or\`{e}me~4.5]{chafa} whereas the necessity in case
of identities is \cite[Th\'{e}or\`{e}me~4.7]{chafa}.

\medskip

We now define the notion of intrication of two permutations in two
(different) bases introduced in \cite[Section 3.4.3]{Fau1981}:
\begin{definition}\rm \label{defIntr}
The {\it intrication} of two pairs $(b,\sigma)$ and $(c,\rho)$ is
the pair $(bc, \sigma\cdot\rho)$ defined by
$\sigma\cdot\rho(l)=c\sigma(h)+\rho(k)$ with $l=bk+h,\ 0\leq h <b$
and $0\leq k <c$ (i.e., we consider Euclidean division of $l$ by $b$, $0\leq l<bc$).
\end{definition}

\begin{lemma}\label{PropIntr}
\begin{enumerate}
\item The function $\psi_{bc}^{\sigma\cdot\rho,+}$ satifies the
relation
$$\psi_{bc}^{\sigma\cdot\rho,+}(x)=\psi_b^{\sigma,+}(cx)+\psi_c^{\rho,+}(x) \mbox{ for any } x\in \RR $$
and an analogous relation is also valid for
$\psi_{bc}^{\sigma\cdot\rho,-}$ and
$\psi_{bc}^{\sigma\cdot\rho}$.

\item Let $Y_B^\Sigma$ be the sequence defined by $B=(b_j)_{j \ge
0}$ and $\Sigma=(\sigma_j)_{j \ge 0}$  with $b_{j+1}=b$ and
$\sigma_{j}=\sigma$ if $j$ is even, and $b_{j+1}=c$ and
$\sigma_j=\rho$ if $j$ is odd. Then
$Y_{bc}^{\sigma\cdot\rho}=Y_B^\Sigma$.
\end{enumerate}
\end{lemma}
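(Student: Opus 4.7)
The plan is to prove part (2) first via a clean digit-reorganization argument, then use the product structure of $\mathcal{Z}_{bc}^{\sigma\cdot\rho}$ to handle part (1) by a direct case analysis on sub-intervals of length $1/(bc)$.

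For part (2), I would unpack definitions directly. Given $n\in\NN_0$, write its base-$bc$ expansion $n=\sum_{k\ge 0}m_k(bc)^k$ with $m_k\in\{0,\ldots,bc-1\}$, and perform Euclidean division $m_k=bk_k+h_k$, so $h_k\in\{0,\ldots,b-1\}$ and $k_k\in\{0,\ldots,c-1\}$. Since for the Cantor base $B=(1,b,c,b,c,\ldots)$ we have $B_{2k+1}=b(bc)^k$ and $B_{2k+2}=(bc)^{k+1}$, the rewriting $n=\sum_k h_k(bc)^k+\sum_k k_k\,b(bc)^k=\sum_k h_k\,B_{2k}+\sum_k k_k\,B_{2k+1}$ identifies the Cantor-$B$ digits of $n$ as $n_{2k}=h_k$, $n_{2k+1}=k_k$. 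Substituting this into the defining formulas and applying the intrication identity $(\sigma\cdot\rho)(m_k)=c\sigma(h_k)+\rho(k_k)$ termwise gives
$$Y_B^\Sigma(n)=\sum_{k\ge 0}\frac{\sigma(h_k)}{b(bc)^k}+\sum_{k\ge 0}\frac{\rho(k_k)}{(bc)^{k+1}}=\sum_{k\ge 0}\frac{c\sigma(h_k)+\rho(k_k)}{(bc)^{k+1}}=Y_{bc}^{\sigma\cdot\rho}(n).$$

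For part (1), fix $x\in[(j-1)/(bc),j/(bc))$ and write $j=b(\ell-1)+i$ with $\ell\in\{1,\ldots,c\}$, $i\in\{1,\ldots,b\}$, so that $\{cx\}\in[(i-1)/b,i/b)$ and $x\in[(\ell-1)/c,\ell/c)$. The key product structure is that the $j'$-th element of $\mathcal{Z}_{bc}^{\sigma\cdot\rho}$, for $j'=bk'+h'$, equals $\sigma(h')/b+\rho(k')/(bc)$; hence $\mathcal{Z}_{bc}^{\sigma\cdot\rho}$ splits into $c$ ``columns'' of $b$ points each, column $k'$ being the shift of $\mathcal{Z}_b^\sigma$ by $\rho(k')/(bc)$, and the first $j$ points consist of $\ell-1$ full columns plus the first $i$ elements of column $\ell-1$. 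A direct count yields, for $l=\alpha c+\beta$ with $\alpha\in\{0,\ldots,b-1\}$, $\beta\in\{0,\ldots,c-1\}$,
$$A([0,l/(bc));j;\mathcal{Z}_{bc}^{\sigma\cdot\rho})=\alpha(\ell-1)+A([0,\alpha/b);i;\mathcal{Z}_b^\sigma)+A([0,\beta/c);\ell-1;\mathcal{Z}_c^\rho)+\mathbf{1}[\sigma^{-1}(\alpha)<i]\,\mathbf{1}[\rho(\ell-1)<\beta].$$
Substituting into the piecewise-linear formula for $\varphi_{bc,l}^{\sigma\cdot\rho}$, split according to whether $l\le c\sigma(i-1)+\rho(\ell-1)$ or not, and then maximizing over $l$ (equivalently over the pair $(\alpha,\beta)$) verifies that $\psi_{bc}^{\sigma\cdot\rho,+}(x)=\psi_b^{\sigma,+}(cx)+\psi_c^{\rho,+}(x)$. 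The analogous identity for $\psi_{bc}^{\sigma\cdot\rho,-}$ follows by the symmetric argument (working with the complementary intervals $[l/(bc),1)$), and the formula for $\psi_{bc}^{\sigma\cdot\rho}=\psi_{bc}^{\sigma\cdot\rho,+}+\psi_{bc}^{\sigma\cdot\rho,-}$ is then immediate.

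The main obstacle is the case analysis in part (1). The identity does \emph{not} hold pointwise at the level of individual $\varphi$'s: for a fixed decomposition $l=\alpha c+\beta$, one typically has $\varphi_{bc,l}^{\sigma\cdot\rho}(x)\ne\varphi_{b,\alpha}^\sigma(cx)+\varphi_{c,\beta}^\rho(x)$, because the indicator correction $\mathbf{1}[\sigma^{-1}(\alpha)<i]\,\mathbf{1}[\rho(\ell-1)<\beta]$ produces a shift that does not correspond to the factorwise formula. The equality emerges only after taking maxima, and the maximizer $l^{*}$ on the left need not be the index $\alpha^{*}c+\beta^{*}$ built from the individual maximizers on the right. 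Tracking the four sub-cases determined by the signs of $\alpha-\sigma(i-1)$ and $\beta-\rho(\ell-1)$, and identifying compensating pairs of indices whose affine expressions coincide, is the technical heart of the proof.
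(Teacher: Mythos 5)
Your treatment of part (2) is correct and complete: the Euclidean division $m_k=bk_k+h_k$ of the base-$bc$ digits, together with $B_{2k}=(bc)^k$ and $B_{2k+1}=b(bc)^k$, identifies $n_{2k}=h_k$, $n_{2k+1}=k_k$, and the series manipulation finishes the job.

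Part (1), however, has a genuine gap precisely where you say the ``technical heart'' lies. Your counting identity for $A\bigl([0,l/(bc));j;\mathcal{Z}_{bc}^{\sigma\cdot\rho}\bigr)$ is right, and you correctly observe that the termwise relation $\varphi_{bc,\alpha c+\beta}^{\sigma\cdot\rho}(x)=\varphi_{b,\alpha}^\sigma(cx)+\varphi_{c,\beta}^\rho(x)$ fails. But you stop there, and ``identifying compensating pairs'' is not a detail to be tracked --- it \emph{is} the proof. Worse, the failure is not benign: when $\beta>\rho(\ell-1)$ and $\sigma^{-1}(\alpha)<i$ one computes $\varphi_{bc,\alpha c+\beta}^{\sigma\cdot\rho}(x)-\bigl(\varphi_{b,\alpha}^\sigma(cx)+\varphi_{c,\beta}^\rho(x)\bigr)=1-\{cx\}>0$ away from the boundary $\alpha=\sigma(i-1)$ (and $(b-1)\{cx\}-i+1$ on it), so the naive termwise bound does not even give $\psi_{bc}^{\sigma\cdot\rho,+}(x)\le\psi_b^{\sigma,+}(cx)+\psi_c^{\rho,+}(x)$. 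What you actually need, and omit, is the re-indexing identity
$\varphi_{bc,\alpha c+\beta}^{\sigma\cdot\rho}(x)=\varphi_{b,\alpha+1\ (\mathrm{mod}\ b)}^\sigma(cx)+\varphi_{c,\beta}^\rho(x)$
valid whenever $\beta>\rho(\ell-1)$ and $\sigma^{-1}(\alpha)<i$, combined with the termwise inequality $\varphi_{bc,\alpha c+\beta}^{\sigma\cdot\rho}(x)\le\varphi_{b,\alpha}^\sigma(cx)+\varphi_{c,\beta}^\rho(x)$ in the complementary sub-cases; and for the reverse inequality one must produce an $l$ that realises $\psi_b^{\sigma,+}(cx)+\psi_c^{\rho,+}(x)$, which requires knowing something about the location of the maximiser $\alpha^*$ of $\varphi_{b,\cdot}^\sigma(cx)$ (for instance that $\sigma^{-1}(\alpha^*-1\ (\mathrm{mod}\ b))<i$ at a non-boundary local maximum). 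None of this appears in the proposal, and the analogous claim for $\psi^{-}$ inherits the same gap; as written, part (1) is a plan, not a proof.
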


If the Cantor base $B$ and the sequence of permutations $\Sigma$
are periodic with the same period $J$, we get
$Y_B^\Sigma=Y_{B_J}^{\Sigma_J}$ with $B_J=\prod_{j=0}^J b_j$ and
$\Sigma_J=\prod_{j=0}^{J-1} \sigma_j$ (with the product $\cdot$ of
Definition \ref{defIntr} which is associative). In particular, if
$B=(1,b,b,b,\ldots)$ and $\Sigma=(\sigma, \sigma, \ldots)$ are
constant, we obtain $Y_b^\sigma=Y_{b^n}^{\sigma^n}$ for any $n \in
\NN$.

\medskip

Definition \ref{defIntr} and Lemma \ref{PropIntr} allow to build
by a simple algorithm a sequence of permutations $\sigma_b$  and
to show the following theorem, hence positively answering a
question set in Remark \ref{alphabid} (see \cite[Theorem 1.1 and
Section 3.1]{Fau92}):
\begin{theorem}[Faure]
For any base $b \ge 2$, there exists a permutation $\sigma_b \in
\Sy_b$ such that
$$
d(Y_b^{\sigma_b})=\limsup_{N \rightarrow \infty} \frac{N
D_N(Y_b^{\sigma_b})}{\log N} \le \frac{1}{\log 2}\cdot
$$
\end{theorem}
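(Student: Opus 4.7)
The approach is to use Theorem~\ref{asymptD} to reduce the target inequality $d(Y_b^{\sigma_b}) \le 1/\log 2$ to the finite quantitative bound
$$\alpha_{b,\sigma_b} \;\le\; \frac{\log b}{\log 2} \;=\; \log_2 b,$$
and then to exhibit $\sigma_b$ by an inductive construction built on the intrication operation of Definition~\ref{defIntr}. Since $\alpha_{b,\sigma}$ is an infimum over $m\ge 1$ of suprema, taking $m=1$ already gives the sufficient (but cleaner) condition $\sup_x \psi_b^{\sigma_b}(x) \le \log_2 b$, and Lemma~\ref{PropIntr}(1),
$$\psi_{bc}^{\sigma\cdot\rho}(x) \;=\; \psi_b^\sigma(cx)+\psi_c^\rho(x),$$
shows that this quantity is \emph{additive under intrication}: $\sup_x \psi_{bc}^{\sigma\cdot\rho}(x)\le \sup_x\psi_b^\sigma(x)+\sup_x\psi_c^\rho(x)$. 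Because $\log_2$ is also additive over products, intrication preserves the bound exactly.

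First I would treat the powers of $2$. Setting $\sigma_2:=\id$ one has $\psi_2^{\id}=\|\cdot\|$, with supremum $1/2\le 1=\log_2 2$. Defining $\sigma_{2^n}:=\sigma_{2^{n-1}}\cdot\id\in \Sy_{2^n}$ recursively and applying the additivity identity yields, by induction on $n$,
$$\sup_x \psi_{2^n}^{\sigma_{2^n}}(x)\;\le\;\frac{n}{2}\;\le\; n\;=\;\log_2 2^{n},$$
so the theorem holds for all powers of~$2$ (with considerable slack).

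Second, I would handle arbitrary composite $b$ by strong induction. Whenever $b=cd$ with $2\le c,d<b$, the inductive hypothesis supplies $\sigma_c,\sigma_d$ with $\sup\psi_c^{\sigma_c}\le\log_2 c$ and $\sup\psi_d^{\sigma_d}\le\log_2 d$; then $\sigma_b:=\sigma_c\cdot\sigma_d\in \Sy_{b}$ satisfies
$$\sup_x \psi_b^{\sigma_b}(x)\;\le\;\log_2 c+\log_2 d\;=\;\log_2 b,$$
closing the induction for every composite base.

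The principal obstacle is prime $b$: here intrication cannot produce an element of $\Sy_b$ from factors in smaller bases, so a direct combinatorial construction is required. The trivial choice $\sigma_b=\id$ works for all primes $b\le 17$ because $\alpha_{b,\id}=(b-1)/4\le\log_2 b$ in that range (by \eqref{defalphabid}), but fails for every prime $b\ge 19$. For such primes I would construct $\sigma_b$ by hand, for instance by considering multiplicative permutations $k\mapsto ak\pmod b$ with $a\in\ZZ_b^{\ast}$ chosen so that the reordered set $(\sigma_b(0)/b,\ldots,\sigma_b(b-1)/b)$ has well-controlled partial-sum oscillations, and then analysing the piecewise-linear functions $\varphi_{b,h}^{\sigma_b}$ interval-by-interval on each $[(k-1)/b,k/b)$ to verify $\sup_x\psi_b^{\sigma_b}(x)\le\log_2 b$. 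This last verification --- producing, uniformly for every prime base, a single permutation whose $\psi$-supremum is at most $\log_2 b$ --- is the technical heart of the argument and is precisely what the ``simple algorithm'' alluded to just before the theorem statement must accomplish.
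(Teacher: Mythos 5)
Your skeleton is sound for composite bases: the reduction via Theorem~\ref{asymptD} to $\alpha_{b,\sigma_b}\le\log_2 b$, the further reduction by taking $m=1$ to the sufficient condition $\sup_x\psi_b^{\sigma_b}(x)\le\log_2 b$, and the subadditivity of this supremum under the intrication of Lemma~\ref{PropIntr}(1) do correctly close a strong induction for every $b$ that factors nontrivially. The difficulty is that this leaves the prime bases untouched, and prime bases are the whole content of the theorem. You acknowledge that for primes $p\ge 19$ you have no construction, only an ansatz (multiplicative maps $k\mapsto ak\pmod{p}$ followed by an interval-by-interval estimate of the $\varphi_{p,h}^{\sigma_p}$'s), with no argument that any choice of $a$ yields $\sup_x\psi_p^{\sigma_p}(x)\le\log_2 p$ uniformly in $p$. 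Since every composite with a large prime factor inherits this missing base case, the gap is not a peripheral technicality; it is the theorem. The paper attributes the construction to \cite[Theorem~1.1 and Section~3.1]{Fau92} precisely because that reference spells out the algorithm that produces $\sigma_b$ for \emph{all} bases, primes included, using the same intrication machinery you invoke; reproducing that algorithm is exactly what is missing here.

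There is also a mismatch in your small-prime base cases. The $m=1$ simplification is lossier, not merely cleaner: you justify $\sigma_b=\id$ for primes $b\le 17$ by $\alpha_{b,\id}=(b-1)/4\le\log_2 b$ from \eqref{defalphabid}, but your induction hypothesis carries the larger quantity $\sup_x\psi_b^{\sigma_b}(x)$. For odd $b$ a direct computation gives $\sup_x\psi_b^{\id}(x)=(b^2-1)/(4b)$, and already at $b=17$ this equals $288/68\approx 4.235 > \log_2 17\approx 4.087$. So the identity satisfies $\alpha_{17,\id}\le\log_2 17$ (hence the theorem directly for $b=17$) but fails the bound your induction needs to propagate into composites having $17$ as a factor. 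Either restrict the identity base cases to $b\le 13$ and widen the set of primes needing an explicit construction, or work with the sharper averaged quantity $\alpha_{b,\sigma}$ throughout---which requires more than the one-line subadditivity of $\sup\psi$.
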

\paragraph{$\beta$-adic expansions.} Van der Corput sequences with respect to $\beta$-expansions were introduced and/or studied by Barat and
Grabner~\cite{BarGrab},  Ninomiya~\cite{nino98a,nino98b} and
Steiner~\cite{steiner2006}. There are several equivalent ways
to introduce such van der Corput sequences. Here we follow the
presentation in \cite{steiner2006}.

Given a real number $\beta>1$, the expansion of 1 with respect to
$\beta$ is the sequence of non-negative integers $(a_j)_{j \ge 1}$
satisfying
\begin{equation}\label{expof1}
1=0.a_1a_2\ldots=\frac{a_1}{\beta}+\frac{a_2}{\beta^2}+\cdots \ \
\ \mbox{ with }\ \ \ a_ja_{j+1}\ldots \prec a_1a_2\ldots \ \mbox{
for all $j \ge 2$.}
\end{equation}
The symbol $\prec$ denotes the lexicographic order of words. For
$x \in [0,1)$ the {\it $\beta$-expansion of $x$} is given by
$$x=0.\epsilon_1\epsilon_2\ldots=\frac{\epsilon_1}{\beta}+\frac{\epsilon_2}{\beta^2}+\cdots
\ \ \ \mbox{ with }\ \ \ \epsilon_j\epsilon_{j+1}\ldots \prec
a_1a_2\ldots \ \mbox{ for all $j \ge 1$.}$$

\begin{definition}\rm
The elements of the {\it $\beta$-adic van der Corput sequence}
$Y_{\beta}=(y_n)_{n \ge 0}$ are the real numbers in $[0,1)$ with
finite $\beta$-expansion, i.e.,
\begin{eqnarray*}
\{y_n \ : \ n \in \NN_0\} & = & \{0.\epsilon_1\epsilon_2\ldots \ : \   \epsilon_j\epsilon_{j+1}\ldots \prec a_1a_2\ldots \mbox{ for all } j \ge 1 \mbox{ and } \\
& & \hspace{2.4cm}
\epsilon_{\ell}\epsilon_{\ell+1}\epsilon_{\ell+2}\ldots=000\ldots
\mbox{ for some } \ell \ge 1\},
\end{eqnarray*}
ordered lexicographically with respect to the (inversed) word
$\ldots \epsilon_2\epsilon_1$, i.e., for
$y_n=0.\epsilon_1\epsilon_2\ldots$ and
$y_m=0.\epsilon_1'\epsilon_2'\ldots$ we have $n < m$ if there
exists some $k \ge 1$ such that $\epsilon_k < \epsilon_k'$ and
$\epsilon_j=\epsilon_j'$ for all $j \ge k$.
\end{definition}

For example, if $\beta_0=\frac{1+\sqrt{5}}{2}$, then the expansion
of 1 is $1=0.11000\ldots$. In this case the initial elements of
the $\beta_0$-adic van der Corput sequence are $$y_0=0, y_1=0.1,
y_2=0.01, y_3=0.001, y_4=0.101, y_5=0.0001, y_6=0.1001,\ldots.$$

This special instance of a $\beta$-adic van der Corput sequence
can also be introduced with the help of the Zeckendorf expansion:
let $(F_n)_{n \ge -1}$ be the sequence of Fibonacci numbers, i.e.,
$F_{-1}=F_0=1$ and $F_n=F_{n-1}+F_{n-2}$ for $n \in \NN$. Every
positive integer $n$ can be uniquely written in the form
$$n=\sum_{j=0}^m a_j F_j\ \ \ \mbox{ where $a_j \in \{0,1\}$ and
$a_j a_{j+1}=0$.}$$ This expansion is called the {\it Zeckendorf
expansion} of $n$. Then the elements of $Y_{\beta_0}$ with
$\beta_0=\frac{1+\sqrt{5}}{2}$ are given by 
$$y_n=\sum_{i=1}^m
\frac{1}{\beta_0^{r_i+1}}\ \ \ \mbox{ if } n=\sum_{i=1}^m F_{r_i}
\mbox{ with $0 \le r_1 < r_2 < \cdots < r_m$ and $r_{i+1} >
r_i+1$.}
$$ 
This definition can also be found in \cite{BPV2010}.

Ninomiya \cite{nino98a} showed that under certain conditions on
$\beta$ the sequence $Y_{\beta}$ has optimal order of the star
discrepancy: A {\it Pisot number} is an algebraic integer for
which all algebraic conjugates have modulus less than 1.
Bertrand~\cite{bertr1977} and K. Schmidt~\cite{schmidt1980} proved
that all Pisot numbers are Parry numbers which means that the
expansion \eqref{expof1} of $1$ is finite or eventually periodic,
i.e., $a_1a_2\ldots=a_1\ldots a_d00\ldots$ or
$a_1a_2\ldots=a_1\ldots a_{d-p}\overline{a_{d-p+1}\ldots a_d}$,
respectively. In this case $\beta$ is the dominant root of the
so-called $\beta$-polynomial $x^d-a_1 x^{d-1}-\cdots-a_d$ (with
$a_d>0$) or $x^d-a_1 x^{d-1}-\cdots-a_d-(x^{d-p}-a_1
x^{d-p-1}-\cdots -a_{d-p})$ (where $p$ is assumed to be minimal),
respectively.

The following theorem is \cite[Theorem~3.1]{nino98a}.

\begin{theorem}[Ninomiya]
If $\beta$ is a Pisot number with irreducible $\beta$-polynomial,
then we have $D_N^{\ast}(Y_{\beta}) \ll (\log N)/N$.
\end{theorem}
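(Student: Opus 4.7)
The plan is to imitate the classical $b$-adic proof of Section \ref{sec_udt}, replacing the $b$-adic elementary intervals by $\beta$-cylinders and using the Pisot/irreducibility hypothesis only at the one point where it is really needed, namely to control the count of early sequence points in each cylinder. First I would set up the combinatorics of the $\beta$-shift. Let $F_\beta$ denote the language of admissible $\beta$-words (those whose lexicographic shifts stay strictly below $a_1 a_2 \ldots$). Because $\beta$ is a Parry number, $F_\beta$ is a sofic shift of finite type, so for each admissible word $w$ of length $m$ the $\beta$-cylinder
$$C_w \;=\; \{\, 0.w\eta_1\eta_2\ldots \in [0,1) \;:\; w\eta_1\eta_2\ldots \in F_\beta\,\}$$
is an interval whose Lebesgue measure satisfies $\lambda(C_w) = \beta^{-m} h(w)$, where $h$ depends only on the suffix state of $w$ and takes finitely many values (bounded above and below by positive constants). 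The first $N$ elements of $Y_\beta$ are, by construction, the reflections of the $\beta$-representations of the $N$ smallest non-negative $\beta$-integers, enumerated lexicographically in their reversed words.

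Next I would prove the key counting estimate: for every admissible word $w$ of length $m$,
$$\bigl|\, \#\{n < N : y_n \in C_w\} \;-\; N\,\lambda(C_w)\,\bigr| \;\ll\; 1,$$
with an implicit constant independent of $w$, $m$, and $N$. This is the analogue of \eqref{anzbadicint}. The proof reduces, via the lexicographic enumeration and the bijection between indices and admissible words, to comparing the exact count of admissible $\beta$-integers below a threshold with its volume approximation $\beta^m h(w)$. Here is where the hypothesis enters: writing the counting function via the companion matrix $M$ of the irreducible $\beta$-polynomial, the count is expressed as the integer part of a linear combination of $\beta^k$'s, and its error against the volume is exactly a linear combination of $\alpha_j^k$ with $\alpha_j$ the Galois conjugates of $\beta$. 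Since $\beta$ is Pisot and the $\beta$-polynomial is irreducible, $|\alpha_j|<1$ for $j \ge 2$, so these error terms form a geometric series bounded by a constant, yielding the claimed $O(1)$ bound uniformly in $w$ and $m$.

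Third, I would pass from cylinders to arbitrary half-open intervals $[0,\alpha)$ by the standard $\beta$-adic discretisation. Writing the $\beta$-expansion $\alpha = 0.\epsilon_1\epsilon_2\ldots$ and truncating at depth $m$, one decomposes $[0,[\alpha]_{\beta,m})$ as a disjoint union of at most $\sum_{j=1}^m \epsilon_j \le m \max_j a_j$ cylinders of depths $\le m$. (This is the direct analogue of the classical decomposition of $[0, k b^{-m})$ into $b$-adic cylinders; for Pisot $\beta$ the bound $\max_j a_j < \infty$ is automatic from the finite/eventually periodic expansion of $1$.) Applying the cylinder estimate to each term gives
$$N\bigl|\Delta_{Y_\beta,N}([\alpha]_{\beta,m})\bigr| \;\ll\; m,$$
and the tail beyond depth $m$ contributes at most $N\beta^{-m}$ to the local discrepancy. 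Choosing $m = \lceil \log N / \log \beta\rceil$ balances the two errors and yields $N D_N^\ast(Y_\beta) \ll \log N$.

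The main obstacle will be the cylinder-counting bound in the second step. For a generic $\beta > 1$ this estimate is false: one only gets an error of size $O(\max_j |\alpha_j|^m)$ which can explode when some conjugate has modulus $\ge 1$. Thus the Pisot hypothesis together with irreducibility of the $\beta$-polynomial is doing all the algebraic work, and the cleanest way to extract it is the Galois-conjugate trace representation of the counting function sketched above. Once that estimate is in hand, the rest is a mechanical adaptation of the $b$-adic argument.
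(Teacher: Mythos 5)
The paper does not actually give a proof of this theorem; it simply cites \cite[Theorem~3.1]{nino98a}, so there is no in-paper argument to compare against. Your sketch does, however, follow the standard approach used in the $\beta$-adic literature: discretize $[0,\alpha)$ into $\beta$-cylinders, prove a uniform $O(1)$ counting error per cylinder, and balance at depth $m\approx\log_\beta N$. That is the right skeleton, and it is the $\beta$-adic analogue of the elementary $b$-adic proof in Section~\ref{sec_udt}. The interval decomposition and the $N\beta^{-m}$ tail estimate are correct as stated (you just need $\max_j a_j\le\lfloor\beta\rfloor<\infty$, which is automatic).

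The genuine gap is the second step. You assert that
\[
\bigl|\#\{n<N: y_n\in C_w\}-N\lambda(C_w)\bigr|\ll 1
\]
uniformly in $w$, $m$, $N$, and justify it by ``the error is exactly a linear combination of $\alpha_j^k$.'' This is too quick in at least three respects. First, the lexicographic ordering of $Y_\beta$ is by \emph{reversed} words, while the admissibility condition $\epsilon_j\epsilon_{j+1}\ldots\prec a_1a_2\ldots$ is a condition on \emph{forward} suffixes; the reversed language is not the same sofic shift, so the ``bijection between indices and admissible words'' needs to be set up carefully before any companion-matrix computation makes sense. Second, once the count is written as a floor of a linear form $\sum_j c_j(w)\beta^{k_j}$, the error against the volume is a sum of a bounded rounding term and a sum $\sum_{j\ge2}c_j(w)\alpha_j^{k}$-type expression; what needs proof is that the coefficients $c_j(w)$ are \emph{uniformly} bounded over all admissible $w$ of all depths (this is typically where the cut-and-project/Meyer-set structure of the $\beta$-integers, or an explicit analysis of the $\beta$-automaton, enters). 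Third, you write that the error ``can explode when some conjugate has modulus $\ge1$''; this is precisely why irreducibility of the $\beta$-polynomial is assumed: for a Pisot $\beta$ the minimal polynomial has all its non-dominant roots strictly inside the unit disk, but the \emph{Parry polynomial} can carry extra cyclotomic factors with roots \emph{on} the unit circle, and irreducibility rules these out. You invoke the Pisot property but do not articulate where irreducibility is actually used; it is the exclusion of such modulus-$1$ Galois terms (and the resulting convergent geometric series), and this point should be made explicit. Until the uniform cylinder bound is genuinely proved, the proposal is a plausible outline rather than a proof, but the outline matches the known route.
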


Bounded remainder sets for the $\beta$-adic van der Corput
sequence were studied in \cite{steiner2006}. Van der 
Corput sequences with respect to abstract numeration
systems were introduced and studied by
Steiner~\cite{steiner2009}. Under some assumptions, these
sequences also have optimal order of the star discrepancy with
respect to W.M. Schmidt's lower bound \eqref{lowschmid}.


\subsection{Polynomial van der Corput sequences}\label{secpolyvdc}

A polynomial version of the radical inverse function was
introduced by Tezuka~\cite{tez1993}. Let $b$ be a prime power
and let $\FF_b$ be the finite field of order $b$. Furthermore let
$\FF_b((x^{-1}))$ be the field of formal Laurent series
$$g=\sum_{k=w}^{\infty} a_k x^{-k}\ \ \ \mbox{ with } a_k \in
\FF_b \ \mbox{ and } \ w \in \ZZ \ \mbox{ with }\ a_w \not=0.$$
For $g \in \FF_b((x^{-1}))$ let $\lfloor g\rfloor$ be the
polynomial part of $g$.

First we introduce a polynomial version of the radical inverse
function. Choose a non-constant polynomial $p \in \FF_b[x]$ and a
bijection $\psi:\ZZ_b \rightarrow \FF_b$ with $\psi(0)=0$. Let
$q\in \FF_b[x]$ and expand this polynomial in base $p(x)$ rather
than in $x$. I.e., find $r_0,r_1,\ldots,r_h \in \FF_b[x]$ such
that $$q(x)=r_{h}(x) p(x)^h+\cdots +r_{1}(x) p(x)+r_{0}(x).$$ Here
$h=\lfloor \deg(q)/\deg(p)\rfloor$ and $r_{j}=\left\lfloor
q/p^j\right\rfloor \pmod{p}$ for $j=0,\ldots,h$. Now the radical
inverse $Y_{b,p}:\FF_b[x] \rightarrow \FF_b((x^{-1}))$ is defined
as
$$Y_{b,p}(q)=\frac{r_0(x)}{p(x)}+\cdots+\frac{r_h(x)}{p(x)^{h+1}}.$$

Let $\mu_b:\FF_b((x^{-1})) \rightarrow [0,1)$ be given as
$\mu_b(\sum_{k=w}^{\infty} a_k x^{-k})=\sum_{k=\max(1,w)}^{\infty}
\psi^{-1}(a_k) b^{-k}$. We identify $n \in \NN_0$ with $b$-adic
expansion $n=n_0+n_1 b+\cdots +n_{m-1} b^{m-1}$ with the
polynomial $n(x)=\psi(n_0) + \psi(n_1) x+ \cdots +\psi(n_{m-1})
x^{m-1} \in \FF_b[x]$ and vice versa.

\begin{definition}\label{def_poly_vdc}\rm
The {\it polynomial van der Corput sequence in base $p$ over
$\FF_b$} is given by $Y_{b,p}^{{\rm poly}}=(y_n)_{n \ge 0}$ with
$y_n=\mu_b(Y_{b,p}(n))$.
\end{definition}

It should be mentioned that this polynomial version of the van der
Corput sequence over $\FF_b$ is a special univariate instance of a
so-called {\it generalized Niederreiter sequence over $\FF_b$}
(see~\cite[Proof of Theorem~1]{tez1993}). Generalized Niederreiter
sequences are a very powerful construction of low discrepancy
sequences and frequently used as sample nodes in modern
quasi-Monte Carlo rules. For their definition we refer to
\cite[Section~8.1]{DP10}; see also Section~\ref{sec_tssequ} of
this article.

If we choose in Definition~\ref{def_poly_vdc} the polynomial $p$
as $p(x)=x+a$ with $a \in \FF_b$, then $Y_{b,p}^{{\rm poly}}$ is a
digital $(0,1)$-sequence over $\FF_b$, see \cite{tez1993}.

If $b$ is a prime number, we have $\FF_b=\ZZ_b$ and we can choose
$\psi=\id$. If we take now $p(x)=x$, then we obtain that
$Y_{b,p}^{{\rm poly}}=Y_b$, the classical van der Corput sequence
in base~$b$.


\section{Multi-dimensional generalizations of the van der Corput sequence}\label{sec_mult}

The final section of this survey is dedicated to multi-dimensional
variants and generalizations of van der Corput sequences. These
comprise generalized or scrambled Halton sequences as well as
Niederreiter's $(t,s)$-sequences which are the bases of modern
quasi-Monte Carlo rules for numerical integration, even in very
high dimensions (dimensions in the hundreds are not uncommon, for
example in applications from financial mathematics).

\subsection{Generalized two-dimensional Hammersley point sets}\label{sec_ham}
Strictly speaking, these point sets are not multi-dimensional
generalizations of the van der Corput sequence. However, they are
intimately related to generalized van der Corput sequences and
there is abundant literature devoted to these point sets.

\begin{definition}[generalized Hammersley point set]\label{defgenHam}\rm
Let $b \ge 2$ be an integer, let $Y_b^{\Sigma}$ be a generalized
van der Corput sequence in base $b$ as defined in
Definition~\ref{vdc2} and let $m \in \NN_0$. Then the {\it
generalized two-dimensional Hammersley point set in base $b$},
consisting of $b^m$ points in $[0,1)^2$ and associated to $\Sigma$,
is defined by
$$
\cH_{b,m}^\Sigma:=\left \{\left (Y_b^{\Sigma}(n), \frac{n}{b^m}
\right )\ : \ n \in \{0,1,\ldots, b^m-1\} \right \}.
$$
\end{definition}

In order to match the traditional definition of Hammersley
point sets, which consists of $m$-bit numbers (see page~\pageref{nbit}),
the infinite sequence of permutations $\Sigma$ is restricted to
permutations such that $\sigma_r(0)=0$ for all $r\ge m$, for
instance $\Sigma=(\sigma_0,\ldots,\sigma_{m-1},\id ,\id, \id,
\ldots)$. Hence, the behavior of $\cH_{b,m}^\Sigma$ only depends
on the finite sequence of $m$ permutations
$\bssigma=(\sigma_0,\ldots,\sigma_{m-1})$. From now on, we only
consider such generalized Hammersley point sets denoted by
$\cH_{b,m}^{\bssigma}$. If $\bssigma=(\sigma,\ldots,\sigma)$, then
we simply write $\cH_{b,m}^{\sigma}$.

If we choose in the above definition $\sigma_j=\id$ for all
permutations, then we obtain the classical two-dimensional
Hammersley point set in base $b$ denoted by
$\cH_{b,m}:=\cH_{b,m}^{\id}$. Two pictures of classical
two-dimensional Hammersley point sets are shown in
Figure~\ref{f18} (notice that Figures~\ref{fboard} and \ref{fboard2} in Section \ref{sec_Faure_meth} are also representations of the two-dimensional Hammersley point sets $\cH_{3,3}$ and $\cH_{3,2}$).

\begin{figure}[htp]
     \centering
     {\includegraphics[width=50mm]{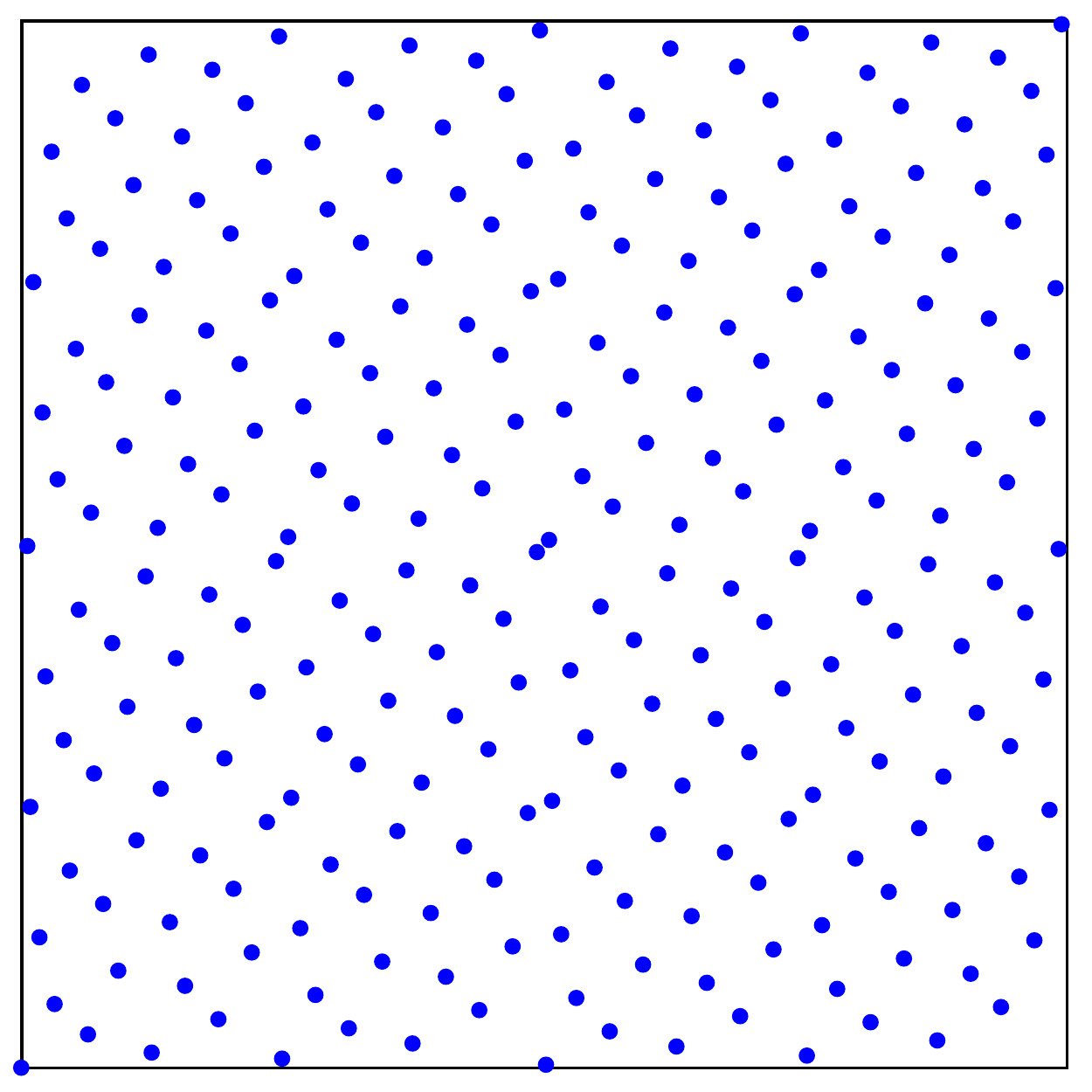}}\hspace{.2in}
     {\includegraphics[width=50mm]{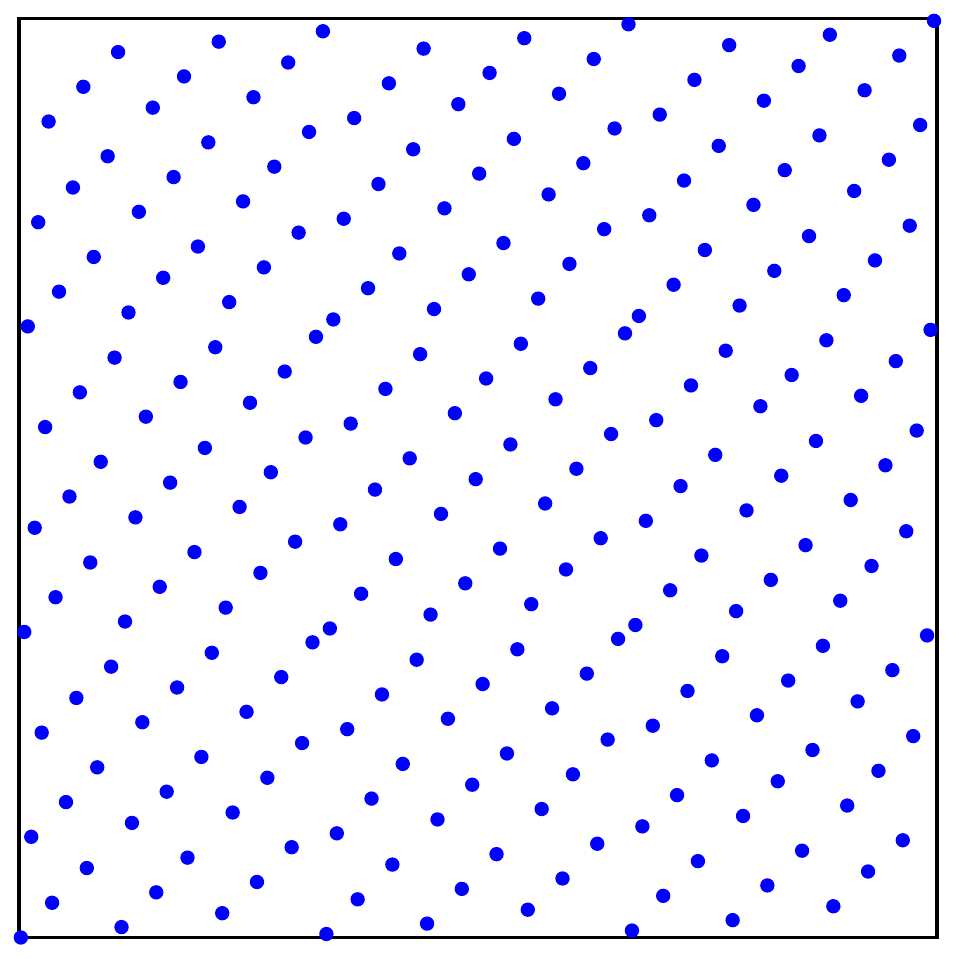}}
     \caption{Two-dimensional Hammersley point sets: $\cH_{2,8}$ in base $b=2$ with $2^8=256$ 
     points (left) and $\cH_{3,5}$ in base $b=3$ with $3^5=243$ points (right).}
     \label{f18}
\end{figure}

Now we need a definition of the local discrepancy in dimension
2. For short, for a two-dimensional point set $\cP=\{\bsx_0,
\ldots, \bsx_{N-1}\}$ of $N$ points in $[0,1)^2$ we define the
local discrepancy (i.e., the deviation from ideal distribution) as
$$\Delta_{\cP}(x,y):=\frac{A([0,x)\times [0,y);\cP)}{N}-xy,$$ where  
$A([0,x)\times [0,y);\cP):=\#\{n \in \{0,1,\ldots,N-1\} \ : \ \bsx_n \in [0,x)\times [0,y)\}$.

\medskip

The link between the local discrepancies of Hammersley point sets
(in dimension 2) and van der Corput sequences (in dimension 1)
is almost trivial and has already been used in
Section~\ref{sec_Faure_meth}. We state it again in a lemma as it is the fundamental key that
permits transferring results for van der Corput sequences to
results for Hammersley point sets \cite[Lemma 3]{fau08} .
\begin{lemma}\label{HamvdC}
Let $m,N$, and $\lambda$ be positive integers with $1 \leq N \leq
b^m$ and $1\leq \lambda < b^m$. Then,
$$\Delta_{\cH_{b,m}^{\bssigma}} \left(\frac{\lambda}{b^m},\frac{N}{b^m} \right) = \Delta_{Y_b^\Sigma,N}\left (\frac{\lambda}{b^m} \right ).$$
\end{lemma}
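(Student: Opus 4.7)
The strategy is to reduce the two-dimensional local discrepancy of $\cH_{b,m}^{\bssigma}$ at the anchor point $(\lambda/b^m, N/b^m)$ to the one-dimensional local discrepancy of $Y_b^\Sigma$ at $\lambda/b^m$, computed from only the first $N$ terms. The entire content of the lemma is encoded in a single counting identity, and the proof is essentially bookkeeping.

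First I would examine which points of the set
\[
\cH_{b,m}^{\bssigma} = \left\{\left(Y_b^\Sigma(n),\, \tfrac{n}{b^m}\right) : n = 0, 1, \ldots, b^m - 1\right\}
\]
lie in the rectangle $[0, \lambda/b^m) \times [0, N/b^m)$. The second-coordinate condition $n/b^m \in [0, N/b^m)$ reads $0 \le n < N$, and since $1 \le N \le b^m$ this amounts to $n \in \{0, 1, \ldots, N-1\}$. The first-coordinate condition is simply $Y_b^\Sigma(n) \in [0, \lambda/b^m)$. Consequently
\[
A\!\left([0, \lambda/b^m) \times [0, N/b^m);\; \cH_{b,m}^{\bssigma}\right)
 \;=\; A\!\left(\lambda/b^m;\; N;\; Y_b^\Sigma\right),
\]
the number of indices $n \in \{0, \ldots, N-1\}$ with $Y_b^\Sigma(n) < \lambda/b^m$.

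Second I would substitute this counting identity into the two local-discrepancy expressions and verify that the deviation-from-ideal parts line up. The point is that the anchor $(\lambda/b^m, N/b^m)$ has been chosen precisely so that the two natural normalizations of the ``expected count'' agree: horizontally slicing the Hammersley set at height $N/b^m$ selects exactly the first $N$ terms of $Y_b^\Sigma$, so the Hammersley's area term and the one-dimensional's $N\cdot(\lambda/b^m)$ term describe the same quantity. Combining with the counting identity above produces the claimed equality directly. This is the same bijective correspondence that underpinned the checkerboard picture $B_m$ in Section~\ref{sec_Faure_meth}.

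There is no real obstacle to this proof; the only thing one must be careful about is matching the normalization conventions of the 1D and 2D local discrepancies at the specific dyadic anchor $(\lambda/b^m, N/b^m)$, which is precisely why the lemma is stated for this anchor rather than for arbitrary $(x, y) \in [0,1)^2$. This is also the reason the lemma is stated as a key black-box: it allows every extreme, star, $L_p$, or diaphony-type estimate proved earlier for $Y_b^\Sigma$ via the $\varphi$-function machinery to be imported, essentially verbatim, as a discrepancy estimate for $\cH_{b,m}^{\bssigma}$.
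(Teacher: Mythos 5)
Your proof is correct and is essentially the same argument the paper gives: observe that the ordinate constraint $n/b^m < N/b^m$ selects precisely the indices $n\in\{0,\ldots,N-1\}$, so the 2D and 1D counting functions coincide, and then match the ``expected count'' terms via the identity $b^m\cdot\tfrac{\lambda}{b^m}\cdot\tfrac{N}{b^m}=N\cdot\tfrac{\lambda}{b^m}$ (using $\#\cH_{b,m}^{\bssigma}=b^m$). One small wrinkle worth noting, present in both your write-up and the paper's: with the paper's \emph{normalized} definitions of $\Delta$, this argument actually yields $b^m\Delta_{\cH_{b,m}^{\bssigma}}\bigl(\tfrac{\lambda}{b^m},\tfrac{N}{b^m}\bigr)=N\Delta_{Y_b^\Sigma,N}\bigl(\tfrac{\lambda}{b^m}\bigr)$, so the lemma as displayed is implicitly in the unnormalized-error convention $E=N\Delta$, which is also how you tacitly read it.
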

\begin{proof}
The proof is almost immediate: from the definition of
$\cH_{b,m}^{\bssigma}$ the counting functions
$A(\tfrac{\lambda}{b^m},\tfrac{N}{b^m},\cH_{b,m}^{\bssigma})$ and
$A(\tfrac{\lambda}{b^m},N,Y_b^\Sigma)$ attain the same value, and on
the other hand we have
$b^m\tfrac{\lambda}{b^m}\tfrac{N}{b^m}=N\tfrac{\lambda}{b^m}$
(note that $ \#\cH_{b,m}^{\bssigma}=b^m$), so that the local
discrepancies are equal.
\end{proof}

From Lemmas \ref{DesLem} and \ref{HamvdC}, it is possible to
obtain (see \cite{fau08} for the details) the following result.
\begin{theorem}[Faure]\label{thmD*Ham}
For any $m \in \NN$ and any
$\bssigma=(\sigma_0,\ldots,\sigma_{m-1})\in \Sy_b^m$ we have, with
some $c_m \in [0,2]$,
$$b^m D_{b^m}^*(\cH_{b,m}^{\bssigma})= \max \left (\max_{1 \le N \le b^m}\sum_{j=1}^m \psi_b^{\sigma_{j-1},+} \bigg( \frac{N}{b^j} \bigg)\ , \ \max_{1 \le N \le b^m} \sum_{j=1}^m \psi_b^{\sigma_{j-1},-} \bigg( \frac{N}{b^j} \bigg)\right)+c_m.$$
\end{theorem}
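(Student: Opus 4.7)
The plan is to reduce the continuous supremum defining $b^m D_{b^m}^*(\cH_{b,m}^{\bssigma})$ to a maximum over the grid $\QQ(b^m) \times \QQ(b^m)$, then identify each grid value with a local discrepancy of the generalized van der Corput sequence $Y_b^\Sigma$ via Lemma~\ref{HamvdC}, and finally evaluate this discrepancy using the fundamental descent Lemma~\ref{DesLem}. I would treat the positive and negative parts $\sup \pm b^m\Delta_{\cH_{b,m}^{\bssigma}}$ separately and combine them at the end.

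First I would handle the discretization in the $\beta$-coordinate: if $\beta \in ((N-1)/b^m, N/b^m]$ for some $N \in \{1,\ldots,b^m\}$, then the counting function of $\cH_{b,m}^{\bssigma}$ on $[0,\alpha) \times [0,\beta)$ coincides with $A(\alpha;N;Y_b^\Sigma)$, giving
\[
b^m \Delta_{\cH_{b,m}^{\bssigma}}(\alpha,\beta) = E(\alpha, N, Y_b^\Sigma) + \alpha(N-b^m\beta),
\]
with a correction term in $[0,1)$. The supremum over $\beta$ in this interval is therefore $E(\alpha,N,Y_b^\Sigma) + \alpha\tau$ for some $\tau\in[0,1]$. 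Next, I would apply the discretization Lemma~\ref{DisLem} to $Y_b^\Sigma$ to replace $\alpha$ by the nearest grid point $y(\alpha)=\lambda/b^m$ at the cost of an additive correction $(y(\alpha)-\alpha)N$ of absolute value at most $1$. Combining the two reductions yields, for each sign choice,
\[
\sup_{(\alpha,\beta)\in[0,1]^2} \pm b^m \Delta_{\cH_{b,m}^{\bssigma}}(\alpha,\beta) = \max_{\substack{1\le N\le b^m\\ 0\le \lambda\le b^m}} \bigl(\pm E(\lambda/b^m, N, Y_b^\Sigma)\bigr) + c_m^{\pm},
\]
with $c_m^\pm \in [0,2]$.

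For the evaluation of the grid-point discrepancies, I would apply Lemma~\ref{DesLem} to write, for $N \in \{1,\ldots,b^m\}$ and $\lambda \in \{1,\ldots,b^m-1\}$,
\[
E(\lambda/b^m, N, Y_b^\Sigma) = \sum_{j=1}^m \varphi_{b,\epsilon_j}^{\sigma_{j-1}}(N/b^j),
\]
where the digits $\epsilon_j \in \{0,\ldots,b-1\}$ are prescribed by the algorithm at the end of that lemma. As noted in the sketch of Theorem~\ref{thmF81D}, that algorithm is invertible, so every $m$-tuple $(\epsilon_1,\ldots,\epsilon_m)$ is realised by a unique $\lambda$. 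Hence the maximum over $\lambda$ factors across $j$:
\[
\max_{0\le\lambda\le b^m} E(\lambda/b^m,N,Y_b^\Sigma) = \sum_{j=1}^m \max_{0\le h<b}\varphi_{b,h}^{\sigma_{j-1}}(N/b^j) = \sum_{j=1}^m \psi_b^{\sigma_{j-1},+}(N/b^j),
\]
and analogously for $-E$, giving $\sum_{j=1}^m \psi_b^{\sigma_{j-1},-}(N/b^j)$; the boundary cases $\lambda\in\{0,b^m\}$ contribute nothing since $E$ vanishes there.

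Plugging these back into the identities from the first step and merging $c_m^+, c_m^-$ into a single constant $c_m\in[0,2]$ (using that $\max(A+c_m^+,B+c_m^-) = \max(A,B) + c_m$ for some $c_m \in [0,2]$) will give the stated formula. The main obstacle will be the first step: the careful bookkeeping of right-limits of the counting function at the jumps $\beta = N/b^m$, the behaviour at the endpoints $\alpha,\beta\in\{0,1\}$, and the simultaneous control of the $\beta$-interval correction $\alpha\tau$ and the $\alpha$-discretization correction $(y(\alpha)-\alpha)N$, in order to confirm that the combined error lies in $[0,2]$ rather than merely being $O(1)$.
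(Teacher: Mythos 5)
Your plan reproduces exactly the route the paper indicates: reduce $D^*_{b^m}(\cH^{\bssigma}_{b,m})$ to the one-dimensional errors $E(\lambda/b^m,N,Y_b^\Sigma)$ via Lemma~\ref{HamvdC} together with the discretization Lemma~\ref{DisLem}, then evaluate these with the descent Lemma~\ref{DesLem} and the invertibility of the $\lambda\leftrightarrow(\varepsilon_1,\dots,\varepsilon_m)$ correspondence noted in the proof sketch of Theorem~\ref{thmF81D}, treating $D^+$ and $D^-$ separately. This is the same approach the paper sketches (and delegates to the cited reference for the bookkeeping you correctly flag as the remaining work to pin $c_m\in[0,2]$); the algebraic skeleton is sound.
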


\begin{corollary}\label{corgenhamdisc}
For any $m\in \NN$ and any
$\bssigma=(\sigma_0,\ldots,\sigma_{m-1})\in \Sy_b^m$ we have, with
some $c_m \in [-2,2]$,
$$D_{b^m}^*(\cH_{b,m}^{\bssigma}) \le D_{b^m}^*(\cH_{b,m})+\frac{c_m}{b^m}.$$
\end{corollary}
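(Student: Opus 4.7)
The plan is to derive the corollary directly from Theorem~\ref{thmD*Ham} by comparing the $\psi$-functions associated with an arbitrary permutation $\sigma$ to those of the identity, and then bounding the resulting sums term-by-term.

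First, I would apply Theorem~\ref{thmD*Ham} twice: once to $\cH_{b,m}^{\bssigma}$, yielding
$$b^m D_{b^m}^*(\cH_{b,m}^{\bssigma}) = M^{\bssigma} + c_m',$$
where $M^{\bssigma}$ denotes the outer maximum of the two partial-sum maxima involving $\psi_b^{\sigma_{j-1},+}$ and $\psi_b^{\sigma_{j-1},-}$ and $c_m' \in [0,2]$, and once to $\cH_{b,m}$, where $\sigma_{j-1}=\id$. In the classical case the observation recorded in Remark~\ref{alphabid} gives $\psi_b^{\id,-}=0$ and hence $\psi_b^{\id,+}=\psi_b^{\id}$, so that
$$b^m D_{b^m}^*(\cH_{b,m}) = \max_{1\le N\le b^m}\sum_{j=1}^m \psi_b^{\id}\!\left(\frac{N}{b^j}\right) + c_m''$$
with $c_m'' \in [0,2]$.

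The crux is the pointwise inequalities $\psi_b^{\sigma,+}\le \psi_b^{\id}$ and $\psi_b^{\sigma,-}\le \psi_b^{\id}$ for every $\sigma\in\Sy_b$. These follow immediately from the bound $\psi_b^{\sigma}\le \psi_b^{\id}$ (recalled in the paper from \cite[Lemmas~1--3]{fau05}, originally \cite[Section~5.5.4]{Fau1981}) together with the fact that $\psi_b^{\sigma,+}$ and $\psi_b^{\sigma,-}$ are both non-negative and sum to $\psi_b^{\sigma}$. Summing these inequalities over $j=1,\ldots,m$ evaluated at $N/b^j$, and taking the maximum over $1\le N\le b^m$ on each side, yields
$$M^{\bssigma} \;\le\; \max_{1\le N\le b^m}\sum_{j=1}^m \psi_b^{\id}\!\left(\frac{N}{b^j}\right).$$

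Combining this with the two applications of Theorem~\ref{thmD*Ham}, I obtain
$$b^m D_{b^m}^*(\cH_{b,m}^{\bssigma}) - c_m' \;\le\; b^m D_{b^m}^*(\cH_{b,m}) - c_m'',$$
so that $b^m D_{b^m}^*(\cH_{b,m}^{\bssigma}) \le b^m D_{b^m}^*(\cH_{b,m}) + (c_m'-c_m'')$. Dividing by $b^m$ and setting $c_m:=c_m'-c_m''\in[-2,2]$ gives the claimed bound. There is no real obstacle here, as Theorem~\ref{thmD*Ham} and the inequalities between the $\psi$-functions do all the heavy lifting; the only subtlety to mention explicitly is that the constants appearing in Theorem~\ref{thmD*Ham} are allowed to differ between the two point sets, which is exactly why the final constant $c_m$ can be negative rather than lying in $[0,2]$.
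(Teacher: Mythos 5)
Your derivation is correct and is exactly the argument implicit in the paper's presentation: the corollary follows from Theorem~\ref{thmD*Ham} applied to both $\bssigma$ and $\id$, combined with $\psi_b^{\id,-}=0$ (Remark~\ref{alphabid}) and the pointwise bound $\psi_b^{\sigma,\pm}\le\psi_b^\sigma\le\psi_b^{\id}$, which you correctly obtain from the non-negativity of $\psi_b^{\sigma,\pm}$, their sum being $\psi_b^\sigma$, and the cited inequality $\psi_b^\sigma\le\psi_b^\id$. The remark about why $c_m$ lands in $[-2,2]$ rather than $[0,2]$ (because the two applications of the theorem may contribute different error constants) is the right point to make explicit.
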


For the star discrepancy of the classical Hammersley point set in
dimension 2 we have the following sharp result according to 
De Clerck~\cite{DeCl} (see also \cite{HaZa} and \cite{LP2003} for
the case $b=2$):

\begin{theorem}[De Clerck]
For $m \ge 2$ we have:
\begin{itemize}
\item if $m \ge 2$ is odd, then $$b^m D_{b^m}^*( \cH_{b,m})=m
\frac{b-1}{4} +\left(\frac{5}{4}+\frac{1}{b}\right)-\frac{1}{4
b^m};$$ \item if $m \ge 2$ is even, then $$b^m D_{b^m}^*(
\cH_{b,m})=m \frac{b^2}{4(b+1)}
+\left(\frac{5}{4}+\frac{2b+3}{4(b+1)^2}\right)-\frac{1}{4
b^m}\left(1+\frac{2b+3}{(b+1)^2}\right).$$
\end{itemize}
\end{theorem}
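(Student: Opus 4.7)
My starting point is Theorem \ref{thmD*Ham} applied with all permutations equal to the identity. Since $\psi_b^{\id,-} \equiv 0$ and $\psi_b^{\id,+} = \psi_b^{\id}$, the formula collapses to
\[
  b^m D_{b^m}^*(\cH_{b,m}) \;=\; \max_{1 \le N \le b^m}\,\sum_{j=1}^{m} \psi_b^{\id}\!\left(\tfrac{N}{b^j}\right) \;+\; c_m,
\]
where Theorem \ref{thmD*Ham} only controls $c_m \in [0,2]$. The proof therefore splits into two tasks: pinning down $c_m$ exactly by a sharper analysis of non-$b$-adic test boxes, and solving the combinatorial maximum of the $\psi_b^{\id}$-sum.

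For the first task, I would revisit the reduction of the supremum over $[0,1]^2$ to a maximum over $b$-adic test points. The key observation is that $\Delta_{\cH_{b,m}}(x,y)$ is piecewise affine in each variable with breakpoints only at point coordinates; hence the supremum is attained in the limit from boxes of the form $[0, \lambda/b^m + \varepsilon) \times [0, N/b^m - \delta)$ as $\varepsilon, \delta \to 0^+$ (the small example $\cH_{2,2}$, where the sup is approached from $(1/2+\varepsilon, 1/2+\delta)$, illustrates the phenomenon). Tracking this limit produces an explicit boundary contribution which, combined with the structure at $\lambda \approx b^m/2$ and $N \approx b^m/2$, gives the constants $5/4 + 1/b$ (odd $m$) or $5/4 + (2b+3)/(4(b+1)^2)$ (even $m$), as well as the tail $-1/(4b^m)(\cdots)$ arising from the behaviour near $N = b^m$.

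For the second task, Lemma \ref{DesLem} yields $N\,\Delta_{Y_b, N}(\lambda/b^m) = \sum_{j=1}^{m} \varphi_{b, \varepsilon_j}^{\id}(N/b^j)$, and since each $\varphi_{b,h}^{\id} \ge 0$, the maximum over $\lambda$ is $\sum_{j=1}^m \psi_b^{\id}(N/b^j)$. Using the piecewise-linear description $\psi_b^{\id}(x) = \max\bigl((k-1)(1-x), (b-k)x\bigr)$ on $[(k-1)/b, k/b)$ together with the fact that $\{N/b^j\}$ depends only on the last $j$ base-$b$ digits of $N$, maximising over $N$ becomes a digit-by-digit optimisation. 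Small cases (e.g.\ $b=2$, $m=3$ gives $N \in \{3, 5\}$, binary $\{011, 101\}$; $b=3$, $m=2$ gives $N \in \{4,5\}$) strongly suggest a period-$2$ digit pattern with per-step contribution $(b-1)/4$ when $m$ is odd and $b^2/(4(b+1))$ when $m$ is even, the parity dependence coming from whether the pattern closes evenly or leaves a residual digit.

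The main obstacle is proving that the proposed period-$2$ pattern is globally optimal among all $N \in \{1, \ldots, b^m - 1\}$ and for every base $b \ge 2$. I would attack this by a pairwise digit-exchange argument: fixing all digits of $N$ except two consecutive ones and showing that a specific local pair maximises the sum, then iterating to classify the globally stable configurations. A cleaner alternative is to turn the descent method of Section \ref{sec_Faure_meth} into an induction engine, reducing the problem in $m$ digits to one in $m - 1$ digits via the scaling relation $\{N/b^{j+1}\} = \{(N/b)/b^j\}$ modulo an integer shift. Once the optimal $N^\ast$ is pinned down, a geometric-series telescoping recovers the main term $m \cdot \alpha$ (with $\alpha$ as above) in closed form, and combining with the boundary analysis of the first task yields the two stated formulas.
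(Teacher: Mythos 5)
The paper does not prove this theorem; it is quoted from De Clerck's 1986 paper \cite{DeCl}, so there is no paper proof to compare against, and your plan of starting from Theorem~\ref{thmD*Ham} is a modern route rather than De Clerck's original argument.

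Before the plan can be assessed, there is a problem with the target statement itself, and your own heuristic almost exposes it but then misreads it. You propose a ``per-step contribution'' of $(b-1)/4$ when $m$ is odd and $b^2/(4(b+1))$ when $m$ is even; but these are exactly the two values of $\alpha_{b,\id}$ in \eqref{defalphabid}, where the case split is on the parity of the \emph{base} $b$, not of $m$. The structure of the maximiser points the same way: for odd $b$ the optimal $N$ has \emph{constant} digit $(b-1)/2$, so that $\{N/b^j\}\to\tfrac12$ where $\psi_b^{\id}(\tfrac12)=(b-1)/4$; the period-$2$ alternation $b/2-1,\,b/2$ arises only for even $b$ (your example $b=3,\,m=2$ with $N\in\{4,5\}$ is inconclusive because two digits are trivially ``period $2$''; check $b=3,\,m=4$, where the maximiser is $40=1111_3$). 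Moreover, combining Theorem~\ref{thmD*Ham} (with $c_m\in[0,2]$) with $\max_{N}\sum_{j=1}^m\psi_b^{\id}(N/b^j)=m\,\alpha_{b,\id}+O(1)$ shows that the leading coefficient of $b^mD^*_{b^m}(\cH_{b,m})$ in $m$ must be $\alpha_{b,\id}$ for every $m$; it cannot alternate between $(b-1)/4$ and $b^2/(4(b+1))$ as $m$ changes parity. Direct computation confirms the inconsistency: for $\cH_{3,2}$ (even $m$, odd $b$), the box $[0,\tfrac79+\varepsilon)\times[0,\tfrac79+\varepsilon)$ contains $8$ of the $9$ points, giving $N\Delta\to 8-9\cdot\tfrac{49}{81}=\tfrac{23}{9}$, which is the value of the first displayed formula at $m=2$, not the second ($\tfrac{89}{36}$); and for $\cH_{2,3}$ (odd $m$, even $b$), the box $[0,\tfrac34+\varepsilon)\times[0,\tfrac34+\varepsilon)$ contains $7$ of the $8$ points, giving $N\Delta\to 7-8\cdot\tfrac9{16}=\tfrac52$, which matches neither displayed formula. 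The correct dichotomy is on the parity of $b$, with an additional $(-1)^m$ in the exponential tail when $b$ is even (recovering the classical Halton--Zaremba value $2^mD^*_{2^m}(\cH_{2,m})=m/3+13/9-(-1)^m\,4/(9\cdot 2^m)$). You should fix the target statement before attempting a proof.

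Even with the statement corrected, the proposal is a plan rather than a proof. The quantity $c_m$ in Theorem~\ref{thmD*Ham} is not a universal constant --- it depends on $m$, $b$ and the permutation sequence --- and determining it exactly (which is where the closed-form constant terms and the $O(b^{-m})$ tail come from) is precisely the technical crux. Neither the boundary-tracking step you sketch nor the digit-exchange / descent induction that would pin down the maximum of $\sum_j\psi_b^{\id}(N/b^j)$ is actually carried out. The route is plausible in outline, but everything of substance is left to do.
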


Note that $m=(\log b^m)/\log b$ and $b^m=\#\cH_{b,m}$. Hence De Clerck's result 
(together with Corollary~\ref{corgenhamdisc}) shows that the star discrepancy 
of the two-dimensional (generalized) Hammersley point set is of order 
$D_{b^m}^*(\cH_{b,m})=O((\log b^m)/b^m)=O((\log N)/N)$, 
which is optimal with respect to the order of magnitude in 
$N=b^m$ according to W.M. Schmidt's general lower bound on the 
star discrepancy of two-dimensional point sets (cf. Remark~\ref{genfactsdisc}). 
We mention that De Clerck~\cite{DeCl} also provides the ``bad'' intervals for 
which the star discrepancy is attained. Further results for the star discrepancy 
of generalized Hammersley point sets can be found in \cite{Fau08,K06a,KLP07}. 
The permutations $\bssigma$ can help to minimize the constant which is associated 
to the leading term $m$ in the star discrepancy formulas, see \cite{fau08} where several results are obtained with the help of the swapping permutation $\tau_b$ applied in various situations, getting analogs in arbitrary bases of results from \cite{K06a,KLP07} in base 2.\\

For the $L_p$-discrepancies the situation is quite different. We
have:

\begin{theorem}[Faure and Pillichshammer]\label{thm_lpham}
 For any integer base $b \ge 2$ and any $m \in \NN_0$ we have $$b^m L_{1,b^m}(\cH_{b,m})=m \frac{b^2-1}{12 b}+\frac{1}{2}+\frac{1}{4 b^m}$$ and
\begin{eqnarray*}
(b^m L_{2,b^m}(\cH_{b,m}))^2 & = & m^2 \left(\frac{b^2-1}{12 b}\right)^2+m \left(\frac{3 b^4+10 b^2-13}{720 b^2}+  \frac{b^2-1}{12 b}\left(1-\frac{1}{2 b^m}\right)\right)\\
&&+\frac{3}{8}+\frac{1}{4 b^m}-\frac{1}{72  b^{2 m}}.
\end{eqnarray*}
Furthermore, for arbitrary $p \in [1,\infty)$ we have
$$(b^mL_{p,b^m}(\cH_{b,m}))^p=m^p \left(\frac{b^2-1}{12
b}\right)^p+O_{p,b}(m^{p-1}),$$ where the constant in the
$O$-notation only depends on $p$ and $b$. In particular, $$\lim_{m
\rightarrow \infty}\frac{b^m L_{p,b^m}(\cH_{b,m})}{\log
b^m}=\frac{b^2-1}{12 b \log b}.$$
\end{theorem}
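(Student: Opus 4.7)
The plan is to reduce the two-dimensional $L_p$-integral to one-dimensional quantities associated with $Y_b$, by exploiting the grid structure of the Hammersley point set together with Lemma \ref{HamvdC}. I would partition $[0,1)^2$ into the $b^{2m}$ elementary cells $R_{\lambda,N}:=[\lambda/b^m,(\lambda+1)/b^m)\times [N/b^m,(N+1)/b^m)$ for $\lambda,N\in\{0,\ldots,b^m-1\}$. Since the points of $\cH_{b,m}$ lie on the $b^m\times b^m$ grid, the counting function $A([0,x)\times[0,y);\cH_{b,m})$ is almost everywhere constant on $R_{\lambda,N}$, equal to $K_{\lambda,N}:=\#\{n\in\{0,\ldots,N\}\colon b^m Y_b(n)\le\lambda\}$ (the count identity that also underlies Lemma \ref{HamvdC}). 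Consequently, on $R_{\lambda,N}$ the local discrepancy coincides with the smooth function $K_{\lambda,N}/b^m-xy$.

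For parts (1) and (2), the change of variables $x=(\lambda+u)/b^m$, $y=(N+v)/b^m$ reduces each cell integral to a tractable integral over $[0,1]^2$ of either $|b^m K_{\lambda,N}-(\lambda+u)(N+v)|$ (for $p=1$, a bilinear absolute value) or of its square (for $p=2$). I would compute these in closed form, sum over $(\lambda,N)$, and reorganize the inner $\lambda$-sums so that they become $\sum_{\lambda=0}^{b^m-1}|N\Delta_{Y_b,N}(\lambda/b^m)|$ or its square, at which point Theorem \ref{thmCF93} supplies the explicit formula $NL_{1,N}(Y_b)=\frac{1}{b}\sum_{j\ge 1}\varphi_b^{\id}(N/b^j)$ and its $L_2$-analog involving $\phi_b^{\id}$. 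The leading constant $(b^2-1)/(12b)$ emerges from the identity $\int_0^1\varphi_b^{\id}(x)\,dx=(b^2-1)/12$, which follows from the description of each $\varphi_{b,h}^{\id}$ as a triangle function with peak $h(b-h)/b$ at $x=h/b$. For $p=2$ the additional $m$-coefficient $(3b^4+10b^2-13)/(720\,b^2)$ arises from $\int_0^1\phi_b^{\id}(x)\,dx$ combined with the off-diagonal cross-sum $\sum_{i\ne j}\varphi_b^{\id}(N/b^i)\varphi_b^{\id}(N/b^j)$; the additive constants $1/2$ and $3/8$ and the $O(b^{-m})$ corrections come from the boundary cells where $\lambda=0$ or $N=0$.

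For part (3), the idea is to approximate $\Delta_{\cH_{b,m}}(x,y)$ on each cell by its value at the lower-left corner; since $b^m\Delta_{\cH_{b,m}}(\lambda/b^m,N/b^m)=N\Delta_{Y_b,N}(\lambda/b^m)$ (the unnormalized form of Lemma \ref{HamvdC}), this yields
\[
(b^m L_{p,b^m}(\cH_{b,m}))^p=\frac{1}{b^m}\sum_{N=0}^{b^m-1}(NL_{p,N}(Y_b))^p+O_{p,b}(m^{p-1}).
\]
Using the descent expansion $N\Delta_{Y_b,N}(\lambda/b^m)=\sum_{j=1}^m\varphi_{b,\varepsilon_j}^{\id}(N/b^j)$ from Lemma \ref{DesLem} together with Minkowski's inequality, and noting that the average of each $\varphi_{b,\varepsilon_j}^{\id}(N/b^j)$ as $N$ ranges over $\{0,\ldots,b^m-1\}$ contributes $(b^2-1)/(12b)$, one isolates the leading term $m^p\bigl((b^2-1)/(12b)\bigr)^p$, with the remainder controlled by the base-$b$ analog of the upper bound in Theorem \ref{discLpb2}. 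The main obstacle will be the precise bookkeeping in part (2): pinning down the exact coefficient $(3b^4+10b^2-13)/(720\,b^2)$ together with the constants $1/2$ and $3/8$ and the $O(b^{-m})$ corrections requires a careful separation of the diagonal ($i=j$) and off-diagonal ($i\ne j$) contributions to the cross-sums $\sum_N\varphi_b^{\id}(N/b^i)\varphi_b^{\id}(N/b^j)$, as well as a delicate treatment of the boundary cells.
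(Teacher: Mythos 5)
Your high-level plan — cell decomposition, cell-wise smoothness of $\Delta_{\cH_{b,m}}$, relating grid-point values to $Y_b$ via the unnormalized form of Lemma~\ref{HamvdC}, and then expanding via the descent machinery — does match the methodology of Faure--Pillichshammer in \cite{FauPi08}, to which the paper delegates the proof. However, two genuine issues would block you if you tried to carry the plan through.

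First, for the exact $L_1$ formula you must know that $\Delta_{\cH_{b,m}}(x,y)\ge 0$ on all of $[0,1)^2$, which follows from $\psi_b^{\id,-}\equiv 0$ (so $D_N^-(Y_b)=0$, cf.\ Remark~\ref{alphabid}) together with Lemma~\ref{HamvdC}: then $K_{\lambda,N}/b^m \ge (\lambda+1)(N+1)/b^{2m}$ and the integrand $K_{\lambda,N}/b^m - xy$ never changes sign inside a cell. Without this, your ``bilinear absolute value'' integral over a cell has a sign change along a hyperbola, and the closed-form computation you propose is not available. The cell integrals then become polynomials in $K_{\lambda,N},\lambda,N$ (for $L_1$: quadratic, for $L_2$: quartic), not the isolated quantities $|N\Delta_{Y_b,N}(\lambda/b^m)|$; the reduction to such sums requires a summation-by-parts step in $\lambda$ and $N$ that your plan glosses over.

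Second, your attribution of the constants is off in two ways. The $1/2$ in the $L_1$ formula and the $3/8$ in the $L_2$ formula do \emph{not} come from the boundary cells $\lambda=0$ or $N=0$ — they are bulk contributions coming from the $(u,v)$-corrections inside \emph{every} cell integral (already for $m=0$ with a single cell one gets $b^0 L_{1,1}(\cH_{b,0})=\tfrac{3}{4}=\tfrac12+\tfrac14$). Relatedly, plugging Theorem~\ref{thmCF93} directly in, as you propose, conflates the discrete grid-point sums $\sum_\lambda$ with the continuous $L_p$-norms $\int_0^1$ of the one-dimensional sequence; these differ by exactly the correction terms that produce the lower-order constants. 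To pin down $1/2$, $3/8$, the $O(b^{-m})$ tails, and the coefficient $(3b^4+10b^2-13)/(720\,b^2)$ (which indeed equals $\tfrac{1}{b}\int_0^1\phi_b^{\id}-\bigl(\tfrac{b^2-1}{12b}\bigr)^2$, as your heuristic suggests), one has to track the discrete double sums via Lemma~\ref{DesLem} directly, keeping the exact Riemann-sum-versus-integral discrepancy, rather than substituting the Chaix--Faure formulas for the continuous $L_p$-discrepancy.
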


A proof of this result can be found in \cite{FauPi08}.
Theorem~\ref{thm_lpham} implies that the $L_p$-discrepancy for
finite $p$ is not of optimal order according to the lower bounds
of Hal\'{a}sz, Roth, and Schmidt for arbitrary $N$-element point
sets (which is $L_{p,N}(\cP) \gg \sqrt{\log N}/N$ in dimension 2,
cf. Remark~\ref{genfactsdisc}). This defect of the classical
Hammersley point sets can be overcome when we consider generalized
Hammersley point sets. There are various papers which provide
sequences of permutations which lead to the optimal order of
$L_2$-discrepancy (see, for example,
\cite{FauPi08,FauPi09,FauPi11,FauPi13,FauPiPi13,FauPiPiSch,HaZa,
KP06,white}). We provide an exemplary result from \cite{FauPi08}.

\begin{theorem}[Faure and Pillichshammer]\label{optshiftL2Ham}
Let $\bssigma \in \{\id,\tau_b\}^m$, where $\tau_b(k):=b-k-1$ for
$k \in \ZZ_b$, and let $l(m)$ denote the number of components of
$\bssigma$ which are equal to $\id$. Then we have
\begin{eqnarray*}
(b^m L_{2,b^m}(\cH_{b,m}^{\bssigma}))^2 & = &
\displaystyle\left(\frac{b^2-1}{12 b}\right)^2 ((m-2 l(m))^2-m) +\frac{b^2-1}{12 b} \left(1-\frac{1}{2 b^m}\right)(2l(m)-m)\\
&& +m \frac{b^4-1}{90 b^2}+\frac{3}{8}+\frac{1}{4
b^m}-\frac{1}{72b^{2m}}.
\end{eqnarray*}
In particular, the $L_2$-discrepancy of $\cH_{b,m}^{\bssigma}$
becomes minimal if we choose a sequence of permutations $\bssigma$
in which exactly $$l_{\min}(m):=\left\{
\begin{array}{ll}
m/2-2 & \mbox{ if } b=2,\\
m/2-1 & \mbox{ if } 3 \le b \le 6,\\
m/2 & \mbox{ if } b \ge 7,
\end{array}\right.$$ elements are the identity in the case of even $m$, and $$l_{\min}(m):=\left\{
\begin{array}{ll}
(m-3)/2 & \mbox{ if } 2 \le b \le 3,\\
(m-1)/2 & \mbox{ if } b \ge 4,
\end{array}\right.$$ elements are the identity in the case of odd $m$. In any case we have

\begin{equation}\label{ungen}
\min_{\bssigma \in \{\id,\tau_b\}^m} (b^m
L_{2,b^m}(\cH_{b,m}^{\bssigma}))^2 = m \frac{(b^2-1) (3
b^2+13)}{720 b^2}+O(1).
\end{equation}
\end{theorem}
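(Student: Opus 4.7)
The plan is to reduce the two-dimensional $L_2$-discrepancy of $\cH_{b,m}^{\bssigma}$ to the one-dimensional discrepancy of the associated generalized van der Corput sequence $Y_b^{\Sigma}$ via Lemma~\ref{HamvdC}, apply the Chaix--Faure formula (Theorem~\ref{thmCF93}), and exploit the two structural identities
\[
\varphi_b^{\tau_b}=-\varphi_b^{\id},\qquad \phi_b^{\tau_b}=\phi_b^{\id},
\]
which follow by inserting $\tau_b(k)=b-1-k$ into the defining formulas for $\varphi_{b,h}^{\sigma}$. The final minimization over $\bssigma$ is then a routine one-variable integer quadratic optimization.

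First I would use Lemma~\ref{HamvdC}: on each strip $y\in[N/b^m,(N+1)/b^m)$ the local discrepancy of $\cH_{b,m}^{\bssigma}$ equals $\delta_N(x)/b^m+x(N/b^m-y)$ with $\delta_N(x):=N\Delta_{Y_b^{\Sigma},N}(x)$. Evaluating the inner $y$-integral and summing over $N$ yields
\[
b^{2m}L_{2,b^m}(\cH_{b,m}^{\bssigma})^2=\frac{1}{b^m}\sum_{N=0}^{b^m-1}\left[(NL_{2,N}(Y_b^{\Sigma}))^2-\int_0^1\! x\,\delta_N(x)\rd x\right]+\frac{1}{9}.
\]
Encoding $\bssigma\in\{\id,\tau_b\}^m$ by signs $\varepsilon_r\in\{\pm1\}$ with $\varepsilon_r=+1$ iff $\sigma_{r-1}=\id$ (and $\varepsilon_r=+1$ for $r>m$), Theorem~\ref{thmCF93} together with the two identities above gives
\[
(NL_{2,N}(Y_b^{\Sigma}))^2=\frac{1}{b}\sum_{j\ge1}\phi_b^{\id}\!\Bigl(\tfrac{N}{b^j}\Bigr)+\frac{1}{b^2}\sum_{i\ne j}\varepsilon_i\varepsilon_j\,\varphi_b^{\id}\!\Bigl(\tfrac{N}{b^i}\Bigr)\varphi_b^{\id}\!\Bigl(\tfrac{N}{b^j}\Bigr),
\]
so that all dependence on $\bssigma$ is bilinear in the signs. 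The first moment $\int_0^1 x\,\delta_N(x)\rd x$ is treated analogously via Koksma's identity~\eqref{FoKoks} and a parallel Chaix--Faure-type expansion of $\int_0^1 x\Delta_{Y_b^{\Sigma},N}(x)\rd x$.

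Averaging over $N=0,\dots,b^m-1$, the crucial structural observations are that, by the $1$-periodicity of $\varphi_b^{\id}$ and direct evaluation of its moments, the cross sums
\[
T_{i,j}:=\frac{1}{b^m}\sum_{N=0}^{b^m-1}\varphi_b^{\id}\!\Bigl(\tfrac{N}{b^i}\Bigr)\varphi_b^{\id}\!\Bigl(\tfrac{N}{b^j}\Bigr)
\]
satisfy (i) $T_{i,j}=\bigl(\tfrac{b^2-1}{12}\bigr)^2$ for all $1\le i\ne j\le m$, and (ii) the tail partial sums $\sum_{j>m}T_{i,j}$ are independent of $i\in[1,m]$ (because $\varphi_b^{\id}$ is a simple polynomial on $[0,1/b)$, so the tail contributions are explicit geometric series). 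Since $\sum_{j=1}^m\varepsilon_j=2l(m)-m$, these two facts collapse the $\bssigma$-dependence of the off-diagonal Chaix--Faure sum into $c^2\bigl((2l(m)-m)^2-m\bigr)$ with $c=\tfrac{b^2-1}{12b}$, and the first-moment contribution produces $c\bigl(1-\tfrac{1}{2b^m}\bigr)(2l(m)-m)$. The remaining constants, independent of $\bssigma$, are then fixed by specializing to $\bssigma=(\id,\dots,\id)$ and matching Theorem~\ref{thm_lpham}, which pins down the terms $m\tfrac{b^4-1}{90b^2}$, $\tfrac{3}{8}$, $\tfrac{1}{4b^m}$, and $-\tfrac{1}{72b^{2m}}$.

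For the optimization, set $k:=2l(m)-m$; the established formula is the quadratic $c^2k^2+c(1-\tfrac{1}{2b^m})k+\textup{const}$, minimized over integers $k$ of the same parity as $m$ at the nearest integer of correct parity to $-\tfrac{1}{2c}\bigl(1-\tfrac{1}{2b^m}\bigr)=-\tfrac{6b}{b^2-1}+O(b^{-m})$. A short case analysis on $b$ (whether this optimum rounds to an even or an odd integer) and on the parity of $m$ yields the stated values of $l_{\min}(m)$, and substituting $k=2l_{\min}(m)-m$ back into the quadratic produces~\eqref{ungen}. The main technical obstacle is the verification of structural facts (i) and (ii); a useful shortcut is to argue a priori, from the bilinear sign structure, that the final formula must be a quadratic in $l(m)$ of the claimed shape, then pin down its three coefficients by evaluating at three specific configurations such as $\bssigma=(\id)^m$, $\bssigma=(\tau_b)^m$, and a single alternating case, thereby bypassing most of the direct bookkeeping.
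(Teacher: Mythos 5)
Your outline follows the same route that Faure and Pillichshammer took in the cited source \cite{FauPi08}: reduce to the one-dimensional van der Corput sequence via Lemma~\ref{HamvdC}, invoke the Chaix--Faure exact formulas from Theorem~\ref{thmCF93}, exploit the sign behavior of the $\varphi$-functions under $\tau_b$, and finish with an integer quadratic optimization in $k=2l(m)-m$. The strip decomposition, the constant $\tfrac19$, the encoding $\sigma_{r-1}\leftrightarrow\varepsilon_r$, and the structural identities $\varphi_b^{\tau_b}=-\varphi_b^{\id}$, $\phi_b^{\tau_b}=\phi_b^{\id}$ are all correct and are exactly the right devices. I also checked that your rounding of $k^*=-\tfrac{6b}{b^2-1}+O(b^{-m})$ to the nearest integer of the parity of $m$ reproduces $l_{\min}(m)$ in every case, and that substituting back yields the leading coefficient $\tfrac{(b^2-1)(3b^2+13)}{720b^2}$ in \eqref{ungen}.

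Two points should be tightened. First, the assertion that $T_{i,j}=\bigl(\tfrac{b^2-1}{12}\bigr)^2$ for all $1\le i\ne j\le m$ is a genuine lemma, not a bookkeeping triviality; it does hold, and the clean reason is that $\varphi_b^{\id}$ is continuous, $1$-periodic and piecewise linear with breakpoints at the $b$-adic grid, so any Riemann sum $\sum_{d=0}^{n-1}\varphi_b^{\id}(t+d/n)$ with $n$ a power of $b$ equals $n\int_0^1\varphi_b^{\id}=n\cdot\tfrac{b^2-1}{12}$ independently of the offset $t$ (the slopes on the $b$ subintervals sum to zero). Applying this twice to the double $b$-adic decomposition of $N$ gives the factorization $T_{i,j}=\mu^2$. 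Your proposed shortcut, determining the three coefficients of the quadratic by evaluating at $\id^m$, $\tau_b^m$, and one alternating $\bssigma$, is fine as a verification device, but it silently presupposes that the averaged formula is a symmetric function of $(\varepsilon_1,\dots,\varepsilon_m)$, which is precisely the content of fact~(i); so it is not really a shortcut around it. Second, Koksma's identity \eqref{FoKoks} is not what gives you $\int_0^1 x\,\delta_N(x)\,\rd x$. What you need is the first-moment expansion of $\int_0^1\Delta_{Y_b^\Sigma,N}(x)\,\rd x=\tfrac1N\sum_n(\tfrac12-x_n)$ in terms of $\varphi_b^{\sigma_{j-1}}(N/b^j)$ (the analogue of the $L_1$-formula in Theorem~\ref{thmCF93}), which is linear in the $\varepsilon_j$ after using $\varphi_b^{\tau_b}=-\varphi_b^{\id}$; this, together with the tail cross-terms $i\le m<j$, is where the linear part $c\bigl(1-\tfrac{1}{2b^m}\bigr)k$ comes from. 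With those two corrections, the proof scheme is complete and is essentially the one used in the reference.
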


A generalization of these results and a detailed discussion can be
found in \cite{FauPiPiSch}. Equation~\eqref{ungen} shows that the optimal permutations lead to
an $L_2$-discrepancy of order $O(\sqrt{m}/b^m)=O(\sqrt{\log N}/N)$
where $N=b^m$ and this is optimal according to the general lower
bound of Roth~\cite{Roth}. It is interesting that the minimal
$L_2$-discrepancy only depends on the number of $\id$-permutations
and not on their position within $\bssigma$. Not quite exact as
the result for $L_{2,N}$, but with the same essence, is the next
result for the $L_p$-discrepancy (with $p \in [1,\infty)$) of
$\cH_{2,m}^{\bssigma}$.

\begin{theorem}[Markhasin]\label{thmLev}
Let $p \in [1,\infty)$. Let $\bssigma \in \{\id,\tau_2\}^m$ and
let $l(m)$ be the number of components of $\bssigma$ which are
equal to $\id$. Then we have $$L_{p,2^m}(\cH_{2,m}^{\bssigma})
\ll_p \frac{\sqrt{m}}{2^m} \ll \frac{\sqrt{\log N}}{N}\ \ \ \mbox{
where $N=2^m$} $$ if and only if  $|2 l(m)-m| \ll_p \sqrt{m}$.
\end{theorem}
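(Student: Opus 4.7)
The anchor of the proof is the exact $L_2$-formula of Theorem~\ref{optshiftL2Ham}, specialized to $b=2$. After collecting terms it reads
\[
\bigl(2^m L_{2,2^m}(\cH_{2,m}^{\bssigma})\bigr)^2 \;=\; \tfrac{1}{64}(m-2l(m))^2 \;-\; \tfrac{1}{8}(m-2l(m)) \;+\; \tfrac{5}{192}\,m \;+\; O(1),
\]
and the right-hand side is of order $m$ if and only if $|m-2l(m)|^2 \ll m$. This disposes of the case $p=2$. From here, the monotonicity $L_{p,N}(\cP) \le L_{q,N}(\cP)$ for $1 \le p \le q$ on the probability space $[0,1)^2$ gives the ``if'' direction whenever $1 \le p \le 2$ and the ``only if'' direction whenever $2 \le p < \infty$ at no extra cost.

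The remaining task is to establish the ``if'' direction for $p>2$ and the ``only if'' direction for $1\le p<2$. For the former, I would expand $\Delta_{\cH_{2,m}^{\bssigma}}$ in the two-dimensional tensor Haar basis $h_{j,k}\otimes h_{j',k'}$ and apply the two-variable Littlewood-Paley inequality (the tensor-product analogue of Lemma~\ref{lpi}, valid on $L_p([0,1)^2)$ for $1<p<\infty$). The Hammersley structure permits an explicit computation of the tensor Haar coefficients; the mean coefficient (the ``$(-1,0,-1,0)$-entry'') is of size $|2l(m)-m|/2^m$, and the remaining coefficients are controlled by the same one-dimensional data that enters the analysis of $Y_2^{\Sigma}$ via Theorem~\ref{thmF81D}. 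Under the hypothesis $|2l(m)-m|\ll\sqrt m$, the mean coefficient contributes $O(\sqrt m/2^m)$, and summing the square function exactly as in the sketched proof of Theorem~\ref{Lp_sym_vdc} gives $L_{p,2^m}(\cH_{2,m}^{\bssigma}) \ll_p \sqrt m/2^m$.

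For the ``only if'' direction with $1\le p<2$ I would use duality instead of monotonicity. The aim is to construct a test function $g_{\bssigma}\in L_{p'}([0,1)^2)$ with $\|g_{\bssigma}\|_{L_{p'}}\ll 1$ (a rescaled product of Rademacher-type functions tuned to the positions of the $\id$'s and $\tau_2$'s in $\bssigma$ is a natural candidate) such that
\[
\Bigl|\int_{[0,1)^2}\Delta_{\cH_{2,m}^{\bssigma}}(x,y)\,g_{\bssigma}(x,y)\,\rd x\,\rd y\Bigr| \;\gg\; \frac{|2l(m)-m|}{2^m}.
\]
H\"older's inequality then forces $L_{p,2^m}(\cH_{2,m}^{\bssigma})\gg |2l(m)-m|/2^m$, and combined with the hypothesis $L_{p,2^m}(\cH_{2,m}^{\bssigma})\ll_p\sqrt m/2^m$ this yields $|2l(m)-m|\ll_p \sqrt m$.

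The hard part will be the explicit control of the two-dimensional Haar coefficients of $\Delta_{\cH_{2,m}^{\bssigma}}$. The $L_2$-formula depends only on the \emph{count} $l(m)$, but individual Haar coefficients depend on the \emph{pattern} of $\bssigma$; showing that the pattern-dependent parts either cancel or are of lower order when summed is the combinatorial core of the argument. Constructing $g_{\bssigma}$ so that its $L_{p'}$-norm stays bounded uniformly in $m$ while it still detects the imbalance $|2l(m)-m|$ is the second delicate point.
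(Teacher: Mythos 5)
Your architecture is sound, and the reduction through the exact $L_2$-formula is a genuine streamlining compared to either of the two proofs the paper references (Markhasin's via Besov and Triebel--Lizorkin embeddings, and the direct Littlewood--Paley proof in \cite{HKP14}). After that reduction, you have to supply only the ``if'' direction for $p>2$ and the ``only if'' direction for $1\le p<2$, and the latter is even easier than you make it.

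For the ``only if'' direction you do not need a bespoke Rademacher-type test function: the constant function $g\equiv 1$ already does the job, i.e.\ one just uses $L_{p,N}(\cP)\ge L_{1,N}(\cP)\ge\bigl|\int_{[0,1)^2}\Delta_{\cP}\bigr|$. Expanding
$\int\Delta_{\cH_{2,m}^{\bssigma}}=\frac{1}{2^m}\sum_{n=0}^{2^m-1}\bigl(1-Y_2^{\bssigma}(n)\bigr)\bigl(1-\tfrac{n}{2^m}\bigr)-\tfrac14$
digit by digit gives the explicit value
$2^m\int_{[0,1)^2}\Delta_{\cH_{2,m}^{\bssigma}}(x,y)\,\rd x\,\rd y=\tfrac{2l(m)-m}{8}+\tfrac12+\tfrac{1}{2^{m+2}}$,
so $L_{p,2^m}(\cH_{2,m}^{\bssigma})\ge\frac{|2l(m)-m|}{8\cdot 2^m}-O(2^{-m})$ for every $p\ge1$. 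This confirms that your claimed ``mean coefficient'' is indeed of that size, and it disposes of the ``only if'' direction for all $p\in[1,\infty)$ without duality gymnastics or monotonicity.

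The only genuinely hard remaining direction is the ``if'' direction for $p>2$, which cannot follow from $L_2$-control and monotonicity, and here your sketch is the right roadmap but not yet a proof. Two concrete gaps: (1) the Littlewood--Paley inequality you need is the two-parameter (hyperbolic, tensor-Haar) version, not the one-parameter Lemma~\ref{lpi}; it does hold for $1<p<\infty$, but it is a distinct input that must be cited. (2) Your own observation that individual tensor-Haar coefficients depend on the \emph{pattern} of $\bssigma$ while the conclusion depends only on the \emph{count} $l(m)$ is exactly where the work lies --- the coefficient bounds must be uniform over all patterns with a given $l(m)$ and must survive the square-function summation; writing ``controlled by the same one-dimensional data'' is a placeholder for that computation, which is the content of \cite{HKP14}. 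As to comparison with the paper: Markhasin proves the result indirectly via norms in Besov spaces with dominating mixed smoothness and embeddings into Triebel--Lizorkin spaces, using the Haar coefficients computed by Hinrichs in \cite{hin2010}; the direct Littlewood--Paley proof in \cite{HKP14} handles all $p$ at once. Your version combines the $L_2$-formula with monotonicity to cut the Littlewood--Paley work roughly in half, which is a nice economy, but the core two-parameter Haar/Littlewood--Paley argument is still needed and is not yet in your proposal.
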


The proof of this result according to Markhasin
\cite{Lev13d,Lev13} is indirect via optimality of the norm of the
discrepancy function in Besov spaces with dominating mixed
smoothness together with embedding theorems between Besov spaces
and Triebel-Lizorkin spaces which contain $L_p$-spaces as special
cases. The main tool there is the computation of Haar coefficients
of the discrepancy function, which, for the generalized Hammersley
point sets, were already computed in \cite{hin2010}. A direct
proof of Theorem~\ref{thmLev} via Littlewood-Paley theory, which
is accessible without knowledge of function space theory, can be
found in \cite{HKP14}. Although Theorem~\ref{thmLev} was proved
only in base $b=2$ a similar result should hold for general bases
$b$. This is work in progress.

We close this section by mentioning that besides permutations also
a symmetrization of the classical Hammersley point set leads to
the optimal order of $L_p$-discrepancy with respect to the order of magnitude
in $N$ (see, e.g., \cite[Theorems~2 and 3]{HKP14}). Furthermore, 
also geometric shifts of the Hammersley point set and
their influence on $L_2$-discrepancy were studied by Bilyk~\cite{B09}.


\subsection{The Halton sequence}\label{sec_halt}

We now come to a very natural extension of van der Corput
sequences to arbitrary dimensions $s \in \NN$.

\paragraph{The classical Halton sequence.} Let $s \in \NN$. A natural generalization of the one-dimensional
van der Corput sequence to dimension $s$ is to choose $s$
different bases $b_1,b_2,\ldots,b_s \ge 2$ and to concatenate
component-wise the van der Corput sequences with bases
$b_1,b_2,\ldots,b_s$, respectively, to a sequence in the
$s$-dimensional unit cube. Such a sequence is called a Halton
sequence.

\begin{definition}\rm
Let $s \ge 1$ and let $\bsb=(b_1,b_2,\ldots,b_s)$ be a vector of
integers greater than 1. The {\it $s$-dimensional Halton sequence
in bases $b_1,b_2,\ldots,b_s$} is given as $H_{\bsb}=(\bsy_n)_{n
\ge 0}$ where $$\bsy_n=(Y_{b_1}(n),Y_{b_2}(n),\ldots ,Y_{b_s}(n))
\ \ \ \mbox{ for $n \in \NN_0$.}$$
\end{definition}

A minimal requirement on the Halton sequence is that it should be
uniformly distributed modulo 1. This depends on the choice of the
bases $b_1,b_2,\ldots,b_s$, as can be seen in the following theorem.

\begin{theorem}\label{thmHalton}
The Halton sequence $H_{\bsb}$ is uniformly distributed modulo 1
if and only if $b_1,b_2,\ldots,b_s$ are pairwise coprime.
\end{theorem}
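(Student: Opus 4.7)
The plan is to prove both implications by reducing the multi-dimensional counting problem to a system of congruences, precisely mirroring the one-dimensional argument given after Definition~\ref{vdc1}.

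For the sufficiency direction, assume $b_1,\ldots,b_s$ are pairwise coprime. First I would establish the analogue of \eqref{anzbadicint} for elementary boxes. Fix $m_1,\ldots,m_s\in\NN$ and $k_i\in\{0,1,\ldots,b_i^{m_i}-1\}$, and set
$$J=\prod_{i=1}^s \left[\frac{k_i}{b_i^{m_i}},\frac{k_i+1}{b_i^{m_i}}\right).$$
From the one-dimensional case, the $n$th element $Y_{b_i}(n)$ lies in $[k_i/b_i^{m_i},(k_i+1)/b_i^{m_i})$ iff $n\equiv A_i(k_i)\pmod{b_i^{m_i}}$ for a uniquely determined residue $A_i(k_i)$. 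Hence $\bsy_n\in J$ iff $n$ simultaneously satisfies
$$n\equiv A_i(k_i)\pmod{b_i^{m_i}}\qquad(i=1,\ldots,s).$$
Since $b_1,\ldots,b_s$ are pairwise coprime, so are $b_1^{m_1},\ldots,b_s^{m_s}$, and the Chinese Remainder Theorem yields a unique solution modulo $M:=b_1^{m_1}\cdots b_s^{m_s}$. Consequently exactly one in every $M$ consecutive indices $n$ gives a point in $J$, so
$$\#\{n<N:\bsy_n\in J\}=\left\lfloor\frac{N}{M}\right\rfloor+\theta,\quad \theta\in\{0,1\},$$
and dividing by $N$ shows that the relative count converges to $1/M=\mathrm{vol}(J)$.

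Next I would pass from elementary boxes to arbitrary axis-parallel boxes $\prod_{i=1}^s[a_i,c_i)\subseteq[0,1)^s$ by the standard sandwich argument: approximate such a box from inside and outside by finite unions of boxes $J$ as above (possible since $b_i$-adic rationals are dense), and use the additivity of both the counting measure and Lebesgue measure. This gives the multi-dimensional analogue of \eqref{def_udt}, i.e.\ uniform distribution of $H_{\bsb}$ modulo~$1$.

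For the necessity direction, suppose some pair $(b_i,b_j)$ with $i\ne j$ satisfies $d:=\gcd(b_i,b_j)>1$. I would test uniform distribution on the single box
$$B=[0,1/b_i)\times[0,1/b_j)\quad\text{(in coordinates $i,j$), and $[0,1)$ elsewhere.}$$
By the first step above, $\bsy_n\in B$ iff $b_i\mid n$ and $b_j\mid n$, i.e.\ $\mathrm{lcm}(b_i,b_j)\mid n$. Therefore
$$\lim_{N\to\infty}\frac{\#\{n<N:\bsy_n\in B\}}{N}=\frac{1}{\mathrm{lcm}(b_i,b_j)}=\frac{d}{b_ib_j}>\frac{1}{b_ib_j}=\mathrm{vol}(B),$$
contradicting uniform distribution of $H_{\bsb}$ modulo~$1$.

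The argument is mostly routine once the CRT reduction is in place; no genuine obstacle arises, but care is needed in the approximation step of the sufficiency direction to verify that the outer and inner covers indeed have volumes converging to that of the target box, and to keep track of the error term $\theta$ uniformly across the $O(1)$ elementary boxes composing each cover.
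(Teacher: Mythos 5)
Your proof is correct and follows precisely the strategy the paper indicates (it only sketches it): reduce each elementary box to a system of congruences, apply the Chinese Remainder Theorem under the coprimality hypothesis, and pass to general boxes by the standard sandwich argument; for necessity, test on a single box where the shared factor $d=\gcd(b_i,b_j)>1$ inflates the hitting frequency from $1/(b_ib_j)$ to $d/(b_ib_j)$. One small stylistic caveat: in your closing remark the number of elementary boxes forming the inner/outer covers at resolution level $m$ is not $O(1)$ but grows with $m$; what matters is that it is finite for fixed $m$, so the accumulated $\theta$-error, divided by $N$, vanishes as $N\to\infty$ before one lets $m\to\infty$ — your argument already uses this ordering of limits implicitly and is therefore sound.
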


The result in Theorem \ref{thmHalton} follows by similar arguments
as used in the proof of uniform distribution modulo 1 of the van der
Corput sequence together with the Chinese Remainder Theorem.

From now on we assume that $\bsb$ is a vector of pairwise coprime
integers. In this case the Halton sequence even has a low
discrepancy of order of magnitude $O((\log N)^s/N)$. This was first
shown by Halton~\cite{halton}. The following result is due to
Niederreiter~\cite{niesiam} and can be shown by a tricky
induction argument. Further discrepancy estimates for the Halton
sequence can be found in \cite{DP10,fau1980,HuaWang,LP14,Meij}.

\begin{theorem}[Niederreiter]\label{diha}
If $H_{\bsb}$ is the Halton sequence in pairwise coprime bases
$b_1,b_2,\ldots,b_s$, then $$D_N^*(H_{\bsb}) <
\frac{s}{N}+\frac{1}{N}\prod_{j=1}^s\left(\frac{b_j -1}{2 \log
b_j} \log N +\frac{b_j+1}{2}\right) \ \ \ \mbox{ for all }\ N \in
\NN.$$
\end{theorem}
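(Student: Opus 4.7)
The plan is to bound $E(\boldsymbol{\alpha}, N) := |A([\bszero, \boldsymbol{\alpha}); N; H_\bsb) - N \prod_{j=1}^s \alpha_j|$ uniformly over $\boldsymbol{\alpha} \in (0,1]^s$; taking the supremum and dividing by $N$ then yields the star discrepancy bound. The main engine is the Chinese Remainder Theorem (CRT): since the bases $b_1,\ldots,b_s$ are pairwise coprime, for any product $E = \prod_j T_j$ in which each $T_j$ is either a $b_j$-adic elementary interval of length $b_j^{-k_j}$ or the whole interval $[0,1)$, the set $\{n : \bsy_n \in E\}$ is a single residue class modulo $M = \prod_{j : k_j > 0} b_j^{k_j}$, so that $|A(E; N; H_\bsb) - N\,\mathrm{vol}(E)| < 1$.

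First I would set $m_j := \lceil \log_{b_j} N \rceil$ (so $b_j^{m_j} \ge N$), $a_j := \lfloor \alpha_j b_j^{m_j} \rfloor$, and $\widehat{\alpha}_j := a_j/b_j^{m_j}$, giving $0 \le \alpha_j - \widehat{\alpha}_j < b_j^{-m_j}$. The triangle inequality splits $E(\boldsymbol{\alpha}, N)$ into three contributions: (i) a discretised main term $|A([\bszero, \widehat{\boldsymbol{\alpha}}); N; H_\bsb) - N \prod_j \widehat{\alpha}_j|$; (ii) a volume change $N|\prod_j \alpha_j - \prod_j \widehat{\alpha}_j| \le N\sum_j b_j^{-m_j} \le s$ by telescoping and $N/b_j^{m_j} \le 1$; and (iii) a shell count $A([\bszero, \boldsymbol{\alpha}); N; H_\bsb) - A([\bszero, \widehat{\boldsymbol{\alpha}}); N; H_\bsb) \le s$, since the shell $[\bszero, \boldsymbol{\alpha}) \setminus [\bszero, \widehat{\boldsymbol{\alpha}})$ is covered by $s$ slabs, each forcing one coordinate into an elementary $b_j$-adic interval of length $b_j^{-m_j}$, which contains at most $\lceil N/b_j^{m_j}\rceil = 1$ point of $H_\bsb$.

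The crucial idea for controlling (i), needed to obtain the factor $1/(2\log b_j)$ rather than the naive $1/\log b_j$, is to write each factor $\mathbf{1}_{[0,\widehat{\alpha}_j)}(x_j)$ in whichever of two equivalent forms has fewer summands: \emph{directly}, as a sum of indicators of the elementary intervals tiling $[0,\widehat{\alpha}_j)$ (with $r_j$ summands, where $r_j$ is the base-$b_j$ digit sum of $a_j$), or \emph{via the complement}, as $1 - \sum_{E\subset [\widehat{\alpha}_j,1)} \mathbf{1}_E(x_j)$ (with $\widetilde{r}_j + 1$ summands, where $\widetilde{r}_j$ is the digit sum of $b_j^{m_j} - a_j$). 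Since the digits of $a_j$ and of $b_j^{m_j} - 1 - a_j$ are positionwise complementary, accounting for the single carry incurred by $b_j^{m_j} - a_j = (b_j^{m_j} - 1 - a_j) + 1$ gives $r_j + \widetilde{r}_j \le (b_j-1)m_j + 1$, whence $s_j := \min(r_j, \widetilde{r}_j + 1) \le (b_j-1)m_j/2 + 1$. Expanding $\prod_j \mathbf{1}_{[0,\widehat{\alpha}_j)}$ coordinatewise with these economical choices produces a signed sum of at most $\prod_j s_j$ indicators of product sets to which the CRT estimate applies; summing the termwise errors controls (i) by $\prod_j s_j$.

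Combining the three contributions yields $E(\boldsymbol{\alpha}, N) \le s + \prod_j s_j$. Inserting $m_j \le \log N/\log b_j + 1$ gives $s_j \le (b_j-1)/(2\log b_j) \cdot \log N + (b_j+1)/2$, which is exactly the per-coordinate factor in the statement, and taking the supremum over $\boldsymbol{\alpha}$ completes the proof. The main obstacle is the direct-versus-complement trick for (i): without it, a naive CRT count over the $\prod_j r_j$ elementary sub-boxes of $[\bszero, \widehat{\boldsymbol{\alpha}})$ alone would lose a factor of $2^s$ in the leading constant, since the worst-case digit sum is $(b_j-1)m_j$ rather than $(b_j-1)m_j/2$.
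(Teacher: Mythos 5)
Your overall strategy is sound, and you have correctly identified the crucial combinatorial idea: expand each indicator $\mathbf{1}_{[0,\widehat\alpha_j)}$ as a signed sum of elementary-interval indicators, choosing between the direct tiling and the complementary form $1-\sum_{E\subset[\widehat\alpha_j,1)}\mathbf{1}_E$ so as to minimize the number of summands, and then apply the CRT to each product term across the $s$ pairwise coprime bases. The per-coordinate count $s_j\le(b_j-1)m_j/2+1$ and the substitution $m_j<\log N/\log b_j+1$ do reproduce Niederreiter's constants. There is, however, a gap in the closing step. Bounding (i), (ii), (iii) individually by $\prod_j s_j$, $s$, $s$ and then adding via the triangle inequality gives only $2s+\prod_j s_j$, which is weaker than the theorem by $s/N$. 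To obtain $s+\prod_j s_j$ you must exploit the sign structure: the shell count $A([\bszero,\boldsymbol\alpha);N;H_{\bsb})-A([\bszero,\widehat{\boldsymbol\alpha});N;H_{\bsb})$ lies in $[0,s]$, while the volume difference $N\bigl(\prod_j\widehat\alpha_j-\prod_j\alpha_j\bigr)$ lies in $[-s,0]$, so their \emph{sum} already lies in $[-s,s]$; you have to combine these two terms before taking absolute values, not after. This is exactly the step that turns $2s$ into $s$ and it must be stated. The edge cases ($a_j=0$, $\alpha_j=1$, $N=1$) and the strictness of the final inequality also deserve a sentence, although they are routine once $m_j<\log N/\log b_j+1$ is noted to be strict for all $N\ge 1$.

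On the relation to the paper: the paper gives no proof of this theorem, only the remark that Niederreiter's bound ``can be shown by a tricky induction argument.'' Your argument is not inductive; you treat all $s$ coordinates simultaneously through one signed expansion and one application of the CRT, rather than recursing on the dimension. The combinatorial core (direct-versus-complement choice yielding the factor $(b_j-1)/2$, and the CRT estimate $\abs{A(E;N)-N\,\mathrm{vol}(E)}<1$ for elementary boxes) is the same content that any proof of this bound must contain, but organizing it as a one-pass discretization-and-signed-splitting argument makes the provenance of both the leading product $\prod_j\bigl(\tfrac{b_j-1}{2\log b_j}\log N+\tfrac{b_j+1}{2}\bigr)$ and the additive $s/N$ transparent at once, which is a genuine expository advantage over a recursive peeling of coordinates.
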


It was a long standing open question if the discrepancy estimate
for Halton sequences is best possible with respect to the order of magnitude in
$N$. Until recently one only had the general lower bound due to 
Kuipers and Niederreiter~\cite{kuinie}  (which is based on the
result of Roth~\cite{Roth} for finite point sets) which
states that there exists some $c_s >0$ such that for every
sequence $X$ in $[0,1)^s$ we have
\begin{equation}\label{eqproinovdims}
 D_N^*(X) \ge c_s \frac{(\log N)^{s/2}}{N} \ \ \ \mbox{ for infinitely many $N\in \NN$}.
\end{equation}
We add that there is a slight but remarkable improvement of the
exponent of the $\log N$-term. It follows from a result of
Bilyk, Lacey, and Vagharshakyan~\cite{BLV08} for finite
point sets that there exists some $c_s >0$ and $\eta_s\in
(0,\tfrac{1}{2})$ such that for every sequence $X$ in $[0,1)^s$ we
have
\begin{equation}\label{lbdbillayvar}
D_N^*(X) \ge c_s \frac{(\log N)^{s/2+\eta_s}}{N} \ \ \ \mbox{for
infinitely many $N \in \mathbb{N}$.}
\end{equation}
For $s \ge 2$ the sharp lower bound for the star discrepancy of
infinite sequences is still unknown. It is conjectured that it is
significantly larger than the one given in \eqref{lbdbillayvar}.
In general, finding a sharp lower bound seems to be
a very difficult problem. We also refer to \cite{BL13} for a
discussion of these questions.

In 2015 Levin~\cite{levin15} obtained a solution of this
question for the important sub-class of Halton sequences. 
He could show that the existing upper bounds on the star
discrepancy of the Halton sequence are optimal with respect to the order of
magnitude in $N$.

\begin{theorem}[Levin]
Let $s \ge 2$ and let $B:=b_1 b_2 \cdots b_s$ and $m_0=\lfloor2 B
\log_2 b_0\rfloor +2$. Then $$\sup_{1 \le N \le 2^{m m_0}} N
D_N^*(H_{\bsb}) \ge \frac{m^s}{8 B} \ \ \ \mbox{ for all $m \ge
B$.}
$$ 
In particular, there exists some $c_{\bsb,s}>0$ such that
$$
D_N^*(H_{\bsb}) \ge c_{\bsb,s} \frac{(\log N)^s}{N} \ \ \ \mbox{
for infinitely many $N \in \NN$.}
$$
\end{theorem}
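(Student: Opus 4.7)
The plan is to produce an explicit rectangular test box $[\bszero, \boldsymbol{\alpha}) \subset [0,1)^s$ whose sides $\alpha_j$ are $b_j$-adic rationals of denominator $b_j^{k_j}$ with $k_j \asymp m$, together with an integer $N\le 2^{m m_0}$, at which the counting discrepancy has absolute value at least $m^s/(8B)$. The role of the exponent $m_0$ is to guarantee that the range $N\in[1,2^{m m_0}]$ is wide enough to accommodate $k_j\asymp m$ digits in each of the $s$ coprime bases simultaneously, i.e.\ that $B':=\prod_j b_j^{k_j}\le 2^{m m_0}$ whenever $m\ge B$.

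First I would pass from the star discrepancy to a purely product-type test statistic via inclusion--exclusion. Writing $D_j(n):=\mathbf{1}_{[0,\alpha_j)}(Y_{b_j}(n))-\alpha_j$,
\[
N\Delta_{H_{\bsb},N}(\boldsymbol{\alpha}) \;=\; \sum_{\emptyset\neq S\subseteq\{1,\dots,s\}}\Big(\prod_{j\notin S}\alpha_j\Big)\sum_{n=0}^{N-1}\prod_{j\in S}D_j(n),
\]
and expanding each $D_j$ back to indicator functions expresses the ``full'' sum $\sum_{n<N}\prod_{j=1}^{s}D_j(n)$ as a signed combination of $2^s$ quantities of the form $A([\bszero,\boldsymbol{\beta});N;H_{\bsb})-N\prod_j\beta_j$. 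Each of these is bounded in absolute value by $ND_N^*(H_{\bsb})$, so any lower bound of order $m^s/B$ on $\sum_{n<N}\prod_j D_j(n)$ transfers to $ND_N^*(H_{\bsb})$ up to a factor $2^s$, which the constant $8B$ is large enough to absorb.

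Next, the one-dimensional theory from Section~\ref{bdremaindersets} is used to analyse each factor. Using Theorem~\ref{th2F83} I would pick $\alpha_j=a_j/b_j^{k_j}$ with $k_j\asymp m$ and with $a_j$ whose base-$b_j$ expansion has a positive proportion of digits in $\{1,\dots,b_j-2\}$, so that the single-cycle sum $\sigma_j:=\sum_{r=0}^{b_j^{k_j}-1}D_j(r)$ is of order $k_j$, with a sign that can be controlled by the choice of $a_j$. Because the $b_j$ are pairwise coprime, the Chinese Remainder Theorem identifies $\ZZ/B'\ZZ$ with $\prod_j\ZZ/b_j^{k_j}\ZZ$; since $D_j(n)$ depends only on $n\bmod b_j^{k_j}$, over one complete period $n\in\{0,\ldots,B'-1\}$ the product sum factorizes as
\[
\sum_{n=0}^{B'-1}\prod_{j=1}^{s} D_j(n) \;=\; \prod_{j=1}^{s}\sigma_j.
\]
Taking $N=KB'$ with $K\ge 1$ yields $\bigl|\sum_{n<N}\prod_j D_j(n)\bigr|=K\bigl|\prod_j\sigma_j\bigr|\gtrsim K m^s$. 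Since $B'\le 2^{m m_0}$ by the choice of $m_0$, one may already take $K=1$ and $N=B'$; the resulting lower bound $ND_N^*(H_{\bsb})\gtrsim m^s/2^s$ is then repackaged into the stated form $m^s/(8B)$ by absorbing the numerical constants into $8B$ (noting $B\ge 2^s$).

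The main obstacle is the \emph{simultaneous} selection of $N$: the one-dimensional lower bound in Theorem~\ref{th1F83} identifies, for each coordinate separately, infinitely many $N$'s with large 1D discrepancy, but a priori these sets need not intersect, so one cannot directly combine the coordinatewise bounds. The product factorization via the Chinese Remainder Theorem is precisely what circumvents this obstacle, by turning the problem of simultaneous badness into a deterministic computation over one complete residue cycle of length $B'$. The remaining bookkeeping is to control the residual contribution when $N$ is not an exact multiple of $B'$ (an error of order $B'$, which is absorbed once a whole cycle is completed) and to verify that the $k_j$'s can be chosen with $k_j\asymp m$ and $\sum_j k_j\log_2 b_j\le m m_0$; this is precisely why $m_0=\lfloor 2B\log_2 b_0\rfloor +2$ is taken so generously. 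Finally, the second assertion of the theorem follows at once: if the supremum bound $ND_N^*(H_{\bsb})\ge m^s/(8B)$ is attained at some $N\le 2^{m m_0}$, then $m\ge (\log N)/(m_0\log 2)$ gives $D_N^*(H_{\bsb})\ge c_{\bsb,s}(\log N)^s/N$ with $c_{\bsb,s}=1/(8B(m_0\log 2)^s)$, for infinitely many $N$ as $m\to\infty$.
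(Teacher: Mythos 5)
The key step of your proposal is false, and the failure is exactly the Hellekalek bounded-remainder theorem (Theorem~\ref{bdremainder}). If $\alpha_j=a_j/b_j^{k_j}$ is a $b_j$-adic rational, then the first $b_j^{k_j}$ terms of $Y_{b_j}$ hit every interval $[\ell b_j^{-k_j},(\ell+1)b_j^{-k_j})$ exactly once, so exactly $a_j$ of them lie in $[0,\alpha_j)$, and therefore
\[
\sigma_j=\sum_{r=0}^{b_j^{k_j}-1}D_j(r)=a_j-b_j^{k_j}\cdot\frac{a_j}{b_j^{k_j}}=0,
\]
not $\asymp k_j$ as you claim. This has nothing to do with whether the digits of $a_j$ lie in $\{1,\dots,b_j-2\}$; it holds for every $b_j$-adic rational of denominator $b_j^{k_j}$, and is precisely why such $\alpha_j$ are bounded-remainder boundaries. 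Theorem~\ref{th1F83}, which you cite, asserts that the one-dimensional error is large for \emph{infinitely many specific} $N$; for $N$ equal to a full period $b_j^{k_j}$ it vanishes. You have conflated these two statements.

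Consequently the Chinese Remainder Theorem factorization, which is algebraically correct, yields $\sum_{n=0}^{B'-1}\prod_j D_j(n)=\prod_j\sigma_j=0$, not a quantity of order $m^s$. The entire content of the argument therefore sits in the ``residual'' contribution from the incomplete cycle $N\bmod B'$, which you treat as a negligible correction --- you have the roles of the full cycles and the partial cycle exactly backwards, and that brings you straight back to the original difficulty of simultaneously choosing one $N$ that is bad in every coordinate. The inclusion--exclusion reduction to the product statistic and the observation about $2^s$ boxes are fine, but they are preliminary bookkeeping; the heart of the argument is missing. Note also that any repair that replaces $\alpha_j$ by a non-$b_j$-adic rational so that $\sigma_j\neq 0$ simultaneously destroys the periodicity $D_j(n)=D_j(n\bmod b_j^{k_j})$ on which the CRT factorization relies. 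Levin's actual argument must (and does) use a genuinely different mechanism for producing, for a single well-chosen $N$ in the prescribed range, a box with error $\gtrsim m^s/B$, exploiting the digital structure of the van der Corput maps rather than a cycle-average that factorizes to zero.
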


\paragraph{Atanassov's results.} The bound in Theorem~\ref{diha} has a slightly defective
appearance when one considers the constant in the leading $\log
N$-term. Consider the quantity
\begin{equation}\label{dast}
d_s^{\ast}(X):= \limsup_{N \rightarrow \infty} \frac{N
D_N^*(X)}{(\log N)^s}
\end{equation}
for Halton sequences $X=H_{\bsb}$. From Theorem~\ref{diha} it
follows that $$d_s^{\ast}(H_{\bsb}) \le \prod_{j=1}^s
\frac{b_j-1}{2 \log b_j}=: c_s.$$ This bound $c_s$ is large and
grows very fast to infinity for growing dimensions $s$. For
example, if $b_1,\ldots ,b_s$ are the first $s$ prime numbers,
then the Prime Number Theorem implies that $b_j \approx j \log j$
for large $j$ and hence
\begin{eqnarray*}
c_s = \prod_{j=1}^s\frac{b_j-1}{2\log b_j} \approx
\prod_{j=1}^s\frac{j \log j}{2 \log(j \log j)} \approx
\prod_{j=1}^s\frac{j \log j}{2 \log j}=\frac{s!}{2^s}.
\end{eqnarray*}
Also the bounds from \cite{fau1980,HuaWang,LP14,Meij} have the
same disadvantage. In 2004, Atanassov \cite{ata} was able to
overcome this particular disadvantage (see also the recent survey
\cite{fau14}, where Atanassov's method is outlined).

\begin{theorem}[Atanassov]\label{thmata}
If $H_{\bsb}$ is the Halton sequence in pairwise coprime bases
$b_1,b_2,\ldots,b_s$, then $$D_N^{\ast}(H_{\bsb})\le \left[
\frac{1}{s!}\prod_{j=1}^s\left(\frac{\lfloor b_j/2\rfloor \log
N}{\log b_j} +s\right)+
\sum_{k=0}^{s-1}\frac{b_{k+1}}{k!}\prod_{j=1}^k\left(\frac{\lfloor
b_j/2 \rfloor \log N}{\log b_j}+k\right)\right]\frac{1}{N}.$$
\end{theorem}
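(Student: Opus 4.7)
My plan is to follow Atanassov's method of \emph{signed splittings}, which is the key device for replacing the naive per-coordinate constant $(b_j-1)/(2\log b_j)$ by the better constant $\lfloor b_j/2\rfloor/\log b_j$, and for gaining the symmetric factor $1/s!$ when the one-dimensional bounds are combined. The overall strategy is to isolate the per-coordinate discrepancy contributions on the one hand, and on the other hand to keep careful track of what happens when these contributions are multiplied together across the $s$ dimensions.

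First I would set up the one-dimensional building block. Fix a coordinate $j$ and an $\alpha_j\in[0,1)$ with base-$b_j$ expansion $\alpha_j=\sum_{i\ge1}\alpha_{j,i}b_j^{-i}$. Writing $[0,\alpha_j)$ as a disjoint union of elementary $b_j$-adic intervals according to this expansion and using the bounded-remainder property of the van der Corput sequence in base $b_j$ (Theorem~\ref{bdremainder} applied to each elementary interval, exactly as in the proof of that theorem) yields an error that, naively, is a sum over at most $m$ digits each contributing up to $b_j-1$. The central trick is now to note that at each digit we may instead \emph{borrow} from the next higher position, i.e., write $\alpha_{j,i}=\alpha_{j,i}$ or $\alpha_{j,i}=-(b_j-\alpha_{j,i})+b_j$ and propagate. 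Choosing the representation that minimizes the absolute value at each position replaces $\alpha_{j,i}\in\{0,\ldots,b_j-1\}$ by a signed digit of absolute value at most $\lfloor b_j/2\rfloor$. Coupled with the choice $m\approx\log N/\log b_j$ (cutting the expansion when the elementary intervals become finer than $1/N$), this is the source of the factor $\lfloor b_j/2\rfloor\log N/\log b_j$ in each coordinate.

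Next I would combine these one-dimensional expansions over all $s$ coordinates. For $\bsx=(x_1,\ldots,x_s)\in[0,1)^s$, the counting function $A([\bszero,\bsx);N;H_{\bsb})$ factors into a sum over $s$-tuples of digit indices, one per coordinate, and the local discrepancy $\Delta_{H_{\bsb},N}(\bsx)$ becomes a multi-linear expression in the (signed) digit sequences from each coordinate. Writing the resulting telescoping estimate as a sum indexed by the subset $T\subseteq\{1,\ldots,s\}$ of coordinates whose digit expansion has been fully resolved, and using the symmetry across coordinates together with the elementary identity
\begin{equation*}
\prod_{j=1}^s u_j=\sum_{k=0}^{s}\frac{1}{k!(s-k)!}\sum_{\pi\in\Sy_s}\text{(symmetric terms)},
\end{equation*}
one extracts the factor $1/s!$ in front of the $s$-fold product, while the ``boundary'' subsets of sizes $k=0,1,\ldots,s-1$ give rise to the additive correction $\sum_{k=0}^{s-1}(b_{k+1}/k!)\prod_{j=1}^k(\cdots)$. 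The cleanest way to make this precise is by induction on $s$: assume the bound holds for all $(s-1)$-dimensional Halton sequences $H_{(b_1,\ldots,b_{s-1})}$, then project $H_{\bsb}$ onto its first $s-1$ coordinates and integrate the resulting estimate against the error in the last coordinate using the signed splitting from the previous paragraph.

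The main obstacle, and the place where Atanassov's paper is genuinely delicate, is the bookkeeping of the signed splittings in the multi-dimensional step. One must show that the choices of sign made independently in each coordinate are compatible with a single global decomposition of $\Delta_{H_{\bsb},N}$, so that the error terms are genuinely additive and do not cross-interfere. This requires careful handling of the ``borrowed'' unit digits and a check that the error incurred by borrowing at coordinate $j$ does not spoil the bounds established in coordinate $j'\neq j$; the pairwise coprimality of the bases $b_1,\ldots,b_s$ (needed for uniform distribution in Theorem~\ref{thmHalton}) is crucial here through the Chinese Remainder Theorem, which guarantees that counting with respect to one coordinate's elementary intervals does not bias the counts with respect to another coordinate's elementary intervals. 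Once this is in place, the final assembly is essentially algebraic, and the stated bound drops out upon optimizing the truncation parameter $m_j$ in each coordinate.
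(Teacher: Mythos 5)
You correctly identify two of the three ingredients of Atanassov's argument: the signed (balanced digit) splitting of each $[0,\alpha_j)$, which replaces the naive digit bound $b_j-1$ by $\lfloor b_j/2\rfloor$ at each resolution level, and the Chinese Remainder property for elementary boxes, which guarantees that a box $\prod_j[a_jb_j^{-d_j},(a_j+1)b_j^{-d_j})$ has local error at most $1$ against any block of consecutive Halton points. Both are correct and necessary, and your remark that pairwise coprimality enters through the CRT is right.

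However, your account of the factor $1/s!$ — the genuinely novel part of the theorem — does not hold up. The ``elementary identity'' you write with $\sum_{\pi\in\Sy_s}$ and ``symmetric terms'' is a placeholder rather than an argument; symmetrizing a product of $s$ factors does not by itself produce a $1/s!$. Worse, the inductive scheme you then propose — project $H_{\bsb}$ to its first $s-1$ coordinates, invoke the inductive bound, and integrate against the last coordinate — is exactly the classical recursion behind Niederreiter's Theorem~\ref{diha}, whose leading constant is $\prod_j(b_j-1)/(2\log b_j)$ with \emph{no} factorial. The paper stresses that this is precisely the defect Atanassov's method overcame; a per-coordinate ``project and integrate'' step multiplies the $(s-1)$-dimensional constant by one new factor and structurally cannot generate $1/s!$. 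The $1/s!$ in fact comes from a combinatorial lattice-point count: after taking the product of the $s$ signed decompositions, the elementary boxes one must account for are indexed by resolution tuples $(d_1,\ldots,d_s)\in\NN_0^s$ with $\prod_jb_j^{d_j}\le N$, i.e., lattice points under the simplex $\sum_j d_j\log b_j\le\log N$, whose cardinality is of order $\frac{1}{s!}\prod_j\frac{\log N}{\log b_j}$; the lower-dimensional faces of this simplex supply the ``$+s$'' shifts and the additive corrections $\sum_{k<s}(b_{k+1}/k!)\prod_{j\le k}(\cdots)$. This counting lemma is the key new step in \cite{ata} and is entirely absent from your sketch; Faure's exposition \cite{fau14}, cited in the paper alongside \cite{ata}, states it explicitly and carries out the remaining bookkeeping.
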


This result implies that $$d_s^{\ast}(H_{\bsb})\le \frac{1}{s!}
\prod_{j=1}^s\frac{\lfloor b_j/2\rfloor}{\log b_j}.$$ If again
$b_1,\ldots ,b_s$ are the first $s$ prime numbers, then this
implies that $d_s^{\ast}(H_{\bsb}) \ll  \frac{1}{2^s s}$ and hence
$d_s^{\ast}(H_{\bsb})$ tends to zero at an exponential rate as $s
\rightarrow \infty$. More exact, we have
\begin{equation}\label{asydsternhalt}
\limsup_{s \rightarrow \infty}\frac{\log d_s^*(H_{\bsb})}{s}\le
-\log 2.
\end{equation}

\paragraph{Generalized Halton sequences.}

A disadvantage of the classical Halton sequence is that there
seem to exist quite many correlations between the components from
the higher dimensions, see e.g. \cite{lem_book,LP14}. To overcome
this problem one often studies generalized Halton sequences
$H_{\bsb}^{\Sigma_1,\ldots,\Sigma_s}$ which are given by
$\bsy_n=(Y_{b_1}^{\Sigma_1}(n),Y_{b_2}^{\Sigma_2}(n),\ldots,Y_{b_s}^{\Sigma_s}(n))$
for $n \in \NN_0$, where $\Sigma_j=(\sigma_{j,r})_{r \ge 0}$ are
sequences of permutations of $\{0,1,\ldots,b_j\}$ for $1 \le j \le
s$; see again \cite{lem_book} and the references therein. The
discrepancy estimate from Theorem~\ref{thmata} remains valid also
for generalized Halton sequences. However, it is important to note
that the permutations can even lead to a drastic reduction of the
star discrepancy of $H_{\bsb}$. For example, in \cite{ata}
Atanassov constructed specific sequences of permutations
$\Sigma_1,\ldots,\Sigma_s$ which lead to exactly such a result:

Let $b_1,\ldots,b_s$ be distinct prime numbers. Then integers
$k_1,\ldots,k_s$ are called ``admissible'' to these primes, if
$b_j \nmid k_j$ for $1 \le j \le s$ and if for each set of
integers $a_1,\ldots,a_s$ with $b_j \nmid a_j$ for $1 \le j \le
s$, there exist integers $\alpha_1,\ldots,\alpha_s$ such that
$$k_j^{\alpha_j} \prod_{i=1 \atop i \not=j}^s b_i^{\alpha_i}
\equiv a_j \pmod{b_j}\ \ \ \mbox{ for all $1 \le j \le s$.}$$

Now let $b_1,\ldots,b_s$ be distinct prime numbers and let
$k_1,\ldots,k_s$ be ``admissible'' to these primes. For each $j
\in \{1,\ldots,s\}$ define a sequence of permutations
$\Sigma_j=(\tau_{j,r})_{r \ge 0}$ of $\ZZ_{b_j}$ by $
\tau_{j,r}(x):=x k_j^r \pmod{b_j}$. For these so-called
scramblings we have (\cite[Theorem~2.3]{ata})

\begin{theorem}[Atanassov]
Let $b_1,\ldots,b_s$ be distinct prime numbers and let
$k_1,\ldots,k_s$ be ``admissible'' to these primes. For the
sequences of permutations constructed above we have
$$D_N^*(H_{\bsb}^{\Sigma_1,\ldots,\Sigma_s}) \le \left(\frac{1}{s!} \sum_{j=1}^s \log b_j\right) \left(\prod_{i=1}^s \frac{b_i (1+\log b_i)}{(b_i-1) \log b_i}\right) \frac{(\log N)^s}{N}+O\left(\frac{(\log N)^{s-1}}{N}\right).$$
\end{theorem}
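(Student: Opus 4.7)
The proof follows Atanassov's strategy from \cite{ata}, proceeding in three main steps with an induction on the dimension $s$ that produces the $1/s!$ factor.

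First, I would express the local discrepancy of $H_{\bsb}^{\Sigma_1,\ldots,\Sigma_s}$ coordinatewise. For a corner $\boldsymbol{\alpha}=(\alpha_1,\ldots,\alpha_s)\in(0,1]^s$, truncate each $\alpha_j$ to its first $m_j$ digits in base $b_j$ and expand $N\,\Delta_{H,N}(\boldsymbol{\alpha})$ via inclusion--exclusion over products of $b_j$-adic elementary intervals. For each such product box, the count of points falling in it reduces to a joint congruence condition on $n$ modulo $b_1^{m_1},\ldots,b_s^{m_s}$, since the value $Y_{b_j}^{\Sigma_j}(n)$ lies in a given $b_j$-adic elementary interval precisely when the first $m_j$ scrambled base-$b_j$ digits of $n$ take prescribed values.

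Second, I would invoke the Chinese Remainder Theorem on the pairwise coprime moduli $b_1^{m_1},\ldots,b_s^{m_s}$: the residues of $n$ modulo these moduli are in bijection with $\prod_j\ZZ/b_j^{m_j}\ZZ$. The multiplicative scrambling $\tau_{j,r}(x)=x k_j^r\bmod b_j$ acts digit-by-digit as multiplication by powers of the $k_j$. The admissibility hypothesis is exactly what is needed to show that, after scrambling, every joint residue pattern is hit, and with frequency $N/\prod_j b_j^{m_j}$ up to an additive $O(1)$ error per pattern. This joint equidistribution step converts the naive product bound $\prod_j D^*_{\,\cdot\,}(Y_{b_j}^{\Sigma_j})$ into a sum-type bound where the contributions of different coordinates ``add rather than multiply'' at leading order.

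Third, I would apply Atanassov's combinatorial induction on $s$---the same identity that yields Theorem~\ref{thmata}---which rewrites expressions of the form $\prod_{j=1}^s(c_j\log N+d_j)$ as $\frac{1}{s!}(\log N)^s\prod_j c_j$ plus symmetric polynomials in $(c_j,d_j)$ contributing lower powers of $\log N$. The single-coordinate factor $\frac{b_i(1+\log b_i)}{(b_i-1)\log b_i}$ arises as an upper bound for the averaged one-dimensional contribution coming from $\psi_{b_i}^{\tau_{i,r}}$, while the leading symmetric coefficient $\frac{1}{s!}\sum_j\log b_j$ in front of $(\log N)^s$ is exactly the next-to-leading symmetric function produced by the induction after the coordinates have been decoupled in Step 2. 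Routine book-keeping of truncation, admissibility, and induction errors gives the $O((\log N)^{s-1}/N)$ remainder.

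The main obstacle is Step 2: making admissibility do its job. The hypothesis is algebraic---solvability of $k_j^{\alpha_j}\prod_{i\ne j}b_i^{\alpha_i}\equiv a_j\pmod{b_j}$ for every target $(a_1,\ldots,a_s)$---and one must translate this into the combinatorial statement that the scrambled digits across all $s$ coordinates equidistribute jointly under the CRT bijection, so that summations over $n\in\{0,1,\ldots,N-1\}$ decompose into independent sums over the $s$ coordinates. Once admissibility is shown to be equivalent to this CRT-compatibility, the argument proceeds in parallel with the proof of Theorem~\ref{thmata}, and the precise tracking of constants to arrive at $\sum_j\log b_j$ and $\prod_i\frac{b_i(1+\log b_i)}{(b_i-1)\log b_i}$ becomes a careful but mechanical calculation.
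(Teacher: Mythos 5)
The paper does not prove this theorem; it only states Atanassov's result and cites \cite[Theorem~2.3]{ata}, so there is no in-paper proof to compare against. Still, your Step~2 misidentifies what the admissibility hypothesis provides. You claim admissibility is ``exactly what is needed to show that, after scrambling, every joint residue pattern is hit, and with frequency $N/\prod_j b_j^{m_j}$ up to an additive $O(1)$ error per pattern.'' But this joint equidistribution of $n$ modulo $\prod_j b_j^{m_j}$ is automatic from the Chinese Remainder Theorem as soon as the bases are pairwise coprime; it holds for \emph{any} choice of permutations and is already what underlies the unscrambled bound of Theorem~\ref{thmata}. If that were all one needed, you would recover exactly Theorem~\ref{thmata} with no improvement. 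Admissibility is a strictly stronger, simultaneous condition: a \emph{single} exponent vector $(\alpha_1,\ldots,\alpha_s)$ must satisfy all $s$ congruences $k_j^{\alpha_j}\prod_{i\neq j}b_i^{\alpha_i}\equiv a_j\pmod{b_j}$ at once, for arbitrary target units $(a_1,\ldots,a_s)$. This simultaneity ties together the scrambling exponents and the digit-truncation levels across coordinates; that coupling is what lets Atanassov replace the per-coordinate factor $\lfloor b_j/2\rfloor/\log b_j$ of Theorem~\ref{thmata} by the much smaller $b_j(1+\log b_j)/((b_j-1)\log b_j)$ appearing here. Your outline supplies no mechanism for this reduction, and consequently your Step~3 attribution of the $\sum_j\log b_j$ prefactor to ``next-to-leading symmetric functions'' of the Atanassov induction is unsupported: that factor comes from the admissibility-coupled choice of levels, not from symmetric-polynomial bookkeeping that would already be present in the unscrambled argument.
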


From this discrepancy bound it follows easily (see, for example,
\cite{FL09}) that
\begin{equation}\label{asydsternhalt1}
\limsup_{s \rightarrow \infty}\frac{\log
d_s^*(H_{\bsb}^{\Sigma_1,\ldots,\Sigma_s})}{s\log s}=-1,
\end{equation}
and this is a drastic improvement compared to
\eqref{asydsternhalt}, since now
$d_s^*(H_{\bsb}^{\Sigma_1,\ldots,\Sigma_s})$ tends to zero at a
superexponential rate as $s \rightarrow \infty$.

\paragraph{Randomized and deterministic scrambled Halton sequences.}
Another possibility to overcome the problem of correlations
between the components from the higher dimensions of Halton
sequences is to use so-called ``random-start'' Halton sequences
which are based on generalizing the Halton sequence in terms of
von Neumann-Kakutani transformations, i.e., the sequence
$((T_{b_1}^n x_1,T_{b_2}^n x_2,\ldots, T_{b_s}^n x_s))_{n \ge 0}$
with $(x_1,x_2,\ldots,x_s)\in [0,1)^s$; see
\cite{oekt,str95,WH2000}.

One could also consider linear digit scramblings: Various types of
linear digit scramblings as introduced at the end of Section
\ref{secdig01seq} have been used by many authors with the aim to
improve the behavior of Halton sequences for multi-dimensional
integration based on quasi-Monte Carlo methods (note that also
Atanassov's permutations $\tau_{j,r}$ introduced above are
specific examples of linear digit scramblings). An extensive
comparative study of such methods is given in \cite{FL09} where a
new effective way of scrambling is also proposed. This method
consists of the search for a list of good multipliers by means of
the diaphony of one-dimensional projections of Halton sequences,
coupled with a criterion based on two-dimensional projections that
selects the best multipliers in the preceding list (the diaphony
is considered, rather than the discrepancy,  because it is easier and
faster to compute, see \cite{Fau06} for more details). Several
numerical experiments show a regular and better behavior of these
multipliers in comparison with other ones (including multipliers
from the modified Halton sequences of Atanassov). The study also
compares linear digit scramblings with ``random-start'' Halton
sequences and suggests that these sequences have almost the same
behavior as original Halton sequences.

\paragraph{Further remarks.}
Bounded remainder intervals for Halton sequences are well
understood and it is known that they are of the form $[0,a_1
b_1^{-m_1})\times \cdots \times [0,a_s b_s^{-m_s})$. We refer to
\cite[Theorem~70]{GHL} and the references therein. Subsequences of
the Halton sequence are studied in \cite{HelNie2011,HLKP,KrLaPi}.
Ergodic properties of $\beta$-adic Halton sequences are studied in
\cite{HIT}.

Also the polynomial version of the van der Corput sequence as
presented in Section~\ref{secpolyvdc} can be generalized to
polynomial Halton sequences which are a special sub-class of
$(t,s)$-sequences then; see \cite{tez1993}.

\paragraph{The Hammersley point set.}
A finite version of the Halton sequence in dimension $s-1$ is the
Hammersley point set in dimension $s$ (cf.
Definition~\ref{defgenHam} for $s=2$).

\begin{definition}[Hammersley point set]\rm
Let $s, N \in \NN$, $s \ge 2$, and let
$\bsb=(b_1,b_2,\ldots,b_{s-1})$ be a vector of integers with $b_j
\ge 2$. The {\it $s$-dimensional Hammersley point set in bases
$b_1,b_2,\ldots,b_{s-1}$} is given as
$\cH_{\bsb,N}=\{\bsy_0,\bsy_1,\ldots,\bsy_{N-1}\}$ where
$$\bsy_n=\left(\frac{n}{N},Y_{b_1}(n),Y_{b_2}(n),\ldots
,Y_{b_{s-1}}(n)\right) \ \ \ \mbox{ for $n=0,1,\ldots,N-1$.}$$
\end{definition}

According to a well-known relation between the star discrepancy of
infinite sequences in dimension $s-1$ and finite point sets in
dimension $s$, whose elements have first coordinate of the form
$n/N$ (see, e.g., \cite[Lemma~3.45]{DP10} or
\cite[Lemma~3.7]{niesiam}) it follows from Theorem~\ref{diha} that
if $b_1,b_2,\ldots,b_{s-1}$ are pairwise coprime integers, then
$$D_N^*(\cH_{\bsb,N}) \ll_{s,\bsb} \frac{(\log N)^{s-1}}{N}.$$
See, e.g., \cite[Chapter~3.4]{DP10} or \cite[Chapter~3]{niesiam}
for more information in this direction. For generalized Hammersley
point sets we refer to \cite{Fau1986a}.


\subsection{$(t,s)$-sequences}\label{sec_tssequ}

In Section~\ref{sec_halt} we generalized the van der Corput sequence 
to arbitrary dimension by choosing different bases, one per coordinate 
direction, and concatenating component-wise van der Corput sequences 
in these bases. We now introduce a generalization to arbitrary dimension 
where the base $b$ is kept fixed for all dimensions. 

\paragraph{Definition of $(t,s)$-sequences.} In the one-dimensional case 
(see Definition \ref{def01seq} in Section~\ref{secdig01seq}), we defined a 
$(0,1)$-sequence in base $b$ as a sequence of points such that each elementary
interval of a certain size contains exactly one element of the
corresponding segment of the sequence. In higher dimensions, the
idea of a $(t,s)$-sequence in base $b$ generalizes this concept.
However, the condition that then higher-dimensional elementary
intervals all contain exactly one point of the segments of the
sequence is, for combinatorial reasons, no longer sustainable
in general. Therefore, one needs additional parameters to specify
how many points lie in the elementary intervals. To be more
precise, we have the following definition that generalizes
Definition \ref{def01seq}. The general idea of this definition is
due to Niederreiter \cite{N87}, but there also were earlier
definitions of a similar flavor for special cases by Sobol'
\cite{S67} and Faure \cite{Fau1982}.

\begin{definition}\label{deftsseq}\rm
Let $b\ge 2$, $t\ge 0$, and $s\ge 1$ be integers. A sequence
$(\bsx_n)_{n\ge 0}$ in $[0,1)^s$ is called a {\it $(t,s)$-sequence
in base $b$ (in the broad sense)} if for all $l,m\in\NN_0$ with
$m\ge t$, every elementary interval of volume $b^{t-m}$ of the
form
$$\left[\frac{a_1}{b^{d_1}},\frac{a_1+1}{b^{d_1}}\right)\times\cdots\times\left[\frac{a_s}{b^{d_s}},\frac{a_s+1}{b^{d_s}}\right),$$
with $a_j,d_j\in\NN_0$ such that $0\le a_j < b^{d_j}$ for $1\le
j\le s$, contains exactly $b^t$ elements of the point set
$$\{[\bsx_n]_{b,m}:\ lb^m\le n\le (l+1)b^m-1\}\ \ \ \mbox{ for all $l \in \NN_0$.}$$ Here the truncation operator $[\cdot]_{b,m}$ is applied component-wise.
\end{definition}

Some remarks on this definition are in order:
\begin{remark}\rm
\begin{itemize}
\item The original definition of $(t,s)$-sequences in base $b$ due
to Niederreiter~\cite{N87,niesiam} is the same but with
$[\bsx_n]_{b,m}$ replaced by $\bsx_n$. The definition presented
here goes back to Niederreiter and Xing (see, e.g.,
\cite[Chapter~8]{NX_book}). \item It is well known that every
$(t,s)$-sequence in base $b$ is uniformly distributed modulo 1
(see, e.g., \cite{DP10,LP14}). \item If one would like to have
greater flexibility in the definition of a $(t,s)$-sequence, it is
also possible to allow the parameter $t$ to depend on the
parameter $m\in\NN_0$ in the above definition. In this case, the
value of $t$ can vary with $m$, and instead of a $(t,s)$-sequence,
one then has a $({\bf T},s)$-sequence with ${\bf T}=(t(m))_{m\ge
0}$. For the sake of simplicity, we concentrate on discussing only
$(t,s)$-sequences with a fixed $t$ in the present paper. For
further information on $({\bf T},s)$-sequences, we refer the
interested reader to \cite{LN95} and \cite[Chapters 4 and
5]{DP10}.

\item Note that in Definition \ref{deftsseq} lower values of $t$
mean better distribution properties, with $t=0$ being the optimal
case. For this reason $t$ is called the {\it quality parameter} of
the sequence. For the one-dimensional sequences presented in
Section \ref{secdig01seq}, a value of $t=0$ can be reached,
however for general sequences in $[0,1)^s$ this is not always
achievable. For instance, a $(0,s)$-sequence in base $b$ can only
exist if $s \le b$ (if $b$ is a prime power, then the ``only if''
can be replaced by ``if and only if''). The question which $t$ for
which parameter configuration $(s,b)$ is optimal has gained much
interest. In \cite{NX1996} it was shown that for the minimal
achievable $t$ we have 
$$t
\ge \frac{s}{b}-\log_b\left( \frac{(b-1)s+b+1}{2}\right),
$$ 
where $\log_b$ is the logarithm to the base $b$. For
further information in this direction we also refer to
\cite{mint}.
\end{itemize}
\end{remark}

As it was the case for $(0,1)$-sequences, also for
$(t,s)$-sequences in base $b$ there exists a digital version
generated by $\NN \times \NN$ matrices over a finite ring $R$. As
in the one-dimensional case we restrict ourselves to the case
where $b$ is a prime power and $R$ is the finite field $\FF_b$ of
order $b$. Indeed, $s$-dimensional digital sequences over $\FF_b$
are obtained by choosing, for each $j\in\{1,\ldots,s\}$, an
$\NN\times\NN$ matrix $C^{(j)}=(c_{r,k}^{(j)})_{r,k\ge 0}$ over
$\FF_b$. Using the construction method outlined in
Definition~\ref{Defdig01}, one obtains a sequence
$X_b^{C^{(j)}}=(x_n^{(j)})_{n\ge 0}$ for each
$j\in\{1,\ldots,s\}$. The $s$-dimensional sequence then consists
of the points
\begin{equation}\label{DigSeq}
\bsx_n:=(x_n^{(1)},\ldots,x_n^{(s)}) \ \ \ \ \mbox{ for $n\in
\NN_0$.}
\end{equation}
The sequence $(\bsx_n)_{n \ge 0}$ is often called a {\it digital
sequence over $\FF_b$}.

The matrices $C^{(j)}$ also determine the $t$-value of the
corresponding digital sequence. Indeed, let $C^{(j)}(m \times m)$
be the left upper $m \times m$ submatrix of $C^{(j)}$. If there
exists a $t \in \NN_0$ such that for every choice of nonnegative
integers $m>t$ and $d_1,\ldots,d_s\in \NN_0$ with $d_1+\cdots +
d_s=m-t$,
\begin{center}
\begin{itemize}
\setlength{\itemsep}{-2pt} \item[] the first $d_1$ rows of
$C^{(1)}(m\times m)$, together with \item[] the first $d_2$ rows
of $C^{(2)}(m \times m)$, together with
\item[] etc., up to\\
\item[] the first $d_s$ rows of $C^{(s)}(m \times m)$,
\end{itemize}
\end{center}
(these are $m-t$ vectors in $\FF_b^m$) is a linearly independent set over $\FF_b$, 
then the digital sequence \eqref{DigSeq} is a $(t,s)$-sequence in base 
$b$ which is then called a {\it digital $(t,s)$-sequence over $\FF_b$} 
(see \cite{DP10,LP14,N87,niesiam}). Hence, finding a $(t,s)$-sequence in base 
$b$ with a low value of $t$ boils down to the problem of finding generating matrices 
$C^{(1)},\ldots, C^{(s)}$ whose row vectors satisfy the necessary linear independence conditions. \\

The dynamics behind digital $(t,s)$-sequences was studied by 
Grabner, Hellekalek, and Liardet. We do not go into details
here and just refer to the papers \cite{GHL} and \cite{hellia}.

\paragraph{Generalized Niederreiter sequences.} A particular kind of digital $(t,s)$-sequences are so-called 
Niederreiter sequences, whose points are constructed by means of polynomials over finite fields. 
These sequences
offer an explicit way of constructing the generating matrices of
powerful digital $(t,s)$-sequences over $\FF_b$. They were
introduced in \cite{N88}, and their definition was further
extended to generalized Niederreiter sequences by Tezuka in
\cite{tez1995}. Following the presentation in \cite{DP10}, a
generalized Niederreiter sequence is obtained as follows.

Let $s\in\NN$, let $b$ be a prime power, and let $p_1,\ldots,p_s$
be distinct monic irreducible polynomials over $\FF_b$. Let
$e_j:=\deg (p_j)$ for $1\le j\le s$. Moreover, choose polynomials
$y_{j,i,k}$ for $1\le j\le s$, $i\ge 1$, and $0\le k< e_j$ with
the property that the set $\{y_{j,i,k}: 0\le k< e_j\}$ is
linearly independent modulo $p_j$ over $\FF_b$. Then consider
the expansions
$$\frac{y_{j,i,k}(x)}{p_j(x)^i}=\sum_{r=0}^\infty a^{(j)}(i,k,r)x^{-r-1}$$
over the field $\FF_b ((x^{-1}))$ for $1\le j\le s$, $i\ge 1$, and
$0\le k< e_j$. We use these expansions to define the generating
matrices of a digital sequence; for $1\le j\le s$ let
$C^{(j)}=(c_{i,r}^{(j)})_{i\ge 1, r\ge 0}$ be given by
\begin{equation}\label{defmatnies}
c_{i,r}^{(j)}=a^{(j)} (Q+1,k,r)\in\FF_b
\end{equation}
for $1\le j\le s$, $i\ge 1$, and $0\le k< e_j$, where $i-1=Qe_j +
k$ with suitable integers $Q=Q(j,i)$ and $k=k(j,i)$ with $0\le k<
e_j$.

\begin{definition}\rm
A digital sequence over $\FF_b$ generated by $C^{(1)},\ldots,
C^{(s)}$ as given by \eqref{defmatnies} is called {\it generalized
Niederreiter sequence}.
\end{definition}

Every generalized Niederreiter sequence is a digital
$(t,s)$-sequence over $\FF_b$ with $$t=\sum_{j=1}^s (e_j-1),$$
see, e.g., \cite{DP10,LP14,niesiam}.

The concept of generalized Niederreiter sequences comprises
earlier constructions due to Sobol'~\cite{S67},
Faure~\cite{Fau1982}, and Niederreiter~\cite{N88}, which are
nowadays very popular as underlying sample points for quasi-Monte
Carlo integration rules (cf. \cite{DP10,niesiam}).
\begin{itemize}
\item A {\it Sobol' sequence} is the generalized Niederreiter
sequence where $b = 2$, $p_1(x) = x$, and, for $j=2,\ldots,s$,
$p_j(x)$ is the $(j-1)$-st primitive polynomial in a sequence of
all primitive polynomials over $\FF_2$ arranged according to
nondecreasing degrees. Further, there are polynomials
$g_{j,0},\ldots, g_{j,e_j-1}$ with $\deg(g_{j,h}) = e_j-h+1$ such
that $y_{j,i,k} = g_{j,k}$ for all $i \in \NN$, $k \in
\{0,\ldots,e_j-1\}$, and $j\in \{1,\ldots,s\}$, see
\cite[Remark~2]{tez1995}. \item A {\it Faure sequence} corresponds
to the case where the base $b$ is a prime number such that $b \ge
s$, $p_j(x) = x-j+1$ for $j=1,2,\ldots, s$, and all $y_{i,j,k} \equiv
1$, see \cite[Remark~3]{tez1995}. In particular, every Faure
sequence is a $(0,s)$-sequence. The matrices of a Faure sequence
can be given explicitly; there exist $s$ pairwise
different elements of $\FF_b$, $\beta_1,\ldots,\beta_s$, which are determined 
by the choice of the $p_j$ such that the $i$th matrix is given as
$$C_i:=\left(
\begin{array}{cccccc}
1 & {1 \choose 0} \beta_i^1 & {2 \choose 0} \beta_i^2 & {3 \choose 0} \beta_i^3 & {4 \choose 0} \beta_i^4 &  \ldots\\ \\
0 & 1 & {2 \choose 1} \beta_i^1 & {3 \choose 1} \beta_i^2 & {4 \choose 1} \beta_i^3 & \ldots \\ \\
0 & 0 & 1 & {3 \choose 2}\beta_i^1 & {4 \choose 2}\beta_i^2 & \ldots \\ \\
\vdots & \ddots  & \ddots & 1 & {4 \choose 3} \beta_i^1& \ldots \\ \\
\vdots & \ddots & \ddots & \ddots & \ddots & \ddots
\end{array}
\right) \in \FF_b^{\NN \times \NN},$$ where the binomial
coefficients have to be taken modulo $b$. \item A {\it
Niederreiter sequence} corresponds to the case where $b$ is a
prime power and the $y_{j,i,k}$ are of the form $y_{j,i,k}(x)=x^k
g_{j,i}(x)$ with polynomials $g_{j,i}\in \FF_b[x]$ such that
$\gcd(g_{j,i},p_j)=1$, see \cite[Remark~4]{tez1995}.
\end{itemize}

The currently best constructions for digital sequences with
respect to the quality parameter $t$ are based on methods from
algebraic geometry. These constructions were developed in a series
of papers by Niederreiter and Xing. For an introduction
to this subject and an overview we refer to the book by
Niederreiter and Xing \cite[Chapter~8]{NX_book}. A survey can also
be found in the book \cite[Chapter~8]{DP10}.

\medskip

Linear digit scramblings have also been used to improve the
quality of $(0,s)$-sequences for quasi-Monte Carlo methods, with
the same approach as for Halton sequences in
Section~\ref{sec_halt}, see \cite{LeFe}.

\paragraph{The star discrepancy of $(t,s)$-sequences.} Regarding the discrepancy of $(t,s)$-sequences, 
it is, not surprisingly, more difficult than in the one-dimensional case to come 
up with good discrepancy bounds. Let us start with discussing the star discrepancy,
for which there has been much progress over the past years.
Regarding lower discrepancy bounds, the best results for dimensions 
$s\ge 2$ are the general bounds \eqref{eqproinovdims} and
\eqref{lbdbillayvar}, excepted in the special case of the two-dimensional $(0,2)$-sequence of Sobol' (also a Faure sequence), say $S_2$, obtained with the van der Corput sequence $Y_2$  on the $x$-axis and its image by the Pascal matrix modulo 2 on the $y$-axis. For this sequence, Faure and Chaix \cite{fauch} were able to obtain the exact order $(\log N)^2$ for $ND^*_N(S_2)$ with a tight lower bound. Together with an upper bound from Pillichshammer in \cite{Pil03}, we have the following estimate:
\begin{theorem}[Faure and Chaix, Pillichshammer]\label{estim S2}
The sequence $S_2$ satisfies the inequalities
\[
\frac{1}{24(\log 2)^2} \le \limsup_{N \rightarrow \infty} \frac{ND^*_N(S_2)}{(\log N)^2} \le \frac{1}{12(\log 2)^2}\cdot
\]
\end{theorem}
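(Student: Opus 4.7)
The plan is to exploit the explicit digital structure of $S_2$: its two generating matrices over $\FF_2$ are the identity $I$ (producing the van der Corput sequence $Y_2$ on the first coordinate) and the Pascal matrix $P$ modulo $2$ on the second coordinate. Both bounds reduce to a two-dimensional refinement of the one-dimensional identity \eqref{disc_henri}, which says $ND_N^*(Y_2)=\sum_{r\ge 1}\|N/2^r\|$.

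For the upper bound (Pillichshammer), I would fix $N\in\NN$ with $2^{m-1}\le N<2^m$ and first establish an exact formula for the local discrepancy $N\Delta_{S_2,N}(\alpha,\beta)$ at an arbitrary corner $(\alpha,\beta)\in[0,1]^2$, expressed as a double sum over pairs of dyadic levels $(i,j)$ with $i+j\le m$, whose summands are of the form $\varphi_{2,h}^{\id}(N/2^i)\cdot \varphi_{2,h'}^{\id}(T_{i,j}N/2^j)$ with $T_{i,j}$ an affine transformation induced by the Pascal matrix; this decomposition uses the elementary-interval property of $(0,2)$-nets to identify, for each pair $(i,j)$, how many points of the first $N$ elements of $S_2$ lie in the relevant dyadic box of volume $2^{-i-j}$. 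A bilinear rearrangement, together with the one-dimensional sharp bound $\sum_{r=1}^m\|N/2^r\|\le m/6+O(1)$ applied to each coordinate, then gives $ND_N^*(S_2)\le m^2/12+O(m)$, which translates into $\limsup_N ND_N^*(S_2)/(\log N)^2\le 1/(12(\log 2)^2)$.

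For the lower bound (Faure--Chaix), the strategy is to exhibit, for each $m$, a specific integer $N_m\le 2^m$ and a corner $(\alpha_m,\beta_m)\in[0,1]^2$ for which $|N_m\Delta_{S_2,N_m}(\alpha_m,\beta_m)|\ge m^2/24+O(m)$. Natural candidates for $N_m$ are the one-dimensional extremizers of $\sum_{j=1}^m\|N/2^j\|$ — for example the $N_m$ with periodic alternating-block dyadic expansion such as $N_m=\frac{2^{m+1}}{3}(1-(-1/2)^{m+1})$ appearing in Theorem~\ref{discLpb2}. The corner $(\alpha_m,\beta_m)$ is chosen so that $\alpha_m$ is an extremizer of the local discrepancy of $Y_2$ at time $N_m$, as given by Corollary~\ref{corbefa}, while $\beta_m$ is the corresponding extremizer of the second-coordinate sequence, whose extremizers are obtained by applying $P$ to the binary digits of the extremizers of $Y_2$. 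Plugging this choice into the formula from the upper-bound step and using the descent-type identities of Section~\ref{sec_Faure_meth} adapted to digital $(0,2)$-sequences, one verifies that the two one-dimensional extremal sums reinforce each other, but only on a set of indices of relative density $1/2$ — accounting for the factor-of-$2$ loss between the constants $1/12$ and $1/24$.

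The main obstacle is the coupling between the two coordinates induced by the Pascal matrix. For the upper bound this shows up as the need to control bilinear cross-terms in the formula for $N\Delta_{S_2,N}(\alpha,\beta)$ without losing constants; for the lower bound, as the need to select the pair $(\alpha_m,\beta_m)$ so that the extremal dyadic digits on the second coordinate are precisely the $P$-image of those on the first, guaranteeing that the two extremal sums add constructively rather than cancel. Once this bookkeeping is organized, the one-dimensional building blocks — the $\psi$-function calculus from Section~\ref{secdiscgenvdC}, the descent method from Section~\ref{sec_Faure_meth}, and Corollary~\ref{corbefa} — apply coordinate-wise and the rest of the argument is routine.
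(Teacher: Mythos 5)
Your proposal has a concrete arithmetic gap that already shows the route, as sketched, cannot produce the claimed constants. The one-dimensional bound you invoke, $\sum_{r=1}^m\|N/2^r\|\le m/6+O(1)$, is not Tijdeman's bound \eqref{discbd_tijde} for $Y_2$: that bound gives $m/3+O(1)$. The $1/(6\log 2)$ constant belongs to the \emph{generalized} sequence $Y_2^{\Sigma^{\id}_{\mathcal{A}}}$ (see \eqref{stdiscestasm}), not to the classical $Y_2$ that forms the first coordinate of $S_2$. Moreover, even if one uses the correct $m/3+O(1)$ per coordinate, a ``bilinear rearrangement'' into a product of two one-dimensional sums would yield $m^2/9+O(m)$, which is \emph{weaker} than the target $m^2/12+O(m)$. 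So neither the constant you quote nor its naive square can produce $1/(12(\log 2)^2)$, and you give no indication of where the factor $12$ actually comes from. The claimed exact formula $N\Delta_{S_2,N}(\alpha,\beta)=\sum_{i+j\le m}\varphi^{\id}_{2,h}(N/2^i)\,\varphi^{\id}_{2,h'}(T_{i,j}N/2^j)$ is the keystone of your argument, but it is merely asserted; the elementary-interval property of a $(0,2)$-sequence does \emph{not} obviously produce a bilinear formula whose summands factor coordinate-wise, because the two coordinates of $S_2$ share the same integer index $n$, so the counting condition in a dyadic box is a joint (non-product) condition on the digits of $n$ and of $Pn$.

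The lower bound paragraph is similarly a heuristic, not a proof. Your ``reinforcement at relative density $1/2$'' would make $1/24$ appear as exactly half of $1/12$, but those two constants come from independent arguments (the $1/12$ from an upper bound, the $1/24$ from an explicit extremizing family of $N$ and corners constructed by Faure and Chaix), and Faure and Chaix in fact conjecture, based on computation, that the correct constant is the lower one $1/(24(\log2)^2)$; there is no a priori factor-of-two symmetry. Choosing $\alpha_m$ from Corollary~\ref{corbefa} and $\beta_m$ as ``the $P$-image of the extremizer'' also does not immediately give constructive interference: the descent identities of Section~\ref{sec_Faure_meth} are stated for one-dimensional $Y_b^\Sigma$ and NUT $(0,1)$-sequences, and extending them to the coupled two-coordinate situation is precisely the hard technical content of \cite{fauch}. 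Finally, note that the paper's own route for the upper bound is not a direct two-dimensional computation at all: \cite{Pil03} proves an upper bound for three-dimensional digital \emph{nets}, and the bound for the two-dimensional \emph{sequence} $S_2$ then follows via the standard sequence-to-net transfer (\cite[Lemma~3.45]{DP10}, \cite[Lemma~3.7]{niesiam}), which is mentioned in Section~\ref{sec_halt} of the survey. So the overall architecture of your proof differs from both cited proofs and, as written, does not close.
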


Until now, this is the only case of a $(t,s)$-sequence in dimension $s \ge 2$ for which the exact order of star discrepancy is known. Based on thorough computations, Faure and Chaix conjectured that the left inequality should be an equality.

\medskip

However, for any dimension, there is a metric result of Larcher
and Pillichshammer~\cite{lp14} for general digital sequences over
prime fields.

\begin{theorem}[Larcher and Pillichshammer]\label{metrbddisc}
Let $s \in \NN$ and let $b$ be a prime number. Then for almost
all\footnote{Here ``almost all'' is meant with respect to a
certain probability measure $\mu_s$ on the class of all $s$-tuples
of $\NN \times \NN$-matrices over $\FF_b$; see \cite{lp14}.}
$s$-tuples $(C_1,\ldots,C_s) \in (\FF_b^{\NN \times \NN})^s$ of
generating matrices the corresponding digital sequences have a star
discrepancy satisfying
$$D_N^\ast \ge c(b,s) \frac{(\log N)^s \log \log
N}{N} \ \ \ \mbox{ for infinitely many } N \in \NN$$ with some
$c(b,s)>0$ not depending on $N$.
\end{theorem}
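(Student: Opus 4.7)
The plan is a metric argument. I would equip the space of $s$-tuples of infinite matrices over $\FF_b$ with a product probability measure $\mu_s$ under which essentially all entries $c_{r,k}^{(j)}$ are independent and uniform on $\FF_b$ (possibly modulo a deterministic skeleton that forces $(C^{(1)},\ldots,C^{(s)})$ to generate a genuine digital sequence in the sense of \eqref{DvdC}). The goal is to show that a typical tuple yields a discrepancy whose Walsh expansion, evaluated at a carefully chosen family of test points, is dominated by sums of (approximately) independent $\pm 1$-valued random variables, after which a law-of-iterated-logarithm argument combined with Borel--Cantelli delivers the stated lower bound infinitely often.

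First I would restrict attention to $N = b^m$ and, for each composition $(d_1,\ldots,d_s) \in \NN_0^s$ with $d_1+\cdots+d_s = m$, consider the local discrepancy of elementary boxes $\prod_{j=1}^s [0, k_j b^{-d_j})$. Using \eqref{DvdC}, the counting function of such a box is an explicit $\FF_b$-affine statistic of the rows $\{c_{r,\cdot}^{(j)} : 0 \le r < d_j\}$ acting on the digit vectors of $0,1,\ldots,b^m-1$, and under $\mu_s$ this statistic is a centered sum of $\asymp b^m$ random increments; across different compositions, the statistics engage different linear combinations of the matrix rows, and these combinations can be shown, by a rank argument over $\FF_b$, to be jointly uncorrelated and, via the underlying row-independence, effectively jointly independent for the purposes of second and fourth moment estimates.

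Next I would apply a Hartman--Wintner type LIL to each fixed composition, supplying a $\mu_s$-almost sure lower bound of order $\sqrt{b^m \log\log b^m}$ on the extremal fluctuation of the associated corner box, infinitely often in $m$. Taking the supremum over the $\binom{m+s-1}{s-1} \asymp m^{s-1}$ compositions and over the $\asymp b^m$ corner vectors $(k_1,\ldots,k_s)$ within each composition then compounds these fluctuations: after accounting for both factors via an entropy/maximal-inequality argument, the accumulated lower bound on the unnormalised star discrepancy $N D_N^\ast$ reaches order $(\log N)^s \log\log N$, infinitely often in $m$. A Borel--Cantelli argument along a sufficiently sparse sub-sequence $(m_k)_{k \ge 1}$ then yields the claim for $\mu_s$-a.a.\ tuples.

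The main obstacle is the delicate assembly at this last step. Upgrading the classical $\sqrt{\log\log N}$ factor of a single Hartman--Wintner event to the full $\log\log N$ stated in the theorem requires (i) an effective multi-dimensional LIL across the $\asymp (\log N)^{s-1}$ independent composition-directions, (ii) a careful chaining/union-bound over the $\asymp b^m$ corner vectors that contributes an additional factor of order $\sqrt{\log N}$, and (iii) a verification that the product measure $\mu_s$ is compatible with the structural constraints (non-singularity of finite sub-matrices, etc.) needed for $(C^{(1)},\ldots,C^{(s)})$ to generate a valid digital sequence. The precise arithmetic of the constant $c(b,s)$ then falls out of assembling the constants at each stage.
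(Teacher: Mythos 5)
Your proposal has a fundamental scaling error that makes the argument internally inconsistent, and the error lies at the very heart of the LIL step.

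You model the counting function of an elementary corner box for the first $b^m$ points of the digital sequence as a ``centered sum of $\asymp b^m$ random increments,'' and then invoke a Hartman--Wintner LIL to obtain fluctuations of size $\sqrt{b^m\log\log b^m}$. This $\sqrt{N}$ scaling is correct for an i.i.d.\ Monte Carlo point set, but it is false for digital sequences. The construction \eqref{DvdC} is an $\FF_b$-linear map applied to the digit vector of $n$; once the relevant linear-independence conditions on the upper-left submatrices hold, the counting function of an elementary interval of volume $b^{-m'}$ is \emph{exactly} $b^{m-m'}$ — deterministic, with zero fluctuation. Even when some ranks degenerate, the deviations are small integers controlled by the quality parameter $t$ of the relevant submatrix, and for a random matrix this $t$ grows very slowly (roughly like $\log m$), not like $\sqrt{N}$. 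The randomness in the matrix does not produce $b^m$ independent coin tosses, one per point; it produces a slow accumulation of a few degenerate linear relations among $m$ rows. Thus the per-composition fluctuation you assert is off by a factor that is exponential in $m$.

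The inconsistency then becomes visible when you compound: a supremum over compositions and corner vectors can only \emph{increase} a lower bound, so if a single composition already supplied $\approx\sqrt{N\log\log N}$, the final conclusion would have to be at least that large. But the target bound $(\log N)^s\log\log N$ is exponentially smaller than $\sqrt{N}$ — and, for any digital $(t,s)$-sequence with bounded $t$, it is known that $ND_N^\ast\ll(\log N)^s$, so fluctuations of size $\sqrt{N}$ cannot even occur. No chaining or entropy argument can repair this, since those steps only enlarge the bound further. The issue is not the delicate assembly at the end, as you suggest, but the identification of the basic random objects at the start.

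The measure-theoretic setup (product measure $\mu_s$ on matrix entries, restriction to $N=b^m$, Borel--Cantelli along a sparse subsequence) is consistent with how one would approach this, but the approximately-independent random variables that drive the LIL in the actual proof of \cite{lp14} are not the $b^m$ points of a segment; they are contributions indexed by the \emph{scale} $m$, coming from the finitely many matrix entries revealed as $m$ increases, and they are of constant size. The combinatorial work then consists in aggregating these $\asymp\log N$ small contributions across the $\asymp(\log N)^{s-1}$ weight-decompositions to reach the $(\log N)^s\log\log N$ order, with the $\log\log N$ factor supplied by a LIL/Borel--Cantelli argument in the index $m$ rather than over the points.
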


The situation is different for upper discrepancy bounds, where one
can use the structure of $(t,s)$-sequences to show strong results
which are better than the metric lower bound from
Theorem~\ref{metrbddisc}. Upper bounds for the star discrepancy
that hold for arbitrary $(t,s)$-sequences are due to Niederreiter
\cite{N87}, which are based on a clever recursion argument. The
bounds in \cite{N87} were slightly refined in \cite{K06}, and in a
series of papers \cite{FK13, FL12, FL14}, further improvements
were shown. We shortly summarize some of the most up-to-date
results. Star discrepancy bounds for $(t,s)$-sequences can be seen
from two different perspectives. One may either try to optimize
the term involving the highest power of $\log N$, including the
constants, as this yields the asymptotically best discrepancy
bounds. On the other hand, one may try to optimize the bounds
globally such that for the non-asymptotic setting the bounds might
be better than those that are optimized with respect to the
leading term.

Regarding the former approach, Faure and Kritzer showed in
\cite{FK13} the currently best general star discrepancy bounds for
general $(t,s)$-sequences, where the constant in the leading term
is the lowest currently known. The following result was shown in
\cite{FK13} using a recursion on $s$ and a discrepancy bound for
finite point sets known as $(t,m,s)$-nets.
\begin{theorem}[Faure and Kritzer]\label{thmfkdiscbound}
 Let $s\ge 2$ and let $S_s$ be a $(t,s)$-sequence in base $b$. Then it is true that
$$D_N^\ast (S_s)\le c_{s,b}\, b^t \, \frac{(\log N)^s}{N} + O\left(b^t\,\frac{(\log N)^{s-1}}{N}\right),$$
where
$$c_{s,b}=\begin{cases}
       \frac{1}{s!}\frac{b^2}{2(b^2-1)}\left(\frac{b-1}{2\log b}\right)^s & \mbox{if $b$ is even,}\\
       \frac{1}{s!}\frac{1}{2}\left(\frac{b-1}{2\log b}\right)^s & \mbox{if $b$ is odd.}
      \end{cases}
 $$
\end{theorem}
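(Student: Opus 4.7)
The plan is to reduce the problem to a sharp star discrepancy bound for $(t,m,s)$-nets in base $b$ and then convert that net bound into a sequence bound via the $b$-adic decomposition of $N$. I would first prove, by induction on $s$, that every $(t,m,s)$-net $P$ in base $b$ satisfies
$$
b^m D_{b^m}^*(P) \le b^t\,\gamma_{s,b}\, m^{s-1} + O_{s,b,t}(m^{s-2}),
$$
where $\gamma_{s,b}$ is the explicit constant chosen so that, after the sequence-to-net reduction below, $(b-1)\gamma_{s,b}/(s(\log b)^s)=c_{s,b}$ in each parity of $b$.

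The base case $s=1$ of the net bound is immediate because the defining property of a $(t,m,1)$-net forces exactly $b^t$ points into each elementary interval of length $b^{t-m}$, so $b^m D_{b^m}^*(P)\le b^t$ uniformly in $m$. For the inductive step, given a box $[\boldsymbol{0},\bsx)\subseteq[0,1)^s$, I would slice the last coordinate: write $x_s$ in base $b$ and peel off one digit at a time, producing a decomposition of $[0,x_s)$ into $b$-adic elementary intervals of length $b^{-i}$ for $i=1,\ldots,m-t$. By the $(t,m,s)$-net property (Definition \ref{deftsseq}), the points of $P$ falling in such a slice project onto the first $s-1$ coordinates as a disjoint union of shifts of a $(t,m-i,s-1)$-net, to which the induction hypothesis applies. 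Bounding the contribution of the last coordinate via the one-dimensional $\psi_b^{\mathrm{id}}$-functions of Section \ref{secdiscgenvdC} and summing over $i$ then yields the claimed estimate; the supremum $\sup_{x\in\RR}\sum_{i=1}^m\psi_b^{\mathrm{id}}(x/b^i)\le \alpha_{b,\mathrm{id}}\,m+O(1)$ from Theorem \ref{asymptD} together with \eqref{defalphabid} supplies the parity-dependent factor hidden in $\gamma_{s,b}$.

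For the sequence-to-net reduction, write $N\in\NN$ in base $b$ as $N=\sum_{j=0}^{m-1} N_j b^j$, where $m=\lfloor \log_b N\rfloor+1$. The defining property of a $(t,s)$-sequence allows the first $N$ points of $S_s$ to be split into $\sum_{j\ge t} N_j$ consecutive blocks, each of size $b^j$, which after $m$-bit truncation form $(t,j,s)$-nets in base $b$; the $O(b^t)$ points lying in blocks with $j<t$ contribute only to the error term. Applying the net bound from the first stage to each block, together with $N_j\le b-1$ and $\sum_{j=t}^{m-1} j^{s-1}=m^s/s+O(m^{s-1})$, yields
$$
N D_N^*(S_s) \le \frac{(b-1)\,b^t\,\gamma_{s,b}}{s}\, m^s + O\!\left(b^t m^{s-1}\right),
$$
and substituting $m=\log N/\log b + O(1)$ converts the leading term into $c_{s,b}\, b^t (\log N)^s/N$.

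The main obstacle is the sharpness of the constant $c_{s,b}$, not the correct order of magnitude (which already follows from the recursive arguments in \cite{N87} and their refinement in \cite{K06}). Tracking the exact leading constant through the induction requires a careful evaluation of $\sup_{x\in\RR}\sum_{i=1}^m\psi_b^{\mathrm{id}}(x/b^i)$ and a telescoping-series estimate in the slicing step; this is where the extra factor $b^2/(b^2-1)$ for even $b$ emerges, reflecting the fact that for even $b$ the function $\psi_b^{\mathrm{id}}$ does not attain its supremum at a $b$-adic rational, so that the associated geometric series leaves a nonvanishing second-order contribution which is absent in the odd case.
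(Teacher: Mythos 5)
The strategy you outline—prove a sharp star-discrepancy bound for $(t,m,s)$-nets by induction on $s$ (slicing the last coordinate into $b$-adic pieces), and then convert it to a $(t,s)$-sequence bound by decomposing the first $N$ points into blocks of sizes $b^j$ indexed by the base-$b$ digits of $N$—is exactly the route the paper attributes to Faure and Kritzer in \cite{FK13} (``using a recursion on $s$ and a discrepancy bound for finite point sets known as $(t,m,s)$-nets''). Your diagnosis that the real work lies in tracking the leading constant through the induction, with the parity-dependent factor $b^2/(b^2-1)$ emerging from a residual geometric series in the $\psi_b^{\id}$-analysis, is also in the spirit of the $\varphi$/$\psi$-function machinery of Section~\ref{secdiscgenvdC} that underlies the explicit constants in that reference.
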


If we consider the quantity $d_s^*$ as defined in \eqref{dast} for $(t,s)$-sequences in base $b$, 
then we obtain from Theorem~\ref{thmfkdiscbound} that $$d_s^*(S_s) \le c_{s,b} b^{t}.$$ 
It is known that for any $s,b \in \NN$, $b \ge 2$, one can construct a 
$(t_b(s),s)$-sequence $S_s^{\ast}$ in base $b$ with $t_b(s) \asymp_b s$. 
These constructions are based on the method of Niederreiter and Xing (see \cite[Chapter~8]{NX_book}). 
For such sequences we obtain 
$$
\limsup_{s \rightarrow \infty} \frac{\log  d_s^{\ast}(S_s^{\ast})}{s \log s} \le -1
$$
and hence $d_s^{\ast}(S_s^{\ast})$ tends to zero at a superexponential rate as 
$s \rightarrow \infty$. This is the same rate as for the special generalized 
Halton sequence given by Atanassov~\cite{ata}, cf. \eqref{asydsternhalt1} in Section~\ref{sec_halt}. 
Remember that the analog result for Halton sequences given in \eqref{asydsternhalt} 
is much weaker. A comparison  is also discussed in \cite{FL09}.\\

The paper \cite{FK13} contains also a result where all constants
in the star discrepancy bounds for $(t,s)$-sequences are made
explicit (cf. \cite[Theorem 2]{FK13}). However, the focus of
\cite{FK13} lies on optimizing the leading constant $c_s$. Faure
and Lemieux showed in \cite{FL12, FL14} further upper
discrepancy bounds which are not optimized regarding $c_s$ but
which in the non-asymptotic setting (i.e., for smaller values of
$N$) yield better discrepancy bounds than \cite[Theorem 2]{FK13}.
The bounds of Faure and Lemieux are as follows.
\begin{theorem}[Faure and Lemieux]\label{thmFauLe1}
 Let $S_s$ be a $(t,s)$-sequence in base $b$. For any $N\in \NN$ it is true that
$$D_N^\ast (S_s)\le\frac{b^t}{s!}\left(\left\lfloor\frac{b}{2}\right\rfloor\frac{\log N}{\log b}+s\right)^s\frac{1}{N} + \frac{b^t}{N}\sum_{k=0}^{s-1}\frac{b}{k!}
\left(\left\lfloor\frac{b}{2}\right\rfloor\frac{\log N}{\log
b}+k\right)^k.$$
\end{theorem}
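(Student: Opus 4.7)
The proof adapts the combinatorial method of Atanassov, originally developed for Halton sequences (cf.\ Theorem \ref{thmata}), to the $(t,s)$-sequence setting. The starting point is to estimate, for an arbitrary test box $[\bszero, \boldsymbol{\alpha}) = \prod_{j=1}^s [0, \alpha_j)$ and a fixed $N \in \NN$, the quantity $|A([\bszero, \boldsymbol{\alpha}); N; S_s) - N \prod_{j=1}^s \alpha_j|$. Each coordinate $\alpha_j$ is written in its base-$b$ expansion $\alpha_j = \sum_{k\ge 1} \alpha_{j,k}\,b^{-k}$, and this allows one to decompose $[\bszero, \boldsymbol{\alpha})$ as a telescoping signed union of elementary boxes of the form $\prod_{j=1}^s [a_j b^{-d_j}, (a_j+1) b^{-d_j})$.

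The key step exploits the defining property of a $(t,s)$-sequence: whenever $d_1,\ldots,d_s \in \NN_0$ satisfy $d_1 + \cdots + d_s = m - t$ with $m \ge t$, every elementary box of the corresponding shape contains exactly $b^t$ points of any segment of $b^m$ consecutive terms of $S_s$. This replaces the exact count ``one point per elementary interval'' that drives Atanassov's original argument in the Halton case, and is the source of the overall factor $b^t$ in the final bound. Choosing $m$ of the order of $\log_b N$, one splits the counting error into a main contribution from elementary boxes whose total resolution $d_1 + \cdots + d_s$ lies in the admissible range $\{0, 1, \ldots, m - t\}$, and a residual contribution from finer boxes where one uses the trivial bound by the volume. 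The remaining combinatorial work is to bound sums of the form $\sum_{I \subseteq \{1,\ldots,s\}} \prod_{j \in I}(\text{digit sums of } \alpha_j \text{ up to resolution } m)$ via Atanassov's polynomial inequalities. The factor $\lfloor b/2 \rfloor$ in place of $b-1$ comes from a symmetrisation (replacing $\alpha_j$ by $1-\alpha_j$ and taking the more favourable of the two estimates), combined with the identity relating $\Delta_{S_s,N}$ at reflected arguments. With $m \le \log N / \log b$ (up to a small correction), the terms indexed by subsets $I$ with $|I| = s$ assemble into the leading contribution $\tfrac{b^t}{s! N}(\lfloor b/2 \rfloor \log N /\log b + s)^s$, while the subsets of size $k < s$ yield the residual terms, the extra factor $b$ accounting for the coordinates that remain unrestricted.

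The main obstacle, compared with the Halton case, is that the coordinates of $S_s$ are tightly coupled: one cannot count points in a product box as a product of one-dimensional counts, and the $(t,s)$-property is only informative on elementary boxes whose $b$-adic exponents sum to at most $m - t$. Propagating this constraint through the signed decomposition forces one to carry the defect $t$ at every step, and it is precisely this bookkeeping that produces the additive $s$ (respectively $k$) inside $(\lfloor b/2 \rfloor \log N /\log b + s)^s$ (respectively $(\lfloor b/2 \rfloor \log N /\log b + k)^k$), together with the overall $b^t$ multiplier.
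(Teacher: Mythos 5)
The paper provides no proof of this theorem; it only cites \cite{FL12} and remarks that the proof proceeds by adapting Atanassov's method for Halton sequences (cf.\ Theorem \ref{thmata}) to $(t,s)$-sequences. Your sketch captures precisely this strategy, correctly identifying the signed $b$-adic decomposition into elementary boxes, the role of the $(t,s)$-property as the source of the overall $b^t$ factor, the symmetrisation yielding $\lfloor b/2\rfloor$, and the Atanassov-style combinatorial bookkeeping behind the lower-order terms, so it is consistent with the paper's account of the cited source.
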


Theorem~\ref{thmFauLe1} was shown in \cite{FL12} by an adaption of
Atanassov's method for Halton sequences to $(t,s)$-sequences.
Finally, in \cite[Theorem~1]{FL14}, the following
improvement of Theorem~\ref{thmFauLe1} was shown.
\begin{theorem}[Faure and Lemieux]\label{thmFauLe2}
Let $S_s$ be a $(t,s)$-sequence in base $b$. For any $N\in \NN$ it
is true that
$$D_N^\ast (S_s)\le\frac{b^t}{s!}\left(\frac{b-1}{2}\right)^s \frac{1}{N} \prod_{k=1}^s \left(\frac{\log N}{\log b} + \gamma_b k\right)
+ \frac{b^t}{N} \sum_{k=0}^{s-1}
\frac{b}{k!}\left(\frac{b-1}{2}\right)^k \prod_{\ell=1}^k
\left(\frac{\log N}{\log b} + \gamma_b \ell\right),$$ where
$\gamma_b:=2-(b \pmod 2)$.
\end{theorem}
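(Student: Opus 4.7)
The plan is to adapt Atanassov's method, originally devised for Halton sequences to obtain Theorem \ref{thmata}, to the setting of $(t,s)$-sequences in base $b$, sharpening the version used in \cite{FL12} to prove Theorem \ref{thmFauLe1}. The argument proceeds by induction on the dimension $s$, using the defining property of a $(t,s)$-sequence---namely that any $b^m$ consecutive points (for $m\ge t$) are exactly $b^t$-times equidistributed among the $b$-adic elementary intervals of volume $b^{t-m}$---to reduce the $s$-dimensional counting problem to a sequence of one-dimensional problems, iterated over coordinate directions.

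First I would fix an anchored box $J = \prod_{j=1}^{s}[0,\alpha_j)$ and write $N = \sum_{i\ge 0} N_i b^i$ in base $b$. Partitioning $\{0,1,\ldots,N-1\}$ into blocks corresponding to the digits $N_i$, the points of $S_s$ restricted to each block fall exactly $b^t$ times in each elementary interval of the appropriate size. Expanding every $\alpha_j$ in base $b$ up to a truncation level $m_j$ and using the product structure of elementary boxes together with the equidistribution property, the error $A(J;N;S_s) - N\lambda_s(J)$ (with $\lambda_s$ the $s$-dimensional Lebesgue measure) can be expressed as a telescoping sum indexed by the digits of the $\alpha_j$, each summand corresponding to replacing one coordinate's genuine $[0,\alpha_j)$ by a union of elementary intervals.

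The improvement over Theorem \ref{thmFauLe1}, replacing $\lfloor b/2\rfloor$ by $(b-1)/2$ and the additive constant $s$ by the step-dependent $\gamma_b k$, comes from a sharper \emph{signed} decomposition of each one-dimensional interval $[0,\alpha_j)$. Rather than bounding each digit $a_{j,i}$ by $\lfloor b/2\rfloor$ via complementation to $[a_{j,i}/b^i, 1)$, one balances positive and negative contributions symmetrically around the "centre" $(b-1)/2$ of the digit set $\{0,1,\ldots,b-1\}$; averaged over the admissible digits, the worst-case signed contribution per level is $(b-1)/2$ rather than $\lfloor b/2\rfloor$. For odd $b$ the splitting is perfectly symmetric and introduces only a single unit-error per dimension (hence $\gamma_b = 1$), whereas for even $b$ the lack of an integer centre forces an additional middle correction per dimension, doubling the per-step error (hence $\gamma_b = 2$).

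The main obstacle will be the clean bookkeeping in the recursion: at the induction step from dimension $k-1$ to dimension $k$, one must show that the bound picks up exactly the factor $\tfrac{b-1}{2}\bigl(\tfrac{\log N}{\log b}+\gamma_b k\bigr)$, not just a factor of that order, so that the errors telescope into the precise product $\prod_{k=1}^{s}\bigl(\tfrac{\log N}{\log b}+\gamma_b k\bigr)$ appearing in the statement. This will require carefully aligning the residual error from previous dimensions with the new signed decomposition at each step, along the lines of Atanassov's original induction. The residual sum $\sum_{k=0}^{s-1}\tfrac{b}{k!}(\cdots)$ then arises as the boundary contribution generated when the signed decomposition terminates after fewer than $s$ dimensions, leaving at most $b$ uncorrected elementary intervals to be bounded crudely in one coordinate; the overall factor $b^t$ reflects the multiplicity coming from the $(t,s)$-sequence equidistribution, which places $b^t$ rather than one point in each elementary interval of the critical size.
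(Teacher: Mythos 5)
The survey does not prove Theorem~\ref{thmFauLe2}; it only cites it as \cite[Theorem~1]{FL14}, so there is no in-paper proof to match you against. Your overall plan---induction on the dimension via Atanassov's method, base-$b$ block decomposition of $\{0,\ldots,N-1\}$ exploiting the $(t,s)$-equidistribution, a signed decomposition of each coordinate interval $[0,\alpha_j)$ into elementary intervals, and a residual lower-dimensional sum giving the $\sum_{k=0}^{s-1}\tfrac{b}{k!}(\cdots)$ term---is the right family of ideas, and it is indeed what the cited reference (titled ``A variant of Atanassov's method for $(t,s)$- and $(t,\boldsymbol{e},s)$-sequences'') pursues.

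However, the crucial step of your plan is unsupported and, as stated, false. You assert that by ``balancing positive and negative contributions symmetrically around the centre $(b-1)/2$,'' the worst-case signed contribution per digit level is $(b-1)/2$. The standard signed decomposition at one digit level chooses between $a_i$ intervals (positive sign) or $b-a_i$ intervals (negative sign, via complementation), so the worst case is $\max_{a}\min(a,b-a)=\lfloor b/2\rfloor$; for even $b$ this equals $b/2$, which is strictly larger than $(b-1)/2$ and is exactly the constant of Theorem~\ref{thmFauLe1}. Nothing in your proposal explains how to push this to $(b-1)/2$ for even $b$ --- an ``averaging over admissible digits'' cannot be invoked, since the digits of $\alpha_j$ are adversarial in a worst-case discrepancy bound, and $\min(a_i,b-a_i)$ is extremal precisely at $a_i=b/2$. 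Likewise, the account of $\gamma_b$ as ``a single unit-error per dimension for odd $b$, doubled for even $b$'' is a heuristic, not an argument: your induction needs to produce exactly the factor $\tfrac{b-1}{2}\bigl(\tfrac{\log N}{\log b}+\gamma_b k\bigr)$ at step $k$, and you have not identified the combinatorial lemma that yields it. Until you state and prove a refined counting lemma (e.g.\ an amortized or multi-level signed splitting that trades a smaller per-level constant against a larger additive correction, carefully distinguishing the parity of $b$), the proof cannot be completed, and this is precisely where the new content of \cite{FL14} relative to \cite{FL12} resides.
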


Numerical experiments in \cite{FL15} for different values of $N$
and the other parameters show that the bounds in Theorems
\ref{thmFauLe1} and \ref{thmFauLe2} may have an advantage over
those in Theorem \ref{thmfkdiscbound} in the
non-asymptotic setting.

Finally, we mention that special discrepancy estimates for
generalized Niederreiter sequences were recently shown by
Tezuka in \cite{T13} and then improved for even bases by Faure and
Lemieux \cite[Theorem~2]{FL14}.
\begin{theorem}[Tezuka, Faure, and Lemieux]\label{thmFauLe3}
Let $S_s$ be a generalized Niederreiter sequence in dimension $s$.
Then it is true that
\begin{eqnarray*}
D_N^\ast (S_s)&\le&\frac{1}{s!} \frac{1}{N} \prod_{j=1}^s \left(\frac{b^{e_j}-1}{2e_j}\left(\frac{\log N}{\log b}+\sum_{i=1}^s e_i\right)+s\right)\\
&&+\frac{1}{N}\sum_{k=0}^{s-1} \frac{b^{e_{k+1}}}{k!}\prod_{i=1}^k
\left(\frac{b^{e_i}-1}{2e_i}\left(\frac{\log N}{\log
b}+\sum_{r=1}^s e_r\right)+k\right),
\end{eqnarray*}
where the $e_j$, $1\le j\le s$, are the degrees of the polynomials
$p_j$ needed in the construction of generalized Niederreiter
sequences as outlined above.
\end{theorem}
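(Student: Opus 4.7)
The plan is to adapt the method of Atanassov for Halton sequences (Theorem~\ref{thmata}) to the digital, polynomial setting of generalized Niederreiter sequences. The starting observation is that in a generalized Niederreiter sequence the $j$-th coordinate is built from the $b$-adic expansion of $n$ reorganized through the expansions of $y_{j,i,k}(x)/p_j(x)^i$, and the natural ``base'' governing the distribution in the $j$-th direction is $b^{e_j}$, because what matters for the leading $i$ rows of $C^{(j)}$ is the residue of $n(x)$ modulo $p_j(x)^i$. Since the $p_j$ are pairwise distinct monic irreducibles over $\FF_b$, they are pairwise coprime in $\FF_b[x]$, so a Chinese Remainder Theorem in $\FF_b[x]$ gives an analog of the pairwise coprimality of bases used by Atanassov.

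First I would reduce the star discrepancy to an estimate for $A([\bszero,\boldsymbol{\alpha});N;S_s)-N\alpha_1\cdots\alpha_s$ on anchored boxes and refine each $\alpha_j\in[0,1)$ by its $b^{e_j}$-adic expansion of length $\lceil (\log N)/(e_j\log b)\rceil$ plus a tail, as in \cite[Section~2]{fau14}. Next I would set up the analog of Atanassov's counting lemma: for a box whose $j$-th side is of the form $a_j/b^{e_j d_j}$, the number of points of $S_s$ among the first $N$ landing in that box is controlled, via the digital $(t,s)$-sequence property with $t=\sum_j(e_j-1)$, by a recursion on the ``digit'' of $\alpha_j$ that is being processed. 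Each step of the recursion produces a contribution of magnitude at most $(b^{e_j}-1)/(2e_j)$ per digit on account of the mean deviation of a single $b^{e_j}$-adic block being at most $(b^{e_j}-1)/2$ and each block covering a logarithmic range of length $e_j\log b$.

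Then I would assemble the per-coordinate estimates into a product using Atanassov's trick: instead of the crude bound $\prod_j(\mathrm{digits}_j+s)$ one balances the number of ``good'' digits against the number of ``bad'' digits and sums against the multinomial weights $1/k!$, which produces the product $\prod(\frac{b^{e_j}-1}{2e_j}L+s)/s!$ with $L=\log N/\log b+\sum_ie_i$, the additive shift $\sum_i e_i$ arising because the first $e_j-1$ rows of $C^{(j)}$ have to be discarded before the linear-independence hypothesis of a $(t,s)$-sequence bites. The boundary term $\sum_{k=0}^{s-1}\frac{b^{e_{k+1}}}{k!}\prod_{i=1}^k(\cdots)$ comes from terminating the refinement of coordinates $k+1,\ldots,s$ at their one-digit level, which introduces the prefactor $b^{e_{k+1}}$ (the number of cells of the coarsest non-trivial partition in direction $k+1$).

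The main obstacle I expect is precisely making the per-coordinate contribution come out as $(b^{e_j}-1)/(2e_j)$ rather than the weaker $(b^{e_j}-1)/2$ or a bound of the form $b^{t}(b-1)/(2e_j)$ that would result from a naive application of the $(t,s)$-property. This requires exploiting not merely that $S_s$ is a $(t,s)$-sequence in base $b$, but that it is in fact a $(0,s)$-sequence in base $b^{\min_j e_j}$ along suitably aggregated directions, and more generally that on each coordinate the relevant digit blocks are governed by the irreducible $p_j$ individually rather than by the global linear-independence structure. For even $b$ there is an additional sharpening step analogous to the even-base improvement in \cite[Theorem~2]{FL14}, where one uses the extra symmetry $\tau_{b^{e_j}}$-type cancellation to pull $(b^{e_j}-1)/2$ down to $\lfloor b^{e_j}/2\rfloor$, but this refinement is not stated here and I would omit it. The remaining work is bookkeeping that mimics Atanassov's inductive argument line by line once the polynomial Chinese Remainder reduction has been established.
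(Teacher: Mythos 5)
Your high-level strategy is the right one and is in fact the route taken by Tezuka and by Faure and Lemieux: an Atanassov-style recursion where the effective base in coordinate $j$ is $b^{e_j}$, the $p_j$ play the role of the pairwise-coprime bases via the Chinese Remainder Theorem in $\FF_b[x]$, and the $s!$-weighted combinatorial assembly produces the stated product. However, your proposed technical substitute for the needed equidistribution property is wrong. You write that one should use that $S_s$ is ``a $(0,s)$-sequence in base $b^{\min_j e_j}$ along suitably aggregated directions''; this is both too crude and not what a generalized Niederreiter sequence actually is. When the degrees $e_j$ differ, a single re-based $(0,s)$-property in base $b^{\min_j e_j}$ does not control point counts in the mixed-resolution boxes $\prod_j[a_j/b^{e_j d_j},(a_j+1)/b^{e_j d_j})$ that the recursion must traverse, and the factor $1/e_j$ would not come out correctly. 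The precise notion needed is Tezuka's $(t,\boldsymbol{e},s)$-sequence in base $b$ with $\boldsymbol{e}=(e_1,\ldots,e_s)$, together with the (nontrivial) fact that every generalized Niederreiter sequence is a $(0,\boldsymbol{e},s)$-sequence; this is exactly the ingredient the present survey points to and is the step your outline leaves open. Your heuristic for the additive shift $\sum_i e_i$ (discarding the first $e_j-1$ rows) is close in spirit but the clean accounting is that $t=\sum_j(e_j-1)$ in the plain $(t,s)$-framework becomes $t=0$ in the $(t,\boldsymbol{e},s)$-framework, with the $e_j$'s absorbed into the box resolutions.

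A second, smaller issue: you propose to ``omit'' the even-base refinement, but the constant $(b^{e_j}-1)/(2e_j)$ stated in the theorem is already the refined constant; the crude version you would obtain is $\lfloor b^{e_j}/2\rfloor/e_j$, which is strictly larger when $b$ is even. (Your phrase ``pull $(b^{e_j}-1)/2$ down to $\lfloor b^{e_j}/2\rfloor$'' also has the inequality backwards.) So for even $b$ your outline would only reach a weaker bound than the one claimed, and the refinement cannot simply be dropped. With the $(0,\boldsymbol{e},s)$-sequence lemma supplied and the even-base sharpening kept, the rest of your Atanassov-style bookkeeping should indeed go through.
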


The bound in Theorem \ref{thmFauLe3}, as well as the earlier
result of Tezuka in \cite{T13}, were obtained by relating
generalized Niederreiter sequences to the new concept of so-called
$(t,\boldsymbol{e},s)$-sequences as introduced in \cite{T13}.
Indeed, generalized Niederreiter sequences can be viewed as
$(0,\boldsymbol{e},s)$-sequences, where
$\boldsymbol{e}=(e_1,\ldots,e_s)$, with the $e_j$ exactly
corresponding to the degrees of the polynomials $p_j$.

\paragraph{The $L_p$-discrepancy of $(t,s)$-sequences.} Regarding the 
$L_p$-discrepancy with $p \in [1,\infty)$, less is known for general 
$(t,s)$-sequences than in the case of the star discrepancy. 
In general, it is clear from the monotonicity of the $L_p$-discrepancies in $p$ that the upper bounds
on the star discrepancy of $(t,s)$-sequences are also valid for
the $L_p$-discrepancy. However, in view of a general lower bound
due to Proinov~\cite{pro86} for $p\in (1,\infty)$, which states
that there exists some $c_{s,p}>0$ such that for every infinite
sequence $X$ in $[0,1)^s$ we have 
$$
L_{p,N}(X) \ge c_{s,p}
\frac{(\log N)^{s/2}}{N}\ \ \ \mbox{ for infinitely many $N \in
\NN$,}
$$ 
the question remains whether there exist
$(t,s)$-sequences for which the $L_p$-discrepancy matches this
lower bound with respect to the order of magnitude in $N$. This question was
answered  in the affirmative by Dick and Pillichshammer for $p=2$
(see \cite{DP14a,DP14b}) and by Dick, Hinrichs, Markhasin, and
Pillichshammer~\cite{DHMP} for general $p$ recently.

\begin{theorem}[Dick, Hinrichs, Markhasin, and Pillichshammer]
For $p \in [1,\infty)$ and every dimension $s$ one can explicitly
construct a digital sequences $S_s$ over $\FF_2$ for which we have
$$L_{p,N}(S_s) \ll_{s,p} \frac{(\log N)^{s/2}}{N}.$$
\end{theorem}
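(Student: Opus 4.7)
The plan is to extend the Haar-coefficient-plus-Littlewood--Paley argument used in the proof of Theorem~\ref{Lp_sym_vdc} from one to $s$ dimensions. Fix $p\in(1,\infty)$; the case $p=1$ will then follow from the monotonicity $L_{1,N}\le L_{2,N}$. On $[0,1)^s$ I would work with the tensor-product dyadic Haar basis $h_{\bsj,\bsm}(\bsx):=\prod_{i=1}^s h_{j_i,m_i}(x_i)$, indexed by $\bsj\in\NN_{-1}^s$ and $m_i\in\mathbb{D}_{j_i}$, supported on the dyadic box $I_{\bsj,\bsm}=\prod_i I_{j_i,m_i}$. The $s$-dimensional Littlewood--Paley inequality then provides constants $0<c_{s,p}\le C_{s,p}<\infty$ with
$$c_{s,p}\|f\|_{L_p}\le \left\|\left(\sum_{\bsj,\bsm} 2^{2|\bsj|_+}\,\langle f,h_{\bsj,\bsm}\rangle^2\,\mathbf{1}_{I_{\bsj,\bsm}}\right)^{1/2}\right\|_{L_p}\le C_{s,p}\|f\|_{L_p},$$
where $|\bsj|_+:=\sum_{i=1}^s\max(j_i,0)$.

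Second, following \cite{DHMP}, I would let $S_s$ be an explicit digital sequence over $\FF_2$ whose generating matrices are tailored so that the Haar coefficients of its local discrepancy decay properly. A natural candidate is an interlaced (``higher-order'') digital sequence built from Sobol' or Niederreiter--Xing matrices, in which rows of the one-dimensional matrices are interleaved across coordinates to boost the net-order. The key analytic task is to establish for $\mu^N_{\bsj,\bsm}:=\langle\Delta_{S_s,N},h_{\bsj,\bsm}\rangle$ the $s$-dimensional analog of Lemma~\ref{ex_HK_sym}: $|\mu^N_{\bsj,\bsm}|\ll_s 2^{-|\bsj|_+}/N$ when $|\bsj|_+<\log_2 N$, $|\mu^N_{\bsj,\bsm}|\ll_s 2^{-2|\bsj|_+}$ when $|\bsj|_+\ge\log_2 N$, and crucially, for indices having some $j_i=-1$, an estimate of order $O_s(1/N)$ for the ``almost-mean'' coefficients (analogous to \eqref{cuc_part}). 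This last bound is where the explicit construction does its work: it forces cancellations in the low-order moments of the discrepancy for every $N$, not only $N=2^m$.

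Third, with these estimates in hand I would plug them into the Littlewood--Paley bound, apply Minkowski's inequality for the $L_{p/2}$-norm exactly as at the end of Section~\ref{sec_sym}, and sum up. For each fixed $\bsj$ the inner $L_{p/2}$-norm is controlled by the uniform bound $B_\bsj^2$ on $(\mu^N_{\bsj,\bsm})^2$, so the contribution of $\bsj$ is $2^{2|\bsj|_+}B_\bsj^2$, which equals $O(1/N^2)$ when $|\bsj|_+<\log_2 N$ and $O(2^{-2|\bsj|_+})$ when $|\bsj|_+\ge\log_2 N$. Using the combinatorial bound $\#\{\bsj\in\NN_{-1}^s:|\bsj|_+=k\}\ll_s(k+1)^{s-1}$, the low-level contribution sums to $O_s((\log N)^s/N^2)$ and the tail is $O_s(1/N^2)$; extracting the square root yields the claimed bound $(\log N)^{s/2}/N$.

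The main obstacle is step two: constructing an explicit digital sequence over $\FF_2$ whose Haar coefficients satisfy the symmetrization-type estimate at all indices with some $j_i=-1$, uniformly in $N$ (not merely at powers of $2$). In one dimension this was achieved by the pairing $y_{2n}+y_{2n+1}=1$ of the symmetrized van der Corput sequence; in $s$ dimensions the required cancellations must emerge from the linear-algebraic structure of the generating matrices over $\FF_2$, and verifying them typically calls for a careful digit-by-digit analysis or, equivalently, sharp control of the Walsh coefficients of $S_s$ for arbitrary truncations.
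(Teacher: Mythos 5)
The paper does not actually prove this theorem: it states the result with a citation to the (then in preparation) reference \cite{DHMP} and adds only that the construction uses higher order, digit-interlaced digital sequences in the sense of Dick \cite{D07,D08}. Your outline is a plausible reconstruction of the strategy in \cite{DHMP} and of the earlier $p=2$ work in \cite{DP14b}: tensor Haar basis, $s$-dimensional Littlewood--Paley, Minkowski on the $L_{p/2}$-norm, and the combinatorial bound $\#\{\bsj : |\bsj|_+ = k\} \ll_s (k+1)^{s-1}$; you also correctly name the interlaced Niederreiter-type construction and $L_1\le L_2$ for the endpoint $p=1$. So as a map of the territory this is accurate.

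However, the proposal leaves unresolved exactly the step that carries the entire difficulty, and the way you state that step is stronger than what is actually true. Your claimed uniform bound $|\mu^N_{\bsj,\bsm}|\ll 2^{-|\bsj|_+}/N$ for $|\bsj|_+<\log_2 N$, holding for \emph{every} $\bsm$ and every $N$, is not a consequence of the digital net property alone: the ``volume'' part of $\Delta_{S_s,N}$ already contributes $\asymp 2^{-2|\bsj|_+}$ to each coefficient, which exceeds $2^{-|\bsj|_+}/N$ whenever $|\bsj|_+<\log_2 N$, so the counting part must cancel this down uniformly in $\bsm$ and uniformly in $N$, not only at $N=2^m$. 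In dimension $s\ge 2$ one cannot obtain this cancellation from an ordinary digital $(t,s)$-sequence; the interlacing is precisely what buys the extra smoothness (higher order) needed to control the Haar spectrum away from the ``diagonal'', and the verification is a delicate digit-by-digit argument, not a formal transcription of Lemma~\ref{ex_HK_sym}. In addition, the coefficients with at least one $j_i=-1$ require a separate argument (they correspond to lower-dimensional projections and are the multivariate analogue of \eqref{cuc_part}), and for the symmetrized trick from Section~\ref{sec_sym} there is no obvious $s$-dimensional counterpart; the cancellation there must come from the algebra of the generating matrices. Since you flag this as ``the main obstacle'' without resolving it, what you have is a correct high-level plan consistent with the paper's brief description, but not a proof.
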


The construction mentioned in the above theorem is based on
so-called higher order sequences which were introduced by
Dick~\cite{D07,D08}, and which are especially useful to obtain
optimal convergence rates for the integration of smooth functions
(see also \cite[Chapter~15]{DP10}). One particular instance of
such a construction are so-called digit-interlaced Niederreiter
sequences.

\paragraph{Conclusion.} Nowadays, digital $(t,s)$-sequences, 
which have their roots in the simple digital construction due to J.G. van der Corput, 
are among the most powerful constructions of sequences which are used as the
sample points for quasi-Monte Carlo integration rules 
$$\frac{1}{N}\sum_{n=0}^{N-1}f(\bsx_n) \approx \int_{[0,1]^s}f(\bsx)\rd \bsx.$$ 
The general link between the distribution properties of the underlying sample 
nodes and the absolute integration error of a quasi-Monte Carlo rule is provided 
by the famous Koksma-Hlawka inequality which was already stated in 
Remark~\ref{genfactsdisc}. Introductory texts on the quasi-Monte Carlo method and 
digital sequences are the books \cite{DP10,LP14,niesiam} and the survey \cite{DKS}.


\begin{small}
\noindent\textbf{Authors' addresses:}\\
\noindent Henri Faure, Aix-Marseille Universit\'{e}, CNRS, Centrale Marseille, I2M, UMR 7373, 13453 Marseille, France, E-mail: \texttt{henri.faure@univ-amu.fr}\\ \\
\noindent Peter Kritzer, Friedrich Pillichshammer, Department of
Financial Mathematics and Applied Number Theory, Johannes Kepler
University Linz, Altenbergerstr.~69, 4040 Linz, Austria, E-mail:
\texttt{peter.kritzer@jku.at},
\texttt{friedrich.pillichshammer@jku.at}
\end{small}
\end{document}